\documentclass[UTF-8,reqno]{amsart}
\usepackage{enumerate, bbm}
\usepackage{amssymb,url,color, booktabs}
\usepackage{mathrsfs}
\usepackage{soul}
\usepackage{color}
\usepackage[colorlinks=true]{hyperref}
\hypersetup{
    linkcolor=blue,          
    citecolor=red,        
    filecolor=blue,      
    urlcolor=cyan
}
\usepackage{color}
\definecolor{MyDarkBlue}{cmyk}{0.8,0.3,0.8,0.4}
\definecolor{yellow}{rgb}{0.99,0.99,0.70}
\definecolor{white}{rgb}{1.0,1.0,1.0}
\definecolor{black}{rgb}{0.00,0.00,0.00}
\definecolor{backgroundcolor}{RGB}{199,238,206}

\numberwithin{equation}{section}

\newtheorem{theorem}{Theorem}[section]
\newtheorem{lemma}[theorem]{Lemma}
\newtheorem{remark}[theorem]{Remark}
\newtheorem{definition}[theorem]{Definition}
\newtheorem{proposition}[theorem]{Proposition}
\newtheorem{Examples}[theorem]{Example}
\newtheorem{corollary}[theorem]{Corollary}

\def\eps{\varepsilon}
\def\e{\mathrm{e}}
\def\supp{\mathrm{supp}}

\def\dif{{\mathord{{\rm d}}}}

\def\max{{\mathord{{\rm max}}}}
\def\min{{\mathord{{\rm min}}}}

\def\mR{{\boldsymbol{r}}}

\def\bbone{{\boldsymbol{1}}}
\def\bb2{{\boldsymbol{2}}}
\def\no{\nonumber}
\def\={&\!\!=\!\!&}

\def\bx{{\mathbf{x}}}

\def\bB{{\mathbf B}}
\def\bC{{\mathbf C}}

\def\b1{{\mathbbm 1}}

\def\cI{{\mathcal I}}
\def\cJ{{\mathcal J}}

\def\cQ{{\mathcal Q}}
\def\cR{{\mathcal R}}
\def\cS{{\mathcal S}}
\def\cT{{\mathcal T}}
\def\cU{{\mathcal U}}
\def\cV{{\mathcal V}}
\def\cW{{\mathcal W}}

\def\mE{{\mathbb E}}

\def\mN{{\mathbb N}}

\def\mR{{\mathbb R}}

\def\mT{{\mathbb T}}

\def\mZ{{\mathbb Z}}

\def\sS{{\mathscr S}}

\def\geq{\geqslant}
\def\leq{\leqslant}
\def\<{{\langle}}
\def\>{{\rangle}}
\def\({{\big(}}
\def\){{\big)}}
\def\[{{\Big[}}
\def\]{{\Big]}}

\def\a{\alpha}
\def\b{\beta}
\def\de{\delta}
\def\g{\gamma}

\def\l{\lambda}

\def\s{\sigma}

\def\vt{\vartheta}

\def\ff{\frac}

\def\bt{\begin{theorem}}
\def\et{\end{theorem}}
\def\bl{\begin{lemma}}
\def\el{\end{lemma}}
\def\br{\begin{remark}}
\def\er{\end{remark}}
\def\bpf{\begin{proof}}
\def\epf{\end{proof}}
\def\bx{\begin{Examples}}
\def\ex{\end{Examples}}
\def\bd{\begin{definition}}
\def\ed{\end{definition}}
\def\bp{\begin{proposition}}
\def\ep{\end{proposition}}
\def\bc{\begin{corollary}}
\def\ec{\end{corollary}}

\def\wt{\widetilde}

\allowdisplaybreaks

\begin{document}

\title[Quintic Approximation of $\Phi^4_3$ I]{Convergence to the Dynamical $\Phi^4_3$ Model under a Vanishing Quintic Perturbation I: A Paracontrolled Approach}
\author[Z. Chen]{Zikai Chen}
\author[S. Kusuoka]{Seiichiro Kusuoka}

\thanks{Zikai Chen: School of Mathematics and Statistics, Wuhan University, Wuhan, Hubei, 430072, China, and Department of Mathematics Sciences, Graduate School of Science, Kyoto University, Kyoto, 606-8502, Japan. E-mail: chenzikai@whu.edu.cn}
\thanks{Seiichiro Kusuoka: Department of Mathematics and Mathematical Sciences, Graduate School of Science, Kyoto University, Kyoto, 606-8102, Japan. E-mail:kusuoka@math.kyoto-u.ac.jp}
\thanks{Z. Chen acknowledges support from China Scholarship Council Grant Numbers 202506270058. S. Kusuoka acknowledges support from JSPS KAKENHI Grant Numbers JP22H00099 and JP23K20801.}

\begin{abstract}
We study local convergence to the dynamical $\Phi^4_3$ model under a vanishing quintic perturbation. More precisely, on the three-dimensional torus we consider
$$\partial_tu_\varepsilon=\Delta u_\varepsilon-\varepsilon^\alpha u_\varepsilon^5+\xi_\varepsilon+C_\varepsilon u_\varepsilon+\widetilde C_\varepsilon u_\varepsilon^3$$
for $\alpha\in(\ff56,1)$, where $\xi_\varepsilon$ is a spatial mollification of space-time white noise. Although the quintic coefficient vanishes, its contractions generate divergent linear and cubic contributions. We identify suitable mass and cubic counterterms that compensate these divergences. Using paracontrolled calculus, we construct the required enhanced stochastic data and prove that their renormalized higher-order components vanish, while the remaining coordinates converge to the enhanced data of the dynamical $\Phi^4_3(\lambda)$ model. We further establish local well-posedness and stability of the associated deterministic solution map. Consequently, for well-prepared initial conditions, $u_\varepsilon$ converges in probability, as a random local solution germ, to the renormalized dynamical $\Phi^4_3(\lambda)$ solution. In particular, the cubic counterterm allows convergence strictly below the threshold $\alpha=1$ arising when only linear renormalization is allowed.
\end{abstract}

\keywords{dynamical $\Phi^4_3$ model, quintic perturbation, renormalization, paracontrolled distributions}

\maketitle

\section{Introduction}
The $\Phi^4_3(\lambda)$ model ($\lambda>0$) is formally given by 
\begin{align*}
\partial_t u=\Delta u-\lambda u^3+\xi,\ (t,x)\in\mathbb R_+\times \mathbb T^3,\no
\end{align*}
where $\xi$ is the space-time white noise on $\mathbb T^3$ with formal covariance $\mathbb E[\xi(s,x)\xi(t,y)]=\delta(t-s)\delta(y-x)$. Throughout the paper, $\mathbb T^3:=(\mathbb R/\mathbb Z)^3$ denotes the three-dimensional unit torus, endowed with the Lebesgue measure of total mass one. For $k\in\mathbb Z^3$, we set $e_k(x):=e^{2\pi i k\cdot x}$, so that $-\Delta e_k=4\pi^2|k|^2e_k$.

We define the spatially mollified noise by
\begin{align}\label{mollinoise}
\xi_\varepsilon(t,x)=\sum_{k\in\mZ^3} f(\varepsilon k)\widehat\xi(t,k)e_k(x).
\end{align}
Here $\hat\xi$ is the Fourier coefficient of $\xi$ in the spatial variable:
$$
\hat \varphi(k):=\int_{\mT^3}\varphi(x)e_{-k}(x)\dif x,
$$
and $f$ is assumed to be smooth, compactly supported, real-valued and even, with $f(0)=1$.

We aim to study the limit, as $\varepsilon\to0$, of solutions to the following approximating equation:
\begin{align*}
\partial_t u_\varepsilon=\Delta u_\varepsilon-\varepsilon^\alpha u_\varepsilon^5+\xi_\varepsilon+C_\varepsilon u_\varepsilon+\wt C_\varepsilon u_\varepsilon^3,\ u_\varepsilon(0,\cdot)=u_{\varepsilon,0},\no
\end{align*}
where $C_\varepsilon$ and $\wt C_\varepsilon$ are constants depending on $\varepsilon$ whose precise form will be determined in the sequel.

For the construction of the stationary stochastic objects, we extend the space-time white noise to $\mathbb R\times\mathbb T^3$. Let $X(t)$ be the solution of the linear equation
\begin{align*}
\partial_t X=(\Delta-1) X+\xi.\no
\end{align*}
In mild form, this reads
\begin{align*}
X(t)=\int_{-\infty}^t P_{t-s}\xi_s\dif s,\no
\end{align*}
where $P_t=e^{t(\Delta-1)}$ is the massive heat semigroup. Moreover, $X$ is a Gaussian process and $X\in C([0,T];\bC^{-\frac{1}{2}-}(\mathbb T^3))$, where $\bC$ is the standard H\"older--Besov space.

\subsection{A heuristic analysis}

The analysis of the role of the parameter $\alpha$ below is purely heuristic and is not intended as a rigorous proof.

Note that the main singular part of $u_\varepsilon$ is given by $X_\varepsilon$ (see \eqref{Xmollify} below for definition). Formally, the perturbation produces the term
\begin{align}\label{quanticwickextension}
\varepsilon^\alpha X_\varepsilon^5=\varepsilon^\alpha X_\varepsilon^{\diamond 5}+10\varepsilon^\alpha C_\varepsilon^{(1)}X_\varepsilon^{\diamond 3}+15\varepsilon^\alpha (C_\varepsilon^{(1)})^2X_\varepsilon,
\end{align}
where $X_\eps^{\diamond m}:=H_m(X_\eps;C_\varepsilon^{(1)})$, $H_m$ is the $m$-order Hermite polynomial and  
\begin{align}\label{defC1eps}
C_\varepsilon^{(1)}:=\mathbb E\big[X_\varepsilon(t,x)^2\big]=\frac12\sum_{k\in\mathbb Z^3}\frac{|f(\varepsilon k)|^2}{1+4\pi^2|k|^2}
\end{align}
is independent of $(t,x)$ since $X_\varepsilon$ is stationary and spatially homogeneous. In particular,
$$C_\varepsilon^{(1)}=\frac{\sigma_f^2}{\varepsilon}+O(1),\ \sigma_f^2=\frac{1}{8\pi^2}\int_{\mathbb R^3}\frac{|f(\theta)|^2}{|\theta|^2}\dif\theta.$$
The last term in \eqref{quanticwickextension} is linear and can be absorbed into the renormalization constant $C_\varepsilon$. Therefore the important term for the nonlinear structure is $10\varepsilon^\alpha C_\varepsilon^{(1)}X_\varepsilon^{\diamond 3}$. Since $\varepsilon^\alpha C_\varepsilon^{(1)}\sim\varepsilon^{\alpha-1}$, if one only allows the usual linear counterterm, $\alpha=1$ is the critical boundary. For $\alpha>1$, the fifth-order perturbation does not change the limiting cubic coefficient. For $\alpha=1$, it may still contribute a finite correction to the effective cubic nonlinearity; see \cite{EX22}. For $\alpha<1$, the perturbation is too large to be treated as a vanishing higher-order correction in the same scaling regime. 

However, the situation is different if one allows a cubic counterterm. In that case, the divergent cubic contribution generated by the quintic term can be compensated by a suitable choice of the cubic counterterm. Hence, one can go below this threshold; that is,  treat part of the regime $\alpha<1$. The precise admissible range is determined by the remaining higher order resonant products in the enhanced stochastic objects.  These terms are not removed by the cubic counterterm and have to be controlled by the small prefactor $\varepsilon^\alpha$.  We postpone the detailed power counting and the rigorous estimates to the subsequent sections. 

\subsection{Motivation and background}

The preceding heuristic analysis isolates the role of the exponent $\alpha$. We now place this question in the broader context of the dynamical $\Phi^4_3$ model and its approximations. The Euclidean $\Phi^4_3$ field is one of the fundamental non-Gaussian scalar field theories in three dimensions and a basic example of a renormalizable Euclidean quantum field theory. Its rigorous construction was a central achievement of constructive quantum field theory; see, for instance, \cite{BFS83,GJ87}. The stochastic quantization procedure proposed in \cite{PW81} associates with this field a Langevin dynamics whose invariant distribution is formally the Euclidean $\Phi^4_3$ measure. The model therefore provides a natural meeting point of stochastic analysis, statistical mechanics and Euclidean quantum field theory.

From the analytic viewpoint, the dynamical $\Phi^4_3$ equation is a canonical singular stochastic PDE. The linear stochastic convolution has spatial regularity slightly below $-\frac12$, so its cubic power cannot be defined by classical multiplication. Consequently, smooth approximations do not converge without suitable divergent counterterms. The Da Prato--Debussche method for two-dimensional stochastic quantization equations \cite{DPD03} provided an important precursor to the modern pathwise theories. Hairer subsequently constructed the renormalized dynamical $\Phi^4_3$ model using regularity structures \cite{Hairer14}. The paracontrolled calculus introduced in \cite{GIP15} gives an alternative approach and was applied directly to the three-dimensional stochastic quantization equation in \cite{CC18}. Further approaches and systematic renormalization procedures include the renormalization-group method of \cite{Kup16} and the algebraic BPHZ theory of \cite{BHZ19}. We refer also to \cite{CW17,MWX17} for expositions of the underlying stochastic objects and diagrammatic estimates. Spatial discretization and lattice approximation were studied in \cite{HM18,ZZ18}.

The local solution theory has been complemented by substantial progress on global dynamics and equilibrium properties. Mourrat and Weber established a coming-down-from-infinity estimate on the torus, yielding global control and a dynamical construction of a finite-volume invariant measure \cite{MW17}. Global solutions and analogous a priori estimates on Euclidean space were obtained in \cite{GH19}. Variational and PDE methods have also led to new constructions and structural results for the Euclidean $\Phi^4_3$ measure \cite{BG20,GH21}. In particular, these approaches connect the singular stochastic dynamics with non-Gaussianity, reflection positivity and the Dyson--Schwinger equations. Low-temperature phase separation and the associated decay of the spectral gap were established in \cite{CGW22}.

Several approximation results for the dynamical $\Phi^4_3$ model are relevant to the present setting. Hairer and Xu proved convergence for three-dimensional continuous phase-coexistence models near criticality \cite{HX18}. Shen and Xu extended it to suitable non-Gaussian random fields \cite{SX18}, while Furlan and Gubinelli treated nonlinearities with only finite smoothness by paracontrolled methods \cite{FG19}. The whole-space setting was considered in \cite{ZZ23WU}. Erhard and Xu subsequently treated odd polynomial potentials together with general higher-order smoothing mechanisms and showed that the effective cubic coupling may depend on the full microscopic potential and smoothing profile \cite{EX22}. At the microscopic level, the convergence of the two-dimensional Ising--Kac Glauber dynamics to the dynamical $\Phi^4_2$ model was proved in \cite{MW17IK}; its three-dimensional counterpart, converging to the dynamical $\Phi^4_3$ model, was established recently in \cite{GMW25}.

The work most directly related to the present setting is \cite{EX22}. Under the canonical weakly nonlinear scaling $-\varepsilon^{-\ff32}V'(\sqrt{\varepsilon},u)$, a sextic component of the microscopic potential produces a quintic term with coefficient of order $\varepsilon$. Its leading cubic contraction is then of order $\varepsilon C_\varepsilon^{(1)}\asymp1$ and contributes a finite correction to the effective cubic coupling. In the present paper, the quintic perturbation has the stronger size $\varepsilon^\alpha u_\varepsilon^5$ with $\alpha<1$. The corresponding cubic contraction is of order $\varepsilon^{\alpha-1}$ and therefore diverges, so this approximation is not covered by the canonical scaling and cannot be treated by a linear mass counterterm alone. We compensate this new relevant contribution by introducing a cubic counterterm. For $\alpha\in(\frac56,1)$, we prove that the remaining higher-order components vanish in the enhanced-data topology and that the corresponding well-prepared solutions converge, as local solution germs, to the renormalized dynamical $\Phi^4_3(\lambda)$ solution.

\subsection{Main results}

We first introduce the notation used in the statement below. We work on a fixed probability space $(\Omega,\mathcal F,\mathbb P)$ carrying the space-time white noise $\xi$. Let $X_\varepsilon(t):=\int_{-\infty}^tP_{t-s}\xi_\varepsilon(s)\dif s$ be the stationary stochastic convolution driven by $\xi_\varepsilon$. The deterministic contraction constants $C_\varepsilon^{(1)}$, $C_\varepsilon^{(2)}$, $D_{\varepsilon,\alpha}$ and $B_{\varepsilon,\alpha}$ are defined in \eqref{defC1eps}, \eqref{eq:C2-def-sec4}, \eqref{eq:D-def} and \eqref{eq:B5-explicit}, respectively. We also set
$$\sigma_f^2:=\frac{1}{8\pi^2}\int_{\mathbb R^3}\frac{|f(\theta)|^2}{|\theta|^2}\dif\theta.$$

For $S>0$, we denote by $\mathscr L_S^{\mathrm{stat}}$ the stationary canonical lift and by $\mathfrak X_{S,\alpha}$ the product space in which the components of the resulting stationary enhanced datum are measured. We denote by $\mathfrak X_{S,\alpha}^0$ the corresponding weighted finite-time space and by $\mathscr Q_0:\mathfrak X_{S,\alpha}\to\mathfrak X_{S,\alpha}^0$ the finite-time realization map. The precise definitions of these spaces, including their dependence on the auxiliary regularity parameters, are given in Section~\ref{subsec:admissible-enhanced-data}. We write $\mathbb X^\Phi$ for the standard stationary $\Phi^4_3$ enhancement and $\boldsymbol0$ for the collection of zero entries corresponding to the additional higher-order components.

We now fix the parameters used in the first main result. Fix $\alpha\in(\frac56,1)$ and $T>0$, and set $\widehat T:=T\wedge1$. Choose $\delta,\delta',\mathfrak d,\zeta,\kappa_0>0$ such that
$$
0<4\delta'<\delta<\frac15\bigl(3\alpha-\frac52\bigr),\ 1-\alpha+\frac32\delta<\mathfrak d<\frac12\bigl(\alpha-\frac12-2\delta\bigr),$$
$$0<\zeta<\delta',\ 2(\zeta+\kappa_0)<3\alpha-\frac52-5\delta.$$
The spaces appearing below are understood with this choice of auxiliary parameters. Let $(\lambda_\varepsilon)_{\varepsilon\in(0,1]}$ be a deterministic bounded family such that $\lambda_\varepsilon\to\lambda>0$ as $\varepsilon\downarrow0$.

With this notation and choice of parameters, our first result identifies the required counterterms and establishes the convergence of the enhanced data.

\begin{theorem}\label{thm:intro-enhanced-data}
Define
$$\wt C_\varepsilon:=10\varepsilon^\alpha C_\varepsilon^{(1)}-\lambda_\varepsilon,\ C_\varepsilon:=C_\varepsilon^{\mathrm{can}}=3\lambda_\varepsilon C_\varepsilon^{(1)}-15\varepsilon^\alpha\bigl(C_\varepsilon^{(1)}\bigr)^2-9\lambda_\varepsilon^2C_\varepsilon^{(2)}-9B_{\varepsilon,\alpha}-6D_{\varepsilon,\alpha},$$
then
\begin{equation*}
C_\varepsilon^{\mathrm{can}}=-15\sigma_f^4\varepsilon^{\alpha-2}+3\lambda\sigma_f^2\varepsilon^{-1}+o(\varepsilon^{-1}).
\end{equation*}

Set $\mu_\varepsilon:=\varepsilon^\alpha$, $\mathbf c_\varepsilon:=(C_\varepsilon^{(1)},C_\varepsilon^{(2)},D_{\varepsilon,\alpha},B_{\varepsilon,\alpha})$, and let
\begin{equation*}
\mathbb Z_\varepsilon:=\mathscr L_{\widehat T}^{\mathrm{stat}}\bigl(X_\varepsilon;\lambda_\varepsilon,\mu_\varepsilon,\mathbf c_\varepsilon\bigr),\ \mathbb Z:=(\mathbb X^\Phi,\lambda,0,\boldsymbol0).
\end{equation*}
See \eqref{DefZ_eps} and \eqref{DefZ} below for the precise definitions of $\mZ_\eps$ and $\mZ$, respectively. Then, for every $p\in[1,\infty)$,
\begin{equation*}
\mathbb Z_\varepsilon\to\mathbb Z\ \text{in }L^p\bigl(\Omega;\mathfrak X_{\widehat T,\alpha}\bigr).
\end{equation*}
Moreover, their finite-time realizations satisfy
\begin{equation*}
\mathbb Z_\varepsilon^0:=\mathscr Q_0\mathbb Z_\varepsilon\to\mathbb Z^0:=\mathscr Q_0\mathbb Z\ \text{in }L^p\bigl(\Omega;\mathfrak X_{\widehat T,\alpha}^0\bigr).
\end{equation*}
\end{theorem}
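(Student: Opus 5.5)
The proof splits into two independent parts: the deterministic asymptotics of the counterterm $C_\varepsilon^{\mathrm{can}}$, and the stochastic convergence of the enhanced data.

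\emph{Counterterm asymptotics.} I would expand each contraction constant separately.
\begin{itemize}
\item From \eqref{defC1eps} and a Riemann-sum comparison, $C_\varepsilon^{(1)}=\sigma_f^2\varepsilon^{-1}+c_f+o(1)$ for an explicit constant $c_f$. The error term comes from comparing $\sum_k |f(\varepsilon k)|^2(1+4\pi^2|k|^2)^{-1}$ with its integral, and smoothness of $f$ gives $O(1)$ there.
\item Squaring, $15\varepsilon^\alpha (C_\varepsilon^{(1)})^2=15\sigma_f^4\varepsilon^{\alpha-2}+30\sigma_f^2c_f\varepsilon^{\alpha-1}+O(\varepsilon^\alpha)$. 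Since $\alpha-1>-1$, the middle term is $o(\varepsilon^{-1})$.
\item Using $\lambda_\varepsilon\to\lambda$, we get $3\lambda_\varepsilon C_\varepsilon^{(1)}=3\lambda\sigma_f^2\varepsilon^{-1}+o(\varepsilon^{-1})$.
\item $C_\varepsilon^{(2)}$ is the usual logarithmic $\Phi^4_3$ constant, so $9\lambda_\varepsilon^2C_\varepsilon^{(2)}=O(|\log\varepsilon|)$.
\item For $D_{\varepsilon,\alpha}$ and $B_{\varepsilon,\alpha}$ I would read off their explicit sum representations from \eqref{eq:D-def} and \eqref{eq:B5-explicit}. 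Power counting gives $\varepsilon^{\alpha}\cdot\varepsilon^{-1}\cdot|\log\varepsilon|$ and $\varepsilon^{2\alpha}\varepsilon^{-3}$-type sizes (or smaller). Each is $o(\varepsilon^{-1})$ precisely when $\alpha>\frac56$ kills the relevant power; for instance $2\alpha-3+\ldots>-1$. These are bounded by comparing lattice sums with integrals of products of heat kernels in Fourier variables.
\end{itemize}
Collecting the terms gives the claimed expansion.

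\emph{Convergence of the enhanced data.} The components of $\mathbb Z_\varepsilon$ split into three groups.
\begin{itemize}
\item \textbf{Standard $\Phi^4_3$ trees.} These are built from $X_\varepsilon$ with coefficient $\lambda_\varepsilon$ and constants $C_\varepsilon^{(1)},C_\varepsilon^{(2)}$. Their convergence to $\mathbb X^\Phi$ in $L^p$ is the classical result of \cite{CC18,MWX17}. The dependence on $\lambda_\varepsilon\to\lambda$ is polynomial and harmless.
\item \textbf{Scalar coordinates.} $\lambda_\varepsilon\to\lambda$ and $\mu_\varepsilon=\varepsilon^\alpha\to0$ hold trivially.
\item \textbf{Higher-order components.} These involve $\mu_\varepsilon$: $\varepsilon^\alpha X_\varepsilon^{\diamond5}$, $\varepsilon^\alpha X_\varepsilon^{\diamond4}$ paired with resolvents, and the resonant products $\varepsilon^\alpha\mathcal I(X_\varepsilon^{\diamond5})\circ X_\varepsilon^{\diamond 2}$, $\mathcal I(X^{\diamond 3}_\varepsilon)\circ \varepsilon^\alpha X_\varepsilon^{\diamond4}$, and so on. Their divergent contractions are removed by $D_{\varepsilon,\alpha}$ and $B_{\varepsilon,\alpha}$.
\end{itemize}
For each higher-order component I would do the following:
\begin{enumerate}
\item Decompose into Wiener chaoses.
\item Compute the second moment of Littlewood--Paley blocks $\mathbb E|\Delta_q(\cdot)(t,x)|^2$ and the corresponding time increments, using the Fourier-side bounds $|f(\varepsilon k)|\le 1$ and $|f(\varepsilon k)|\lesssim (\varepsilon|k|)^{-N}$.
\item Trade the cutoff for regularity: $\varepsilon^{\alpha}X_\varepsilon^{\diamond5}$ is bounded in $\mathbf C^{-\frac52+\alpha-\delta}$-type norms with a factor $\varepsilon^{\delta'}$. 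Each Wick power costs $\varepsilon^{-1/2}$ of regularity, which is what fixes the windows on $\delta$ and $\mathfrak d$.
\item Apply Gaussian hypercontractivity and the Kolmogorov/Besov embedding to upgrade to $L^p(\Omega;C_T\mathbf C^{\cdot})$, obtaining norms $\lesssim \varepsilon^{\theta}$ with $\theta>0$.
\end{enumerate}
This shows the higher-order components tend to $\boldsymbol0$. The finite-time statement then follows because $\mathscr Q_0$ is Lipschitz (in fact continuous linear/polynomial) from $\mathfrak X_{\widehat T,\alpha}$ to $\mathfrak X^0_{\widehat T,\alpha}$, a property assumed from Section~\ref{subsec:admissible-enhanced-data}.

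\emph{Main obstacle.} I expect the main difficulty to be the renormalized resonant products mixing the quintic and cubic trees, for example $\mathcal I(\varepsilon^\alpha X_\varepsilon^{\diamond5})\circ X^{\diamond2}_\varepsilon$ and the term that produces $B_{\varepsilon,\alpha}$. After the constant subtraction, the lower-chaos remainders must be shown to be $O(\varepsilon^{\theta})$ in regularity about $-\frac12-\delta$. Here the constraint $3\alpha-\frac52-5\delta>0$ is sharp. My first attempt would be the standard paraproduct commutator decomposition into chaos components and bounding each contracted kernel by explicit convolution estimates of the form $\sum_{k}\langle k\rangle^{-a}\langle k-m\rangle^{-b}$. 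Any remaining $\varepsilon$-dependence would be placed on the cutoff, using $\mathbbm 1_{|k|\lesssim\varepsilon^{-1}}\langle k\rangle^{\gamma}\le\varepsilon^{-\gamma}$.
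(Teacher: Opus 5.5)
Your overall architecture matches the paper's: Wiener chaos and second moments of Littlewood--Paley blocks, hypercontractivity plus a Garsia--Rodemich--Rumsey/Kolmogorov step for the new coordinates, \cite{CC18} for the standard ones, and the local Lipschitz bound of $\mathscr Q_0$ for the finite-time statement. However, your power counting for $B_{\varepsilon,\alpha}$ in the counterterm asymptotics is wrong. The fourfold sum in \eqref{eq:B5-explicit} runs over $12$ lattice dimensions against a denominator of degree $10$. It is therefore of order $\varepsilon^{-2}$, and $B_{\varepsilon,\alpha}=O(\varepsilon^{2\alpha-2})$. Likewise, $9$ dimensions against degree $8$ give $D_{\varepsilon,\alpha}=O(\varepsilon^{\alpha-1})$. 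With your size $\varepsilon^{2\alpha-3}$, the term $9B_{\varepsilon,\alpha}$ would not be $o(\varepsilon^{-1})$ for any $\alpha<1$. It would even dominate the leading term $\varepsilon^{\alpha-2}$, so the claimed expansion would fail. Nor does $\alpha>\frac56$ play any role here: all corrections are $o(\varepsilon^{-1})$ as soon as $\alpha>\frac12$.

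In the stochastic part, step 3 lists $\varepsilon^\alpha X_\varepsilon^{\diamond5}$ as a vanishing component that is small in $\mathbf C^{\alpha-\frac52-\delta}$. It is not a coordinate of $\mathbb Z_\varepsilon$, and the claim is false. Its zero Fourier mode has variance $\asymp\varepsilon^{2\alpha-2}\to\infty$ (Remark~\ref{rem:m5-not-standalone}), so it is unbounded in every Besov norm. The underlying heuristic, trading the cutoff $|k|\lesssim\varepsilon^{-1}$ for regularity, misses configurations where two large input frequencies nearly cancel. These produce the term $K^3\varepsilon^{3-m}$ in \eqref{eq:dyadic-conv-renorm}, which is worst at $K\sim1$ and cannot be absorbed by lowering the regularity index. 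For $m\le4$ the prefactor wins because $\alpha>\frac12$. For $m=5$, only $\varepsilon^\alpha\mathcal I(X_\varepsilon^{\diamond5})$ vanishes, thanks to the extra factor $(a_k(a_k+A))^{-1}$ coming from the heat integral.

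You also speak only of constant subtractions, which misses a cancellation. Consider the coordinates $\frac53\lambda\mu\mathcal I(X^{\diamond3})\circ X^{\diamond4}-2dX$ and $\frac53\mu^2\mathcal I(X^{\diamond5})\circ X^{\diamond4}-3bX$. Their first-chaos coefficient diverges like $\varepsilon^{\alpha-1}$, resp.\ $\varepsilon^{2\alpha-2}$, and so does the subtracted field. Bounding each contracted kernel separately therefore cannot show that the difference vanishes. You need the localization estimate $|\Gamma_{\varepsilon,3}(n)-\Gamma_{\varepsilon,3}(0)|\lesssim\varepsilon^{-\kappa}\varrho(n)$ for the Fourier multiplier, and its analogue for the $(5,4)$ term. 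In the $(5,4)$ case you must also control the error from replacing $\widehat X_\varepsilon(t-u,n)$ by $\widehat X_\varepsilon(t,n)$. The remaining error then costs one derivative but carries a positive power of $\varepsilon$.

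Incidentally, none of these chaos bounds needs $3\alpha-\frac52-5\delta>0$; they use only $\alpha-\frac12-\delta-2\delta'>0$. The threshold $\alpha>\frac56$ enters through the deterministic fixed-point argument.
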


\br
The parameter regime used in Theorem~\ref{thm:intro-enhanced-data} is nonempty because $\alpha>\frac56$. Indeed,
\begin{align*}
\frac12\bigl(\alpha-\frac12-2\delta\bigr)-\bigl(1-\alpha+\frac32\delta\bigr)=\frac12\Bigl(3\alpha-\frac52-5\delta\Bigr)>0.
\end{align*}
Thus one may first choose $\delta$, then $\delta'<\ff\delta4$, next $\mathfrak d$ in the resulting nonempty interval, and finally $\zeta,\kappa_0>0$ sufficiently small. These are auxiliary parameters encoding the regularity losses and time weights in the enhanced-data spaces; they impose no additional assumption on the approximating model.
\er

For the solution-level statement, we retain the hypotheses and notation of Theorem~\ref{thm:intro-enhanced-data}. Set $\sigma:=\alpha-\frac12-2\delta$ and choose $\beta$ and $\gamma$ such that
$$2\mathfrak d<\beta<\sigma,\ 2-\alpha+2\delta<\gamma<\min\{2\alpha-\frac12-3\delta,\,\beta+\alpha-\delta\}.$$
The parameter conditions in Theorem~\ref{thm:intro-enhanced-data} guarantee that these intervals are nonempty. Here $\beta$ is the regularity of the initial remainder, while $\sigma$ and $\gamma$ are the regularity exponents used in the controlled fixed-point space.

Let $v_{\varepsilon,0}$ and $v_0$ be measurable $\bC^\beta$-valued random variables. We say that the initial conditions are well prepared if
$$u_{\varepsilon,0}=X_\varepsilon(0)+v_{\varepsilon,0},\ v_{\varepsilon,0}\longrightarrow v_0\ \text{in probability in }\bC^\beta.$$
We then set $u_0:=X(0)+v_0$.

Fix $\lambda_*\in(0,\lambda)$. Since $\lambda_\varepsilon\to\lambda$, one has $\lambda_\varepsilon\geq\lambda_*$ for all sufficiently small $\varepsilon$. For $R\geq1$, define
$$\Omega_{\varepsilon,R}:=
\big\{\|\mathbb Z_\varepsilon^0\|_{\mathfrak X_{\widehat T,\alpha}^0}+\|\mathbb Z^0\|_{\mathfrak X_{\widehat T,\alpha}^0}+\|v_{\varepsilon,0}\|_\beta+\|v_0\|_\beta\leq R\big\},$$
where $\|\cdot\|_\beta$ denotes the norm of $\bC^\beta$.

By a local solution germ at time $0$, we mean an equivalence class of pairs $(\tau,u)$, where $\tau>0$ and $u$ is a local solution on $[0,\tau]$. Two pairs $(\tau_1,u_1)$ and $(\tau_2,u_2)$ are equivalent if there exists $\tau\in(0,\tau_1\wedge\tau_2]$ such that $u_1=u_2$ on $[0,\tau]$.

The following local convergence theorem is our main result.

\begin{theorem}\label{thm:intro-local-convergence}
Under the above setting, for every $R\geq1$, there exist deterministic constants $K_{R,\lambda_*}<\infty$ and $0<T_R\leq\widehat T$, independent of $\varepsilon$, such that the following assertions hold on $\Omega_{\varepsilon,R}$: the approximating equation
\begin{align*}
\partial_tu_\varepsilon=\Delta u_\varepsilon-\varepsilon^\alpha u_\varepsilon^5+\xi_\varepsilon+C_\varepsilon^{\mathrm{can}}u_\varepsilon+\wt C_\varepsilon u_\varepsilon^3,\ u_\varepsilon(0)=u_{\varepsilon,0},
\end{align*}
has a pathwise local mild solution $u_\varepsilon=X_\varepsilon+v_\varepsilon$, where
$(v_\varepsilon,v_\varepsilon)\in\mathcal D_{T_R}^{\beta,\gamma}(\mathbb Z_\varepsilon;v_{\varepsilon,0})$ and $\|v_\varepsilon\|_{\mathcal L_{T_R}^{\beta,\sigma}}\leq K_{R,\lambda_*},\ \|v_\varepsilon^\sharp\|_{\mathcal L_{T_R}^{\beta,\gamma}}\leq1$ (See Definition \ref{def:controlled-distributions} below for the precise definition). The limiting enhanced datum gives a local solution $u=X+v$ of
\begin{align}\label{eq:intro-limit-equation}
\partial_tu=\Delta u-\lambda u^3+\xi,\ u(0)=u_0,
\end{align}
where \eqref{eq:intro-limit-equation} is understood in the renormalized sense. The fixed points are unique in the above bounded class and therefore determine unique local solution germs at time $0$.

For every $\eta>0$,
\begin{equation*}
\mathbb P\big(\Omega_{\varepsilon,R}\cap\big\{\|u_\varepsilon-u\|_{C([0,T_R];\bC^{-\frac12-\delta})}>\eta\big\}\big)\to0.
\end{equation*}
The localizing events are asymptotically exhaustive:
\begin{equation*}
\lim_{R\to\infty}\limsup_{\varepsilon\to0}\mathbb P\bigl(\Omega_{\varepsilon,R}^{\mathrm c}\bigr)=0.
\end{equation*}
Consequently, the local solution germs determined by $u_\varepsilon$ converge in probability to the local solution germ of the renormalized dynamical $\Phi^4_3(\lambda)$ equation. The lifetime $T_R$ may depend on the localization radius; no positive deterministic lifetime independent of $R$ is asserted.
\end{theorem}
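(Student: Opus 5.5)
The plan is to derive Theorem~\ref{thm:intro-local-convergence} from Theorem~\ref{thm:intro-enhanced-data} in three steps: a deterministic local well-posedness theorem for the remainder equation driven by an abstract enhanced datum, a deterministic Lipschitz stability estimate of the solution map, and a final probabilistic localization.

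\emph{Step 1: rewrite the equation for the remainder.} Substitute $u_\varepsilon=X_\varepsilon+v_\varepsilon$ into the approximating equation. Expand $-\varepsilon^\alpha(X_\varepsilon+v_\varepsilon)^5+\widetilde C_\varepsilon(X_\varepsilon+v_\varepsilon)^3+C^{\mathrm{can}}_\varepsilon(X_\varepsilon+v_\varepsilon)$ in Wick powers $X_\varepsilon^{\diamond m}$. The choice $\widetilde C_\varepsilon=10\varepsilon^\alpha C^{(1)}_\varepsilon-\lambda_\varepsilon$ makes the cubic Wick terms reduce to $-\lambda_\varepsilon X^{\diamond3}_\varepsilon$. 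What remains are terms of the form $\mu_\varepsilon X_\varepsilon^{\diamond m}v_\varepsilon^{5-m}$ and the usual $\Phi^4_3$ terms. Next perform the Da Prato--Debussche/paracontrolled decomposition $v_\varepsilon=-\lambda_\varepsilon I(X^{\diamond3}_\varepsilon)+\dots$, with $v_\varepsilon^\sharp$ defined as in Definition~\ref{def:controlled-distributions}. Each ill-defined resonant product, such as $X^{\diamond2}\circ I(X^{\diamond3})$ and $\mu X^{\diamond4}\circ I(\cdot)$, is then replaced by the corresponding coordinate of the abstract datum $\mathbb Z^0\in\mathfrak X^0_{\widehat T,\alpha}$. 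The constants $C^{(2)},B,D$ are exactly those absorbed into the stationary lift $\mathscr L^{\mathrm{stat}}$. This gives a map $\Gamma_{\mathbb Z}$ acting on $\mathcal D^{\beta,\gamma}_{T}(\mathbb Z;v_0)$, well defined for every datum in the space. For smooth $\mathbb Z_\varepsilon$, its fixed points coincide with classical mild solutions. For $\mathbb Z$ the extra coordinates vanish, so the map reduces to the standard renormalized $\Phi^4_3(\lambda)$ fixed-point map.

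\emph{Step 2: deterministic estimates.} Use the paraproduct, resonant and commutator estimates together with Schauder estimates carrying time weights, which are needed because the initial datum only lies in $\mathbf C^\beta$. The aim is to show that $\Gamma_{\mathbb Z}$ maps the ball $\{\|v\|_{\mathcal L^{\beta,\sigma}_T}\le K,\ \|v^\sharp\|_{\mathcal L^{\beta,\gamma}_T}\le1\}$ into itself and is a contraction once $T=T_R$ is small. The admissible parameters depend only on $R$ and $\lambda_*$, and hold whenever $\|\mathbb Z^0\|+\|v_0\|_\beta\le R$. The constraints $\gamma<2\alpha-\tfrac12-3\delta$ and $\gamma<\beta+\alpha-\delta$ are precisely what make the quintic terms $\mu X^{\diamond m}v^{5-m}$ well defined. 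These terms carry small positive powers of $T$, using that $\mu$ multiplies data of regularity about $-\tfrac m2$ weighted by the $\alpha$-dependent norms.

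The same estimates give Lipschitz continuity:
\[
\|v(\mathbb Z_1;v_{0,1})-v(\mathbb Z_2;v_{0,2})\|\le C_R\bigl(\|\mathbb Z^0_1-\mathbb Z^0_2\|+\|v_{0,1}-v_{0,2}\|_\beta\bigr).
\]
Uniqueness in the bounded class follows from the contraction. Uniqueness of the solution germ then follows because two solutions that agree near $0$ restart from a common datum.

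\emph{Step 3: probabilistic conclusion.} On $\Omega_{\varepsilon,R}$, the stability bound gives
\[
\|u_\varepsilon-u\|_{C([0,T_R];\mathbf C^{-1/2-\delta})}\le\|X_\varepsilon-X\|+C_R\bigl(\|\mathbb Z^0_\varepsilon-\mathbb Z^0\|+\|v_{\varepsilon,0}-v_0\|_\beta\bigr).
\]
Theorem~\ref{thm:intro-enhanced-data} and the assumption on the initial data make the right-hand side tend to $0$ in probability. For exhaustiveness, use Markov's inequality with the uniform $L^p$ bounds that follow from the $L^p$ convergence, together with tightness of $v_{\varepsilon,0}$, which holds since it converges in probability.

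The main obstacle is Step 2, specifically the quintic contributions with $m=3,4$, such as $\mu X^{\diamond4}v$ and $\mu X^{\diamond3}v^2$. These have regularity near $-2$ and $-\tfrac32$ and must be paired with the paracontrolled structure of $v$. I would handle them first by paralinearizing $v\approx -\lambda I(X^{\diamond3})+v^\sharp$. The resonant terms would then be delegated to the higher-order coordinates of $\mathbb Z$, which carry the $\mathfrak d$-weights. I would check that the commutator remainders close with exactly the stated inequalities between $\beta,\gamma,\sigma,\delta$.
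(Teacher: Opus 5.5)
Your overall architecture is the paper's: an abstract fixed point for the remainder on bounded data sets, a Lipschitz stability estimate for the solution map, then localization on $\Omega_{\varepsilon,R}$ with Markov's inequality and tightness. Your Step~3 is essentially the paper's argument.

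\textbf{The gap.} It lies in Step~2, exactly where you locate the main obstacle: the paralinearization $v\approx-\lambda I(X^{\diamond3})+v^\sharp$ cannot close.
\begin{itemize}
\item To give sense to $Y_4v$, with $Y_4=\mu X^{\diamond4}\in\mathbf C^{\alpha-2-\delta}$, you need $v^\sharp\circ Y_4$ to be classical, i.e.\ $\gamma>2-\alpha+\delta>1$.
\item But $v+\lambda\mathbf X_3^0$ still contains $-\mathbf Y_5^0\in\mathbf C^{\alpha-\frac12-}$, $-5I_0(v\prec Y_4)\in\mathbf C^{\alpha-}$ and $-3\lambda I_0(v\prec X^{\diamond2})\in\mathbf C^{1-}$, all strictly below $2-\alpha$.
\item This is the Da Prato--Debussche ansatz, which already fails in the $X^{\diamond2}$ direction for ordinary $\Phi^4_3$.
\end{itemize}

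\textbf{The missing idea.} It is the ansatz \eqref{eq:controlled-remainder-definition}. Keep $\mathbf Y_5^0$ explicitly, and introduce a second paracontrolled direction $I_0(v'\prec Y_4)$, generated by the quintic term, next to the usual $I_0(v'\prec X^{\diamond2})$.
\begin{itemize}
\item The resonances of these paraproducts with $A_2=X^{\diamond2}$ and $A_4=Y_4$ are not coordinates of $\mathbb Z$. They yield the four diagonal terms $I_0(I_0(v'\prec A_i)\circ A_j)$, $i,j\in\{2,4\}$.
\item These must be rewritten after the outer heat integration as $I_0(v'(\mathbf A_i\circ A_j))+\mathscr J_{ij}(v')$, with the renormalized resonance supplied by $\mathbf M_{ij}$ (Lemma~\ref{lem:integrated-diagonal-estimate}).
\item The time-commutator part $\mathscr J_{ij}^{\mathrm{time}}$ is integrable only if $v'$ has time-H\"older regularity $\mathfrak d$ with $2\mathfrak d+2\alpha-2-2\delta>0$.
\item Combined with $2\mathfrak d<\beta<\sigma=\alpha-\frac12-2\delta$, this produces the window for $\mathfrak d$ and forces $\alpha>\frac56$ a second time.
\end{itemize}
Saying that \emph{commutator remainders close with the stated inequalities} does not capture this mechanism.

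Your reading of the $\gamma$-constraints is also reversed. The lower bound $\gamma>2-\alpha+2\delta$ is what makes $v^\sharp\circ Y_4$ well defined. The bound $\gamma<2\alpha-\frac12-3\delta$ reflects the best regularity the expansion produces, e.g.\ $I_0(\mathbf R_{54}^0)$ with $\mathbf R_{54}^0\in\mathbf C^{2\alpha-\frac52-\delta}$. The bound $\gamma<\beta+\alpha-\delta$ comes from $I_0(Y_4\prec v)$ when $v_0$ is only in $\mathbf C^\beta$.

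\textbf{Two smaller points.}

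First, the fixed-point space. Your ball is described through $\|v\|$ and $\|v^\sharp\|$, which suggests you impose $v'=v$. Then $w=\Gamma_{\mathbb Z,v_0}(v,v)$ is controlled by $v$ rather than by itself. Its own remainder differs from $\mathscr G_{\mathbb Z}^\sharp(v,v)$ by $5I_0((w-v)\prec Y_4)+3\lambda I_0((w-v)\prec X^{\diamond2})$, which in general lies only in $\mathbf C^{\alpha-}$. This is why the paper iterates on pairs, $(v,v')\mapsto(\Gamma_{\mathbb Z,v_0}(v,v'),v)$. It contracts in the scaled metric $T^\chi\|v'-\bar v'\|_{\mathcal L_T^{\beta,\sigma}}+\|v^\sharp-\bar v^\sharp\|_{\mathcal L_T^{\beta,\gamma}}$, because reconstruction controls $\|v-\bar v\|_{\mathcal L_T^{\beta,\sigma}}$ by $\|v^\sharp-\bar v^\sharp\|_{\mathcal L_T^{\beta,\gamma}}$ only with an $O(1)$ constant.

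Second, germ uniqueness does not follow from \emph{restarting from a common datum}. No shifted-interval uniqueness is available, and agreement near $0$ is precisely what must be shown. Instead, every controlled fixed point has finite controlled size. So two of them lie in a common bounded class after restriction to a short initial interval, and the contraction on that class, run with enlarged constants, forces them to coincide there.
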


\br 
Our results complement the convergence results in \cite{EX22,FG19,HX18,SX18,ZZ23WU}. The main difference is that we allow the quintic perturbation to have the stronger size $\varepsilon^\alpha$, with $\alpha\in(\frac56,1)$, rather than the canonical size $\varepsilon$. This produces a divergent correction to the cubic coupling and therefore requires an additional cubic counterterm. Although the approximation considered here is less general than that of \cite{EX22}, our result permits a stronger quintic perturbation and still yields local convergence to the standard dynamical $\Phi^4_3$ model.
\er

\subsection{Structure of the paper}

The remainder of the paper is organized as follows. Section \ref{sec02} collects the analytic preliminaries and introduces the enhanced-data framework used throughout the paper. After recalling the H\"older--Besov spaces, paraproduct calculus and the relevant commutator estimate, we separate the stationary Ornstein--Uhlenbeck component from the solution and perform the Wick--Taylor expansion of the mollified equation. This identifies the singular products underlying the paracontrolled ansatz and leads to the canonical choice of the mass counterterm. We then define the space of admissible enhanced data, construct its finite-time realization and formulate the convergence of the mollified enhanced data.

Section \ref{sec03} develops the deterministic solution theory for a general admissible enhanced datum. We introduce the space of controlled distributions, construct the renormalized mild map and establish the corresponding analytic bounds and stability estimates. A scaled contraction argument then yields local well-posedness, uniqueness of the initial solution germ and quantitative stability on bounded sets. Applying these results to the convergent enhanced data gives the localized convergence of the approximating solutions to the renormalized dynamical $\Phi^4_3(\lambda)$ solution.

Section \ref{sec:rough-distribution} provides the stochastic estimates required for the construction and convergence of the enhanced data. We first establish dyadic and time-regularity estimates for the Wick powers and their stationary heat convolutions. We then analyze the higher-order resonant products through Wiener-chaos expansions, showing that the nonlocal higher-order components vanish in the required topologies and isolating the local contributions represented by the constants $D_{\varepsilon,\alpha}$ and $B_{\varepsilon,\alpha}$. These estimates complete the proof of the enhanced-data convergence used in the preceding sections.

\section{Preliminaries}\label{sec02}

This section prepares the analytic and stochastic framework used in the solution theory. We first recall the H\"older--Besov spaces, the Bony decomposition, and the commutator estimate needed below. We then rewrite the mollified equation after separating its stationary Ornstein--Uhlenbeck component and identify the singular terms which determine the paracontrolled ansatz. This expansion also leads to the canonical choice of the mass counterterm. Finally, we introduce the rough distribution space and a finite-time realization.

\subsection{H\"older-Besov space and paraproduct calculus}

Let $\chi,\theta\in C_c^\infty$ be nonnegative radial functions such that:
\begin{enumerate}
    \item $\supp \chi$ is contained in a ball, and $\supp \theta$ is contained in an annulus;
    \item for every $\xi\in\mR^d$, $\chi(\xi)+\sum_{j\geq 0}\theta(2^{-j}\xi)=1;$
    \item $\supp\chi\cap\supp\bigl(\theta(2^{-j}\cdot)\bigr)=\varnothing\ \text{for }j\geq 1$ and $\supp\bigl(\theta(2^{-j}\cdot)\bigr)\cap\supp\bigl(\theta(2^{-i}\cdot)\bigr)=\varnothing\ \text{whenever }|i-j|>1$.
\end{enumerate}
For $u\in\sS'(\mR^d)$, define the Littlewood--Paley blocks by
$$\Delta_{-1}u =\bigl(\chi\,\hat u\bigr)^{\scriptscriptstyle\vee},\ \Delta_j u = \bigl(\theta(2^{-j}\cdot)\,\hat u\bigr)^{\scriptscriptstyle\vee},\ j\geq 0,$$
where $^{\scriptscriptstyle\vee}$ denotes the inverse Fourier transform.

\begin{definition}[Besov spaces]
Let $\alpha\in\mR$ and $1\leq p,q\leq\infty$. The Besov space $\bB^{\alpha}_{p,q}$ is defined by
$$\bB^{\alpha}_{p,q}:=\Big\{u\in\sS':\|u\|^q_{\bB^{\alpha}_{p,q}}:=\sum_{j\geq -1}2^{j q\alpha}\|\Delta_j u\|_{L^p}^q<\infty\Big\}.$$
As usual, when $q=\infty$, the $\ell^q$-norm is replaced by the supremum norm. In particular, we denote $ \bC^{\alpha}:=\bB^{\alpha}_{\infty,\infty}$ and write $\|u\|_{\a}:=\|u\|_{\bC^\a}$.
\end{definition}

Then we recall the Bony paraproduct decomposition, which allows us to control the product of functions in H\"{o}lder spaces under weaker regularity assumptions. For any $k\geq0$, define the cut-off low frequency operator $S_k$ as
\begin{align*}
    S_k f:=\sum_{j=-1}^{k-1}\Delta_jf\to f\ \text{in }\sS'\ \text{as }k\to\infty.
\end{align*} 
where we take $S_k=0$ for negative $k$ by convention. For $f,g\in\sS'$, we define the following paraproducts
\begin{align*}
    f\prec g:=\sum_{k\geq-1}S_{k-1} f\Delta_k g,\ f\circ g:=\sum_{k\geq-1}\Delta_k f\wt\Delta_k g,
\end{align*}
where $\wt\Delta_j:=\sum_{|i-j|\le1}\Delta_i$. The Bony decomposition of $fg$ is formally written as (cf. \cite{BCD11})
\begin{align*}
    fg=f\prec g+f\circ g+g\prec f.\no
\end{align*} 
The essential property of the Bony decomposition is that for some $N_0\in\mN$,
\begin{align*}
    \Delta_k(S_{j-1}f\Delta_j g)=0\ \text{for}\ |k-j|>N_0.\no
\end{align*}

We recall the following standard continuity estimates for the paraproduct and resonant product; see, for instance, \cite[Themrem 2.82 and 2.85]{BCD11}.

\bl\label{bony}
Let $\a, \b\in\mR$.
\begin{enumerate}
\item[\rm\text{(i)}] If $\a<0$, then there exists a constant $C=C(d,\a,\b)>0$ such that
\begin{align}
\| f\prec g\|_{\a+\b}\lesssim_C\|f\|_{\a}\|g\|_{\b}.\no
\end{align}
For $\a=0$, we have 
\begin{align}
\| f\prec g\|_{\b}\lesssim_C\|f\|_{L^\infty}\|g\|_{\b}.\no
\end{align}

\item[\rm\text{(ii)}] If $\a+\b>0$, then there exists a constant $C=C(d,\a,\b)>0$ such that
\begin{align}\no
\|f\circ g\|_{\a+\b}\lesssim_C\|f\|_{\a}\|g\|_{\b}.
\end{align}

\item[\rm\text{(iiI)}] If $\a+\b>0$, then there exists a constant $C=C(d,\a,\b)>0$ such that
\begin{align}
\|fg\|_{\a\wedge\b}\lesssim_C \|f\|_{\a}\|g\|_{\b}.\no
\end{align}
\end{enumerate}
\el

We shall also use the following commutator estimate from \cite[Lemma 2.4]{GIP15}.

\bl\label{commutatores}
Let $\a$, $\b$, $\g\in\mR$ such that $\a<1$, $\a+\b+\g>0$ and $\b+\g<0$. Then
$$\mathfrak R(f,g,h):=(f\prec g)\circ h-f(g\circ h)$$
is well-defined for $f\in\bC^\a$, $g\in\bC^\b$ and $h\in\bC^\g$. Moreover, 
$$\|\mathfrak R(f,g,h)\|_{\a+\b+\g}\lesssim\|f\|_\a\|g\|_\b\|h\|_\g.$$
\el

\subsection{Paracontrolled distribution analysis}

We consider
\begin{align*}
\partial_t u_\varepsilon=\Delta u_\varepsilon-\varepsilon^\alpha u_\varepsilon^5+\xi_\varepsilon+C_\varepsilon u_\varepsilon+\wt C_\varepsilon u_\varepsilon^3,\ (t,x)\in \mathbb R_+\times \mathbb T^3,\no
\end{align*}
where $\a\in(\ff56,1)$. We use the zero-mean stationary Ornstein–Uhlenbeck process $X$ in the construction of the stochastic objects. Then
\begin{align*}
\mE\big[\widehat X_t(k)\widehat X_s(\ell)\big]=\mathbf 1_{\{k+\ell=0\}}\frac{e^{-(1+4\pi^2|k|^2)|t-s|}}{2(1+4\pi^2|k|^2)}.
\end{align*}
Define its spatial mollification by
\begin{align}\label{Xmollify}
X_\eps(t)=\sum_{k\in \mZ^3}f(\eps k)\widehat X(t,k)e_k.
\end{align}
For $m\in\mathbb N_+$, we define the Wick-product notation
\begin{align*}
X_\varepsilon^{\diamond m}:=H_m(X_\varepsilon; \mathbb E[X_\varepsilon^2]):=H_m(X_\varepsilon;C_\varepsilon^{(1)}),\no
\end{align*}
where $H_m$ is the $m$-th Hermite polynomial. In particular, we have 
\begin{align*}
X_\varepsilon^3=X_\varepsilon^{\diamond3}+3C_\varepsilon^{(1)} X_\varepsilon,\ X_\varepsilon^5=X_\varepsilon^{\diamond5}+10C_\varepsilon^{(1)} X_\varepsilon^{\diamond3}+15(C_\varepsilon^{(1)})^2 X_\varepsilon.\no
\end{align*}

We denote by
$$I_0(F)(t):=\int_0^t P_{t-s}F(s)\,\dif s$$
the finite-time integration operator. By the Duhamel formula, $I_0(F)$ is the contribution of the forcing term $F$ to the mild solution on $[0,t]$ with zero initial condition. We also define the stationary integration operator
$$\cI(F)(t):=\int_{-\infty}^t P_{t-s}F(s)\,ds$$
used in Section~\ref{subsec:admissible-enhanced-data} to construct the stationary stochastic coordinates entering the enhanced data $\mathbb Z_\varepsilon$ and $\mathbb Z$; see  \eqref{DefZ_eps} and \eqref{DefZ} below. For stationary stochastic objects, $I_0(F)(t)=\cI(F)(t)-P_t\cI(F)(0)$. The stationary enhanced datum is converted into its finite-time realization through the map $\mathscr Q_0$ defined in Section~\ref{subsec:admissible-enhanced-data}.

Then the equation for $u_\varepsilon:=X_\varepsilon+v_\varepsilon$ is written in the
mild form
\begin{align*}
v_\varepsilon=P_t v_{\eps,0}-I_0(\varepsilon^\alpha u_\varepsilon^5-(C_\varepsilon +1)u_\varepsilon-\wt C_\varepsilon u_\varepsilon^3),\ v_{\eps,0}=u_{\varepsilon,0}-X_\varepsilon(0).\no
\end{align*}
Denote $\lambda_\eps:=10\eps^\a C_\eps^{(1)}-\wt C_\eps\to\lambda$. Then the local Wick counterterm generated by $\eps^\a u^5-\wt C_\eps u^3$ is
$$
A_{\eps,\a}^{\rm loc}:=15\eps^\a\big(C_\eps^{(1)}\big)^2-3\wt C_\eps C_\eps^{(1)}=3\lambda_\eps C_\eps^{(1)}-15\eps^\a\big(C_\eps^{(1)}\big)^2 .
$$
A direct Wick--Taylor expansion gives the exact identity
$$
\begin{aligned}
\eps^\a (X_\eps+v)^5-\wt C_\eps (X_\eps+v)^3-A_{\eps,\a}^{\rm loc}(X_\eps+v)=&\lambda_\eps X_\eps^{\diamond3}+\eps^\a X_\eps^{\diamond5}+3\lambda_\eps vX_\eps^{\diamond2}\\
&+5\eps^\a vX_\eps^{\diamond4}+3\lambda_\eps v^2X_\eps+10\eps^\a v^2X_\eps^{\diamond3}\\
&+\lambda_\eps v^3+10\eps^\a v^3X_\eps^{\diamond2}+5\eps^\a v^4X_\eps+\eps^\a v^5 .
\end{aligned}
$$
Therefore $v_\eps$ solves
$$
\begin{aligned}
v_\eps={}&P_t v_{\eps,0}-I_0\Big(\lambda_\eps X_\eps^{\diamond3}+\eps^\a X_\eps^{\diamond5}+3\lambda_\eps v_\eps X_\eps^{\diamond2}+5\eps^\a v_\eps X_\eps^{\diamond4}+3\lambda_\eps v_\eps^2X_\eps\\
&+10\eps^\a v_\eps^2X_\eps^{\diamond3}+\lambda_\eps v_\eps^3+10\eps^\a v_\eps^3X_\eps^{\diamond2}+5\eps^\a v_\eps^4X_\eps+\eps^\a v_\eps^5-\big(C_\eps+1-A_{\eps,\a}^{\rm loc}\big)(X_\eps+v_\eps)\Big).
\end{aligned}
$$

The leading singular terms suggest the following one-level paracontrolled ansatz:
$$
\begin{aligned}
v_\eps=P_t v_{\eps,0}-\underbrace{\eps^\a I_0\big(X_\eps^{\diamond5}\big)}_{\bC^{\a-\ff12-}}-\underbrace{\lambda_\eps I_0\big(X_\eps^{\diamond3}\big)}_{\bC^{\ff12-}}-\underbrace{5\eps^\a I_0\big(v_\eps'\prec X_\eps^{\diamond4}\big)}_{\bC^{\a-}}-\underbrace{3\lambda_\eps I_0\big(v_\eps'\prec X_\eps^{\diamond2}\big)}_{\bC^{1-}}+\underbrace{v_\eps^\sharp}_{\bC^{2\a-\ff12-}},\ v_\eps'=v_\eps.
\end{aligned}
$$
This is the analogue of the Catellier–Chouk ansatz (see \cite{CC18}). The term $\eps^\a I_0(X_\eps^{\diamond5})$ is kept explicitly in the ansatz, since it does not vanish in the $\bC^{\ff12-}$ topology, hence cannot be absorbed into the smoother remainder $v_\eps^\sharp$.

\subsection{Mass counterterm}

We keep the usual scalar counterterm from the $\Phi^4_3$ theory.  With
$a_k:=1+4\pi^2|k|^2$, set
\begin{align}\label{eq:C2-def-sec4}
C_\eps^{(2)}:=\frac12\sum_{k_1,k_2\in\mZ^3}\frac{|f(\eps k_1)|^2|f(\eps k_2)|^2}{a_{k_1}a_{k_2}\big(a_{k_1+k_2}+a_{k_1}+a_{k_2}\big)}.
\end{align}
An explicit calculation yields
$$C_\eps^{(2)}=\frac{1}{96\pi^2}\log\frac1\eps+O(1).$$
The two extra local constants produced by the quintic terms are exactly the constants
$D_{\eps,\a}$ and $B_{\eps,\a}$ defined in \eqref{eq:D-def} and \eqref{eq:B5-explicit} below. Notice that, in the present range $\a<1$, these constants does not converge as $\eps\downarrow0$; rather, $D_{\eps,\a}=O(\eps^{\a-1}),\ B_{\eps,\a}=O(\eps^{2\a-2})$.

\begin{proposition}\label{prop:finite-counterterm}
Assume $\l_\eps\to\l$ with $\sup_{\eps\in(0,1]}|\lambda_\eps|<\infty$. The scalar mass counterterm compatible with the renormalizations of Section~\ref{sec:rough-distribution} is
\begin{equation}\label{eq:canonical-mass-counterterm}
C_\eps^{\rm can}:=A_{\eps,\a}^{\rm loc}-9\lambda_\eps^2C_\eps^{(2)}-9B_{\eps,\a}-6D_{\eps,\a}.
\end{equation}
Moreover, if we set
$$
\sigma_f^2:=\frac{1}{8\pi^2}\int_{\mR^3}\frac{|f(\theta)|^2}{|\theta|^2}\,\dif\theta,
$$
then
\begin{equation}\label{eq:Ceps-leading-subcritical}
C_\eps^{\rm can}=-15\sigma_f^4\eps^{\a-2}+3\lambda\sigma_f^2\eps^{-1}+o(\eps^{-1}).
\end{equation}
The lower order divergent terms are still contained in the exact expression \eqref{eq:canonical-mass-counterterm}, namely in $C_\eps^{(2)}$, $B_{\eps,\a}$, $D_{\eps,\a}$ and the subleading part of $A_{\eps,\a}^{\rm loc}$.
\end{proposition}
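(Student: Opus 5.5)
The first assertion of the proposition, that $C_\eps^{\rm can}$ is the right mass counterterm, is definitional. It amounts to collecting every local constant that multiplies $(X_\eps+v_\eps)$ in the mild equation once the singular resonant products are renormalized in Section~\ref{sec:rough-distribution}. We read off the structure from the Wick--Taylor identity. $A^{\rm loc}_{\eps,\a}$ absorbs the Wick contractions of $\eps^\a u^5-\wt C_\eps u^3$. The constant $-9\lambda_\eps^2C_\eps^{(2)}$ is the usual Catellier--Chouk term, coming from $3\lambda_\eps X_\eps^{\diamond2}\circ\lambda_\eps I(X_\eps^{\diamond3})$ with multiplicity $3\cdot3$. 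The constants $B_{\eps,\a}$ and $D_{\eps,\a}$ are the analogous local parts of the cross and quintic resonances, e.g. $5\eps^\a X_\eps^{\diamond4}\circ \eps^\a I(X_\eps^{\diamond5})$ and mixed terms $X^{\diamond2}\circ I(X^{\diamond5})$, $X^{\diamond4}\circ I(X^{\diamond3})$, with combinatorial prefactors $9$ and $6$. So the real content to prove is the asymptotic \eqref{eq:Ceps-leading-subcritical}. We take the size bounds $D_{\eps,\a}=O(\eps^{\a-1})$ and $B_{\eps,\a}=O(\eps^{2\a-2})$ stated just before the proposition as given from their explicit formulas.

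The first step is to expand $C_\eps^{(1)}$ precisely. We view \eqref{defC1eps} as a Riemann sum, $C_\eps^{(1)}=\frac12\sum_k |f(\eps k)|^2/(1+4\pi^2|k|^2)$. Setting $\theta=\eps k$ gives $\frac{1}{2}\eps^{-3}\sum \eps^3|f(\theta)|^2\eps^2/(\eps^2+4\pi^2|\theta|^2)\approx \eps^{-1}\frac{1}{8\pi^2}\int|f|^2|\theta|^{-2}d\theta=\sigma_f^2\eps^{-1}$. The error is $O(1)$: it comes from the mass $\eps^2$ (which contributes $\int|f|^2\eps^2/(|\theta|^2(\eps^2+|\theta|^2))$, of order $\eps$ after scaling) and from the lattice-to-integral error. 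We control the latter by comparing each summand to the average over the unit cube around $k$. Because of the $|\theta|^{-2}$ singularity, the $k=0$ cube and nearby cubes are handled separately, each contributing $O(1)$ before the $\eps^{-1}$ scaling is undone. This yields $C_\eps^{(1)}=\sigma_f^2\eps^{-1}+O(1)$, as already recorded in the introduction.

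Next we insert this into each term. Squaring gives $(C_\eps^{(1)})^2=\sigma_f^4\eps^{-2}+O(\eps^{-1})$, hence $15\eps^\a(C_\eps^{(1)})^2=15\sigma_f^4\eps^{\a-2}+O(\eps^{\a-1})$, and $O(\eps^{\a-1})=o(\eps^{-1})$ since $\a>0$. For the linear term, $3\lambda_\eps C_\eps^{(1)}=3\lambda\sigma_f^2\eps^{-1}+3(\lambda_\eps-\lambda)\sigma_f^2\eps^{-1}+O(1)=3\lambda\sigma_f^2\eps^{-1}+o(\eps^{-1})$, using only $\lambda_\eps\to\lambda$ and boundedness. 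For the remaining terms, $9\lambda_\eps^2C_\eps^{(2)}=O(\log\eps^{-1})$ and $6D_{\eps,\a}=O(\eps^{\a-1})$ are both $o(\eps^{-1})$. Finally, $9B_{\eps,\a}=O(\eps^{2\a-2})$, and $2\a-2>-1$ exactly because $\a>\frac12$, which holds since $\a>\frac56$; so it is also $o(\eps^{-1})$. Summing these gives \eqref{eq:Ceps-leading-subcritical}.

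The main obstacle is the bound on $B_{\eps,\a}$, since it is the only term whose exponent lies close to $-1$. If the paper's crude $O(\eps^{2\a-2})$ comes from a double sum of the type $\eps^{2\a}\sum_{k_1,\dots,k_4}\prod|f(\eps k_i)|^2/(a_{k_i}\cdots)$, we will verify it by the same rescaling $\theta_i=\eps k_i$. Power counting gives eight dimensions per two-loop level, and after extracting the scaling factor the remaining integral must be checked to be convergent at $\theta\to0$. Should only a logarithmic loss appear there, it is still harmless, because $2\a-2>-1$ holds with room to spare.
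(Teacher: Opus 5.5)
Your proposal is correct and follows the paper's own proof. You substitute $C_\eps^{(1)}=\sigma_f^2\eps^{-1}+O(1)$ into $A_{\eps,\a}^{\rm loc}=3\lambda_\eps C_\eps^{(1)}-15\eps^\a(C_\eps^{(1)})^2$, and you discard $9\lambda_\eps^2C_\eps^{(2)}=O(\log\eps^{-1})$, $6D_{\eps,\a}=O(\eps^{\a-1})$ and $9B_{\eps,\a}=O(\eps^{2\a-2})$ as $o(\eps^{-1})$, using $\a>\frac12$ for the last; your Riemann-sum sketch for $C_\eps^{(1)}$ just fills in a step the paper quotes from the introduction.

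One small correction to your heuristic attribution. In the paper, $D_{\eps,\a}$ and $B_{\eps,\a}$ come from the zeroth- and first-chaos parts of $\cI(X_\eps^{\diamond3})\circ X_\eps^{\diamond3}$, $\cI(X_\eps^{\diamond3})\circ X_\eps^{\diamond4}$, $\cI(X_\eps^{\diamond4})\circ X_\eps^{\diamond4}$ and $\cI(X_\eps^{\diamond5})\circ X_\eps^{\diamond4}$. By contrast, $\cI(X_\eps^{\diamond5})\circ X_\eps^{\diamond2}$ has no such part and needs no counterterm (Lemma~\ref{lem:X5-X2-vanishing}).
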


\begin{proof}
The local Wick expansion above gives the scalar linear term $A_{\eps,\a}^{\rm loc}$. The usual second-order $\Phi^4_3$ resonance, arising from $\cI(X_\eps^{\diamond3})\circ X_\eps^{\diamond2}$, contributes the mass correction $9\lambda_\eps^2C_\eps^{(2)}$, while the two new local pieces isolated in Section~\ref{sec:rough-distribution} contribute $9B_{\eps,\a}$ and $6D_{\eps,\a}$.  Subtracting these scalar terms gives \eqref{eq:canonical-mass-counterterm}.  The additional $+1$ coming from the massive semigroup $P_t=e^{t(\Delta-1)}$ is a regular deterministic term and is not included in $C_\eps^{\rm can}$.

Finally,
$$
C_\eps^{(1)}=\frac{\sigma_f^2}{\eps}+O(1),
$$
so, since $\lambda_\eps\to\lambda$,
$$
A_{\eps,\a}^{\rm loc}=3\lambda_\eps C_\eps^{(1)}-15\eps^\a\big(C_\eps^{(1)}\big)^2=-15\sigma_f^4\eps^{\a-2}+\frac{3\lambda\sigma_f^2}{\eps}+o(\eps^{-1}).
$$
The remaining terms in \eqref{eq:canonical-mass-counterterm} are all $o(\eps^{-1})$: indeed $C_\eps^{(2)}=O(\log(\ff1\eps))$, $D_{\eps,\a}=O(\eps^{\a-1})$, and $B_{\eps,\a}=O(\eps^{2\a-2})$.  This proves \eqref{eq:Ceps-leading-subcritical}.
\end{proof}

\subsection{Rough distribution and finite-time realizations}\label{subsec:admissible-enhanced-data}

We now collect the stochastic objects required by the paracontrolled expansion into a deterministic enhanced datum.

For $\rho\in\mR$ and $\zeta\in(0,1)$, let $C_T^{\zeta,\rho}$ be the space equipped with the norm
\begin{equation*}
\|F\|_{C_T^{\zeta,\rho}}:=\sup_{0\leq t\leq T}\|F_t\|_{\bC^\rho}+\sup_{0\leq s<t\leq T}\ff{\|F_t-F_s\|_{\bC^\rho}}{|t-s|^{\zeta}}.
\end{equation*}

Fix $\alpha\in(\ff56,1)$ and choose $\delta>0$ such that
\begin{equation}\label{eq:model-parameter-choice}
0<\delta<\frac15\Big(3\alpha-\frac52\Big).
\end{equation}
Choose a parabolic time exponent $\mathfrak d$ and an auxiliary exponent $\delta'$ so that
\begin{equation}\label{eq:model-parabolic-time-choice}
1-\alpha+\frac32\delta<\mathfrak d<\frac12\Big(\alpha-\frac12-2\delta\Big),\ 0<4\delta'<\delta.
\end{equation}
The interval for $\mathfrak d$ is not empty in view of \eqref{eq:model-parameter-choice}.  The exponent $\mathfrak d$ is the time regularity which will be used in the controlled derivative $v'$. 

For $\rho\in\mR$, define
\begin{align*}
\|F\|_{\mathcal E_T^{\mathfrak d;\rho}}:=\sup_{0\leq t\leq T}\|F_t\|_{\bC^\rho}+\sup_{0\leq s<t\leq T}\frac{\|F_t-F_s\|_{\bC^{\rho-2\mathfrak d}}}{|t-s|^{\mathfrak d}}.
\end{align*}
We let $\mathcal E_T^{\mathfrak d;\rho}$ be the completion of $C^\infty([0,T]\times\mT^3)$ with respect to this norm, and let $\mathcal E_{T,0}^{\mathfrak d;\rho}$ be its closed subspace consisting of elements with zero trace at $t=0$. Thus the fixed-time regularity is $\rho$, while the $\mathfrak d$-H\"older time increment is measured in the parabolically lower space $\bC^{\rho-2\mathfrak d}$.

We introduce the following collection of model distributions, which slightly enlarges the usual collection associated with the $\Phi^4_3$ model:
\begin{align*}
\mathfrak X_T^\Phi:=C_T^{\delta',-\ff12-\delta}\times C_T^{\delta',-1-\delta}\times\mathcal E_T^{\mathfrak d;1-\delta}\times\mathcal E_T^{\mathfrak d;\ff12-\delta}\times C_T^{\delta',-\delta}\times C_T^{\delta',-\delta}\times C_T^{\delta',-\ff12-\delta}.
\end{align*}
Except for the third coordinate, these are the usual stationary $\Phi^4_3$ coordinates constructed by the standard Wiener-chaos estimates; see, for example, \cite{CC18}. The additional third coordinate records $\cI(X^{\diamond2})$.  It is needed to recover the finite-time realization of the resonant product involving $I_0(X^{\diamond2})$. Its convergence in $\mathcal E_T^{\mathfrak d;1-\delta}$, namely, uniformly in time in $\bC^{1-\delta}$ with $\mathfrak d$-H\"older time increments in $\bC^{1-\delta-2\mathfrak d}$, is proved by the same integrated second-chaos estimate as for the other second-chaos components of the enhancement.

We define the space of higher-order components by
\begin{align}\label{eq:stationary-higher-sector}
\begin{split}
\mathfrak V_{T,\alpha}:=&C_T^{\delta',\alpha-1-\delta}\times C_T^{\delta',\alpha-\ff32-\delta}\times C_T^{\delta',\alpha-2-\delta}\times\mathcal E_T^{\mathfrak d;\alpha+1-\delta}\times\mathcal E_T^{\mathfrak d;\alpha+\ff12-\delta}\times\mathcal E_T^{\mathfrak d;\alpha-\delta}\\
&\times\mathcal E_T^{\mathfrak d;\alpha-\ff12-\delta}\times C_T^{\delta',2\alpha-2-\delta}\times C_T^{\delta',\alpha-\ff32-\delta}\times C_T^{\delta',\alpha-1-\delta}\times C_T^{\delta',\alpha-1-\delta}\\
&\times C_T^{\delta',\alpha-1-\delta}\times C_T^{\delta',\alpha-1-\delta}\times C_T^{\delta',2\alpha-2-\delta}\times C_T^{\delta',\alpha-\ff32-\delta}\times C_T^{\delta',2\alpha-\ff52-\delta}.
\end{split}
\end{align}
We set
\begin{equation*}
\mathfrak X_{T,\alpha}:=\mathfrak X_T^\Phi\times\mR^2\times\mathfrak V_{T,\alpha},
\end{equation*}
and equip it with the product norm.

We first define the canonical lift on a smooth core. 
Let $X\in C_c^\infty(\mR\times\mT^3)$, $(\lambda,\mu)\in\mR^2$, and $\mathbf c=(c_1,c_2,d,b)\in\mR^4$. Define
\begin{equation*}
X^{\diamond m}:=H_m(X;c_1),\ m=2,3,4,5.
\end{equation*}
The standard part of the smooth canonical lift is defined by
\begin{equation*}
\mathbb X^\Phi(X;\mathbf c):=\big(X,X^{\diamond2},\cI(X^{\diamond2}),\cI(X^{\diamond3}),\cI(X^{\diamond3})\circ X,\cI(X^{\diamond2})\circ X^{\diamond2}-c_2,\cI(X^{\diamond3})\circ X^{\diamond2}-3c_2X\big).
\end{equation*}
The first higher-order block is defined by
\begin{equation*}
\mathbb V^1(X;\mu,\mathbf c):=\big(\mu X^{\diamond2},\mu X^{\diamond3},\mu X^{\diamond4},\mu\cI(X^{\diamond2}),\mu\cI(X^{\diamond3}),\mu\cI(X^{\diamond4}),\mu\cI(X^{\diamond5})\big),
\end{equation*}
and the second one is defined by
\begin{equation}\label{eq:second-higher-order-block}
\begin{aligned}
\mathbb V^2(X;\lambda,\mu,\mathbf c):=\big(&\mu^2\cI(X^{\diamond5})\circ X^{\diamond3},\lambda\mu\cI(X^{\diamond5})\circ X^{\diamond2},\lambda\mu\cI(X^{\diamond5})\circ X,\mu\cI(X^{\diamond2})\circ X^{\diamond4},\\
&\mu\cI(X^{\diamond4})\circ X^{\diamond2},\ff{10}{3}\lambda\mu\cI(X^{\diamond3})\circ X^{\diamond3}-d,\ff{25}{9}\mu^2\cI(X^{\diamond4})\circ X^{\diamond4}-b,\\
&\ff53\lambda\mu\cI(X^{\diamond3})\circ X^{\diamond4}-2dX,\ff53\mu^2\cI(X^{\diamond5})\circ X^{\diamond4}-3bX\big).
\end{aligned}
\end{equation}
We write $\mathbb V(X;\lambda,\mu,\mathbf c):=(\mathbb V^1,\mathbb V^2)$ and define the smooth canonical lift by
\begin{equation*}
\mathscr L_T^{\rm stat}(X;\lambda,\mu,\mathbf c):=\big(\mathbb X^\Phi(X;\mathbf c),\lambda,\mu,\mathbb V(X;\lambda,\mu,\mathbf c)\big).
\end{equation*}
The superscript ``stat'' here indicates that the coordinates are constructed using the stationary integration operator $\mathcal I$; it does not impose a probabilistic stationarity assumption on the deterministic smooth field $X$.

\begin{definition}\label{def:admissible-enhanced-data}
Let
\begin{equation*}
\mathscr X_{T,\alpha}^{\infty}:=\left\{\mathscr L_T^{\rm stat}(X;\lambda,\mu,\mathbf c):X\in C_c^\infty(\mR\times\mT^3),(\lambda,\mu)\in\mR^2,\mathbf c\in\mR^4\right\}.
\end{equation*}
The space of admissible enhanced data is defined by
\begin{equation*}
\mathscr X_{T,\alpha}:=\overline{\mathscr X_{T,\alpha}^{\infty}}^{\,\mathfrak X_{T,\alpha}}.
\end{equation*}
We equip $\mathscr X_{T,\alpha}$ with the metric inherited from $\mathfrak X_{T,\alpha}$,
\begin{equation*}
d_{\mathscr X_{T,\alpha}}(\mathbb Z,\overline{\mathbb Z}):=\|\mathbb Z-\overline{\mathbb Z}\|_{\mathfrak X_{T,\alpha}}.
\end{equation*}
\end{definition}

We write $\mathbb Z=(\mathbb X^\Phi,\lambda,\mu,\mathbb V)\in\mathscr X_{T,\alpha}$.
For a general admissible datum, all composite entries are joint limits of smooth canonical lifts and are not reconstructed from the first coordinate $X$ and the scalar $\mu$. We construct stochastic lifts only for $\mu_\varepsilon=\varepsilon^\alpha\to0$, and make no such claim for a fixed $\mu\neq0$.

With
\begin{equation*}
\mathbf c_\varepsilon:=\bigl(C_\varepsilon^{(1)},C_\varepsilon^{(2)},D_{\varepsilon,\alpha},B_{\varepsilon,\alpha}\bigr),
\end{equation*}
we use
\begin{equation}\label{DefZ_eps}
\mathbb Z_\varepsilon:=\mathscr L_T^{\rm stat}\bigl(X_\varepsilon;\lambda_\varepsilon,\varepsilon^\alpha,\mathbf c_\varepsilon\bigr)
\end{equation}
as shorthand for the limit in $\mathfrak X_{T,\alpha}$ of the canonical lifts obtained by multiplying $X_\varepsilon$ by smooth cutoffs on expanding time intervals and then mollifying in time. The exponential decay of the massive semigroup and the classical nature of all spatial products for fixed $\varepsilon$ imply that this limit exists. Hence $\mathbb Z_\varepsilon\in\mathscr X_{T,\alpha}$ almost surely.

The limiting datum is
\begin{equation}\label{DefZ}
\mathbb Z=(\mathbb X^\Phi,\lambda,0,\boldsymbol0),
\end{equation}
where
\begin{equation*}
\mathbb X^\Phi:=\big(X,X^{\diamond2},\cI(X^{\diamond2}),\cI(X^{\diamond3}),\cI(X^{\diamond3})\circ X,(\cI(X^{\diamond2})\circ X^{\diamond2})^\diamond,(\cI(X^{\diamond3})\circ X^{\diamond2})^\diamond\big),
\end{equation*}
and $\boldsymbol0$ denotes the limit of the scaled higher-order coordinates.

The estimates in Section~\ref{sec:rough-distribution}, together with the standard construction of the $\Phi^4_3$ model distributions, yield the following convergence of the admissible enhanced data.

\begin{theorem}\label{thm:enhanced-data-convergence}
Assume that $\lambda_\varepsilon\to\lambda$.  For every $p\in[1,\infty)$ and every choice of parameters satisfying \eqref{eq:model-parameter-choice}--\eqref{eq:model-parabolic-time-choice},
\begin{equation*}
\mathbb Z_\varepsilon\to\mathbb Z\ \text{in}\ L^p(\Omega;\mathfrak X_{T,\alpha}).
\end{equation*}
In particular, $\mathbb Z\in\mathscr X_{T,\alpha}$ almost surely.
\end{theorem}

\begin{proof}

The convergence of the usual standard $\Phi^4_3$ coordinates follows from the standard Wiener-chaos construction; see \cite[Section~4]{CC18}. 

Set $\bar\delta=\delta-2\delta'$. Then $\bar\delta>0$ and, since $4\delta'<\delta$, whenever an estimate in Section~\ref{sec:rough-distribution} requires an auxiliary loss parameter larger than $2\delta'$, it can be chosen strictly below $\ff\delta2$. Apply Proposition~\ref{prop:eps-wick-vanishing} and Lemmas~\ref{lem:X5-X3-vanishing}--\ref{lem:X5-X4-B-localization} below with $\bar\delta$ in place of the spatial loss parameter denoted by $\delta$ in Section~\ref{sec:rough-distribution}. Then all the resulting spatial indices agree exactly with those in \eqref{eq:stationary-higher-sector}. This proves convergence of all components in $\mathfrak V_{T,\alpha}$ except $\varepsilon^\alpha\cI(X_\varepsilon^{\diamond m})$, $m=2,3,4,5$, whose convergence in the corresponding parabolic path spaces is verified below.

For $m\in\{2,3,4,5\}$, set $F_{\varepsilon,m}:=\varepsilon^\alpha\cI(X_\varepsilon^{\diamond m})$ and $\rho_m:=\alpha+2-\frac m2-\delta$.
The fixed-time part of $\|F_{\varepsilon,m}\|_{\mathcal E_T^{\mathfrak d;\rho_m}}$ converges to zero by Proposition~\ref{prop:eps-wick-vanishing} below. For the time-increment part, take $\zeta=\mathfrak d$ in \eqref{wick-dyadic-02}.  Since $\mathfrak d<\frac14$, this choice is admissible.  For sufficiently small $\eta>0$, dyadic summation gives
\begin{align*}
\left\|\sup_{0\leq s<t\leq T}\frac{\|F_{\varepsilon,m}(t)-F_{\varepsilon,m}(s)\|_{\bC^{\rho_m-2\mathfrak d}}}{|t-s|^{\mathfrak d}}\right\|_{L^p(\Omega)}\lesssim\varepsilon^{\alpha-\eta}\sup_{1\leq K\lesssim\varepsilon^{-1}}K^{\rho_m-2\mathfrak d+\frac m2-2+2\mathfrak d+\eta}\lesssim\varepsilon^{\delta-2\eta}\to0.
\end{align*}
Thus for $m=2,3,4,5$,
\begin{equation*}
\varepsilon^\alpha\cI(X_\varepsilon^{\diamond m})\to0\ \text{in}\ L^p\bigl(\Omega;\mathcal E_T^{\mathfrak d;\rho_m}\bigr).
\end{equation*}
Similarly, for $m=2,3$ we have
\begin{equation*}
\cI(X_\varepsilon^{\diamond m})\to\cI(X^{\diamond m})\ \text{in}\ L^p\bigl(\Omega;\mathcal E_T^{\mathfrak d;2-\frac m2-\delta}\bigr),
\end{equation*}
This proves convergence in their prescribed $\mathcal E_T^{\mathfrak d;\rho}$ spaces.
\end{proof}

We next construct the finite-time realization used in the mild equation. Its integrated coordinates vanish at the initial time, while some resonant coordinates need not converge in their prescribed spatial spaces as $t\downarrow0$. This motivates the weighted path spaces introduced below.

Choose $\zeta,\kappa_0>0$ such that
\begin{equation}\label{eq:finite-time-weight-choice}
\zeta<\delta',\ 2(\zeta+\kappa_0)<3\alpha-\frac52-5\delta.
\end{equation}
For $\omega>0$ and $\rho\in\mR$, set
\begin{align}\label{eq:weighted-finite-time-norm}
\|F\|_{\mathcal W_T^{\omega,\zeta;\rho}}:=\sup_{0<t\leq T}t^\omega\|F_t\|_{\bC^\rho}+\sup_{0<s<t\leq T}s^\omega\frac{\|F_t-F_s\|_{\bC^\rho}}{|t-s|^\zeta}.
\end{align}
We define $\mathcal W_T^{\omega,\zeta;\rho}$ as the completion of $C^\infty([0,T]\times\mT^3)$ with respect to \eqref{eq:weighted-finite-time-norm}.  Elements of this space are identified with paths on $(0,T]$; no value at $t=0$ is prescribed.  Since
$0<\zeta<\delta'$, every smooth path satisfies
\begin{equation}\label{eq:regular-path-weighted-embedding}
\|F\|_{\mathcal W_T^{\omega,\zeta;\rho}}\leq T^\omega(1+T^{\delta'-\zeta})\|F\|_{C_T^{\delta',\rho}}.
\end{equation}
Consequently, the identity map extends continuously from the closure of smooth paths in $C_T^{\delta',\rho}$ into $\mathcal W_T^{\omega,\zeta;\rho}$.

For a smooth path $F$, set
\begin{equation*}
(Q_0F)_t:=F_t-P_tF_0.
\end{equation*}

\begin{lemma}\label{lem:finite-time-primitives}
For every $\rho\in\mR$ and every smooth path $F$, one has
\begin{equation}\label{eq:Q0-primitive-estimate}
\|Q_0F\|_{\mathcal E_{T,0}^{\mathfrak d;\rho}}\lesssim\|F\|_{\mathcal E_T^{\mathfrak d;\rho}}.
\end{equation}
Consequently, $Q_0$ extends uniquely to a bounded linear map
\begin{equation*}
Q_0:\mathcal E_T^{\mathfrak d;\rho}\to\mathcal E_{T,0}^{\mathfrak d;\rho}.
\end{equation*}
\end{lemma}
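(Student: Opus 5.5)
The argument only needs two standard properties of the massive heat semigroup $P_t=e^{t(\Delta-1)}$ on $\bC^\rho$: uniform boundedness,
\[
\|P_tg\|_{\bC^\rho}\lesssim\|g\|_{\bC^\rho},
\]
and the smoothing--continuity estimate
\[
\|(P_t-\mathrm{Id})g\|_{\bC^{\rho-2\theta}}\lesssim t^\theta\|g\|_{\bC^\rho},\qquad \theta\in[0,1].
\]
Both follow from the Littlewood--Paley characterization and are uniform for $t\in[0,T]$. We write $Q_0F=F-PF_0$ and estimate the two terms separately in $\mathcal E_T^{\mathfrak d;\rho}$. The trace at $t=0$ is $F_0-P_0F_0=0$ by construction, so $Q_0F$ lands in the zero-trace subspace once the norm is controlled.

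\textbf{Fixed-time part.} We have $\sup_t\|F_t\|_{\bC^\rho}\le\|F\|_{\mathcal E_T^{\mathfrak d;\rho}}$ directly. By uniform boundedness, $\sup_t\|P_tF_0\|_{\bC^\rho}\lesssim\|F_0\|_{\bC^\rho}\le\|F\|_{\mathcal E_T^{\mathfrak d;\rho}}$.

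\textbf{Time increments.} The increment of $F$ is controlled by definition of the norm. For the semigroup term, take $0\le s<t\le T$ and write
\[
P_tF_0-P_sF_0=P_s(P_{t-s}-\mathrm{Id})F_0 .
\]
Uniform boundedness of $P_s$ on $\bC^{\rho-2\mathfrak d}$ together with the continuity estimate with $\theta=\mathfrak d\in(0,1)$ gives
\[
\|P_tF_0-P_sF_0\|_{\bC^{\rho-2\mathfrak d}}\lesssim|t-s|^{\mathfrak d}\|F_0\|_{\bC^\rho}.
\]
This is exactly the parabolic loss built into the $\mathcal E_T^{\mathfrak d;\rho}$ norm. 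Combining the two parts yields \eqref{eq:Q0-primitive-estimate} with a constant depending only on $T,\rho,\mathfrak d$.

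\textbf{Extension.} By definition, $\mathcal E_T^{\mathfrak d;\rho}$ is the completion of smooth paths. The map $Q_0$ is linear and bounded on this dense set. For smooth $F$, $Q_0F$ is smooth on $[0,T]\times\mT^3$ and has zero trace, so it lies in $\mathcal E_{T,0}^{\mathfrak d;\rho}$, which is closed. Hence $Q_0$ extends uniquely by density and continuity to a bounded map into $\mathcal E_{T,0}^{\mathfrak d;\rho}$.

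The only point needing care is the semigroup continuity estimate. It follows by applying the bound $\|(e^{-\tau a}-1)\Delta_jg\|_{L^\infty}\lesssim(\tau2^{2j})^\theta\|\Delta_jg\|_{L^\infty}$ blockwise, where $a=1-\Delta$. This bound is a standard Fourier-multiplier estimate on annuli, with the block $j=-1$ trivial. I expect no real obstacle here.
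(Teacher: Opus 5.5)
Your proof is correct and follows essentially the same route as the paper: boundedness of $P_t$ on $\bC^\rho$ for the uniform part, the increment identity $(Q_0F)_t-(Q_0F)_s=F_t-F_s-(P_t-P_s)F_0$ together with $\|(P_t-P_s)F_0\|_{\bC^{\rho-2\mathfrak d}}\lesssim|t-s|^{\mathfrak d}\|F_0\|_{\bC^\rho}$, and extension by completion. You additionally justify the semigroup estimate blockwise and check the zero trace and the closedness of $\mathcal E_{T,0}^{\mathfrak d;\rho}$; the paper leaves these points implicit.
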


\begin{proof}
The uniform estimate follows from the boundedness of $P_t$ on $\bC^\rho$. Moreover,
\begin{equation*}
(Q_0F)_t-(Q_0F)_s=F_t-F_s-(P_t-P_s)F_0.
\end{equation*}
The first difference is controlled by the $\mathcal E_T^{\mathfrak d;\rho}$ norm, while
\begin{equation*}
\|(P_t-P_s)F_0\|_{\bC^{\rho-2\mathfrak d}}\lesssim|t-s|^{\mathfrak d}\|F_0\|_{\bC^\rho}.
\end{equation*}
This proves \eqref{eq:Q0-primitive-estimate}. The extension follows by completion.
\end{proof}

The next estimate controls the initial-time corrections in the resonant coordinates.

\begin{lemma}\label{lem:initial-layer-resonance}
Let $r_1,r_2,\rho\in\mR$, let
$A\in\mathcal E_T^{\mathfrak d;r_1}$ and $B\in C_T^{\delta',r_2}$ be smooth paths, and set
\begin{equation}\label{eq:initial-layer-threshold}
\theta_*(r_1,r_2,\rho):=\max\Big\{0,-\frac{r_1+r_2}{2},\frac{\rho-r_1-r_2}{2}\Big\}.
\end{equation}
For $t>0$, define
\begin{equation*}
\Gamma(A,B)_t:=(P_tA_0)\circ B_t.
\end{equation*}
Assume that
\begin{equation}\label{eq:initial-layer-weight-condition}
\omega>\zeta+\theta_*(r_1,r_2,\rho).
\end{equation}
Then $\Gamma(A,B)\in\mathcal W_T^{\omega,\zeta;\rho}$.  More precisely, for every $\theta$ satisfying
\begin{equation*}
\theta_*(r_1,r_2,\rho)<\theta<\omega-\zeta,
\end{equation*}
one has
\begin{equation}\label{eq:initial-layer-estimate}
\|\Gamma(A,B)\|_{\mathcal W_T^{\omega,\zeta;\rho}}\lesssim T^{\omega-\theta-\zeta}(1+T^\zeta+T^{\delta'})\|A\|_{\mathcal E_T^{\mathfrak d;r_1}}\|B\|_{C_T^{\delta',r_2}}.
\end{equation}
For smooth $\bar A\in\mathcal E_T^{\mathfrak d;r_1}$ and
$\bar B\in C_T^{\delta',r_2}$, the corresponding difference estimate is
\begin{align}\label{eq:initial-layer-difference}
\begin{split}
&\|\Gamma(A,B)-\Gamma(\bar A,\bar B)\|_{\mathcal W_T^{\omega,\zeta;\rho}}\\
&\qquad\lesssim T^{\omega-\theta-\zeta}(1+T^\zeta+T^{\delta'})\big(\|A-\bar A\|_{\mathcal E_T^{\mathfrak d;r_1}}\|B\|_{C_T^{\delta',r_2}}+\|\bar A\|_{\mathcal E_T^{\mathfrak d;r_1}}\|B-\bar B\|_{C_T^{\delta',r_2}}\big).
\end{split}
\end{align}
\end{lemma}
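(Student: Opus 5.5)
We need the weighted norm of $\Gamma_t=(P_tA_0)\circ B_t$: a supremum part $\sup_t t^\omega\|\Gamma_t\|_\rho$ and an increment part. The tool is the smoothing estimate $\|P_tA_0\|_{\bC^{r_1+2\theta}}\lesssim t^{-\theta}\|A_0\|_{r_1}$ for $\theta\geq0$, combined with Lemma~\ref{bony}(ii). The resonant product estimate needs $r_1+2\theta+r_2>0$, which is the condition $\theta>-\frac{r_1+r_2}{2}$. To land in $\bC^\rho$ we need $r_1+2\theta+r_2\geq\rho$, which is the condition $\theta\geq\frac{\rho-r_1-r_2}{2}$. Taking $\theta$ with $\theta_*<\theta<\omega-\zeta$ gives
$$\|\Gamma_t\|_\rho\lesssim t^{-\theta}\|A\|_{\mathcal E_T^{\mathfrak d;r_1}}\|B\|_{C_T^{\delta',r_2}}.$$
Hence $t^\omega\|\Gamma_t\|_\rho\lesssim T^{\omega-\theta}\|A\|\|B\|$, which is bounded by $T^{\omega-\theta-\zeta}T^\zeta\|A\|\|B\|$. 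Only $\|A_0\|_{r_1}$ is used, and it is bounded by the $\mathcal E$-norm.

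For the increment with $0<s<t\leq T$, split
$$\Gamma_t-\Gamma_s=\bigl((P_t-P_s)A_0\bigr)\circ B_t+(P_sA_0)\circ(B_t-B_s).$$
For the second term, use the bound above at time $s$ together with $\|B_t-B_s\|_{r_2}\leq|t-s|^{\delta'}\|B\|$. Since $|t-s|\leq T$, we have $|t-s|^{\delta'}\leq T^{\delta'-\zeta}|t-s|^\zeta$. Therefore $s^\omega\|\cdot\|_\rho/|t-s|^\zeta\lesssim s^{\omega-\theta}T^{\delta'-\zeta}\|A\|\|B\|$, which is at most $T^{\omega-\theta-\zeta}T^{\delta'}\|A\|\|B\|$.

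For the first term, which is the main point, write $(P_t-P_s)A_0=(P_{t-s}-I)P_sA_0$. Apply the standard estimate $\|(P_h-I)g\|_{\bC^{a-2\zeta}}\lesssim h^\zeta\|g\|_a$ with $g=P_sA_0$ and $a=r_1+2\theta+2\zeta$. This needs $\theta+\zeta\geq0$, which holds. The result is
$$\|(P_t-P_s)A_0\|_{r_1+2\theta}\lesssim|t-s|^\zeta s^{-\theta-\zeta}\|A_0\|_{r_1}.$$
Applying Lemma~\ref{bony}(ii) as before gives $s^\omega\|\cdot\|_\rho/|t-s|^\zeta\lesssim s^{\omega-\theta-\zeta}\|A\|\|B\|\lesssim T^{\omega-\theta-\zeta}\|A\|\|B\|$. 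Here we need $\omega-\theta-\zeta>0$, which is exactly the upper constraint on $\theta$; this is why the hypothesis \eqref{eq:initial-layer-weight-condition} has the extra $\zeta$. Collecting the three contributions gives \eqref{eq:initial-layer-estimate} with the factor $(1+T^\zeta+T^{\delta'})$.

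Since $\Gamma$ is bilinear, write $\Gamma(A,B)-\Gamma(\bar A,\bar B)=\Gamma(A-\bar A,B)+\Gamma(\bar A,B-\bar B)$. Applying \eqref{eq:initial-layer-estimate} to each piece gives \eqref{eq:initial-layer-difference}.

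Finally, membership in $\mathcal W_T^{\omega,\zeta;\rho}$ as a completion of smooth functions: for smooth $A,B$, the path $\Gamma$ is smooth on $(0,T]$, possibly singular only at $t=0$. Approximate $A_0$ and $B$ by smooth data, or equivalently mollify in time away from $0$, and use the estimates above to conclude. The only mildly delicate step is the time-increment estimate of the semigroup term, handled as described.
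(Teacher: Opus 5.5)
Your proposal is correct and follows the paper's proof essentially step for step. It uses the same splitting of $\Gamma_t-\Gamma_s$, the identity $P_t-P_s=(P_{t-s}-\mathrm{Id})P_s$ with the smoothing bounds $s^{-\theta}$ and $|t-s|^\zeta s^{-\theta-\zeta}$, and bilinearity for the difference estimate. (As in the paper, your step $|t-s|^{\delta'}\le T^{\delta'-\zeta}|t-s|^\zeta$ tacitly uses $\zeta<\delta'$ from \eqref{eq:finite-time-weight-choice}; and for smooth $A,B$ the path $\Gamma$ is already smooth on $[0,T]\times\mT^3$, so your final approximation step is unnecessary.)
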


\begin{proof}
The choice of $\theta$ implies $r_1+r_2+2\theta>\rho\vee0$. The heat-flow and resonant-product estimates therefore give
\begin{equation*}
\|\Gamma(A,B)_t\|_{\bC^\rho}\lesssim t^{-\theta}\|A_0\|_{\bC^{r_1}}\|B_t\|_{\bC^{r_2}}.
\end{equation*}
For $0<s<t\leq T$, write
\begin{align*}
\Gamma(A,B)_t-\Gamma(A,B)_s=\bigl((P_t-P_s)A_0\bigr)\circ B_t+(P_sA_0)\circ(B_t-B_s).
\end{align*}
By the standard Schauder estimates for the massive heat semigroup $P_t=e^{t(\Delta-1)}$ (see \cite[Lemmas~A.6 and A.7]{EX22}) and the identity $P_t-P_s=(P_{t-s}-\mathrm{Id})P_s$, we have
\begin{align*}
\|(P_t-P_s)A_0\|_{\bC^{r_1+2\theta}}\lesssim|t-s|^\zeta s^{-\theta-\zeta}\|A_0\|_{\bC^{r_1}},\ \|P_sA_0\|_{\bC^{r_1+2\theta}}\lesssim s^{-\theta}\|A_0\|_{\bC^{r_1}}.
\end{align*}
Consequently,
\begin{align*}
\|\Gamma(A,B)_t-\Gamma(A,B)_s\|_{\bC^\rho}\lesssim(|t-s|^\zeta s^{-\theta-\zeta}+|t-s|^{\delta'}s^{-\theta})\|A\|_{\mathcal E_T^{\mathfrak d;r_1}}\|B\|_{C_T^{\delta',r_2}}.
\end{align*}
Multiplying by $s^\omega|t-s|^{-\zeta}$ and using $\theta<\omega-\zeta$ proves \eqref{eq:initial-layer-estimate}.  The difference estimate follows from
\begin{equation*}
\Gamma(A,B)-\Gamma(\bar A,\bar B)=\Gamma(A-\bar A,B)+\Gamma(\bar A,B-\bar B).
\end{equation*}
\end{proof}

We now specify the weights used by the finite-time resonant coordinates:
\begin{align}\label{eq:finite-time-coordinate-weights}
\omega_{31}=\omega_{22}:=&\delta+\zeta+\kappa_0,&\omega_{32}:=&\frac14+\delta+\zeta+\kappa_0,\no\\
\omega_{51}=\omega_{24}=\omega_{42}=\omega_{33}:=&\frac{1-\alpha}{2}+\delta+\zeta+\kappa_0,&\omega_{53}=\omega_{44}:=&1-\alpha+\delta+\zeta+\kappa_0,\\
\omega_{52}=\omega_{34}:=&\frac34-\frac\alpha2+\delta+\zeta+\kappa_0,&\omega_{54}:=&\frac54-\alpha+\delta+\zeta+\kappa_0.\no
\end{align}
For the correction defining the coordinate indexed by $\tau$, let $\theta_\tau^*$ denote the threshold in \eqref{eq:initial-layer-threshold}.  A direct calculation gives $\omega_\tau=\zeta+\theta_\tau^*+\kappa_0$ for every resonant coordinate $\tau$.  By \eqref{eq:finite-time-weight-choice}, all these weights lie in $(0,1)$.

The finite-time standard model distribution space is defined by
\begin{align*}
\mathfrak X_{T,\alpha}^{0,\Phi}:=C_T^{\delta',-\ff12-\delta}\times C_T^{\delta',-1-\delta}\times\mathcal E_{T,0}^{\mathfrak d;1-\delta}\times\mathcal E_{T,0}^{\mathfrak d;\ff12-\delta}\times\mathcal W_T^{\omega_{31},\zeta;-\delta}\times\mathcal W_T^{\omega_{22},\zeta;-\delta}\times\mathcal W_T^{\omega_{32},\zeta;-\ff12-\delta}.
\end{align*}
The product space for the unchanged higher-order stochastic coordinates and the finite-time integrated Wick-power components is defined by
\begin{align*}
\mathfrak V_{T,\alpha}^{0,1}:=C_T^{\delta',\alpha-1-\delta}\times C_T^{\delta',\alpha-\ff32-\delta}\times C_T^{\delta',\alpha-2-\delta}\times\mathcal E_{T,0}^{\mathfrak d;\alpha+1-\delta}\times\mathcal E_{T,0}^{\mathfrak d;\alpha+\ff12-\delta}\times\mathcal E_{T,0}^{\mathfrak d;\alpha-\delta}\times\mathcal E_{T,0}^{\mathfrak d;\alpha-\ff12-\delta},
\end{align*}
and the resonant higher-order block is defined by
\begin{align*}
\mathfrak V_{T,\alpha}^{0,2}:=&\mathcal W_T^{\omega_{53},\zeta;2\alpha-2-\delta}\times\mathcal W_T^{\omega_{52},\zeta;\alpha-\ff32-\delta}\times\mathcal W_T^{\omega_{51},\zeta;\alpha-1-\delta}\times\mathcal W_T^{\omega_{24},\zeta;\alpha-1-\delta}\times\mathcal W_T^{\omega_{42},\zeta;\alpha-1-\delta}\\
&\qquad\times\mathcal W_T^{\omega_{33},\zeta;\alpha-1-\delta}\times\mathcal W_T^{\omega_{44},\zeta;2\alpha-2-\delta}\times\mathcal W_T^{\omega_{34},\zeta;\alpha-\ff32-\delta}\times\mathcal W_T^{\omega_{54},\zeta;2\alpha-\ff52-\delta}.
\end{align*}
We set
\begin{equation*}
\mathfrak X_{T,\alpha}^0:=\mathfrak X_{T,\alpha}^{0,\Phi}\times\mR^2\times\mathfrak V_{T,\alpha}^{0,1}\times\mathfrak V_{T,\alpha}^{0,2},
\end{equation*}
and equip it with the product norm.

Let
\begin{equation*}
\mathbb Z=(\mathbb X^\Phi,\lambda,\mu,\mathbb V)=\mathscr L_T^{\rm stat}(X;\lambda,\mu,\mathbf c)\in\mathscr X_{T,\alpha}^{\infty}
\end{equation*}
be a smooth canonical lift. Writing $\mathbb V=(\mathbb V^1,\mathbb V^2)$, we denote its coordinates by the three displays below. We first define all finite-time coordinates on this smooth canonical core.
$$\mathbb X^\Phi=\bigl(X,X^{\diamond2},\mathbf X_2,\mathbf X_3,\mathbf X_{31},\mathbf X_{22},\mathbf X_{32}\bigr),$$
$$\mathbb V^1=\bigl(Y_2,Y_3,Y_4,\mathbf Y_2,\mathbf Y_3,\mathbf Y_4,\mathbf Y_5\bigr),$$
$$\mathbb V^2=\bigl(\mathbf R_{53},\mathbf R_{52},\mathbf R_{51},\mathbf R_{24},\mathbf R_{42},\mathbf R_{33},\mathbf R_{44},\mathbf R_{34},\mathbf R_{54}\bigr).$$
Define the finite-time integrated coordinates by
\begin{equation}\label{eq:finite-time-primitives-definition}
\mathbf X_2^0:=Q_0\mathbf X_2,\ \mathbf X_3^0:=Q_0\mathbf X_3,\ \mathbf Y_j^0:=Q_0\mathbf Y_j,\ j=2,3,4,5.
\end{equation}

The standard finite-time resonances are
\begin{align}\label{eq:finite-time-standard-resonances}
\mathbf X_{31}^0:=\mathbf X_{31}-(P_\cdot\mathbf X_3(0))\circ X,\ \mathbf X_{22}^0:=\mathbf X_{22}-(P_\cdot\mathbf X_2(0))\circ X^{\diamond2},\ \mathbf X_{32}^0:=\mathbf X_{32}-(P_\cdot\mathbf X_3(0))\circ X^{\diamond2}.
\end{align}
The higher-order finite-time resonances are defined by
\begin{align}\label{eq:finite-time-higher-resonances}
\begin{split}
\mathbf R_{53}^0:=\mathbf R_{53}-(P_\cdot\mathbf Y_5(0))\circ Y_3,\ \mathbf R_{52}^0:=\mathbf R_{52}-\lambda(P_\cdot\mathbf Y_5(0))\circ X^{\diamond2},\ \mathbf R_{51}^0:=\mathbf R_{51}-\lambda(P_\cdot\mathbf Y_5(0))\circ X,\\
\mathbf R_{24}^0:=\mathbf R_{24}-(P_\cdot\mathbf X_2(0))\circ Y_4,\ \mathbf R_{42}^0:=\mathbf R_{42}-(P_\cdot\mathbf Y_4(0))\circ X^{\diamond2},\ \mathbf R_{33}^0:=\mathbf R_{33}-\frac{10}{3}\lambda(P_\cdot\mathbf X_3(0))\circ Y_3,\\
\mathbf R_{44}^0:=\mathbf R_{44}-\frac{25}{9}(P_\cdot\mathbf Y_4(0))\circ Y_4,\ \mathbf R_{34}^0:=\mathbf R_{34}-\frac53\lambda(P_\cdot\mathbf X_3(0))\circ Y_4,\ \mathbf R_{54}^0:=\mathbf R_{54}-\frac53(P_\cdot\mathbf Y_5(0))\circ Y_4.
\end{split}
\end{align}
We then set
$$\mathbb X^{0,\Phi}:=\bigl(X,X^{\diamond2},\mathbf X_2^0,\mathbf X_3^0,\mathbf X_{31}^0,\mathbf X_{22}^0,\mathbf X_{32}^0\bigr),$$
$$\mathbb V^{0,1}:=\bigl(Y_2,Y_3,Y_4,\mathbf Y_2^0,\mathbf Y_3^0,\mathbf Y_4^0,\mathbf Y_5^0\bigr),$$
$$\mathbb V^{0,2}:=\bigl(\mathbf R_{53}^0,\mathbf R_{52}^0,\mathbf R_{51}^0,\mathbf R_{24}^0,\mathbf R_{42}^0,\mathbf R_{33}^0,\mathbf R_{44}^0,\mathbf R_{34}^0,\mathbf R_{54}^0\bigr),$$
$$\mathscr Q_0\mathbb Z=\mathbb Z^0:=\bigl(\mathbb X^{0,\Phi},\lambda,\mu,\mathbb V^{0,1},\mathbb V^{0,2}\bigr).$$
On the smooth canonical core, these definitions amount to replacing $\mathcal I$ by $I_0$ in the integrated and resonant coordinates, while leaving the counterterms unchanged.

Define the finite-time admissible space by
\begin{equation}\label{eq:finite-time-admissible-space}
\mathscr X_{T,\alpha}^0
:=\overline{\Big\{\mathscr Q_0\mathbb Z:
\mathbb Z\in\mathscr X_{T,\alpha}^{\infty}\Big\}}^{\,\mathfrak X_{T,\alpha}^0}.
\end{equation}

\begin{proposition}[Finite-time realization]\label{prop:finite-time-realization}
The map $\mathscr Q_0$, defined on $\mathscr X_{T,\alpha}^{\infty}$ by \eqref{eq:finite-time-primitives-definition}, \eqref{eq:finite-time-standard-resonances}, and \eqref{eq:finite-time-higher-resonances}, extends uniquely to a locally Lipschitz map
\begin{equation*}
\mathscr Q_0:\mathscr X_{T,\alpha}\to\mathscr X_{T,\alpha}^0.
\end{equation*}
More precisely,
\begin{align}\label{eq:finite-time-realization-stability}
\|\mathscr Q_0\mathbb Z-\mathscr Q_0\overline{\mathbb Z}\|_{\mathfrak X_{T,\alpha}^0}\leq C_T(1+\|\mathbb Z\|_{\mathfrak X_{T,\alpha}}+\|\overline{\mathbb Z}\|_{\mathfrak X_{T,\alpha}})^2d_{\mathscr X_{T,\alpha}}(\mathbb Z,\overline{\mathbb Z}).
\end{align}
\end{proposition}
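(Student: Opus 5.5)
The plan is to prove the Lipschitz estimate \eqref{eq:finite-time-realization-stability} on the smooth canonical core $\mathscr X_{T,\alpha}^\infty$, coordinate by coordinate. Then extend $\mathscr Q_0$ by density, since $\mathscr X_{T,\alpha}$ is by definition the closure of this core.

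Let $\mathbb Z,\overline{\mathbb Z}\in\mathscr X_{T,\alpha}^\infty$. The coordinates of $\mathscr Q_0\mathbb Z$ fall into three groups.

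\textbf{Unchanged coordinates.} These are $X$, $X^{\diamond2}$, $Y_2,Y_3,Y_4$ and the scalars $\lambda,\mu$. Their spaces in $\mathfrak X_{T,\alpha}^0$ coincide with those in $\mathfrak X_{T,\alpha}$, so the difference bound is trivial.

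\textbf{Integrated coordinates.} These are $\mathbf X_2^0,\mathbf X_3^0,\mathbf Y_j^0$. By linearity of $Q_0$, Lemma~\ref{lem:finite-time-primitives} gives
\begin{equation*}
\|Q_0\mathbf X_2-Q_0\overline{\mathbf X}_2\|_{\mathcal E_{T,0}^{\mathfrak d;1-\delta}}\lesssim\|\mathbf X_2-\overline{\mathbf X}_2\|_{\mathcal E_T^{\mathfrak d;1-\delta}},
\end{equation*}
and similarly for the others.

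\textbf{Resonant coordinates.} Each finite-time resonance has the form $\mathbf R^0=\mathbf R-c\,\Gamma(A,B)$. Here $c\in\{1,\lambda,\frac{10}{3}\lambda,\frac{25}{9},\frac53\lambda,\frac53\}$, $A$ is an integrated coordinate in some $\mathcal E_T^{\mathfrak d;r_1}$, and $B$ is a $C_T^{\delta',r_2}$ coordinate.
\begin{itemize}
\item For the stationary part $\mathbf R$, I would use the embedding \eqref{eq:regular-path-weighted-embedding}. The target regularity $\rho$ of $\mathbf R^0$ equals that of $\mathbf R$ in $\mathfrak V_{T,\alpha}$ or $\mathfrak X^\Phi_T$, so $\|\mathbf R-\overline{\mathbf R}\|_{\mathcal W}\lesssim_T\|\mathbf R-\overline{\mathbf R}\|_{C_T^{\delta',\rho}}$.
\item For the correction, I would use the difference estimate \eqref{eq:initial-layer-difference}. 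When $c$ involves $\lambda$, the product rule $c\Gamma-\bar c\bar\Gamma=(c-\bar c)\Gamma+\bar c(\Gamma-\bar\Gamma)$ produces an extra factor of the norms. This is the source of the quadratic prefactor $(1+\|\mathbb Z\|+\|\overline{\mathbb Z}\|)^2$: a scalar times a bilinear expression is at most cubic, but each difference term carries at most two undifferenced factors, which the square absorbs.
\end{itemize}

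\textbf{Weight check (main obstacle).} The bookkeeping step is to verify the hypothesis \eqref{eq:initial-layer-weight-condition}, $\omega_\tau>\zeta+\theta^*_\tau$, for each of the twelve resonances. Take $\mathbf R_{54}^0$ as an example: $r_1=\alpha-\frac12-\delta$, $r_2=\alpha-2-\delta$ and $\rho=2\alpha-\frac52-\delta$. Then $\theta^*=\max\{0,\frac54-\alpha+\delta,\frac{\delta}{2}\}=\frac54-\alpha+\delta$, which matches $\omega_{54}-\zeta-\kappa_0$. Since $\kappa_0>0$, the hypothesis holds.

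The remaining cases follow the same pattern. The middle entry $-(r_1+r_2)/2$ dominates when $r_1+r_2<0$, and otherwise the bound is $\theta^*\le\delta$. One should note that the $\delta$-losses in $r_1,r_2$ enter additively, which is exactly why every weight contains $+\delta$. One must also check that $\omega_\tau<1$ so that the weighted spaces are meaningful; this follows from \eqref{eq:finite-time-weight-choice}, as remarked after \eqref{eq:finite-time-coordinate-weights}. With these checks, \eqref{eq:initial-layer-difference} yields the required bounds, with constants depending only on $T$.

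\textbf{Extension.} Summing over coordinates gives \eqref{eq:finite-time-realization-stability} on $\mathscr X_{T,\alpha}^\infty$. Because the constant is locally bounded, $\mathscr Q_0$ is uniformly continuous on bounded subsets of the core. It therefore extends uniquely, and locally Lipschitz, to the closure $\mathscr X_{T,\alpha}$. The image lies in $\mathscr X_{T,\alpha}^0$ by the definition \eqref{eq:finite-time-admissible-space}, and the estimate passes to the limit by continuity of the norms.
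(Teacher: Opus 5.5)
Your proposal is correct and follows the paper's proof essentially line by line. The unchanged coordinates are trivial, the integrated coordinates are handled by Lemma~\ref{lem:finite-time-primitives}, and each resonant coordinate is split into its stationary part (via \eqref{eq:regular-path-weighted-embedding}) and its initial correction (via Lemma~\ref{lem:initial-layer-resonance}, with the scalar $\lambda$ producing the quadratic prefactor). The map is then extended by density of the smooth core and completeness of $\mathfrak X_{T,\alpha}^0$.

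Your explicit check that $\omega_\tau=\zeta+\theta_\tau^*+\kappa_0$ just spells out what the paper asserts. Two minor remarks: in all twelve cases $r_1+r_2<0$, so your ``otherwise'' branch is vacuous. Also, the condition $\omega_\tau<1$ is not needed for this proposition; it only matters later, in Lemma~\ref{lem:weighted-forcing}.
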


\begin{proof}
The unchanged coordinates are controlled in their original $C_T^{\delta',\rho}$ spaces.  The integrated Wick-power coordinates are controlled by Lemma~\ref{lem:finite-time-primitives}.  Each stationary resonant coordinate, viewed as a path in the finite-time weighted space, is controlled by \eqref{eq:regular-path-weighted-embedding}.  Its initial correction is of the form treated in Lemma~\ref{lem:initial-layer-resonance}, up to a fixed numerical factor and, in some cases, multiplication by the scalar coordinate $\lambda$. Here the weights in \eqref{eq:finite-time-coordinate-weights} were chosen so that \eqref{eq:initial-layer-weight-condition} holds strictly.

The estimates \eqref{eq:regular-path-weighted-embedding}, \eqref{eq:Q0-primitive-estimate}, \eqref{eq:initial-layer-estimate} and \eqref{eq:initial-layer-difference} imply \eqref{eq:finite-time-realization-stability} on smooth canonical lifts.  Since $\mathscr X_{T,\alpha}$ is the closure of the smooth lifts and $\mathfrak X_{T,\alpha}^0$ is complete, the map extends uniquely.  Its image belongs to the closure in \eqref{eq:finite-time-admissible-space}.
\end{proof}

As a consequence of Theorem~\ref{thm:enhanced-data-convergence} and Proposition~\ref{prop:finite-time-realization}, for every $p\in[1,\infty)$,
\begin{equation}\label{eq:finite-time-model-convergence}
\mathbb Z_\varepsilon^0:=\mathscr Q_0\mathbb Z_\varepsilon\to\mathbb Z^0:=\mathscr Q_0\mathbb Z\ \text{in}\ L^p(\Omega;\mathfrak X_{T,\alpha}^0).
\end{equation}
Indeed, \eqref{eq:finite-time-realization-stability} and the H\"older inequality give
\begin{align*}
\|\mathbb Z_\varepsilon^0-\mathbb Z^0\|_{L^p(\Omega;\mathfrak X_{T,\alpha}^0)}\leq C_T\left\|1+\|\mathbb Z_\varepsilon\|_{\mathfrak X_{T,\alpha}}+\|\mathbb Z\|_{\mathfrak X_{T,\alpha}}\right\|_{L^{4p}(\Omega)}^2\left\|d_{\mathscr X_{T,\alpha}}(\mathbb Z_\varepsilon,\mathbb Z)\right\|_{L^{2p}(\Omega)},
\end{align*}
where the first factor is uniformly bounded and the second factor converges to zero by Theorem~\ref{thm:enhanced-data-convergence}. For $0<S\leq T$, let $\operatorname{Res}_{S,T}$ denote the componentwise restriction to $[0,S]$. Both admissible spaces are stable under restriction, and the corresponding restriction maps are contractive:
$$\operatorname{Res}_{S,T}\mathscr X_{T,\alpha}\subseteq\mathscr X_{S,\alpha},\ 
\|\operatorname{Res}_{S,T}\mathbb Z\|_{\mathfrak X_{S,\alpha}}\leq\|\mathbb Z\|_{\mathfrak X_{T,\alpha}};$$
$$\operatorname{Res}_{S,T}\mathscr X_{T,\alpha}^0\subseteq\mathscr X_{S,\alpha}^0,\ \|\operatorname{Res}_{S,T}\mathbb Z^0\|_{\mathfrak X_{S,\alpha}^0}\leq\|\mathbb Z^0\|_{\mathfrak X_{T,\alpha}^0}.$$
Thus both the stationary datum and its finite-time realization can be used on every shorter time interval. In the fixed-point argument below, all stochastic terms are interpreted through $\mathbb Z^0$.

\section{Solution theory}\label{sec03}

This section develops the solution theory needed to transfer the convergence of the enhanced stochastic data to the convergence of the corresponding solutions.  We retain the parameter choices of Section~\ref{subsec:admissible-enhanced-data}, in particular $\alpha\in(\frac56,1)$ and the exponents $\delta,\delta'$ and $\mathfrak d$ satisfying \eqref{eq:model-parameter-choice} and \eqref{eq:model-parabolic-time-choice}.  In Section~\ref{subsec:controlled-distributions}, we introduce controlled pairs $(v,v')$ relative to the finite-time realization $\mathbb Z^0=\mathscr Q_0\mathbb Z$.  The two components are initially independent, and the scaled distance \eqref{eq:controlled-scaled-distance} supplies the small factor needed to handle the derivative swap in the fixed-point map.  In Section~\ref{subsec:renormalized-mild-map}, we construct the renormalized mild map and establish its local bounds and stability with respect to the controlled pair, the enhanced datum and the initial condition.  Finally, Section~\ref{subsec:local-well-posedness} combines these estimates with restriction consistency and the scaled contraction argument to obtain a bounded local fixed point, uniqueness of the associated initial solution germ and quantitative stability on bounded data sets.  Applying this theory to the convergent finite-time models from Section~\ref{subsec:admissible-enhanced-data} yields deterministic convergence of solutions and, under the well-prepared initial condition \eqref{eq:well-prepared-convergence-assumption}, convergence of the approximating fields to the renormalized dynamical $\Phi^4_3(\lambda)$ solution in the localized solution-germ sense.

\subsection{The space of controlled distributions}\label{subsec:controlled-distributions}

We fix an admissible stationary enhanced datum $\mathbb Z\in\mathscr X_{T,\alpha}$ and write $\mathbb Z^0=\mathscr Q_0\mathbb Z\in\mathscr X_{T,\alpha}^0$ for its finite-time realization.  All stochastic coordinates which enter the mild formulation are read from $\mathbb Z^0$.  The positive weights $\omega_\tau$ introduced in Section~\ref{subsec:admissible-enhanced-data} belong only to the finite-time resonant model coordinates.  The controlled solution itself is required to remain uniformly bounded in its base $\bC^\beta$ norm; only its higher spatial norm is allowed to have the standard heat-flow singularity encoded below. Thus no analogue of the model weight $\omega_\tau$ is introduced in the solution space.

Following the pair formulation of \cite[Definition~3.3]{CC18}, a controlled object is a pair $(v,v')$.  The two components are kept independent at this stage; the identity $v'=v$ will follow only at a fixed point of the solution map.

We retain the parameters $\alpha,\delta,\delta',\mathfrak d,\zeta$ fixed in Section~\ref{subsec:admissible-enhanced-data} and set
\begin{equation*}
\sigma:=\alpha-\frac12-2\delta.
\end{equation*}
Choose $\beta$ and $\gamma$ such that
\begin{align}\label{eq:controlled-spatial-exponents}
2\mathfrak d<\beta<\sigma,\ 2-\alpha+2\delta<\gamma<\min\Big\{2\alpha-\frac12-3\delta,\beta+\alpha-\delta\Big\}.
\end{align}
These intervals are nonempty. In particular,
\begin{equation}\label{eq:controlled-ordering}
0<2\mathfrak d<\beta<\sigma<\gamma.
\end{equation}
Here $\beta$ describes the spatial regularity of the initial remainder $v_0$, while $\sigma$ and $\gamma$ describe the spatial regularities of $(v,v')$ and $v^\sharp$, respectively, in the controlled fixed-point space (see Definition \ref{def:controlled-distributions} below). The lower bound on $\mathfrak d$ is precisely what makes the time-increment part of the integrated diagonal operator locally integrable. Here, the diagonal terms refer to the resonant interactions $I_0\bigl(I_0(f\prec A_i)\circ A_j\bigr)$, $i,j\in\{2,4\}$, where $A_2=X^{\diamond2}$ and $A_4=Y_4$. Their commutator expansion defines the integrated operator $\mathscr J_{ij}$ in \eqref{eq:J-definition} below. The upper bound $\gamma<\beta+\alpha-\delta$ is needed to propagate the rough initial condition through the terms containing $Y_4$. 

For $\rho_0\leq\rho_1$ and a path $f:[0,T]\to\bC^{\rho_0}$ whose restriction to $(0,T]$ takes values in $\bC^{\rho_1}$, define
\begin{align}\label{eq:controlled-path-norm}
\begin{split}
\|f\|_{\mathcal L_T^{\rho_0,\rho_1}}:=&\sup_{0\leq t\leq T}\|f_t\|_{\rho_0}+\sup_{0<t\leq T}t^{\frac{\rho_1-\rho_0}{2}}\|f_t\|_{\rho_1}\\
&+\sup_{0\leq s<t\leq T}\frac{\|f_t-f_s\|_{\rho_0-2\mathfrak d}}{|t-s|^\mathfrak d}+\sup_{0<s<t\leq T}s^{\frac{\rho_1-\rho_0}{2}}\frac{\|f_t-f_s\|_{\rho_1-2\mathfrak d}}{|t-s|^\mathfrak d}.
\end{split}
\end{align}
We denote by $\mathcal L_T^{\rho_0,\rho_1}$ the Banach space of paths with finite norm \eqref{eq:controlled-path-norm}.  Notice that continuity at the initial time is required only in the weaker space $\bC^{\rho_0-2\mathfrak d}$; this convention allows the heat extension of every $a\in\bC^{\rho_0}$, without imposing strong continuity of the heat semigroup on the whole Besov space $B_{\infty,\infty}^{\rho_0}$.  For $a\in\bC^{\rho_0}$, set
\begin{equation*}
\mathcal L_{T,a}^{\rho_0,\rho_1}:=\left\{f\in\mathcal L_T^{\rho_0,\rho_1}:f_0=a\right\}.
\end{equation*}
Thus $\mathcal L_{T,a}^{\rho_0,\rho_1}$ is a closed affine subspace and $\mathcal L_{T,0}^{\rho_0,\rho_1}$ is a Banach space.  Restriction to a shorter interval is contractive in these norms. Moreover, we have the following elementary embedding property.

\begin{lemma}\label{lem:controlled-path-embeddings}
Let $0<T\leq1$.
\begin{enumerate}[]
\item (i) For every $a\in\bC^{\rho_0}$,
\begin{equation}
\|P_\cdot a\|_{\mathcal L_T^{\rho_0,\rho_1}}\lesssim\|a\|_{\rho_0}.\no
\end{equation}
\item (ii) If $\rho_0\leq\rho_1\leq\rho_2$, then
\begin{equation}
\mathcal L_T^{\rho_0,\rho_2}\hookrightarrow\mathcal L_T^{\rho_0,\rho_1}.\no
\end{equation}
\item (iii) If $\rho_0\leq\rho_1\leq\rho$, then
$$\mathcal E_{T,0}^{\mathfrak d;\rho}\hookrightarrow\mathcal L_{T,0}^{\rho_0,\rho_1},\ \|F\|_{\mathcal L_T^{\rho_0,\rho_1}}\lesssim\|F\|_{\mathcal E_T^{\mathfrak d;\rho}}.$$
\end{enumerate}
The implicit constants are uniform for $0<T\leq1$.
\end{lemma}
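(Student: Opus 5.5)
We prove the three assertions of Lemma~\ref{lem:controlled-path-embeddings} separately, each by directly estimating the four terms of the norm \eqref{eq:controlled-path-norm}. The only tools needed are the Schauder estimates for the massive heat semigroup already quoted in the proof of Lemma~\ref{lem:initial-layer-resonance}:
\[
\|P_ta\|_{\rho+2\theta}\lesssim t^{-\theta}\|a\|_\rho \quad (\theta\geq0),
\qquad
\|(P_{h}-\mathrm{Id})a\|_{\rho-2\theta}\lesssim h^{\theta}\|a\|_\rho \quad (\theta\in[0,1]).
\]
Both hold uniformly for $t,h\leq1$, which is where the restriction $T\leq1$ enters.

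For (i), the sup term is bounded by the boundedness of $P_t$ on $\bC^{\rho_0}$. The weighted term follows from the smoothing estimate with $\theta=(\rho_1-\rho_0)/2$. For the increments, write $P_t-P_s=(P_{t-s}-\mathrm{Id})P_s$.
\begin{itemize}
\item In $\bC^{\rho_0-2\mathfrak d}$, apply the second estimate with $\theta=\mathfrak d$ to $P_sa\in\bC^{\rho_0}$; this gives $|t-s|^{\mathfrak d}\|a\|_{\rho_0}$.
\item In $\bC^{\rho_1-2\mathfrak d}$, first use $\|P_sa\|_{\rho_1}\lesssim s^{-(\rho_1-\rho_0)/2}\|a\|_{\rho_0}$, then the same increment estimate. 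The factor $s^{(\rho_1-\rho_0)/2}$ in the norm exactly cancels the singularity.
\end{itemize}

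For (ii), the two unweighted terms coincide in both norms, so only the weighted terms need work. Interpolation of H\"older--Besov norms gives
\[
\|f_t\|_{\rho_1}\leq\|f_t\|_{\rho_0}^{1-\vartheta}\|f_t\|_{\rho_2}^{\vartheta},
\qquad \rho_1=(1-\vartheta)\rho_0+\vartheta\rho_2 .
\]
Then
\[
t^{(\rho_1-\rho_0)/2}\|f_t\|_{\rho_1}\leq \|f_t\|_{\rho_0}^{1-\vartheta}\bigl(t^{(\rho_2-\rho_0)/2}\|f_t\|_{\rho_2}\bigr)^{\vartheta},
\]
since $(\rho_1-\rho_0)/2=\vartheta(\rho_2-\rho_0)/2$. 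The increment term is handled the same way, interpolating the $\bC^{\rho_1-2\mathfrak d}$ norm between $\bC^{\rho_0-2\mathfrak d}$ and $\bC^{\rho_2-2\mathfrak d}$ with the same $\vartheta$ and weight $s^{\vartheta(\rho_2-\rho_0)/2}$. Young's inequality finishes the bound.

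For (iii), let $F\in\mathcal E_{T,0}^{\mathfrak d;\rho}$ with $\rho\geq\rho_1\geq\rho_0$. By monotonicity of Besov norms in the regularity index,
\[
\|F_t\|_{\rho_0}\leq\|F_t\|_{\rho_1}\lesssim\|F_t\|_\rho ,
\]
and likewise for the increments, measured in $\bC^{\rho_i-2\mathfrak d}\supseteq\bC^{\rho-2\mathfrak d}$ (the embedding constant depends only on the indices). The weights satisfy $t^{(\rho_1-\rho_0)/2}\leq1$ because $t\leq T\leq1$, so every term of \eqref{eq:controlled-path-norm} is bounded by $\|F\|_{\mathcal E_T^{\mathfrak d;\rho}}$. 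The zero-trace condition passes to the limit, since convergence in $\mathcal E$ implies convergence of $F_0$ in $\bC^{\rho}\hookrightarrow\bC^{\rho_0}$. Hence the embedding is into $\mathcal L_{T,0}^{\rho_0,\rho_1}$.

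I expect the main obstacle to be the weighted increment term in (i), specifically justifying the increment estimate in $B^{\rho}_{\infty,\infty}$ without strong continuity of the semigroup. The $\bC^{\rho_0-2\mathfrak d}$ convention is designed precisely so that the loss $2\mathfrak d$ with $\mathfrak d\leq1$ makes the estimate hold blockwise: on the dyadic block at scale $2^j$, $\|\Delta_j(e^{h(\Delta-1)}-1)a\|_{L^\infty}\lesssim\min(1,h2^{2j})\|\Delta_ja\|_{L^\infty}$, and $\min(1,h2^{2j})\leq (h2^{2j})^{\theta}$ for $\theta\leq1$.
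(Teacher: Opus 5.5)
Your proposal is correct and follows essentially the same route as the paper: semigroup smoothing plus the factorization $P_t-P_s=(P_{t-s}-\mathrm{Id})P_s$ for (i), Besov interpolation with $\vartheta=(\rho_1-\rho_0)/(\rho_2-\rho_0)$ for (ii), and the spatial embeddings together with $t^{(\rho_1-\rho_0)/2}\leq1$ for (iii). The only omission is the degenerate case $\rho_2=\rho_0$ in (ii). There $\vartheta$ is undefined, but the two norms coincide, so the claim is trivial; the paper handles this case separately.
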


\begin{proof}
The first statement follows the heat-flow estimates: for $0<s<t$,
$$\|P_ta\|_{\rho_1}\lesssim t^{-\frac{\rho_1-\rho_0}{2}}\|a\|_{\rho_0},\ \|(P_t-P_s)a\|_{\rho_0-2\mathfrak d}\lesssim|t-s|^\mathfrak d\|a\|_{\rho_0},$$
$$\|(P_t-P_s)a\|_{\rho_1-2\mathfrak d}\lesssim|t-s|^\mathfrak d s^{-\frac{\rho_1-\rho_0}{2}}\|a\|_{\rho_0}.$$

For the second statement, if $\rho_2=\rho_0$, there is nothing to prove.  Suppose that
$\rho_2>\rho_0$ and set $\theta:=\frac{\rho_1-\rho_0}{\rho_2-\rho_0}\in[0,1]$. Then the spatial interpolation gives
$$t^{\frac{\rho_1-\rho_0}{2}}\|f_t\|_{\rho_1}\lesssim\|f_t\|_{\rho_0}^{1-\theta}(t^{\frac{\rho_2-\rho_0}{2}}\|f_t\|_{\rho_2})^\theta,$$
$$s^{\frac{\rho_1-\rho_0}{2}}\frac{\|f_t-f_s\|_{\rho_1-2\mathfrak d}}{|t-s|^\mathfrak d}\lesssim\Big(\frac{\|f_t-f_s\|_{\rho_0-2\mathfrak d}}{|t-s|^\mathfrak d}\Big)^{1-\theta}\Big(s^{\frac{\rho_2-\rho_0}{2}}\frac{\|f_t-f_s\|_{\rho_2-2\mathfrak d}}{|t-s|^\mathfrak d}\Big)^\theta.$$
The remaining two terms in the
$\mathcal L_T^{\rho_0,\rho_1}$ norm are directly controlled by their
counterparts in $\mathcal L_T^{\rho_0,\rho_2}$. 

For the third statement, it follows directly from the spatial embeddings $\bC^\rho\hookrightarrow\bC^{\rho_1}\hookrightarrow\bC^{\rho_0}$. The proof is complete.
\end{proof}

The finite-time integrated Wick-power coordinates which enter the controlled ansatz satisfy 
$$\mathbf X_3^0\in\mathcal E_{T,0}^{\mathfrak d;\frac12-\delta},\ \mathbf Y_5^0\in\mathcal E_{T,0}^{\mathfrak d;\alpha-\frac12-\delta}$$
Lemma~\ref{lem:controlled-path-embeddings} yields
\begin{equation*}
\|\mathbf X_3^0\|_{\mathcal L_T^{\beta,\sigma}}+\|\mathbf Y_5^0\|_{\mathcal L_T^{\beta,\sigma}}\lesssim\|\mathbb Z^0\|_{\mathfrak X_{T,\alpha}^0}.
\end{equation*}
We also record the deterministic estimate for the two integrated paraproducts appearing in the ansatz.

\begin{lemma}\label{lem:controlled-integrated-paraproduct}
Let $r\in\mR$ satisfy $ r+2\mathfrak d<\beta<\sigma<r+2$, $A\in C_T^{\delta',r}$ and $f\in\mathcal L_T^{\beta,\sigma}$.  Define
\begin{equation*}
\mathcal B_A(f):=I_0(f\prec A).
\end{equation*}
Then $\mathcal B_A(f)\in\mathcal L_{T,0}^{\beta,\sigma}$ and, uniformly for $0<T\leq1$,
\begin{equation}\label{eq:integrated-paraproduct-estimate}
\|\mathcal B_A(f)\|_{\mathcal L_T^{\beta,\sigma}}\lesssim T^{1-\frac{\beta-r}{2}}\|A\|_{C_T^{\delta',r}}\|f\|_{\mathcal L_T^{\beta,\sigma}}.
\end{equation}
The same estimate also holds for differences.
\end{lemma}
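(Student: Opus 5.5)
The plan is to bound the four pieces of the $\mathcal L_T^{\beta,\sigma}$ norm of $\mathcal B_A(f)=I_0(f\prec A)$ separately, using only Lemma~\ref{bony}(i) and the Schauder estimates for $P_t=e^{t(\Delta-1)}$:
\[
\|P_t g\|_{\rho+2\theta}\lesssim t^{-\theta}\|g\|_\rho,\qquad \|(P_t-\mathrm{Id})g\|_{\rho-2\theta}\lesssim t^{\theta}\|g\|_\rho,\qquad \theta\in[0,1].
\]
First we record the pointwise bound for the integrand. Write $g_s:=f_s\prec A_s$. Since $\beta>r+2\mathfrak d>r$, we may view $f_s\in\bC^\beta\subset L^\infty$ (note $\beta>0$). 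Lemma~\ref{bony}(i) with the $L^\infty$ version then gives
\[
\|g_s\|_{r}\lesssim\|f_s\|_{L^\infty}\|A_s\|_r\lesssim\|f\|_{\mathcal L_T^{\beta,\sigma}}\|A\|_{C_T^{\delta',r}},
\]
uniformly in $s\in[0,T]$. Only this uniform bound will be used. The weighted $\bC^\sigma$ information on $f$ is not needed, because the target regularity $\sigma<r+2$ is reached by heat smoothing alone.

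Next come the fixed-time bounds. For $\rho\in\{\beta,\sigma\}$ we have $\rho<r+2$, so
\[
\|I_0(g)_t\|_\rho\lesssim\int_0^t (t-s)^{-\frac{\rho-r}{2}}\,\dif s\,\sup_s\|g_s\|_r\lesssim t^{1-\frac{\rho-r}2}\sup_s\|g_s\|_r .
\]
For $\rho=\beta$ this yields $T^{1-\frac{\beta-r}{2}}$. For $\rho=\sigma$, multiplying by $t^{\frac{\sigma-\beta}{2}}$ gives $t^{1-\frac{\beta-r}{2}}$ again, which is exactly the claimed prefactor. Also $I_0(g)_0=0$, so $\mathcal B_A(f)\in\mathcal L_{T,0}^{\beta,\sigma}$ once the increments are controlled.

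For the time increments, fix $0\le s<t$ and split
\[
I_0(g)_t-I_0(g)_s=(P_{t-s}-\mathrm{Id})I_0(g)_s+\int_s^tP_{t-u}g_u\,\dif u .
\]
\begin{itemize}
\item In $\bC^{\beta-2\mathfrak d}$, the first term is bounded by $|t-s|^{\mathfrak d}\|I_0(g)_s\|_\beta\lesssim|t-s|^{\mathfrak d}s^{1-\frac{\beta-r}2}$.
\item In $\bC^{\beta-2\mathfrak d}$, the second term is bounded by $\int_s^t(t-u)^{-\frac{\beta-2\mathfrak d-r}{2}}\dif u\lesssim |t-s|^{1-\frac{\beta-r}{2}+\mathfrak d}$. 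This requires $\beta-2\mathfrak d-r<2$, which holds, and the exponent $\beta-2\mathfrak d-r$ is positive by the hypothesis $r+2\mathfrak d<\beta$, so the kernel is genuinely singular.
\end{itemize}
Both terms are $\lesssim|t-s|^{\mathfrak d}T^{1-\frac{\beta-r}2}$.

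The weighted $\sigma$-increment is handled in the same way, with $\sigma$ in place of $\beta$:
\begin{itemize}
\item The first term gives $|t-s|^{\mathfrak d}s^{1-\frac{\sigma-r}{2}}$. Multiplying by $s^{\frac{\sigma-\beta}2}$ produces $s^{1-\frac{\beta-r}2}$.
\item The second term gives $|t-s|^{\mathfrak d}|t-s|^{1-\frac{\sigma-r}{2}}$, using $\sigma<r+2$. To absorb the weight, we need $s^{\frac{\sigma-\beta}{2}}|t-s|^{1-\frac{\sigma-r}2}\lesssim T^{1-\frac{\beta-r}{2}}$. When $1-\frac{\sigma-r}{2}\ge0$ this is immediate, since $s,|t-s|\le T$ and the two exponents sum to $1-\frac{\beta-r}2$.
\end{itemize}

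I expect the main obstacle to be precisely this weighted increment term when the exponent bookkeeping is tight. In particular, I would double-check that every time integral converges under the stated constraints $r+2\mathfrak d<\beta<\sigma<r+2$. If some integral fails to converge, the fix I would try first is to interpolate the increment between the $\beta$-level and $\sigma$-level bounds, as in Lemma~\ref{lem:controlled-path-embeddings}(ii).

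Since $f\mapsto\mathcal B_A(f)$ is bilinear in $(f,A)$, the difference estimate follows by writing
\[
\mathcal B_A(f)-\mathcal B_{\bar A}(\bar f)=\mathcal B_A(f-\bar f)+\mathcal B_{A-\bar A}(\bar f)
\]
and applying the bound to each term.
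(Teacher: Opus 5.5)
Your proposal is correct and follows essentially the same route as the paper: a uniform $\bC^r$ bound on $f\prec A$, fixed-time Schauder estimates, the splitting $(P_{t-s}-\mathrm{Id})\mathcal B_A(f)_s+\int_s^tP_{t-u}(f\prec A)_u\,\dif u$ for the increments, and bilinearity for the difference estimate. The hedge about the weighted increment is unnecessary, because the hypothesis $\sigma<r+2$ already gives $1-\frac{\sigma-r}{2}>0$, so the case you call immediate is the only case that occurs.
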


\begin{proof}
Since $\beta>2\mathfrak d>0$, 
\begin{equation*}
\|(f\prec A)_t\|_r\lesssim\|f_t\|_\beta\|A_t\|_r.
\end{equation*}
Since $\beta<\sigma<r+2$, the Schauder estimate gives
\begin{align}\label{eq:integrated-paraproduct-pointwise-proof}
\|\mathcal B_A(f)_t\|_\beta\lesssim t^{1-\frac{\beta-r}{2}}\sup_{0\leq u\leq T}\|(f\prec A)_u\|_r,\ t^{\ff{\s-\b}2}\|\mathcal B_A(f)_t\|_\sigma\lesssim t^{1-\frac{\beta-r}{2}}\sup_{0\leq u\leq T}\|(f\prec A)_u\|_r.
\end{align}
We next estimate the time increments.  Let $0\leq s<t\leq T$ and set
$h:=t-s$.  The semigroup property gives
\begin{equation*}
\mathcal B_A(f)_t-\mathcal B_A(f)_s=(P_h-\mathrm{Id})\mathcal B_A(f)_s+\int_s^tP_{t-u}(f\prec A)_u\dif u.
\end{equation*}
For the low increment norm, note $\beta-2\mathfrak d>r$.  Therefore,
$$\|(P_h-\mathrm{Id})\mathcal B_A(f)_s\|_{\beta-2\mathfrak d}\lesssim h^\mathfrak d\|\mathcal B_A(f)_s\|_\beta\lesssim h^\mathfrak d s^{1-\frac{\beta-r}{2}}\sup_{0\leq u\leq T}\|(f\prec A)_u\|_r,$$
$$\Big\|\int_s^tP_{t-u}(f\prec A)_u\dif u\Big\|_{\beta-2\mathfrak d}\lesssim h^{1-\frac{\beta-r}{2}+\mathfrak d}\sup_{0\leq u\leq T}\|(f\prec A)_u\|_r.$$
Consequently,
\begin{equation}\label{eq:integrated-paraproduct-low-increment}
\|\mathcal B_A(f)_t-\mathcal B_A(f)_s\|_{\beta-2\mathfrak d}\lesssim h^\mathfrak d T^{1-\frac{\beta-r}{2}}\sup_{0\leq u\leq T}\|(f\prec A)_u\|_r.
\end{equation}
Similarly,
\begin{equation}\label{eq:integrated-paraproduct-high-increment}
s^{\ff{\s-\b}2}\|\mathcal B_A(f)_t-\mathcal B_A(f)_s\|_{\sigma-2\mathfrak d}\lesssim h^\mathfrak d T^{1-\frac{\beta-r}{2}}\sup_{0\leq u\leq T}\|(f\prec A)_u\|_r.
\end{equation}
Combining \eqref{eq:integrated-paraproduct-pointwise-proof}, \eqref{eq:integrated-paraproduct-low-increment} and \eqref{eq:integrated-paraproduct-high-increment} proves
\eqref{eq:integrated-paraproduct-estimate}.  The difference estimate follows from the bilinear identity
\begin{equation*}
f\prec A-\bar f\prec\bar A=(f-\bar f)\prec A+\bar f\prec(A-\bar A).
\end{equation*}
\end{proof}

For the two singular directions, set $r_2:=-1-\delta$ and $r_4:=\alpha-2-\delta$. The parameter choices imply
\begin{align}\label{eq:diagonal-integrability-margins}
\begin{split}
\min_{i,j\in\{2,4\}}(2\mathfrak d+r_i+r_j+2)&=2\mathfrak d+2\alpha-2-2\delta>0,\\
\min_{i,j\in\{2,4\}}(\beta+r_i+r_j+2)&=\beta+2\alpha-2-2\delta>0.
\end{split}
\end{align}
Moreover,
\begin{equation*}
\gamma<\beta+r_i+2,\ i\in\{2,4\}.
\end{equation*}
The first line of \eqref{eq:diagonal-integrability-margins} is the condition needed for the time-increment part of the integrated diagonal operator; the second is the spatial commutator condition.

Since $\alpha<1$, one has $r_4<r_2<0$.  The choice of $\beta$ and $\sigma$ therefore implies
\begin{equation*}
r_i<\beta<\sigma<r_i+2,\ \min_{i\in\{2,4\}}(\beta-r_i)=\beta+1+\delta>2\mathfrak d.
\end{equation*}
Hence all the assumptions of Lemma~\ref{lem:controlled-integrated-paraproduct} are satisfied for $A_2=X^{\diamond2}$ and $A_4=Y_4$.  The smaller of the two time powers is
\begin{equation*}
\theta_0:=\min_{i\in\{2,4\}}\big(1-\frac{\beta-r_i}{2}\big)=\frac{\alpha-\delta-\beta}{2}>0.
\end{equation*}

\begin{remark}\label{rem:well-prepared-initial-data}
The deterministic fixed-point theory below is formulated for a remainder initial condition $v_0\in\bC^\beta$.  For the approximating equation this corresponds to the well-prepared decomposition
\begin{equation*}
u_{\varepsilon,0}=X_\varepsilon(0)+v_{\varepsilon,0},\ v_{\varepsilon,0}\to v_0\ \text{in}\ \bC^\beta.
\end{equation*}
If $u_{\varepsilon,0}$ is prescribed independently of $X_\varepsilon(0)$, then $v_{\varepsilon,0}=u_{\varepsilon,0}-X_\varepsilon(0)$ has negative regularity.  In the presence of the regular quintic term, that situation requires a separate initial-layer analysis and is not covered here.
\end{remark}

Write the relevant finite-time coordinates as in Section~\ref{subsec:admissible-enhanced-data}:
$$\mathbb X^{0,\Phi}=\bigl(X,X^{\diamond2},\mathbf X_2^0,\mathbf X_3^0,\mathbf X_{31}^0,\mathbf X_{22}^0,\mathbf X_{32}^0\bigr),$$
$$\mathbb V^{0,1}=\bigl(Y_2,Y_3,Y_4,\mathbf Y_2^0,\mathbf Y_3^0,\mathbf Y_4^0,\mathbf Y_5^0\bigr).$$
On a smooth canonical lift,
\begin{equation*}
Y_4=\mu X^{\diamond4},\ \mathbf X_3^0=I_0(X^{\diamond3}),\ \mathbf Y_5^0=\mu I_0(X^{\diamond5}).
\end{equation*}
For a general admissible datum, the same symbols denote the corresponding coordinates of $\mathbb Z^0$.

Let $v_0\in\bC^\beta$ and let
\begin{equation*}
\mathbf v=(v,v')\in\bigl(\mathcal L_{T,v_0}^{\beta,\sigma}\bigr)^2.
\end{equation*}
We define the controlled remainder by
\begin{align}\label{eq:controlled-remainder-definition}
v^\sharp:=v-P_\cdot v_0+\mathbf Y_5^0+\lambda\mathbf X_3^0+5I_0(v'\prec Y_4)+3\lambda I_0(v'\prec X^{\diamond2}).
\end{align}
The two integrated paraproducts identify the singular directions of the equation: $Y_4$ is generated by the quintic perturbation, while $X^{\diamond2}$ is the usual cubic direction.

\begin{definition}[Controlled distributions]\label{def:controlled-distributions}
Let $\mathbb Z\in\mathscr X_{T,\alpha}$, $\mathbb Z^0=\mathscr Q_0\mathbb Z$ and $v_0\in\bC^\beta$.  We say that $\mathbf v=(v,v')$ is controlled by $\mathbb Z$ with initial condition $v_0$ if
\begin{equation*}
(v,v')\in\bigl(\mathcal L_{T,v_0}^{\beta,\sigma}\bigr)^2,\ v^\sharp\in\mathcal L_{T,0}^{\beta,\gamma},
\end{equation*}
where $v^\sharp$ is defined by \eqref{eq:controlled-remainder-definition}.  The space of such pairs is denoted by $\mathcal D_T^{\beta,\gamma}(\mathbb Z;v_0)$.  The size functional is defined by
\begin{equation*}
\|\mathbf v\|_{\mathcal D_T^{\beta,\gamma}(\mathbb Z;v_0)}:=\|v'\|_{\mathcal L_T^{\beta,\sigma}}+\|v^\sharp\|_{\mathcal L_T^{\beta,\gamma}}.
\end{equation*}
For two controlled pairs associated with the same enhanced datum and the same initial condition, define
\begin{equation}\label{eq:controlled-distance}
d_{\mathcal D_T}(\mathbf v,\overline{\mathbf v}):=\|v'-\bar v'\|_{\mathcal L_T^{\beta,\sigma}}+\|v^\sharp-\bar v^\sharp\|_{\mathcal L_T^{\beta,\gamma}}.
\end{equation}
For $\chi>0$, we shall also use the equivalent $T$-dependent metric
\begin{equation}\label{eq:controlled-scaled-distance}
d_{\mathcal D_T}^{(\chi)}(\mathbf v,\overline{\mathbf v}):=T^\chi\|v'-\bar v'\|_{\mathcal L_T^{\beta,\sigma}}+\|v^\sharp-\bar v^\sharp\|_{\mathcal L_T^{\beta,\gamma}}.
\end{equation}
The scaled metric will be used only in the contraction argument; it is not an initial-time weight in the definition of the solution.
\end{definition}

The first component $v$ is reconstructed continuously from $(v',v^\sharp)$ and the fixed data $(\mathbb Z^0,v_0)$.

\begin{lemma}\label{lem:controlled-reconstruction}
Uniformly for $0<T\leq1$, every $\mathbf v\in\mathcal D_T^{\beta,\gamma}(\mathbb Z;v_0)$ satisfies
\begin{align}\label{eq:controlled-reconstruction-estimate}
\begin{split}
\|v\|_{\mathcal L_T^{\beta,\sigma}}\lesssim&\|v_0\|_\beta+\|\mathbf Y_5^0\|_{\mathcal E_T^{\mathfrak d;\alpha-\frac12-\delta}}+|\lambda|\|\mathbf X_3^0\|_{\mathcal E_T^{\mathfrak d;\frac12-\delta}}+\|v^\sharp\|_{\mathcal L_T^{\beta,\gamma}}\\
&+T^{\theta_0}\big(5\|Y_4\|_{C_T^{\delta',\alpha-2-\delta}}+3|\lambda|\|X^{\diamond2}\|_{C_T^{\delta',-1-\delta}}\big)\|v'\|_{\mathcal L_T^{\beta,\sigma}}.
\end{split}
\end{align}
In particular, for every $R>0$ there exists a constant $C_R<\infty$ such that
\begin{equation*}
\|v\|_{\mathcal L_T^{\beta,\sigma}}\leq C_R\big(1+\|\mathbf v\|_{\mathcal D_T^{\beta,\gamma}(\mathbb Z;v_0)}\big)
\end{equation*}
whenever $\|\mathbb Z^0\|_{\mathfrak X_{T,\alpha}^0}+\|v_0\|_\beta\leq R$.

If $\mathbf v,\overline{\mathbf v}$ are controlled by the same datum and have the same initial condition, then
\begin{align}\label{eq:controlled-reconstruction-same-data-difference}
\|v-\bar v\|_{\mathcal L_T^{\beta,\sigma}}\lesssim\|v^\sharp-\bar v^\sharp\|_{\mathcal L_T^{\beta,\gamma}}+T^{\theta_0}\big(\|Y_4\|_{C_T^{\delta',\alpha-2-\delta}}+|\lambda|\|X^{\diamond2}\|_{C_T^{\delta',-1-\delta}}\big)\|v'-\bar v'\|_{\mathcal L_T^{\beta,\sigma}}.
\end{align}
Consequently, on bounded sets and for every $\chi>0$,
\begin{equation}\label{eq:controlled-derivative-swap-smallness}
T^\chi\|v-\bar v\|_{\mathcal L_T^{\beta,\sigma}}\lesssim_R\bigl(T^\chi+T^{\theta_0}\bigr)d_{\mathcal D_T}^{(\chi)}(\mathbf v,\overline{\mathbf v}).
\end{equation}

More generally, if $\mathbf v\in\mathcal D_T^{\beta,\gamma}(\mathbb Z;v_0)$ and
$\overline{\mathbf v}\in\mathcal D_T^{\beta,\gamma}(\overline{\mathbb Z};\bar v_0)$, then, on every set on which the two model norms, the two initial norms and the two controlled sizes are bounded by $R$,
\begin{align}\label{eq:controlled-reconstruction-different-data}
\|v-\bar v\|_{\mathcal L_T^{\beta,\sigma}}\lesssim_R\|v_0-\bar v_0\|_\beta+d_{\mathcal D_T}(\mathbf v,\overline{\mathbf v})+\|\mathbb Z^0-\overline{\mathbb Z}^0\|_{\mathfrak X_{T,\alpha}^0}.
\end{align}
Here the term $d_{\mathcal D_T}(\mathbf v,\overline{\mathbf v})$ is still defined by \eqref{eq:controlled-distance}.
\end{lemma}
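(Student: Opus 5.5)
The proof rests on solving the definition \eqref{eq:controlled-remainder-definition} for $v$:
\begin{equation*}
v=P_\cdot v_0-\mathbf Y_5^0-\lambda\mathbf X_3^0-5I_0(v'\prec Y_4)-3\lambda I_0(v'\prec X^{\diamond2})+v^\sharp,
\end{equation*}
and estimating each of the six terms in $\mathcal L_T^{\beta,\sigma}$ by results already established.

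\textbf{The basic bound.}
\begin{itemize}
\item Lemma~\ref{lem:controlled-path-embeddings}(i) gives $\|P_\cdot v_0\|_{\mathcal L_T^{\beta,\sigma}}\lesssim\|v_0\|_\beta$.
\item Lemma~\ref{lem:controlled-path-embeddings}(iii) applies to $\mathbf Y_5^0$ and $\mathbf X_3^0$, since $\beta\le\sigma\le\alpha-\frac12-\delta\le\frac12-\delta$. Indeed $\sigma=\alpha-\frac12-2\delta$ and $\alpha<1$. This gives the second and third terms of \eqref{eq:controlled-reconstruction-estimate}.
\item Lemma~\ref{lem:controlled-path-embeddings}(ii) with $\rho_0=\beta$, $\rho_1=\sigma$, $\rho_2=\gamma$ gives $\|v^\sharp\|_{\mathcal L_T^{\beta,\sigma}}\lesssim\|v^\sharp\|_{\mathcal L_T^{\beta,\gamma}}$, because $\sigma<\gamma$ by \eqref{eq:controlled-ordering}.
\item For the two integrated paraproducts, apply Lemma~\ref{lem:controlled-integrated-paraproduct} with $r=r_4$, $A=Y_4$ and with $r=r_2$, $A=X^{\diamond2}$. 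Its hypotheses were verified just before the definition of $\theta_0$. Each paraproduct is bounded by $T^{1-\frac{\beta-r_i}{2}}$ times the data norm times $\|v'\|_{\mathcal L_T^{\beta,\sigma}}$. For $T\le1$ both powers are at most $T^{\theta_0}$.
\end{itemize}
Summing these bounds gives \eqref{eq:controlled-reconstruction-estimate}.

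\textbf{The $C_R$ bound.} This follows at once, since every model norm appearing in \eqref{eq:controlled-reconstruction-estimate} is dominated by $\|\mathbb Z^0\|_{\mathfrak X^0_{T,\alpha}}$ and $T^{\theta_0}\le1$.

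\textbf{Same data, same initial condition.} Here $P_\cdot v_0$, $\mathbf Y_5^0$ and $\lambda\mathbf X_3^0$ cancel in $v-\bar v$. By linearity of $\mathcal B_A$ in $f$, what remains is $v^\sharp-\bar v^\sharp$ minus the integrated paraproducts of $v'-\bar v'$. The same two lemmas then give \eqref{eq:controlled-reconstruction-same-data-difference}.

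Multiplying by $T^\chi$ and using $T^\chi\le1$, the $v^\sharp$-term is bounded by $d^{(\chi)}_{\mathcal D_T}$. The other term is $T^{\theta_0}\cdot T^\chi\|v'-\bar v'\|\le T^{\theta_0}d^{(\chi)}_{\mathcal D_T}$. This yields \eqref{eq:controlled-derivative-swap-smallness}, with the model norms absorbed into the constant depending on $R$.

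\textbf{Different data.} This is the only step needing some care, and it is bilinear bookkeeping rather than a real obstacle. Write each term as a difference:
\begin{itemize}
\item $P_\cdot(v_0-\bar v_0)$ is bounded by $\|v_0-\bar v_0\|_\beta$.
\item $\mathbf Y_5^0-\bar{\mathbf Y}_5^0$ is bounded by the model distance via Lemma~\ref{lem:controlled-path-embeddings}(iii).
\item $\lambda\mathbf X_3^0-\bar\lambda\bar{\mathbf X}_3^0=(\lambda-\bar\lambda)\mathbf X_3^0+\bar\lambda(\mathbf X_3^0-\bar{\mathbf X}_3^0)$, which is bounded on $R$-balls because $\lambda$ is itself a coordinate of $\mathbb Z^0$.
\item The paraproducts split via the bilinear identity in the proof of Lemma~\ref{lem:controlled-integrated-paraproduct}:
\begin{equation*}
I_0(v'\prec Y_4)-I_0(\bar v'\prec\bar Y_4)=I_0((v'-\bar v')\prec Y_4)+I_0(\bar v'\prec(Y_4-\bar Y_4)),
\end{equation*}
and analogously for $\lambda I_0(v'\prec X^{\diamond2})$, with an extra split of $\lambda-\bar\lambda$. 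The factors $\|\bar v'\|$ and $\|Y_4\|$ are bounded by $R$.
\item $v^\sharp-\bar v^\sharp$ is bounded by the embedding of Lemma~\ref{lem:controlled-path-embeddings}(ii).
\end{itemize}
Collecting terms gives \eqref{eq:controlled-reconstruction-different-data}, with constants depending polynomially on $R$.
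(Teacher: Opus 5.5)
Your proposal is correct and is essentially the paper's own argument. You invert \eqref{eq:controlled-remainder-definition}; you control $P_\cdot v_0$, $\mathbf Y_5^0$, $\lambda\mathbf X_3^0$ and $v^\sharp$ by Lemma~\ref{lem:controlled-path-embeddings}; you control the two integrated paraproducts by Lemma~\ref{lem:controlled-integrated-paraproduct} with $r=r_2,r_4$; and you telescope the bilinear terms for the difference estimates.

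One small slip: in deriving \eqref{eq:controlled-derivative-swap-smallness} you should not use $T^\chi\le1$ to drop the factor $T^\chi$ in front of the $v^\sharp$-term. Taken literally, that gives only $(1+T^{\theta_0})\,d_{\mathcal D_T}^{(\chi)}(\mathbf v,\overline{\mathbf v})$, which is weaker than the claim. The correct step is simply $T^\chi\|v^\sharp-\bar v^\sharp\|_{\mathcal L_T^{\beta,\gamma}}\le T^\chi d_{\mathcal D_T}^{(\chi)}(\mathbf v,\overline{\mathbf v})$. This retained $T^\chi$ (the $S^\chi$ in \eqref{eq:fixed-point-contraction}) is exactly what makes the derivative swap small in the contraction argument.
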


\begin{proof}
Lemma~\ref{lem:controlled-path-embeddings} controls the heat extension, the two finite-time integrated Wick-power coordinates and the embedding $\mathcal L_{T,0}^{\beta,\gamma} \hookrightarrow\mathcal L_{T,0}^{\beta,\sigma}$. Lemma~\ref{lem:controlled-integrated-paraproduct}, applied with $r=r_2$ and $r=r_4$, controls the two integrated paraproducts.  Substitution in \eqref{eq:controlled-remainder-definition} proves \eqref{eq:controlled-reconstruction-estimate}.

For two pairs over the same data, subtract their reconstruction formulas and apply the difference part of Lemma~\ref{lem:controlled-integrated-paraproduct}; this proves \eqref{eq:controlled-reconstruction-same-data-difference} and then \eqref{eq:controlled-derivative-swap-smallness}.  For different data, expand each bilinear term by a telescoping identity.  The finite-time integrated Wick-power differences and the differences of $Y_4$, $X^{\diamond2}$ and $\lambda$ are components of $\mathbb Z^0-\overline{\mathbb Z}^0$.  This gives \eqref{eq:controlled-reconstruction-different-data}.
\end{proof}

\begin{proposition}\label{prop:controlled-space-complete}
For fixed $(\mathbb Z,v_0)$, the metric space $\mathcal D_T^{\beta,\gamma}(\mathbb Z;v_0)$ is complete for $d_{\mathcal D_T}$.  It is also complete for each equivalent metric $d_{\mathcal D_T}^{(\chi)}$ with $\chi>0$.
\end{proposition}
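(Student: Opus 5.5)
The plan is to identify $\mathcal D_T^{\beta,\gamma}(\mathbb Z;v_0)$, via the map $\mathbf v\mapsto(v',v^\sharp)$, with a closed subset of the Banach space $\mathcal L_{T}^{\beta,\sigma}\times\mathcal L_{T}^{\beta,\gamma}$. Under this identification $d_{\mathcal D_T}$ becomes the product norm distance, and completeness reduces to closedness.

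First I check that the map is injective and that $v$ is recovered continuously. Given $(v',v^\sharp)$, formula \eqref{eq:controlled-remainder-definition} determines
\[
v=v^\sharp+P_\cdot v_0-\mathbf Y_5^0-\lambda\mathbf X_3^0-5I_0(v'\prec Y_4)-3\lambda I_0(v'\prec X^{\diamond2}).
\]
So $d_{\mathcal D_T}$ is a genuine metric. Moreover, $v$ depends affinely and Lipschitz-continuously on $(v',v^\sharp)$ in $\mathcal L_T^{\beta,\sigma}$, by Lemma~\ref{lem:controlled-integrated-paraproduct} (with $r=r_2,r_4$) and the estimate \eqref{eq:controlled-reconstruction-same-data-difference}.

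Next, let $(\mathbf v_n)$ be $d_{\mathcal D_T}$-Cauchy. Then $v_n'\to w'$ in $\mathcal L_T^{\beta,\sigma}$ and $v_n^\sharp\to w^\sharp$ in $\mathcal L_T^{\beta,\gamma}$, by completeness of these Banach spaces. By \eqref{eq:controlled-reconstruction-same-data-difference}, $v_n$ is Cauchy in $\mathcal L_T^{\beta,\sigma}$; call its limit $w$. The constraints are closed conditions:
\begin{itemize}
\item The traces are continuous linear functionals, since the sup norm controls the value at $t=0$. Hence $w_0=w'_0=v_0$ and $w^\sharp_0=0$, so $w,w'\in\mathcal L_{T,v_0}^{\beta,\sigma}$ and $w^\sharp\in\mathcal L_{T,0}^{\beta,\gamma}$.
\item The map $(v,v')\mapsto v-P_\cdot v_0+\mathbf Y_5^0+\lambda\mathbf X_3^0+5I_0(v'\prec Y_4)+3\lambda I_0(v'\prec X^{\diamond2})$ is continuous from $(\mathcal L_T^{\beta,\sigma})^2$ to $\mathcal L_T^{\beta,\sigma}$, again by Lemma~\ref{lem:controlled-integrated-paraproduct}. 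Passing to the limit in $\mathcal L_T^{\beta,\sigma}$, which is weaker than $\mathcal L_T^{\beta,\gamma}$ by Lemma~\ref{lem:controlled-path-embeddings}(ii), shows that this map sends $(w,w')$ to $w^\sharp$.
\end{itemize}
Therefore $\mathbf w=(w,w')\in\mathcal D_T^{\beta,\gamma}(\mathbb Z;v_0)$ and $d_{\mathcal D_T}(\mathbf v_n,\mathbf w)\to0$.

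The one point needing care is that $\mathcal L_T^{\rho_0,\rho_1}$ is complete. It is defined by finiteness of a norm, not as a closure. I would argue this directly: a Cauchy sequence converges uniformly in $\bC^{\rho_0}$, and pointwise on $(0,T]$ in $\bC^{\rho_1}$ with the weights. The weighted sup and Hölder seminorms are lower semicontinuous under pointwise convergence in the weaker norm $\bC^{\rho_0-2\mathfrak d}$, since Besov norms are lower semicontinuous under distributional convergence. The standard Fatou-type argument then gives convergence in norm.

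Finally, for $\chi>0$ and fixed $T\in(0,1]$, we have $T^\chi d_{\mathcal D_T}\le d^{(\chi)}_{\mathcal D_T}\le d_{\mathcal D_T}$. So $d^{(\chi)}_{\mathcal D_T}$ is bi-Lipschitz equivalent to $d_{\mathcal D_T}$, has the same Cauchy sequences, and inherits completeness.
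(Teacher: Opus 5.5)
Your proposal is correct and follows essentially the paper's argument: the affine identification $\mathbf v\mapsto(v',v^\sharp)$, whose inverse is the reconstruction formula \eqref{eq:controlled-remainder-definition} (continuous by Lemma~\ref{lem:controlled-reconstruction}), combined with completeness of the path spaces and equivalence of the scaled metric. The paper observes that this map is onto $\mathcal L_{T,v_0}^{\beta,\sigma}\times\mathcal L_{T,0}^{\beta,\gamma}$, since the reconstructed $v$ automatically has trace $v_0$ because every other term vanishes at $t=0$; this makes your closedness check unnecessary, though harmless, and the only other difference is that you also sketch completeness of $\mathcal L_T^{\rho_0,\rho_1}$, which the paper takes for granted.
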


\begin{proof}
The map $\mathbf v=(v,v')\longmapsto(v',v^\sharp)$ is an affine bijection from $\mathcal D_T^{\beta,\gamma}(\mathbb Z;v_0)$ onto $\mathcal L_{T,v_0}^{\beta,\sigma}\times\mathcal L_{T,0}^{\beta,\gamma}$. Its inverse is the reconstruction formula \eqref{eq:controlled-remainder-definition}, which is continuous by Lemma~\ref{lem:controlled-reconstruction}.  The two path spaces are complete, and the scaled metric is equivalent to the unscaled one for every fixed $T>0$.
\end{proof}

\begin{remark}[Role of $\alpha>\frac56$]\label{rem:alpha-threshold-controlled-space}
The most singular classical product involving the controlled remainder is $v^\sharp\circ Y_4$.  Since $v^\sharp_t\in\bC^\gamma$ and $Y_{4,t}\in\bC^{\alpha-2-\delta}$, it is well defined provided $\gamma+\alpha-2-\delta>0$. On the other hand, the paracontrolled expansion produces at most the remainder regularity $2\alpha-\frac12-$, which explains the upper bound imposed on $\gamma$.  Ignoring arbitrarily small losses, the interval
\begin{equation*}
2-\alpha<\gamma<2\alpha-\frac12
\end{equation*}
is nonempty exactly when $\alpha>\frac56$.
\end{remark}

The fixed-point map constructed below has the form
\begin{equation*}
\mathcal M_{\mathbb Z,v_0}(v,v')=\bigl(\Gamma_{\mathbb Z,v_0}(v,v'),v\bigr).
\end{equation*}
Therefore every fixed point satisfies $v'=v$.  In the contraction argument we shall use the scaled metric \eqref{eq:controlled-scaled-distance}; the smallness of the derivative swap in the second component is precisely \eqref{eq:controlled-derivative-swap-smallness}.

\subsection{The renormalized mild map}\label{subsec:renormalized-mild-map}

We now define the nonlinear mild map for a generic admissible enhanced datum. The only genuinely diagonal terms are treated after the outer heat integration.

Fix $\lambda_*>0$ and let
\begin{equation*}
\mathbb Z=(\mathbb X^\Phi,\lambda,\mu,\mathbb V)\in\mathscr X_{T,\alpha},\ \lambda\geq\lambda_*.
\end{equation*}
Write $\mathbb Z^0=\mathscr Q_0\mathbb Z$ for the finite-time realization and use the coordinate notation
$$\mathbb X^{0,\Phi}:=\bigl(X,X^{\diamond2},\mathbf X_2^0,\mathbf X_3^0,\mathbf X_{31}^0,\mathbf X_{22}^0,\mathbf X_{32}^0\bigr),$$
$$\mathbb V^{0,1}:=\bigl(Y_2,Y_3,Y_4,\mathbf Y_2^0,\mathbf Y_3^0,\mathbf Y_4^0,\mathbf Y_5^0\bigr),$$
$$\mathbb V^{0,2}:=\bigl(\mathbf R_{53}^0,\mathbf R_{52}^0,\mathbf R_{51}^0,\mathbf R_{24}^0,\mathbf R_{42}^0,\mathbf R_{33}^0,\mathbf R_{44}^0,\mathbf R_{34}^0,\mathbf R_{54}^0\bigr),$$
On a smooth canonical lift,
\begin{align*}
Y_j=\mu X^{\diamond j},\ j=2,3,4;\ \mathbf Y_j^0=\mu I_0(X^{\diamond j}),\ j=2,3,4,5.
\end{align*}
Set
\begin{equation*}
A_2:=X^{\diamond2},\ A_4:=Y_4,\ \mathbf A_2:=\mathbf X_2^0,\ \mathbf A_4:=\mathbf Y_4^0.
\end{equation*}
Thus $A_i\in C_T^{\delta',r_i}$ and $\mathbf A_i\in\mathcal E_{T,0}^{\mathfrak d;r_i+2}$, where $r_2=-1-\delta$ and $r_4=\alpha-2-\delta$.

For $t>0$ and smooth spatial distributions, define the commutator
\begin{align*}
\mathfrak R_t^{\mathrm h}(f,g):=P_t(f\prec g)-f\prec P_t g,\ \mathfrak R(f,g,h):=(f\prec g)\circ h-f(g\circ h).
\end{align*}
For a smooth path $f$ and $i,j\in\{2,4\}$, set
\begin{align}\label{eq:J-time-definition}
\mathscr J_{ij}^{\mathrm{time}}(f)(t):=\int_0^t P_{t-s}\int_0^s(f_r-f_s)(P_{s-r}A_{i,r}\circ A_{j,s})\dif r\dif s,
\end{align}
\begin{align}
\mathscr J_{ij}^{\mathrm{heat}}(f)(t):=\int_0^t P_{t-s}\int_0^s\bigl(\mathfrak R_{s-r}^{\mathrm h}(f_r,A_{i,r})\circ A_{j,s}\bigr)\dif r\dif s,
\end{align}
\begin{align}\label{eq:J-bony-definition}
\mathscr J_{ij}^{\mathrm{Bony}}(f)(t):=\int_0^t P_{t-s}\int_0^s\mathfrak R(f_r,P_{s-r}A_{i,r},A_{j,s})\dif r\dif s,
\end{align}
and
\begin{equation}\label{eq:J-definition}
\mathscr J_{ij}(f):=\mathscr J_{ij}^{\mathrm{time}}(f)+\mathscr J_{ij}^{\mathrm{heat}}(f)+\mathscr J_{ij}^{\mathrm{Bony}}(f).
\end{equation}

Choose $\kappa_{\mathrm J}>0$ so small that
$$0<\kappa_{\mathrm J}<\min\{\beta-2\mathfrak d,\,2\mathfrak d+2\alpha-2-2\delta\},$$
and define
$$\eta_{\mathrm J}:=\min\{\beta-2\mathfrak d,\,2\mathfrak d+2\alpha-2-2\delta\}-\kappa_{\mathrm J}>0.$$

\begin{lemma}[Integrated diagonal estimate]\label{lem:integrated-diagonal-estimate}
For every $i,j\in\{2,4\}$, the three expressions \eqref{eq:J-time-definition}--\eqref{eq:J-bony-definition} admit canonical analytic interpretations for $f\in\mathcal L_T^{\beta,\sigma}$, $A_i\in C_T^{\delta',r_i}$ and $A_j\in C_T^{\delta',r_j}$. Their sum defines a jointly continuous map
\begin{equation*}
\mathscr J_{ij}:
\mathcal L_T^{\beta,\sigma}
\times C_T^{\delta',r_i}
\times C_T^{\delta',r_j}
\longrightarrow
\mathcal L_{T,0}^{\beta,\gamma}.
\end{equation*}
There exist $\theta_{\mathrm J}>0$ and a polynomial $\mathcal Q$ such that, uniformly for $0<T\leq1$,
\begin{equation}\label{eq:J-estimate}
\|\mathscr J_{ij}(f)\|_{\mathcal L_T^{\beta,\gamma}}\leq T^{\theta_{\mathrm J}}\mathcal Q\bigl(\|A_i\|_{C_T^{\delta',r_i}},\|A_j\|_{C_T^{\delta',r_j}}\bigr)\|f\|_{\mathcal L_T^{\beta,\sigma}}.
\end{equation}
On bounded sets, the corresponding difference estimate is
\begin{align}\label{eq:J-difference-estimate}
\|\mathscr J_{ij}^{A_i,A_j}(f)-\mathscr J_{ij}^{\bar A_i,\bar A_j}(\bar f)\|_{\mathcal L_T^{\beta,\gamma}}\lesssim_R T^{\theta_{\mathrm J}}\bigl(\|f-\bar f\|_{\mathcal L_T^{\beta,\sigma}}+\|A_i-\bar A_i\|_{C_T^{\delta',r_i}}+\|A_j-\bar A_j\|_{C_T^{\delta',r_j}}\bigr).
\end{align}
For smooth data one has the exact identity
\begin{align}\label{eq:integrated-master-identity}
I_0(I_0(f\prec A_i)\circ A_j)=I_0(f(\mathbf A_i\circ A_j))+\mathscr J_{ij}(f).
\end{align}
\end{lemma}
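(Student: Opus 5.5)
The plan is to establish the exact identity \eqref{eq:integrated-master-identity} first on smooth data, and then to prove \eqref{eq:J-estimate} for each of the three pieces separately. Continuity, the analytic interpretation and \eqref{eq:J-difference-estimate} then follow by density and multilinearity. For smooth $f$, $A_i$, $A_j$ we write
\begin{equation*}
I_0(f\prec A_i)_s=\int_0^sP_{s-r}(f_r\prec A_{i,r})\,\dif r .
\end{equation*}
We decompose $P_{s-r}(f_r\prec A_{i,r})=\mathfrak R^{\mathrm h}_{s-r}(f_r,A_{i,r})+(f_r-f_s)\prec P_{s-r}A_{i,r}+f_s\prec P_{s-r}A_{i,r}$, take the resonant product with $A_{j,s}$, and apply Lemma~\ref{commutatores} in the form $(f_r\prec g)\circ h=\mathfrak R(f_r,g,h)+f_r(g\circ h)$. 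Since $\int_0^sP_{s-r}A_{i,r}\,\dif r=\mathbf A_{i,s}$, the leading terms recombine into $f_s(\mathbf A_{i,s}\circ A_{j,s})$. Applying $I_0$ then gives \eqref{eq:integrated-master-identity}, with the leftovers grouped exactly as $\mathscr J^{\mathrm{time}}_{ij}$, $\mathscr J^{\mathrm{heat}}_{ij}$ and $\mathscr J^{\mathrm{Bony}}_{ij}$. (The time term as written uses the full product $(f_r-f_s)(\cdot)$; I would arrange the bookkeeping so that it matches the stated definitions, moving the paraproduct/resonant reordering into the Bony term.)

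For the bounds, fix $s$ and estimate the inner $\dif r$ integrand in $\bC^{\rho}$ with $\rho$ slightly above $\gamma-2$. Then the outer Schauder integral $\int_0^tP_{t-s}(\cdot)\,\dif s$ gains almost two derivatives, which gives the $\bC^\gamma$ and weighted-$\bC^\gamma$ parts of the $\mathcal L^{\beta,\gamma}_T$ norm. The time increments are handled as in the proof of Lemma~\ref{lem:controlled-integrated-paraproduct}.

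\begin{itemize}
\item \emph{Bony term.} Lemma~\ref{commutatores} with $f_r\in\bC^\beta$, $P_{s-r}A_{i,r}\in\bC^{r_i+2\theta}$ (cost $(s-r)^{-\theta}$) and $A_{j,s}\in\bC^{r_j}$ gives regularity $\beta+r_i+r_j+2\theta$. Choosing $\theta<1$ is allowed because $\beta+r_i+r_j+2>0$ by \eqref{eq:diagonal-integrability-margins}; the resulting singularity is $\dif r$-integrable.
\item \emph{Heat commutator.} The estimate $\|\mathfrak R^{\mathrm h}_{u}(f,g)\|_{\beta+r_i+2\theta}\lesssim u^{-\theta}\|f\|_\beta\|g\|_{r_i}$ for $\beta<1$ is standard. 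The resonant product with $A_{j,s}$ is then legitimate, again by the second margin, and the estimate proceeds as for the Bony term.
\item \emph{Time term.} Use $\|f_r-f_s\|_{\beta-2\mathfrak d}\lesssim(s-r)^{\mathfrak d}$ together with $\|P_{s-r}A_{i,r}\circ A_{j,s}\|_{r_i+r_j+2\theta}\lesssim(s-r)^{-\theta}$. This product requires $r_i+r_j+2\theta>0$, and the product with $f_r-f_s$ needs $\beta-2\mathfrak d+r_i+r_j+2\theta>0$. We then need $\theta-\mathfrak d<1$ for integrability; choosing $\theta$ just above $-(r_i+r_j)/2$ requires precisely $2\mathfrak d+r_i+r_j+2>0$, which is the first line of \eqref{eq:diagonal-integrability-margins}.
\end{itemize}

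In each case the remaining room after integrating in $r$ and $s$ gives a positive power $T^{\theta_{\mathrm J}}$, with $\theta_{\mathrm J}$ comparable to $\eta_{\mathrm J}$. The constraint $\gamma<\beta+r_i+2$ ensures that the regularity target is reached.

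The main obstacle is the time term. The factor $f_r-f_s$ lives only in $\bC^{\beta-2\mathfrak d}$, and the resonant product $P_{s-r}A_{i,r}\circ A_{j,s}$ is ill-defined at $r=s$, so the two singularities must be balanced. I will choose the exponent $\theta=\theta(i,j)$ so that both $\beta-2\mathfrak d>\kappa_{\mathrm J}$ and $2\mathfrak d+2\alpha-2-2\delta>\kappa_{\mathrm J}$ are used. For the weighted part of the norm near $s=0$, I will use the $s^{(\sigma-\beta)/2}$ control of $f$ and split $\int_0^s$ at $r=s/2$.

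The difference estimate follows by trilinearity in $(f,A_i,A_j)$, applying the same bounds to each telescoped term. Continuity and the canonical interpretation follow by extending from smooth data, since the bounds only involve the norms stated.
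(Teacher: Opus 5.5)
Your proposal follows the paper's proof essentially step for step: the same splitting into a heat commutator, a Bony commutator and a time-increment term (your early split $f_r=f_s+(f_r-f_s)$ recombines into the stated $\mathscr J^{\mathrm{Bony}}_{ij}$ by linearity of $\mathfrak R$ in its first argument), the same use of the two margins in \eqref{eq:diagonal-integrability-margins}, and the same outer Schauder step. Your choice of the time-term exponent just above $-(r_i+r_j)/2$, rather than just below $1+\mathfrak d$ as in the paper, is equally valid.

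Two small adjustments. First, do not invoke density of smooth data, since smooth paths are not dense in these $\bC^\rho$-based spaces. Second, the $s^{(\sigma-\beta)/2}$ weight and the split at $r=s/2$ are unnecessary. Your bounds use only $\|f_r\|_\beta$ and the $\bC^{\beta-2\mathfrak d}$ increment of $f$, so they define $\mathscr J_{ij}$ and give joint continuity directly, which is exactly what the paper does.
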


\begin{proof}
We indicate the estimates which are responsible for the parameter conditions. Let $\tau=s-r$.  By the time increment in \eqref{eq:controlled-path-norm},
\begin{equation*}\|f_r-f_s\|_{\beta-2\mathfrak d}\lesssim\tau^{\mathfrak d}\|f\|_{\mathcal L_T^{\beta,\sigma}}.
\end{equation*}
Set $\eta_{ij}^{\mathrm{time}}:=2\mathfrak d+r_i+r_j+2-\kappa_{\mathrm J}>0$ by the first line of \eqref{eq:diagonal-integrability-margins}. The heat-flow and resonant-product estimates give
\begin{equation*}
\|P_\tau A_{i,r}\circ A_{j,s}\|_{\eta_{ij}^{\mathrm{time}}}\lesssim\tau^{-1-\mathfrak d+\ff{\kappa_{\mathrm J}}2}\|A_i\|_{C_T^{\delta',r_i}}\|A_j\|_{C_T^{\delta',r_j}}.
\end{equation*}
Since $\beta-2\mathfrak d>0$, multiplication by $f_r-f_s$ is classical and the integrand in \eqref{eq:J-time-definition} belongs to $\bC^{\eta_{\mathrm J}}$ with size bounded by $\tau^{-1+\ff{\kappa_{\mathrm J}}2}\|f\|_{\mathcal L_T^{\beta,\sigma}}\|A_i\|_{C_T^{\delta',r_i}}\|A_j\|_{C_T^{\delta',r_j}}$. The singularity is integrable at $r=s$.

For the heat commutator, the standard estimate gives
\begin{equation*}
\|\mathfrak R_\tau^{\mathrm h}(f_r,A_{i,r})\|_{\beta+r_i+2-\kappa_{\mathrm J}}\lesssim\tau^{-1+\ff{\kappa_{\mathrm J}}2}\|f_r\|_\beta\|A_{i,r}\|_{r_i}.
\end{equation*}
After taking the resonance with $A_{j,s}$, the spatial regularity is $\beta+r_i+r_j+2-\kappa_{\mathrm J}>0$ by the second line of \eqref{eq:diagonal-integrability-margins}.

Finally, for $i,j\in\{2,4\}$, $r_i+r_j+2-\kappa_{\mathrm J}<0$, while $\beta+r_i+r_j+2-\kappa_{\mathrm J}>0$.  Hence the commutator estimate applied to $\mathfrak R(f_r,P_\tau A_{i,r},A_{j,s})$ yields the same $\tau^{-1+\ff{\kappa_{\mathrm J}}2}$ bound in a space of regularity at least $\eta_{\mathrm J}$.

Since $\eta_{\mathrm J}>0$ and $\gamma<2<\eta_{\mathrm J}+2$, the Schauder estimate, including its time-increment version with exponent $\mathfrak d$, maps the outer integral into $\mathcal L_{T,0}^{\beta,\gamma}$ and yields a positive power of $T$. This proves \eqref{eq:J-estimate}.  The estimates are multilinear, so telescoping gives \eqref{eq:J-difference-estimate}.

For the algebraic identity, write
$$P_{s-r}(f_r\prec A_{i,r})=f_r\prec P_{s-r}A_{i,r}+\mathfrak R_{s-r}^{\mathrm h}(f_r,A_{i,r}),$$
$$(f_r\prec P_{s-r}A_{i,r})\circ A_{j,s}=f_r(P_{s-r}A_{i,r}\circ A_{j,s})+\mathfrak R(f_r,P_{s-r}A_{i,r},A_{j,s}).$$
Splitting $f_r=f_s+(f_r-f_s)$ and integrating first in $r$ gives \eqref{eq:integrated-master-identity}.  The estimates above define $\mathscr J_{ij}$ without any density argument.  For smooth inputs, the preceding algebraic computation proves \eqref{eq:integrated-master-identity}.
\end{proof}

The finite-time resonant coordinates may blow up at $t=0$.  The following weighted Schauder estimate is used whenever such a coordinate occurs under the outer integral.

\begin{lemma}\label{lem:weighted-forcing}
Let $\rho<\beta\leq\gamma$ and $0\leq\omega<1$ satisfy $
\rho\leq\beta-2\mathfrak d$, $\gamma<\rho+2$ and $\Theta(\rho,\omega):=1-\omega-\frac{\beta-\rho}{2}>0$.
If $G:(0,T]\to\bC^\rho$ satisfies
\begin{equation*}
M_{\rho,\omega}(G):=\sup_{0<t\leq T}t^\omega\|G_t\|_\rho<\infty,
\end{equation*}
then $I_0(G)\in\mathcal L_{T,0}^{\beta,\gamma}$ and
\begin{equation}\label{eq:weighted-forcing-estimate}
\|I_0(G)\|_{\mathcal L_T^{\beta,\gamma}}\lesssim T^{\Theta(\rho,\omega)}M_{\rho,\omega}(G).
\end{equation}
If $F\in\mathcal W_T^{\omega,\zeta;\rho}$ and $f\in\mathcal L_T^{\beta,\sigma}$ with $\beta+\rho>0$, then $f_tF_t$ is defined for $t>0$ and
\begin{equation}\label{eq:weighted-product-forcing-estimate}
\|I_0(fF)\|_{\mathcal L_T^{\beta,\gamma}}\lesssim T^{\Theta(\rho,\omega)}\|f\|_{\mathcal L_T^{\beta,\sigma}}\|F\|_{\mathcal W_T^{\omega,\zeta;\rho}}.
\end{equation}
Both estimates have locally Lipschitz difference versions.
\end{lemma}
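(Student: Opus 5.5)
We prove Lemma~\ref{lem:weighted-forcing} by a direct Schauder-type computation for the weighted forcing $G$, checking each of the four terms in the norm \eqref{eq:controlled-path-norm} with $(\rho_0,\rho_1)=(\beta,\gamma)$. The basic input is the smoothing estimate $\|P_\tau g\|_{\rho+\kappa}\lesssim \tau^{-\kappa/2}\|g\|_\rho$ for $\kappa\geq0$, together with $\|(P_h-\mathrm{Id})g\|_{\rho'-2\mathfrak d}\lesssim h^{\mathfrak d}\|g\|_{\rho'}$. Throughout we use $\|G_s\|_\rho\leq s^{-\omega}M_{\rho,\omega}(G)$ and the Beta integral
\begin{equation*}
\int_0^t(t-s)^{-a}s^{-\omega}\,\dif s\lesssim t^{1-a-\omega},
\end{equation*}
which is valid for $a,\omega<1$.

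\emph{Pointwise terms.} For the $\bC^\beta$ bound take $a=\frac{\beta-\rho}{2}<1$. This is admissible because $\Theta>0$ and $\omega\geq0$. It gives $\|I_0(G)_t\|_\beta\lesssim t^{\Theta}M$. For the weighted $\bC^\gamma$ bound take $a=\frac{\gamma-\rho}{2}<1$, using $\gamma<\rho+2$. This gives $t^{\frac{\gamma-\beta}{2}}\|I_0(G)_t\|_\gamma\lesssim t^{\frac{\gamma-\beta}{2}+1-\frac{\gamma-\rho}{2}-\omega}M=t^\Theta M$. Since $T\leq1$, both are bounded by $T^\Theta M$.

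\emph{Increment terms.} For $0\leq s<t$ and $h=t-s$ we split, as in the proof of Lemma~\ref{lem:controlled-integrated-paraproduct},
\begin{equation*}
I_0(G)_t-I_0(G)_s=(P_h-\mathrm{Id})I_0(G)_s+\int_s^tP_{t-u}G_u\,\dif u.
\end{equation*}
The first piece is handled by the pointwise bounds just proved. It contributes $h^{\mathfrak d}s^\Theta M$ in $\bC^{\beta-2\mathfrak d}$ and $h^{\mathfrak d}s^{\Theta-\frac{\gamma-\beta}{2}}M$ in $\bC^{\gamma-2\mathfrak d}$; multiplying the latter by $s^{\frac{\gamma-\beta}{2}}$ gives the required form.

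The second piece is the main point, because of the weight singularity at $u\to0$ when $s$ is small. In $\bC^{\beta-2\mathfrak d}$ we have
\begin{equation*}
\Big\|\int_s^t P_{t-u}G_u\,\dif u\Big\|_{\beta-2\mathfrak d}\lesssim\int_s^t(t-u)^{-\frac{(\beta-2\mathfrak d-\rho)_+}{2}}u^{-\omega}\,\dif u\,M,
\end{equation*}
where $\rho\leq\beta-2\mathfrak d$ makes the exponent equal to $\frac{\beta-2\mathfrak d-\rho}{2}$. We split into two cases.
\begin{itemize}
\item If $u\geq s\geq h$, bound $u^{-\omega}\leq h^{-\omega}$. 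This yields $h^{1-\frac{\beta-\rho}{2}+\mathfrak d-\omega}=h^{\mathfrak d}h^{\Theta}$.
\item If $s<h$, then $u\leq t\leq 2h$. The Beta integral over $[0,t]$ gives $t^{\Theta+\mathfrak d}\lesssim h^{\mathfrak d}T^\Theta$.
\end{itemize}
The $\bC^{\gamma-2\mathfrak d}$ increment with weight $s^{\frac{\gamma-\beta}{2}}$ is treated the same way with exponent $\frac{\gamma-2\mathfrak d-\rho}{2}<1$. The weight $s^{\frac{\gamma-\beta}{2}}\leq u^{\frac{\gamma-\beta}{2}}$ is absorbed into the integrand, which reduces the computation to the previous one. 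This proves \eqref{eq:weighted-forcing-estimate}; the zero trace is clear since $\|I_0(G)_t\|_\beta\to0$.

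\emph{Product estimate.} For \eqref{eq:weighted-product-forcing-estimate}, the condition $\beta+\rho>0$ and Lemma~\ref{bony}(iii) give $\|f_tF_t\|_{\rho\wedge\beta}=\|f_tF_t\|_\rho\lesssim\|f_t\|_\beta\|F_t\|_\rho$, using $\rho<\beta$. Hence $M_{\rho,\omega}(fF)\leq\|f\|_{\mathcal L_T^{\beta,\sigma}}\|F\|_{\mathcal W_T^{\omega,\zeta;\rho}}$, and the first part applies. The time-Hölder part of the $\mathcal W$ norm is not needed here.

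\emph{Difference versions.} These follow by linearity in $G$ and bilinearity of $(f,F)\mapsto fF$.
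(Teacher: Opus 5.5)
Your proof is correct and matches the paper's argument: pointwise Schauder bounds, the splitting $(P_h-\mathrm{Id})I_0(G)_s+\int_s^tP_{t-u}G_u\,\dif u$ with the case split $h\le s$ versus $h>s$, and the Bony estimate to reduce \eqref{eq:weighted-product-forcing-estimate} to \eqref{eq:weighted-forcing-estimate}. One small correction concerns the $\bC^{\gamma-2\mathfrak d}$ increment with $h\le s$: the effective weight $\omega-\frac{\gamma-\beta}{2}$ can be negative (for instance when $\omega=0$), so $u^{-\omega}\le h^{-\omega}$ does not carry over literally; use $u\in[s,2s]$ instead, which gives a bound of $s^{\Theta}+h^{\Theta}\lesssim T^{\Theta}$.
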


\begin{proof}
The pointwise Schauder estimate show that
\begin{align}\label{eq:weighted-forcing-pointwise-proof}
\|(I_0(G))_t\|_\beta\lesssim M_{\rho,\omega}(G) t^{\Theta(\rho,\omega)},\ t^{\ff{\g-\b}2}\|(I_0(G))_t\|_\gamma\lesssim M_{\rho,\omega}(G) t^{\Theta(\rho,\omega)}.
\end{align}

Let $0\leq s<t\leq T$ and set $h:=t-s$.  We use
\begin{equation*}
(I_0(G))_t-(I_0(G))_s=(P_h-\mathrm{Id})(I_0(G))_s+\int_s^tP_{t-u}G_u\dif u.
\end{equation*}
The heat-flow increment estimate and \eqref{eq:weighted-forcing-pointwise-proof} give
\begin{align*}
h^{-\mathfrak d}\|(P_h-\mathrm{Id})(I_0(G))_s\|_{\beta-2\mathfrak d}\lesssim M_{\rho,\omega}(G) T^{\Theta(\rho,\omega)},\ s^{\ff{\g-\b}2} h^{-\mathfrak d}\|(P_h-\mathrm{Id})(I_0(G))_s\|_{\gamma-2\mathfrak d}\lesssim M_{\rho,\omega}(G) T^{\Theta(\rho,\omega)}.
\end{align*}
The second estimate is used only for $s>0$.

It remains to treat the short integral.  The condition $\rho\leq\beta-2\mathfrak d$ ensures that the required smoothing exponents are nonnegative.  If $h\leq s$, then
\begin{align*}
h^{-\mathfrak d}\Big\|\int_s^tP_{t-u}G_u\dif u\Big\|_{\beta-2\mathfrak d}&\lesssim M_{\rho,\omega}(G) s^{-\omega}h^{1-\ff{\beta-\rho}2} \lesssim M_{\rho,\omega}(G) T^{\Theta(\rho,\omega)},\no\\
s^{\ff{\g-\b}2} h^{-\mathfrak d}\Big\|\int_s^tP_{t-u}G_u\dif u\Big\|_{\gamma-2\mathfrak d}&\lesssim M_{\rho,\omega}(G) s^{\ff{\g-\b}2-\omega}h^{1-\ff{\gamma-\rho}2}\lesssim M_{\rho,\omega}(G) T^{\Theta(\rho,\omega)}.
\end{align*}
If $h>s$, then $t<2h$, and a direct beta-integral gives
\begin{align}
h^{-\mathfrak d}\Big\|\int_s^tP_{t-u}G_u\dif u\Big\|_{\beta-2\mathfrak d}\lesssim M_{\rho,\omega}(G) h^{\Theta(\rho,\omega)},\ s^{\ff{\g-\b}2} h^{-\mathfrak d}\Big\|\int_s^tP_{t-u}G_u\dif u\Big\|_{\gamma-2\mathfrak d}\lesssim M_{\rho,\omega}(G) h^{\Theta(\rho,\omega)}.\no
\end{align}
The first estimate also covers $s=0$, while the second is only needed for $s>0$.  Combining the estimates above proves \eqref{eq:weighted-forcing-estimate}.

If $\beta+\rho>0$, the Bony estimate yields
\begin{equation*}
M_{\rho,\omega}(fF)\lesssim\|f\|_{\mathcal L_T^{\beta,\sigma}}\|F\|_{\mathcal W_T^{\omega,\zeta;\rho}},
\end{equation*}
and \eqref{eq:weighted-product-forcing-estimate} follows directly. The difference estimates follow by telescoping the linear and bilinear expressions.
\end{proof}

For each weighted coordinate $\tau$, denote its spatial regularity by
$\rho_\tau$.  In the order used in Section~\ref{subsec:admissible-enhanced-data},
\begin{align*}
\rho_{31}=\rho_{22}:=&-\delta,&\rho_{32}:=&-\frac12-\delta,\\
\rho_{51}=\rho_{24}=\rho_{42}=\rho_{33}:=&\alpha-1-\delta,&\rho_{53}=\rho_{44}:=&2\alpha-2-\delta,\\
\rho_{52}=\rho_{34}:=&\alpha-\frac32-\delta,&\rho_{54}:=&2\alpha-\frac52-\delta.
\end{align*}
All the exponents above are strictly negative.  Hence $\rho_\tau\leq\beta-2\mathfrak d$ for every weighted coordinate. Let $\mathcal R_0$ be the set of these indices and define
\begin{equation*}
\theta_{\mathrm{mod}}:=\min_{\tau\in\mathcal R_0}\big(1-\omega_\tau-\frac{\beta-\rho_\tau}{2}\big).
\end{equation*}
Then for every $\tau\in\cR_0$,
\begin{equation}\label{eq:weighted-model-time-margin-positive}
\theta_{\mathrm{mod}}>0,\ \gamma<\rho_\tau+2.
\end{equation}
The critical coordinate is $\mathbf R_{54}^0$.  Indeed,
\begin{align*}
1-\omega_{54}-\frac{\beta-\rho_{54}}{2}=\frac{4\alpha-3-\beta-3\delta-2\zeta-2\kappa_0}{2}>\frac{3\alpha-\frac52-\delta-2\zeta-2\kappa_0}{2}>0.
\end{align*}
All other coordinates have a larger margin.

Define the normalized diagonal model coordinates by
\begin{equation*}
\mathbf M_{22}:=\mathbf X_{22}^0,\ \mathbf M_{42}:=\mathbf R_{42}^0,\ \mathbf M_{24}:=\mathbf R_{24}^0,\ \mathbf M_{44}:=\frac9{25}\mathbf R_{44}^0.
\end{equation*}
For $i,j\in\{2,4\}$, set
\begin{equation*}
\mathscr D_{ij}^{\mathbb Z}(f):=I_0(f\mathbf M_{ij})+\mathscr J_{ij}(f).
\end{equation*}
The first term is interpreted by Lemma~\ref{lem:weighted-forcing} and the second by Lemma~\ref{lem:integrated-diagonal-estimate}, which implies the following result.

\begin{lemma}\label{lem:renormalized-diagonal-operators}
For every $i,j\in\{2,4\}$, $\mathscr D_{ij}^{\mathbb Z}:\mathcal L_T^{\beta,\sigma}\to\mathcal L_{T,0}^{\beta,\gamma}$ is jointly continuous in $f$ and $\mathbb Z^0$.  On bounded sets,
\begin{equation*}
\|\mathscr D_{ij}^{\mathbb Z}(f)\|_{\mathcal L_T^{\beta,\gamma}}\lesssim_RT^{\theta_{\mathrm D}}\|f\|_{\mathcal L_T^{\beta,\sigma}},
\end{equation*}
where $\theta_{\mathrm D}:=\min\{\theta_{\mathrm J},\theta_{\mathrm{mod}}\}>0$. The corresponding locally Lipschitz difference estimate holds.

If $\mathbb Z=\mathscr L_T^{\mathrm{stat}}(X;\lambda,\mu,\mathbf c)$, $\mathbf c=(c_1,c_2,d,b)$ is a smooth canonical lift, then
\begin{align}\label{eq:renormalized-diagonal-smooth-identities}
\begin{split}
\mathscr D_{22}^{\mathbb Z}(f)=&I_0(I_0(f\prec A_2)\circ A_2)-c_2I_0(f),\\
\mathscr D_{42}^{\mathbb Z}(f)=&I_0(I_0(f\prec A_4)\circ A_2),\\
\mathscr D_{24}^{\mathbb Z}(f)=&I_0(I_0(f\prec A_2)\circ A_4),\\
\mathscr D_{44}^{\mathbb Z}(f)=&I_0(I_0(f\prec A_4)\circ A_4)-\frac9{25}bI_0(f).
\end{split}
\end{align}
\end{lemma}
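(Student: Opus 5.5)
The proof splits into an analytic part (estimates and continuity) and an algebraic part (the smooth identities).

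\emph{Estimates and continuity.} Write $\mathscr D_{ij}^{\mathbb Z}(f)=I_0(f\mathbf M_{ij})+\mathscr J_{ij}(f)$ and treat the two summands separately.

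For the first summand, $\mathbf M_{ij}$ is (up to the factor $\frac9{25}$ for $\mathbf M_{44}$) a weighted coordinate of $\mathbb Z^0$ with index $\tau\in\{22,42,24,44\}\subset\mathcal R_0$. Its regularity satisfies $\rho_\tau<0$ and $\rho_\tau\le\beta-2\mathfrak d$. Moreover $\beta+\rho_\tau>0$: for the worst case $\rho_{44}=2\alpha-2-\delta$ this follows from the second line of \eqref{eq:diagonal-integrability-margins}. By \eqref{eq:weighted-model-time-margin-positive} we also have $\gamma<\rho_\tau+2$ and $\Theta(\rho_\tau,\omega_\tau)\ge\theta_{\mathrm{mod}}>0$. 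Hence \eqref{eq:weighted-product-forcing-estimate} of Lemma~\ref{lem:weighted-forcing} gives
\[
\|I_0(f\mathbf M_{ij})\|_{\mathcal L_T^{\beta,\gamma}}\lesssim T^{\theta_{\mathrm{mod}}}\|f\|_{\mathcal L_T^{\beta,\sigma}}\|\mathbb Z^0\|_{\mathfrak X^0_{T,\alpha}}.
\]
For the second summand, Lemma~\ref{lem:integrated-diagonal-estimate} with $A_2=X^{\diamond2}$ and $A_4=Y_4$ (both coordinates of $\mathbb Z^0$) gives the bound $T^{\theta_{\mathrm J}}\mathcal Q\|f\|$.

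Adding the two bounds and using $T\le1$ yields the estimate with $\theta_{\mathrm D}=\min\{\theta_{\mathrm J},\theta_{\mathrm{mod}}\}$. Joint continuity and the locally Lipschitz difference bounds follow by telescoping. Use the bilinear difference versions in Lemma~\ref{lem:weighted-forcing}, together with \eqref{eq:J-difference-estimate}, and note that all of $\mathbf M_{ij}$, $A_i$, $A_j$ are controlled by $\|\mathbb Z^0-\overline{\mathbb Z}^0\|$.

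\emph{Smooth identities.} Here $\mathbf A_2=\mathbf X_2^0=I_0(X^{\diamond2})$ and $\mathbf A_4=\mathbf Y_4^0=\mu I_0(X^{\diamond4})=I_0(A_4)$. By \eqref{eq:integrated-master-identity},
\[
I_0(I_0(f\prec A_i)\circ A_j)=I_0(f(\mathbf A_i\circ A_j))+\mathscr J_{ij}(f),
\]
so the claim reduces to computing $\mathbf A_i\circ A_j$ in terms of $\mathbf M_{ij}$. Since $I_0(F)=\cI(F)-P_\cdot\cI(F)(0)$, we get
\[
\mathbf A_i\circ A_j=\cI(A_i)\circ A_j-(P_\cdot\cI(A_i)(0))\circ A_j .
\]
The case $(2,2)$: by \eqref{eq:finite-time-standard-resonances}, $\mathbf X_{22}^0=\cI(X^{\diamond2})\circ X^{\diamond2}-c_2-(P_\cdot\mathbf X_2(0))\circ X^{\diamond2}$, so $\mathbf A_2\circ A_2=\mathbf M_{22}+c_2$.

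The case $(4,2)$: $\mathbf R_{42}=\mu\cI(X^{\diamond4})\circ X^{\diamond2}$ carries no counterterm, and \eqref{eq:finite-time-higher-resonances} subtracts $(P_\cdot\mathbf Y_4(0))\circ X^{\diamond2}$, so $\mathbf A_4\circ A_2=\mathbf M_{42}$.

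The case $(2,4)$: $\mathbf R_{24}=\mu\cI(X^{\diamond2})\circ X^{\diamond4}=\cI(X^{\diamond2})\circ Y_4$, and the correction is $(P_\cdot\mathbf X_2(0))\circ Y_4$, so $\mathbf A_2\circ A_4=\mathbf M_{24}$.

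The case $(4,4)$: $\mathbf R_{44}=\frac{25}9\mu^2\cI(X^{\diamond4})\circ X^{\diamond4}-b$ with correction $\frac{25}9(P_\cdot\mathbf Y_4(0))\circ Y_4$, so $\frac9{25}\mathbf R_{44}^0=\mathbf A_4\circ A_4-\frac9{25}b$.

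Substituting these into the master identity gives \eqref{eq:renormalized-diagonal-smooth-identities}, with the constants passing out of $I_0$ as $-c_2I_0(f)$ and $-\frac9{25}bI_0(f)$.

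The main point requiring care is the bookkeeping of the normalizing factors and scalar counterterms. In particular, $\mathbf M_{44}$ must carry $\frac9{25}$ because $\mathbf R_{44}$ was defined with the prefactor $\frac{25}9$; with this normalization $\mathbf M_{44}=\mathbf A_4\circ A_4-\frac9{25}b$, which is what produces the $\frac9{25}b$ counterterm. Everything else is a direct combination of the earlier lemmas.
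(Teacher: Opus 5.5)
Your proposal is correct and follows the paper's route. The bound and joint continuity come from splitting $\mathscr D_{ij}^{\mathbb Z}(f)=I_0(f\mathbf M_{ij})+\mathscr J_{ij}(f)$ and applying Lemma~\ref{lem:weighted-forcing} and Lemma~\ref{lem:integrated-diagonal-estimate}. The smooth identities come from \eqref{eq:integrated-master-identity} together with $Q_0\cI(F)=I_0(F)$ and the definitions \eqref{eq:finite-time-standard-resonances}--\eqref{eq:finite-time-higher-resonances}, and your explicit checks of $\beta+\rho_\tau>0$ and of the normalization $\mathbf M_{44}=\mathbf A_4\circ A_4-\frac9{25}b$ supply the bookkeeping the paper leaves implicit.
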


Let $v_0\in\bC^\beta$ and $\mathbf v=(v,v')\in\mathcal D_T^{\beta,\gamma}(\mathbb Z;v_0)$.  Set
\begin{equation*}
H_{\mathbf v}:=P_\cdot v_0+v^\sharp,\ \Psi:=\mathbf Y_5^0+\lambda\mathbf X_3^0,\ W:=v+\Psi.
\end{equation*}
By \eqref{eq:controlled-remainder-definition}, we obtain
\begin{equation}\label{eq:W-controlled-decomposition}
W=H_{\mathbf v}-5I_0(v'\prec A_4)-3\lambda I_0(v'\prec A_2).
\end{equation}
Choose $0<\kappa_W<\min\{\a-\ff12-2\de,2\a-\ff32-2\de\}$ and set $\rho_W:=\alpha-\delta-\kappa_W$. Then
\begin{equation}\label{eq:W-regularity-margins}
\beta<\rho_W<\gamma,\ \rho_W-\frac12-\delta>0,\ \rho_W+\alpha-\frac32-\delta>0.
\end{equation}

\begin{lemma}\label{lem:W-improved-regularity}
It holds that $W\in\mathcal L_T^{\beta,\rho_W}$. Moreover, on bounded sets of data,
\begin{equation*}
\|W\|_{\mathcal L_T^{\beta,\rho_W}}\lesssim_R1+\|\mathbf v\|_{\mathcal D_T^{\beta,\gamma}(\mathbb Z;v_0)}.
\end{equation*}
The corresponding difference estimate holds.  In particular, $W\circ X$ and $W\circ Y_3$ are classically defined for every positive time.
\end{lemma}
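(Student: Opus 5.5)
The plan is to estimate each of the four summands in the decomposition \eqref{W-controlled-decomposition}, $W=P_\cdot v_0+v^\sharp-5I_0(v'\prec A_4)-3\lambda I_0(v'\prec A_2)$, separately in $\mathcal L_T^{\beta,\rho_W}$, and then add.

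First, the heat extension $P_\cdot v_0$ is bounded by Lemma~\ref{lem:controlled-path-embeddings}(i) with $\rho_0=\beta$ and $\rho_1=\rho_W$, giving $\|P_\cdot v_0\|_{\mathcal L_T^{\beta,\rho_W}}\lesssim\|v_0\|_\beta$. Second, since $\beta<\rho_W<\gamma$ by \eqref{eq:W-regularity-margins}, Lemma~\ref{lem:controlled-path-embeddings}(ii) gives $\|v^\sharp\|_{\mathcal L_T^{\beta,\rho_W}}\lesssim\|v^\sharp\|_{\mathcal L_T^{\beta,\gamma}}$.

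The two integrated paraproducts are the only point needing care. I would rerun the argument of Lemma~\ref{lem:controlled-integrated-paraproduct} with $\sigma$ replaced by $\rho_W$. That proof uses only $r+2\mathfrak d<\beta<\rho_W<r+2$ together with the bound $\|(f\prec A)_t\|_r\lesssim\|f_t\|_\beta\|A_t\|_r$; in particular it needs only the $\sup_t\|f_t\|_\beta$ part of $\|f\|_{\mathcal L_T^{\beta,\sigma}}$.
\begin{itemize}
\item For $A_4$ we have $r_4=\alpha-2-\delta$. Then $r_4+2=\alpha-\delta>\alpha-\delta-\kappa_W=\rho_W$, so the condition holds precisely because $\kappa_W>0$. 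This is why $\rho_W$ sits just below $\alpha-\delta$.
\item For $A_2$ we have $r_2+2=1-\delta>\rho_W$, since $\alpha<1$.
\item The lower condition $r_i+2\mathfrak d<\beta$ was already verified in the text.
\end{itemize}
The Schauder estimates then give a bound $\lesssim T^{\theta_0}\|A_i\|_{C_T^{\delta',r_i}}\|v'\|_{\mathcal L_T^{\beta,\sigma}}$.

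Summing the four pieces and bounding the model norms by $R$ yields $\|W\|_{\mathcal L_T^{\beta,\rho_W}}\lesssim_R 1+\|v'\|_{\mathcal L_T^{\beta,\sigma}}+\|v^\sharp\|_{\mathcal L_T^{\beta,\gamma}}$, which is the claim. The difference estimate follows by telescoping the bilinear paraproduct terms exactly as in Lemma~\ref{lem:controlled-reconstruction}. The terms $v_0-\bar v_0$, $v^\sharp-\bar v^\sharp$, $v'-\bar v'$ and the model differences $A_i-\bar A_i$, $\lambda-\bar\lambda$ all enter linearly, multiplied by quantities bounded on bounded sets.

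Finally, for $t>0$ we have $W_t\in\bC^{\rho_W}$ with $X_t\in\bC^{-\frac12-\delta}$ and $Y_{3,t}\in\bC^{\alpha-\frac32-\delta}$. By \eqref{eq:W-regularity-margins} the sums of regularities are positive, so Lemma~\ref{bony}(ii) defines $W\circ X$ and $W\circ Y_3$ classically. The main, though mild, obstacle is confirming the strict upper Schauder condition $\rho_W<r_4+2$ for the $Y_4$ direction, which holds only because of the loss $\kappa_W$.
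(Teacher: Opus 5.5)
Your proposal is correct and is essentially the paper's proof. The paper likewise places $H_{\mathbf v}=P_\cdot v_0+v^\sharp$ in $\mathcal L_T^{\beta,\gamma}\hookrightarrow\mathcal L_T^{\beta,\rho_W}$, reruns Lemma~\ref{lem:controlled-integrated-paraproduct} with target regularity $\rho_W<r_i+2$ for the two integrated paraproducts, and reads off the classical resonances $W\circ X$ and $W\circ Y_3$ from \eqref{eq:W-regularity-margins}.
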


\begin{proof}
The term $H_{\mathbf v}$ belongs to $\mathcal L_T^{\beta,\gamma}$ and hence to $\mathcal L_T^{\beta,\rho_W}$.  Since $\rho_W<r_i+2$ for $i\in\{2,4\}$, the proof of Lemma~\ref{lem:controlled-integrated-paraproduct} applies with target regularity $\rho_W$ to the two integrated paraproducts in \eqref{eq:W-controlled-decomposition}.  This proves the estimate.  The last statement follows from \eqref{eq:W-regularity-margins}.
\end{proof}

Define the two principal product remainders by
\begin{align}\label{eq:P12-sharp-definition}
\mathcal P_{\mathbb Z}^{12,\sharp}(\mathbf v):=3\lambda I_0(A_2\prec v+H_{\mathbf v}\circ A_2)-3I_0(\mathbf R_{52}^0)-3\lambda^2I_0(\mathbf X_{32}^0)-15\lambda\,\mathscr D_{42}^{\mathbb Z}(v')-9\lambda^2\mathscr D_{22}^{\mathbb Z}(v'),
\end{align}
\begin{align}\label{eq:P14-sharp-definition}
\mathcal P_{\mathbb Z}^{14,\sharp}(\mathbf v):=5I_0(A_4\prec v+H_{\mathbf v}\circ A_4)-3I_0(\mathbf R_{54}^0)-3I_0(\mathbf R_{34}^0)-15\lambda\mathscr D_{24}^{\mathbb Z}(v')-25\mathscr D_{44}^{\mathbb Z}(v').
\end{align}
Set
\begin{align}\label{eq:P12-P14-definition}
\mathcal P_{\mathbb Z}^{12}(\mathbf v):=3\lambda I_0(v\prec A_2)+\mathcal P_{\mathbb Z}^{12,\sharp}(\mathbf v),\ \mathcal P_{\mathbb Z}^{14}(\mathbf v):=5I_0(v\prec A_4)+\mathcal P_{\mathbb Z}^{14,\sharp}(\mathbf v).
\end{align}

We next define the quadratic blocks. By Lemma~\ref{lem:W-improved-regularity}, $W_t\in\bC^{\rho_W}$ for every $t>0$.
Since $X_t\in\bC^{-\frac12-\delta}$ and $Y_{3,t}\in\bC^{\alpha-\frac32-\delta}$, \eqref{eq:W-regularity-margins} and the resonant-product estimate in Lemma \ref{bony} show that $W\circ X$ and $W\circ Y_3$ are well defined on $(0,T]$. We define $\circ_{\mathbb Z}$ on $\{W,\Psi\}\times\{X,Y_3\}$ by
\begin{align}\label{eq:prescribed-Psi-resonances}
\Psi\circ_{\mathbb Z}X:=\lambda^{-1}\mathbf R_{51}^0+\lambda\mathbf X_{31}^0,\ W\circ_{\mathbb Z}X:=W\circ X,\ \Psi\circ_{\mathbb Z}Y_3:=\mathbf R_{53}^0+\frac3{10}\mathbf R_{33}^0,\ W\circ_{\mathbb Z}Y_3:=W\circ Y_3.
\end{align}
Here $\circ_{\mathbb Z}$ denotes the possibly renormalized resonant product encoded by the enhanced datum $\mathbb Z$, which agrees with the usual Bony resonant product when no renormalization is needed. For $B\in\{X,Y_3\}$ and $(F,G)\in\{(W,\Psi),(\Psi,\Psi)\}$, set
\begin{align}\label{eq:symmetric-triple-product}
\begin{split}
\mathfrak T_B^{\mathbb Z}(F,G):=&(F\circ G)B+(F\prec G+G\prec F)\prec B+B\prec(F\prec G+G\prec F)\\
&+F(G\circ_{\mathbb Z}B)+G(F\circ_{\mathbb Z}B)+\mathfrak R(F,G,B)+\mathfrak R(G,F,B).
\end{split}
\end{align}
Here $\mathfrak R(W,\Psi,B)$ and $\mathfrak R(\Psi,\Psi,B)$ are given by Lemma~\ref{commutatores}. Since the regularities of $W$ and $B$ sum to a positive number, we define directly $\mathfrak R(\Psi,W,B):=(\Psi\prec W)\circ B-\Psi(W\circ B)$. Thus $\mathfrak T_B^{\mathbb Z}(F,G)$ is an extension of the ordinary triple product $FGB$: if $F,G,B$ are smooth, $F\circ_{\mathbb Z}B=F\circ B$ and $G\circ_{\mathbb Z}B=G\circ B$, then $\mathfrak T_B^{\mathbb Z}(F,G)=FGB$.

The inequalities
\begin{align*}
\rho_W-\frac12-\delta>0,\ \rho_W+\alpha-\frac32-\delta>0,\ 2\alpha-\frac32-3\delta>0,\ 3\alpha-\frac52-3\delta&>0
\end{align*}
ensure that all ordinary resonant products and commutators arising in the four cases above are well defined. The terms involving $\Psi\circ_{\mathbb Z}B$ occur only under the outer operator $I_0$; there, $I_0$ is applied to each summand in \eqref{eq:symmetric-triple-product}, and Lemma~\ref{lem:weighted-forcing} applies.

Define
\begin{align}\label{eq:P21-definition}
\mathcal P_{\mathbb Z}^{21}(\mathbf v):=3\lambda I_0(W^2X-2\mathfrak T_X^{\mathbb Z}(W,\Psi)+\mathfrak T_X^{\mathbb Z}(\Psi,\Psi)),
\end{align}
\begin{align}\label{eq:P23-definition}
\mathcal P_{\mathbb Z}^{23}(\mathbf v):=10I_0(W^2Y_3-2\mathfrak T_{Y_3}^{\mathbb Z}(W,\Psi)+\mathfrak T_{Y_3}^{\mathbb Z}(\Psi,\Psi)).
\end{align}

Finally, set
\begin{equation*}
v\circ_{\mathbb Z}X:=W\circ X-\Psi\circ_{\mathbb Z}X
\end{equation*}
and introduce the paralinearization remainder
\begin{equation*}
\operatorname{Rem}_4(v):=v^4-4v^3\prec v.
\end{equation*}
The last singular block is
\begin{align}\label{eq:P41-definition}
\mathcal P_{\mathbb Z}^{41}(\mathbf v):=5\mu I_0(v^4\prec X+X\prec v^4+4v^3(v\circ_{\mathbb Z}X)+4\mathfrak R(v^3,v,X)+\operatorname{Rem}_4(v)\circ X).
\end{align}

\begin{lemma}\label{lem:smooth-consistency-product-blocks}
Let $\mathbb Z=\mathscr L_T^{\mathrm{stat}}(X;\lambda,\mu,\mathbf c)$, $\mathbf c=(c_1,c_2,d,b)$ be a smooth canonical lift, and $\mathbf v=(v,v')$ be a smooth controlled pair. Then
$$\mathcal P_{\mathbb Z}^{12}(v,v')=3\lambda I_0(vX^{\diamond2})+9\lambda^2c_2I_0(X+v'),$$
$$\mathcal P_{\mathbb Z}^{14}(v,v')=5\mu I_0(vX^{\diamond4})+6dI_0(X)+9bI_0(X+v'),$$
$$\mathcal P_{\mathbb Z}^{21}(v,v')=3\lambda I_0(v^2X),$$
$$\mathcal P_{\mathbb Z}^{23}(v,v')=10\mu I_0(v^2X^{\diamond3})+6dI_0(v),$$
$$\mathcal P_{\mathbb Z}^{41}(v,v')=5\mu I_0(v^4X).$$
\end{lemma}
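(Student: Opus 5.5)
The proof is a direct algebraic verification on the smooth core, carried out block by block. For smooth data all resonant products are classical and $\circ_{\mathbb Z}$ coincides with $\circ$, except for the prescribed coordinates, which carry the constants $c_2,d,b$. Throughout I will use the Bony identity $fg=f\prec g+f\circ g+g\prec f$ and the definition $\mathfrak R(f,g,h)=(f\prec g)\circ h-f(g\circ h)$. I will also use the smooth forms of the finite-time coordinates, which on the canonical core are obtained by replacing $\cI$ by $I_0$:
\[
\mathbf X_{32}^0=I_0(X^{\diamond3})\circ X^{\diamond2}-3c_2X,\qquad \mathbf R_{52}^0=\lambda\mu I_0(X^{\diamond5})\circ X^{\diamond2},
\]
\[
\mathbf R_{54}^0=\tfrac53\mu^2I_0(X^{\diamond5})\circ X^{\diamond4}-3bX,\qquad \mathbf R_{34}^0=\tfrac53\lambda\mu I_0(X^{\diamond3})\circ X^{\diamond4}-2dX,
\]
and similarly for the others. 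Finally, Lemma~\ref{lem:renormalized-diagonal-operators} supplies the identities \eqref{eq:renormalized-diagonal-smooth-identities} for $\mathscr D_{ij}^{\mathbb Z}$.

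\textbf{Blocks $\mathcal P^{12}$ and $\mathcal P^{14}$.} Write $v\circ A_2=H_{\mathbf v}\circ A_2-\Psi\circ A_2-5I_0(v'\prec A_4)\circ A_2-3\lambda I_0(v'\prec A_2)\circ A_2$, using \eqref{eq:W-controlled-decomposition} and $v=W-\Psi$. Then $3\lambda I_0(vA_2)$ splits into the paraproduct terms plus $3\lambda I_0(v\circ A_2)$. With $\Psi=\mu I_0(X^{\diamond5})+\lambda I_0(X^{\diamond3})$, we have
\[
3\lambda\Psi\circ A_2=3\mathbf R_{52}^0+3\lambda^2\bigl(\mathbf X_{32}^0+3c_2X\bigr).
\]
The two diagonal terms become $15\lambda\mathscr D_{42}^{\mathbb Z}(v')$ and $9\lambda^2\bigl(\mathscr D_{22}^{\mathbb Z}(v')+c_2I_0(v')\bigr)$. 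Collecting terms gives \eqref{eq:P12-sharp-definition} plus the residual $9\lambda^2c_2I_0(X+v')$, which is the claimed identity.

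The same computation with $A_4=\mu X^{\diamond4}$ gives the second identity. Here $5\Psi\circ A_4=3\mathbf R_{54}^0+9bX+3\mathbf R_{34}^0+6dX$, and the diagonal term $25\mathscr D_{44}^{\mathbb Z}(v')$ produces $+9bI_0(v')$ through $\frac{9}{25}\cdot25=9$. The bookkeeping of these numerical factors ($\frac53$, $\frac{25}9$, $3$) is exactly what the normalisations in \eqref{eq:second-higher-order-block} were designed for, so I will check them explicitly.

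\textbf{Blocks $\mathcal P^{21}$ and $\mathcal P^{23}$.} In the smooth case the prescribed products are $\Psi\circ_{\mathbb Z}X=\Psi\circ X$, because $\lambda^{-1}\mathbf R_{51}^0+\lambda\mathbf X_{31}^0$ carries no constant. By contrast,
\[
\Psi\circ_{\mathbb Z}Y_3=\mu^2I_0(X^{\diamond5})\circ X^{\diamond3}+\lambda\mu I_0(X^{\diamond3})\circ X^{\diamond3}-\tfrac3{10}d.
\]
Hence $\mathfrak T_X^{\mathbb Z}(F,G)=FGX$ exactly. For $Y_3$, the expression $\mathfrak T_{Y_3}^{\mathbb Z}(F,G)$ equals $FGY_3$ minus $\frac3{10}d(F+G)$. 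Using $W^2-2W\Psi+\Psi^2=v^2$, the first block gives $3\lambda I_0(v^2X)$. For the second block,
\[
10\Bigl(-2\bigl(-\tfrac3{10}d\bigr)(W+\Psi)+\bigl(-\tfrac3{10}d\bigr)(2\Psi)\Bigr)=6dW-6d\Psi=6dv,
\]
which yields $10\mu I_0(v^2X^{\diamond3})+6dI_0(v)$. Here I expect a potential sign or coefficient slip, since the stated $\mathcal P^{14}$ contains $6dI_0(X)$ while $\mathcal P^{23}$ contains $6dI_0(v)$; these must combine to $6dI_0(u)$. I would cross-check this against the counterterm $6D_{\varepsilon,\alpha}$ in $C_\varepsilon^{\mathrm{can}}$.

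\textbf{Block $\mathcal P^{41}$.} In the smooth case $v\circ_{\mathbb Z}X=v\circ X$. Write
\[
v^4X=v^4\prec X+X\prec v^4+v^4\circ X,
\]
and expand $v^4\circ X=(4v^3\prec v)\circ X+\operatorname{Rem}_4(v)\circ X$ with $(v^3\prec v)\circ X=v^3(v\circ X)+\mathfrak R(v^3,v,X)$. Together these give $5\mu I_0(v^4X)$.

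The main obstacle is purely the coefficient bookkeeping in $\mathcal P^{14}$ and $\mathcal P^{23}$, specifically matching the $d$- and $b$-terms against the normalisations in $\mathbb V^2$.
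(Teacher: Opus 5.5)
Your route is the paper's: reconstruct $v$ from $(H_{\mathbf v},\Psi,v')$, insert the smooth forms of the finite-time coordinates and the identities \eqref{eq:renormalized-diagonal-smooth-identities}, and compare $\mathfrak T^{\mathbb Z}$ with the classical triple product. Your verifications of $\mathcal P^{12}$, $\mathcal P^{14}$, $\mathcal P^{21}$ and $\mathcal P^{41}$ are correct and agree with the paper term by term.

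The $\mathcal P^{23}$ step is wrong as written. Your rule $\mathfrak T_{Y_3}^{\mathbb Z}(F,G)=FGY_3-\frac3{10}d(F+G)$ fails for $(F,G)=(W,\Psi)$. By \eqref{eq:symmetric-triple-product}, in the smooth case
$\mathfrak T_{Y_3}^{\mathbb Z}(F,G)-FGY_3=F\,(G\circ_{\mathbb Z}Y_3-G\circ Y_3)+G\,(F\circ_{\mathbb Z}Y_3-F\circ Y_3)$.
Since $W\circ_{\mathbb Z}Y_3:=W\circ Y_3$ is not renormalized, only the $\Psi$-slot carries the shift $-\frac3{10}d$. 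The correction is therefore $-\frac3{10}dW$ for $(W,\Psi)$ and $-\frac35d\Psi$ for $(\Psi,\Psi)$.

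Your displayed line also does not evaluate to what you write: $10\bigl(\frac35d(W+\Psi)-\frac35d\Psi\bigr)=6dW$, not $6dW-6d\Psi$. Applied correctly, your rule would give $\mathcal P^{23}=10\mu I_0(v^2X^{\diamond3})+6dI_0(W)$, which contradicts the lemma. You reached $6dv$ only because the arithmetic slip cancelled the wrong rule.

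The correct computation is $10\bigl(-2(-\frac3{10}dW)-\frac35d\Psi\bigr)=6d(W-\Psi)=6dv$. Equivalently, $W^2Y_3-2\mathfrak T_{Y_3}^{\mathbb Z}(W,\Psi)+\mathfrak T_{Y_3}^{\mathbb Z}(\Psi,\Psi)=v^2Y_3+\frac35dv$, which is exactly the paper's identity.

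The coefficient slip you anticipate between $6dI_0(X)$ in $\mathcal P^{14}$ and $6dI_0(v)$ in $\mathcal P^{23}$ does not exist. The two add to $6dI_0(X+v)$. This is precisely the $6d$ in $(9\lambda^2c_2+9b+6d)I_0(X+v)$, which Proposition~\ref{prop:smooth-consistency-mild-map} matches against $C^{\mathrm{can}}$. That check should be carried out in your proof rather than left as an open worry.
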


\begin{proof}
The controlled reconstruction can be written as
\begin{equation*}
v=H_{\mathbf v}-\mathbf Y_5^0-\lambda\mathbf X_3^0-5I_0(v'\prec A_4)-3\lambda I_0(v'\prec A_2).
\end{equation*}
Taking the resonant product with $A_2$, applying the outer integration, and using \eqref{eq:renormalized-diagonal-smooth-identities} gives
\begin{align*}
I_0(v\circ A_2)=I_0(H_{\mathbf v}\circ A_2)-I_0(\mathbf Y_5^0\circ A_2)-\lambda I_0(\mathbf X_3^0\circ A_2)-5\mathscr D_{42}^{\mathbb Z}(v')-3\lambda\mathscr D_{22}^{\mathbb Z}(v')-3\lambda c_2I_0(v').
\end{align*}
On a smooth canonical lift,
\begin{align*}
\lambda(\mathbf Y_5^0\circ A_2)=\mathbf R_{52}^0,\ \mathbf X_3^0\circ A_2=\mathbf X_{32}^0+3c_2X.
\end{align*}
Adding the two paraproduct pieces of $vA_2$ and comparing with \eqref{eq:P12-sharp-definition}--\eqref{eq:P12-P14-definition} proves
\begin{equation*}
\mathcal P_{\mathbb Z}^{12}(v,v')=3\lambda I_0(vX^{\diamond2})+9\lambda^2c_2I_0(X+v').
\end{equation*}

The same calculation with $A_4$ gives
\begin{align*}
I_0(v\circ A_4)=I_0(H_{\mathbf v}\circ A_4)-I_0(\mathbf Y_5^0\circ A_4)-\lambda I_0(\mathbf X_3^0\circ A_4)-5\mathscr D_{44}^{\mathbb Z}(v')-3\lambda\mathscr D_{24}^{\mathbb Z}(v')-\frac95bI_0(v').
\end{align*}
The model identities are
\begin{align*}
\mathbf Y_5^0\circ A_4=\frac35\mathbf R_{54}^0+\frac95bX,\ \lambda\mathbf X_3^0\circ A_4=\frac35\mathbf R_{34}^0+\frac65dX.
\end{align*}
This gives
\begin{equation*}
\mathcal P_{\mathbb Z}^{14}(v,v')=5\mu I_0(vX^{\diamond4})+6dI_0(X)+9bI_0(X+v').
\end{equation*}

On a smooth canonical lift, $\Psi\circ_{\mathbb Z}X=\Psi\circ X$. Since $v=W-\Psi$,
\begin{equation*}
W^2X-2\mathfrak T_X^{\mathbb Z}(W,\Psi)+\mathfrak T_X^{\mathbb Z}(\Psi,\Psi)=v^2X.
\end{equation*}
Moreover, $v\circ_{\mathbb Z}X=v\circ X$, and
\begin{equation*}
v^4\circ X=4v^3(v\circ X)+4R(v^3,v,X)+\operatorname{Rem}_4(v)\circ X.
\end{equation*}
This proves the identities for $\mathcal P_{\mathbb Z}^{21}$ and $\mathcal P_{\mathbb Z}^{41}$.

For the $Y_3$ block,
\begin{equation*}
\Psi\circ Y_3=\Psi\circ_{\mathbb Z}Y_3+\frac3{10}d.
\end{equation*}
Consequently,
\begin{equation*}
W^2Y_3-2\mathfrak T_{Y_3}^{\mathbb Z}(W,\Psi)+\mathfrak T_{Y_3}^{\mathbb Z}(\Psi,\Psi)=v^2Y_3+\frac35dv.
\end{equation*}
Since $Y_3=\mu X^{\diamond3}$, this proves the identity for $\mathcal P_{\mathbb Z}^{23}$.
\end{proof}

For brevity, write
\begin{align}\label{eq:product-block-vector}
\mathbf P_{\mathbb Z}^{\sharp}(\mathbf v):=\bigl(\mathcal P_{\mathbb Z}^{12,\sharp}(\mathbf v),\mathcal P_{\mathbb Z}^{14,\sharp}(\mathbf v),\mathcal P_{\mathbb Z}^{21}(\mathbf v),\mathcal P_{\mathbb Z}^{23}(\mathbf v),\mathcal P_{\mathbb Z}^{41}(\mathbf v)\bigr).
\end{align}

\begin{proposition}\label{prop:renormalized-product-blocks}
Let $v_0\in\bC^\beta$ and $\mathbf v\in\mathcal D_T^{\beta,\gamma}(\mathbb Z;v_0)$.  The blocks \eqref{eq:P12-sharp-definition}--\eqref{eq:P41-definition}, initially defined on smooth canonical lifts, admit unique jointly continuous extensions to every admissible enhanced datum.  They satisfy
\begin{equation*}
\mathcal P_{\mathbb Z}^{12}(\mathbf v),\mathcal P_{\mathbb Z}^{14}(\mathbf v)\in\mathcal L_{T,0}^{\beta,\sigma},\ \mathbf P_{\mathbb Z}^{\sharp}(\mathbf v)\in(\mathcal L_{T,0}^{\beta,\gamma})^5.
\end{equation*}
For every $R>0$, there exist $C_{R,\lambda_*}<\infty$ and $\theta_1>0$ such that, whenever $\|\mathbb Z^0\|_{\mathfrak X_{T,\alpha}^0}+\|v_0\|_\beta\leq R$ and $\lambda\geq\lambda_*$, one has
\begin{equation}\label{eq:renormalized-product-block-bound}
\|\mathbf P_{\mathbb Z}^{\sharp}(\mathbf v)\|_{(\mathcal L_T^{\beta,\gamma})^5}\leq C_{R,\lambda_*}T^{\theta_1}\big(1+\|\mathbf v\|_{\mathcal D_T^{\beta,\gamma}(\mathbb Z;v_0)}\big)^4.
\end{equation}
\end{proposition}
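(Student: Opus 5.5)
The plan is to treat each of the five blocks separately. For each summand we check two things: that it is defined by a multilinear expression that is continuous in the coordinates of $\mathbb Z^0$ and in $(v,v',v^\sharp)$, and that it gains a positive power of $T$ when placed in $\mathcal L_T^{\beta,\gamma}$ (or $\mathcal L_T^{\beta,\sigma}$ for the two paraproduct pieces). Uniqueness of the extension is automatic: the smooth canonical lifts are dense in $\mathscr X_{T,\alpha}$ (Definition~\ref{def:admissible-enhanced-data}), and $\mathscr Q_0$ is locally Lipschitz by Proposition~\ref{prop:finite-time-realization}. So it suffices to give the bounds and locally Lipschitz difference bounds for the explicit formulas \eqref{eq:P12-sharp-definition}--\eqref{eq:P41-definition}, all of which are written only in terms of coordinates of $\mathbb Z^0$.

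\textbf{Blocks $\mathcal P^{12}$ and $\mathcal P^{14}$.} The paraproducts $3\lambda I_0(v\prec A_2)$ and $5I_0(v\prec A_4)$ lie in $\mathcal L_{T,0}^{\beta,\sigma}$ by Lemma~\ref{lem:controlled-integrated-paraproduct}, with $v$ controlled by Lemma~\ref{lem:controlled-reconstruction}. In the sharp parts, $A_i\prec v$ has regularity $r_i+\beta$. The term $H_{\mathbf v}\circ A_i$ has regularity $\gamma+r_i>0$, using $\gamma>2-\alpha+2\delta$, with weight $t^{-(\gamma-\beta)/2}$. Both are then integrated by the weighted Schauder estimate Lemma~\ref{lem:weighted-forcing} with $\omega=(\gamma-\beta)/2<1$. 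The needed margin is $1-\frac{\gamma-\beta}2-\frac{\beta-\rho}{2}>0$ for $\rho=r_i+\beta$ (respectively $\gamma+r_i$), which follows from $\gamma<\beta+r_i+2$. The model terms $I_0(\mathbf R^0_{52})$, $I_0(\mathbf X^0_{32})$, $I_0(\mathbf R^0_{54})$ and $I_0(\mathbf R^0_{34})$ are handled by Lemma~\ref{lem:weighted-forcing} and \eqref{eq:weighted-model-time-margin-positive}, which gives the factor $T^{\theta_{\mathrm{mod}}}$. The diagonal operators $\mathscr D_{ij}^{\mathbb Z}(v')$ are handled by Lemma~\ref{lem:renormalized-diagonal-operators}, which gives $T^{\theta_{\mathrm D}}\|v'\|$. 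Every term is linear in $\mathbf v$.

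\textbf{Blocks $\mathcal P^{21}$, $\mathcal P^{23}$ and $\mathcal P^{41}$.} For $\mathcal P^{21}$ and $\mathcal P^{23}$ we use $W\in\mathcal L_T^{\beta,\rho_W}$ from Lemma~\ref{lem:W-improved-regularity}. Then $W^2X$ and $W^2Y_3$ are classical: $W^2\in\bC^{\rho_W}$ with weight $t^{-(\rho_W-\beta)/2}$, and \eqref{eq:W-regularity-margins} holds. We expand the triple products $\mathfrak T_B^{\mathbb Z}$ term by term. The paraproducts use Lemma~\ref{bony}(i); the resonances $F\circ G$ use $\rho_W+\alpha-\frac12-\delta>0$ and $2\alpha-1-2\delta>0$; the commutators $\mathfrak R$ use Lemma~\ref{commutatores}, with regularity conditions read off from $2\alpha-\frac32-3\delta>0$ and $3\alpha-\frac52-3\delta>0$. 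The renormalized pieces $F(\Psi\circ_{\mathbb Z}B)$ are products of a positive-regularity path with a weighted coordinate $\mathbf R^0_{51}$, $\mathbf X^0_{31}$, $\mathbf R^0_{53}$ or $\mathbf R^0_{33}$, so \eqref{eq:weighted-product-forcing-estimate} applies. Here the factor $\lambda^{-1}$ in \eqref{eq:prescribed-Psi-resonances} is bounded by $\lambda_*^{-1}$, which is the only place $\lambda_*$ enters. For $\mathcal P^{41}$, the power $v^4\in\bC^\beta$ is classical. We define $v\circ_{\mathbb Z}X=W\circ X-\Psi\circ_{\mathbb Z}X$ through the weighted coordinates. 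Paralinearization gives $\operatorname{Rem}_4(v)\in\bC^{2\beta}$, and $2\beta-\frac12-\delta>0$ holds since $\beta>2\mathfrak d>2(1-\alpha)+3\delta$ is not quite enough. If that fails, I would instead use the higher weighted norm $\bC^\sigma$ of $v$ and accept a weight $t^{-(\sigma-\beta)}$; this still fits in Lemma~\ref{lem:weighted-forcing} because $\sigma-\beta$ is small. The commutator $\mathfrak R(v^3,v,X)$ is treated the same way. These nonlinear pieces are polynomial of degree at most four in $\|v\|_{\mathcal L_T^{\beta,\sigma}}\lesssim_R 1+\|\mathbf v\|_{\mathcal D}$, which produces the fourth power in \eqref{eq:renormalized-product-block-bound}. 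We take $\theta_1$ as the minimum of all the time exponents obtained.

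\textbf{Main obstacle.} I expect the hardest step to be the bookkeeping of the time weights. Several factors carry initial singularities simultaneously: $t^{-(\gamma-\beta)/2}$ from $H_{\mathbf v}$, $t^{-(\rho_W-\beta)/2}$ from $W$, and $t^{-\omega_\tau}$ from the model. Their product must stay below $t^{-1}$ with a margin that still yields a positive power of $T$ after Schauder into $\mathcal L^{\beta,\gamma}$. The critical case is the $Y_3$ block combined with $\mathbf R^0_{53}$, or the $Y_4$ block with $H_{\mathbf v}\circ A_4$. There I would first check the exponent inequality directly from \eqref{eq:controlled-spatial-exponents} and \eqref{eq:finite-time-weight-choice}. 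If it fails, the fallback is to interpolate $H_{\mathbf v}$ between $\bC^\beta$ and $\bC^\gamma$, using only the minimal regularity $2-\alpha+2\delta$ needed for the resonance.
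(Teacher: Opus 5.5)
Your proposal is correct and follows essentially the same route as the paper's proof. The paper uses Lemma~\ref{lem:controlled-integrated-paraproduct} for $I_0(v\prec A_i)$, and Lemma~\ref{lem:weighted-forcing} for $A_i\prec v$, $H_{\mathbf v}\circ A_i$ and the weighted model coordinates. It uses Lemma~\ref{lem:renormalized-diagonal-operators} for the diagonal terms, Lemma~\ref{lem:W-improved-regularity} together with \eqref{eq:weighted-product-forcing-estimate} for the quadratic blocks, and a $\bC^\sigma$-based commutator/paralinearization bound with weight $t^{\sigma-\beta}$ for $\mathcal P^{41}$.

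Two minor corrections. First, taking $\rho=\gamma+r_i$ for $H_{\mathbf v}\circ A_i$ can violate the hypothesis $\rho\le\beta-2\mathfrak d$ of Lemma~\ref{lem:weighted-forcing}, so $\rho$ must be lowered. Your interpolation fallback does this and is exactly the paper's choice of regularity $-r_i+\varepsilon_i$ for $H_{\mathbf v}$; it gives the same power $1+\frac{r_i}{2}$. Second, the quartic margin holds not because $\sigma-\beta$ is small, but because $1-(\sigma-\beta)-\frac{\beta-r_X}{2}=\frac54-\alpha+\frac\beta2+\frac{3\delta}{2}>0$ for all admissible parameters.
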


\begin{proof}
We first treat the two principal blocks.  For $i\in\{2,4\}$, the paraproduct estimate gives
\begin{equation}\label{eq:principal-left-paraproduct-bound}
\sup_{0<t\leq T}
\|A_{i,t}\prec v_t\|_{\beta+r_i}\lesssim\|A_i\|_{C_T^{\delta',r_i}}\|v\|_{\mathcal L_T^{\beta,\sigma}}.
\end{equation}
Since $\beta+r_i\leq\beta-2\mathfrak d$ and $\gamma<\beta+r_i+2$, Lemma~\ref{lem:weighted-forcing}, with $\rho=\beta+r_i$ and $\omega=0$, yields the positive time power 
$$\theta_i:=1-\frac{\beta-(\beta+r_i)}2=1+\frac{r_i}{2}>0.$$

The resonance $H_{\mathbf v}\circ A_i$ is estimated separately.  Choose
$\varepsilon_i>0$ so small that $0<\varepsilon_i<\min\{\beta-2\mathfrak d,\gamma+r_i\}$, $a_i:=-r_i+\varepsilon_i<\gamma$, and set $\omega_i:=\frac{a_i-\beta}{2}$. Hence $0\leq\omega_i<1$, $\varepsilon_i<\beta$ and $\gamma<\varepsilon_i+2$. The path-space embeddings give
\begin{equation*}
\sup_{0<t\leq T}t^{\omega_i}\|H_{\mathbf v,t}\|_{a_i}\lesssim\|v_0\|_\beta+\|v^\sharp\|_{\mathcal L_T^{\beta,\gamma}}.
\end{equation*}
Since $a_i+r_i=\varepsilon_i>0$, the resonant-product estimate implies
\begin{equation}\label{eq:H-Ai-weighted-bound}
\sup_{0<t\leq T}t^{\omega_i}\|H_{\mathbf v,t}\circ A_{i,t}\|_{\varepsilon_i}\lesssim\|A_i\|_{C_T^{\delta',r_i}}\big(\|v_0\|_\beta+\|v^\sharp\|_{\mathcal L_T^{\beta,\gamma}}\big).
\end{equation}
Moreover, $1-\omega_i-\frac{\beta-\varepsilon_i}{2}=1+\frac{r_i}{2}=\theta_i$. Thus Lemma~\ref{lem:weighted-forcing}, now with $\rho=\varepsilon_i$ and $\omega=\omega_i$, controls $I_0(H_{\mathbf v}\circ A_i)$ in $\mathcal L_{T,0}^{\beta,\gamma}$.  This proves the required estimates for $A_i\prec v$ and $H_{\mathbf v}\circ A_i$.  The diagonal terms are controlled by Lemma~\ref{lem:renormalized-diagonal-operators}, while the explicit weighted coordinates in \eqref{eq:P12-sharp-definition} and \eqref{eq:P14-sharp-definition} are controlled by Lemma~\ref{lem:weighted-forcing}.  Finally, Lemma~\ref{lem:controlled-integrated-paraproduct} applied to $I_0(v\prec A_i)$ shows that the two full blocks $\mathcal P_{\mathbb Z}^{12}$ and $\mathcal P_{\mathbb Z}^{14}$ belong to $\mathcal L_{T,0}^{\beta,\sigma}$.

We next turn to the quadratic blocks.  Set
\begin{equation*}
q_W:=\frac{\rho_W-\beta}{2},\ r_X:=-\frac12-\delta,\ r_3:=\alpha-\frac32-\delta.
\end{equation*}
By Lemma~\ref{lem:W-improved-regularity}, on bounded sets,
\begin{equation*}
\sup_{0<t\leq T}\left(\|W_t\|_\beta+t^{q_W}\|W_t\|_{\rho_W}\right)\lesssim_R 1+\|\mathbf v\|_{\mathcal D_T^{\beta,\gamma}(\mathbb Z;v_0)}.
\end{equation*}
Using \eqref{eq:W-regularity-margins}, the regularities of the two integrated Wick-power  components of $\Psi$, and the multilinear paraproduct and commutator estimates, every term in the $X$-quadratic block which does not contain a prescribed model resonance satisfies
\begin{equation}\label{eq:X-quadratic-regular-part-bound}
\sup_{0<t\leq T}t^{q_W}\|F_t\|_{r_X}\lesssim_R\big(1+\|\mathbf v\|_{\mathcal D_T^{\beta,\gamma}(\mathbb Z;v_0)}\big)^2,
\end{equation}
while every such term in the $Y_3$-quadratic block satisfies
\begin{equation}\label{eq:Y3-quadratic-regular-part-bound}
\sup_{0<t\leq T}t^{q_W}\|F_t\|_{r_3}\lesssim_R\big(1+\|\mathbf v\|_{\mathcal D_T^{\beta,\gamma}(\mathbb Z;v_0)}\big)^2.
\end{equation}
Here $F$ denotes any one of the corresponding nonprescribed summands in \eqref{eq:P21-definition} or \eqref{eq:P23-definition}.  Notice that
\begin{equation*}
q_W<1,\ r_X,r_3\leq\beta-2\mathfrak d,\  \gamma<r_X+2,\ \gamma<r_3+2.
\end{equation*}
The two time powers furnished by Lemma~\ref{lem:weighted-forcing} are
\begin{align*}
\theta_X:=1-q_W-\frac{\beta-r_X}{2}=\frac34-\frac\alpha2+\frac{\kappa_W}{2}>0,\ \theta_3:=1-q_W-\frac{\beta-r_3}{2}=\frac14+\frac{\kappa_W}{2}>0.
\end{align*}

Every remaining quadratic term contains one of the prescribed resonances in \eqref{eq:prescribed-Psi-resonances}.  Its other factor is either $W$ or $\Psi$, both of which are bounded in $\mathcal L_T^{\beta,\sigma}$.  Moreover, $\beta+\rho_\tau>0$ for all prescribed coordinates occurring here. Consequently, the weighted product estimate \eqref{eq:weighted-product-forcing-estimate} applies term by term, and \eqref{eq:weighted-model-time-margin-positive} gives the time power $\theta_{\mathrm{mod}}$.  The coefficient $\lambda^{-1}$ is uniformly bounded for $\lambda\geq\lambda_*$.

It remains to check the fourth-order block.  After separating the prescribed part of $v\circ_{\mathbb Z}X$, the terms $v^4\prec X$, $X\prec v^4$, and $v^3(W\circ X)$ satisfy the same weighted $\bC^{r_X}$ estimate as in \eqref{eq:X-quadratic-regular-part-bound}, with the right-hand side replaced by a polynomial of degree at most four.  The term containing $v^3(\Psi\circ_{\mathbb Z}X)$ is covered by \eqref{eq:weighted-product-forcing-estimate}.  For the two remaining terms, the commutator and paralinearization estimates give
\begin{align}\label{eq:P41-remainder-weighted-bound}
t^{\sigma-\beta}(\|\mathfrak R(v_t^3,v_t,X_t)\|_{2\sigma+r_X}+\|\operatorname{Rem}_4(v_t)\circ X_t\|_{2\sigma+r_X})\lesssim_R\big(1+\|\mathbf v\|_{\mathcal D_T^{\beta,\gamma}(\mathbb Z;v_0)}\big)^4.
\end{align}
Since $2\sigma+r_X=2\alpha-\frac32-5\delta>0$, these terms may in particular be measured in $\bC^{r_X}$.  Applying Lemma~\ref{lem:weighted-forcing} with $\rho=r_X$ and $\omega=\sigma-\beta$ gives
\begin{equation*}
\theta_{41}:=1-(\sigma-\beta)-\frac{\beta-r_X}{2}=\frac54-\alpha+\frac\beta2+\frac{3\delta}{2}>0.
\end{equation*}

Taking
\begin{equation*}
\theta_1:=\min\left\{\theta_{\mathrm D},\theta_{\mathrm{mod}},\theta_2,\theta_4,\theta_X,\theta_3,\theta_{41}\right\}>0
\end{equation*}
proves \eqref{eq:renormalized-product-block-bound}.  All expressions are at most quartic in the controlled pair.  The preceding analytic estimates define every displayed term directly for an admissible datum and an arbitrary controlled pair.  Their multilinear difference versions imply joint continuity, and the definitions agree with the original formulas on smooth canonical lifts.  This gives the claimed extensions.
\end{proof}

\begin{proposition}\label{prop:renormalized-product-blocks-stability}
Let $\mathbf v\in\mathcal D_T^{\beta,\gamma}(\mathbb Z;v_0)$  and $\overline{\mathbf v}\in\mathcal D_T^{\beta,\gamma}(\overline{\mathbb Z};\bar v_0)$. Assume that $\lambda\wedge\bar\lambda\geq\lambda_*$ and
\begin{align*}
\|\mathbb Z^0\|_{\mathfrak X_{T,\alpha}^0}+\|\overline{\mathbb Z}^0\|_{\mathfrak X_{T,\alpha}^0}+\|v_0\|_\beta+\|\bar v_0\|_\beta+\|\mathbf v\|_{\mathcal D_T}+\|\overline{\mathbf v}\|_{\mathcal D_T}\leq R.
\end{align*}
Then
\begin{align}\label{eq:renormalized-product-block-stability}
\|\mathbf P_{\mathbb Z}^{\sharp}(\mathbf v)-\mathbf P_{\overline{\mathbb Z}}^{\sharp}(\overline{\mathbf v})\|_{(\mathcal L_T^{\beta,\gamma})^5}\leq C_{R,\lambda_*}\bigl(\|\mathbb Z^0-\overline{\mathbb Z}^0\|_{\mathfrak X_{T,\alpha}^0}+\|v_0-\bar v_0\|_\beta+T^{\theta_1}d_{\mathcal D_T}(\mathbf v,\overline{\mathbf v})\bigr).
\end{align}
On bounded sets, Proposition~\ref{prop:finite-time-realization} allows the first term on the right-hand side to be replaced by $d_{\mathscr X_{T,\alpha}}(\mathbb Z,\overline{\mathbb Z})$.
\end{proposition}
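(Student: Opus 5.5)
The stability estimate follows the same term-by-term scheme as the proof of Proposition~\ref{prop:renormalized-product-blocks}. Every block in $\mathbf P_{\mathbb Z}^{\sharp}(\mathbf v)$ is a finite sum of multilinear expressions. Their arguments are of three kinds: model coordinates of $\mathbb Z^0$ (including the scalars $\lambda$, $\lambda^{-1}$, $\mu$); the controlled quantities $v$, $v'$, $H_{\mathbf v}=P_\cdot v_0+v^\sharp$, $W$, $\Psi$; and the operators $\mathscr D_{ij}^{\mathbb Z}$, $\mathfrak R$, $\operatorname{Rem}_4$. For each summand I would write the difference $\mathbf P_{\mathbb Z}^\sharp(\mathbf v)-\mathbf P_{\overline{\mathbb Z}}^\sharp(\overline{\mathbf v})$ as a telescoping sum, changing one argument at a time. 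Each resulting term is then estimated with exactly the bound used in the existence proof. In that term one factor is replaced by a difference and the others by bounded quantities, which are controlled by $R$.

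First I would collect the differences of the inputs. The differences $\|v-\bar v\|_{\mathcal L_T^{\beta,\sigma}}$ are controlled by \eqref{eq:controlled-reconstruction-different-data}. Next, $\|H_{\mathbf v}-H_{\overline{\mathbf v}}\|$ is bounded by $\|v_0-\bar v_0\|_\beta+\|v^\sharp-\bar v^\sharp\|_{\mathcal L_T^{\beta,\gamma}}$, using Lemma~\ref{lem:controlled-path-embeddings}(i). The difference $\Psi-\bar\Psi$ is a model difference. The difference $W-\overline W$ in $\mathcal L_T^{\beta,\rho_W}$ follows from \eqref{eq:W-controlled-decomposition}, Lemma~\ref{lem:controlled-integrated-paraproduct} and the difference part of Lemma~\ref{lem:W-improved-regularity}. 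The factor $\lambda^{-1}-\bar\lambda^{-1}$ is bounded by $\lambda_*^{-2}|\lambda-\bar\lambda|$.

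Then I would take the blocks in turn. For $\mathcal P^{12,\sharp}$ and $\mathcal P^{14,\sharp}$ I would use the difference versions of \eqref{eq:principal-left-paraproduct-bound}, \eqref{eq:H-Ai-weighted-bound} and Lemma~\ref{lem:weighted-forcing}. The explicit weighted coordinates $\mathbf R^0_\tau$, $\mathbf X^0_{32}$ enter linearly and contribute only model differences. The diagonal terms are handled by the locally Lipschitz estimate in Lemma~\ref{lem:renormalized-diagonal-operators}, built on \eqref{eq:J-difference-estimate}. For the quadratic blocks $\mathcal P^{21},\mathcal P^{23}$ I would expand $\mathfrak T_B^{\mathbb Z}$ multilinearly and apply the difference forms of \eqref{eq:X-quadratic-regular-part-bound}, \eqref{eq:Y3-quadratic-regular-part-bound} and \eqref{eq:weighted-product-forcing-estimate}. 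For $\mathcal P^{41}$ I would use polynomial telescoping ($v^3-\bar v^3$, etc.) together with the trilinear commutator of Lemma~\ref{commutatores} and a difference version of the paralinearization bound behind \eqref{eq:P41-remainder-weighted-bound}. The commutator and paralinearization bounds are locally Lipschitz in $v$, since $\operatorname{Rem}_4$ is a polynomial remainder. Each term then carries the same positive time power $T^{\theta_1}$ as before. Since $T\le1$, I would keep this power only in front of $d_{\mathcal D_T}$ and discard it (bounding it by one) in front of the model and initial-data differences, which gives \eqref{eq:renormalized-product-block-stability}. The final remark follows by inserting \eqref{eq:finite-time-realization-stability}.

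The main obstacle is that \eqref{eq:renormalized-product-block-stability} asks for the factor $T^{\theta_1}$ in front of $d_{\mathcal D_T}(\mathbf v,\overline{\mathbf v})$, yet the two pairs have different data. The reconstruction difference $v-\bar v$ in \eqref{eq:controlled-reconstruction-different-data} carries no small factor in front of $\|v^\sharp-\bar v^\sharp\|$. I would therefore check that every occurrence of $v-\bar v$, $H-\bar H$ or $W-\overline W$ in $\mathbf P^\sharp$ sits under an outer $I_0$ that produces a time power. This holds because every summand in the existence proof gained $T^{\theta_\bullet}$ from Lemma~\ref{lem:weighted-forcing} or Lemma~\ref{lem:renormalized-diagonal-operators}, applied linearly in the varying factor. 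A secondary issue is the weighted coordinates, whose time singularity $t^{-\omega_\tau}$ must be preserved under differencing. Since the $\mathcal W_T^{\omega_\tau,\zeta;\rho_\tau}$ norms enter linearly and \eqref{eq:weighted-model-time-margin-positive} is uniform, this is unproblematic.
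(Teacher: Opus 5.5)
Your proposal is correct and takes essentially the same approach as the paper's proof: you telescope each multilinear summand, apply the difference versions of the bounds from Proposition~\ref{prop:renormalized-product-blocks}, and keep $T^{\theta_1}$ only in front of $d_{\mathcal D_T}$ since $T\le1$. Those bounds are the principal paraproduct and $H_{\mathbf v}\circ A_i$ estimates, the Lipschitz bound for $\mathscr D_{ij}^{\mathbb Z}$, the weighted forcing estimates, the $W$-difference bound, and local Lipschitz continuity of $\lambda\mapsto\lambda^{-1}$ on $[\lambda_*,\infty)$. Your check that every difference of $v$, $H_{\mathbf v}$ or $W$ sits under an outer $I_0$ or a diagonal operator, so that the non-small reconstruction bound is always multiplied by a positive time power, is exactly what the paper compresses into its remark that every difference involving the controlled pair carries at least $T^{\theta_1}$.
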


\begin{proof}
Expand every multilinear difference by a telescoping identity.  The estimates \eqref{eq:principal-left-paraproduct-bound} and \eqref{eq:H-Ai-weighted-bound}, together with their bilinear difference versions, give the time powers $\theta_i$ for the two principal blocks.  The diagonal differences are controlled by \eqref{eq:J-difference-estimate} and the difference version of Lemma~\ref{lem:weighted-forcing}.

The difference versions of Lemmas~\ref{lem:controlled-reconstruction} and \ref{lem:W-improved-regularity} give, on the bounded set under consideration,
\begin{align*}
\sup_{0<t\leq T}\left(\|W_t-\overline W_t\|_\beta+t^{q_W}\|W_t-\overline W_t\|_{\rho_W}\right)\lesssim_R\|\mathbb Z^0-\overline{\mathbb Z}^0\|_{\mathfrak X_{T,\alpha}^0}+\|v_0-\bar v_0\|_\beta+d_{\mathcal D_T}(\mathbf v,\overline{\mathbf v}).
\end{align*}
Applying the same telescoping expansion to \eqref{eq:X-quadratic-regular-part-bound}, \eqref{eq:Y3-quadratic-regular-part-bound}, and \eqref{eq:P41-remainder-weighted-bound} yields respectively the positive time powers $\theta_X$, $\theta_3$, and $\theta_{41}$.  Every difference of a prescribed weighted resonance is controlled by the difference version of \eqref{eq:weighted-product-forcing-estimate} and carries the time power $\theta_{\mathrm{mod}}$.  Finally, the map $(\lambda,F)\longmapsto\lambda^{-1}F$ is locally Lipschitz on $[\lambda_*,\infty)\times\mathcal W_T^{\omega_{51},\zeta;\rho_{51}}$.

Thus every difference involving the controlled pair is multiplied by at least $T^{\theta_1}$. Differences of the model and of the initial condition may also carry a positive time power, which can be discarded since $T\leq1$. Taking the minimum of the powers listed in the proof of Proposition~\ref{prop:renormalized-product-blocks} gives \eqref{eq:renormalized-product-block-stability}.
\end{proof}

The remaining mixed term involving $Y_2$ requires no renormalization. Indeed, since $\beta+\alpha-1-\delta>0$, $v_t^3Y_{2,t}\in\mathcal C^{\alpha-1-\delta}$ for every $t>0$. Moreover, $\gamma<\alpha+1-\delta$, so the outer Schauder estimate gives
\begin{equation}\label{eq:v3Y2-estimate}
\|I_0(v^3Y_2)\|_{\mathcal L_T^{\beta,\gamma}}\lesssim T^{\theta_{Y_2}}\|Y_2\|_{C_T^{\delta',\alpha-1-\delta}}\|v\|_{\mathcal L_T^{\beta,\sigma}}^3
\end{equation}
for some $\theta_{Y_2}>0$.  The purely deterministic terms $I_0(v^3)$ and $I_0(v^5)$ are defined in the usual way.

\begin{definition}\label{def:renormalized-mild-map}
For $\mathbf v=(v,v')\in\mathcal D_T^{\beta,\gamma}(\mathbb Z;v_0)$,
define
\begin{align}\label{eq:renormalized-mild-map}
\begin{split}
\mathscr G_{\mathbb Z}(\mathbf v):=&-\mathbf Y_5^0-\lambda\mathbf X_3^0-\mathcal P_{\mathbb Z}^{12}(\mathbf v)-\mathcal P_{\mathbb Z}^{14}(\mathbf v)-\mathcal P_{\mathbb Z}^{21}(\mathbf v)-\mathcal P_{\mathbb Z}^{23}(\mathbf v)\\
&-\lambda I_0(v^3)-10I_0(v^3Y_2)-\mathcal P_{\mathbb Z}^{41}(\mathbf v)-\mu I_0(v^5)+I_0(X+v).
\end{split}
\end{align}
The first component of the fixed-point map is
\begin{equation}\label{eq:Gamma-definition}
\Gamma_{\mathbb Z,v_0}(\mathbf v):=P_\cdot v_0+\mathscr G_{\mathbb Z}(\mathbf v).
\end{equation}
\end{definition}

\begin{remark}
The final term $I_0(X+v)$ is caused by the choice $P_t=\exp(t(\Delta-1))$.  It is not part of the mass renormalization and must be retained in \eqref{eq:renormalized-mild-map}.
\end{remark}

By \eqref{eq:P12-P14-definition}, the mild map has the controlled form
\begin{align}\label{eq:mild-map-controlled-form}
\mathscr G_{\mathbb Z}(\mathbf v)=-\mathbf Y_5^0-\lambda\mathbf X_3^0-5I_0(v\prec Y_4)-3\lambda I_0(v\prec X^{\diamond2})+\mathscr G_{\mathbb Z}^{\sharp}(\mathbf v),
\end{align}
where
\begin{align*}
\mathscr G_{\mathbb Z}^{\sharp}(\mathbf v):=&-\mathcal P_{\mathbb Z}^{12,\sharp}(\mathbf v)-\mathcal P_{\mathbb Z}^{14,\sharp}(\mathbf v)-\mathcal P_{\mathbb Z}^{21}(\mathbf v)-\mathcal P_{\mathbb Z}^{23}(\mathbf v)\\
&-\lambda I_0(v^3)-10I_0(v^3Y_2)-\mathcal P_{\mathbb Z}^{41}(\mathbf v)-\mu I_0(v^5)+I_0(X+v).
\end{align*}
In particular,
\begin{equation*}
\mathscr G_{\mathbb Z}^{\sharp}(\mathbf v)\in\mathcal L_{T,0}^{\beta,\gamma}.
\end{equation*}
Consequently,
\begin{equation*}
\bigl(\Gamma_{\mathbb Z,v_0}(\mathbf v),v\bigr)\in\mathcal D_T^{\beta,\gamma}(\mathbb Z;v_0).
\end{equation*}
The controlled derivative of the output is the first input component $v$.

\begin{proposition}\label{prop:renormalized-mild-map-bounds}
For every $R>0$, there exist $C_{R,\lambda_*}<\infty$ and $\theta_{\mathrm G}>0$ such that
\begin{equation}\label{eq:renormalized-mild-map-bound}
\|\mathscr G_{\mathbb Z}^{\sharp}(\mathbf v)\|_{\mathcal L_T^{\beta,\gamma}}\leq C_{R,\lambda_*}T^{\theta_{\mathrm G}}\big(1+\|\mathbf v\|_{\mathcal D_T^{\beta,\gamma}(\mathbb Z;v_0)}\big)^5
\end{equation}
whenever $\|\mathbb Z^0\|_{\mathfrak X_{T,\alpha}^0}+\|v_0\|_\beta\leq R$ and $\lambda\geq\lambda_*$. If $\overline{\mathbf v}\in\mathcal D_T^{\beta,\gamma}(\overline{\mathbb Z};\bar v_0)$, $\lambda\wedge\bar\lambda\geq\lambda_*$, and $\|\mathbb Z^0\|_{\mathfrak X_{T,\alpha}^0}+\|\overline{\mathbb Z}^0\|_{\mathfrak X_{T,\alpha}^0}+\|v_0\|_\beta+\|\bar v_0\|_\beta+\|\mathbf v\|_{\mathcal D_T}+\|\overline{\mathbf v}\|_{\mathcal D_T}\leq R$, then
\begin{align}\label{eq:renormalized-mild-map-stability}
\|\mathscr G_{\mathbb Z}^{\sharp}(\mathbf v)-\mathscr G_{\overline{\mathbb Z}}^{\sharp}(\overline{\mathbf v})\|_{\mathcal L_T^{\beta,\gamma}}\leq C_{R,\lambda_*}\bigl(\|\mathbb Z^0-\overline{\mathbb Z}^0\|_{\mathfrak X_{T,\alpha}^0}+\|v_0-\bar v_0\|_\beta+T^{\theta_{\mathrm G}}d_{\mathcal D_T}(\mathbf v,\overline{\mathbf v})\bigr).
\end{align}
On bounded sets, the finite-time model difference may be replaced by $d_{\mathscr X_{T,\alpha}}(\mathbb Z,\overline{\mathbb Z})$.
\end{proposition}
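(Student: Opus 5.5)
The plan is to decompose $\mathscr G_{\mathbb Z}^{\sharp}(\mathbf v)$ into its nine summands and bound each one separately in $\mathcal L_T^{\beta,\gamma}$. Each bound should carry a positive power of $T$ and a polynomial of degree at most five in $1+\|\mathbf v\|_{\mathcal D_T^{\beta,\gamma}(\mathbb Z;v_0)}$; taking $\theta_{\mathrm G}$ to be the minimum of the exponents obtained then gives \eqref{eq:renormalized-mild-map-bound}. The summands fall into two groups. The first group, $\mathcal P_{\mathbb Z}^{12,\sharp},\mathcal P_{\mathbb Z}^{14,\sharp},\mathcal P_{\mathbb Z}^{21},\mathcal P_{\mathbb Z}^{23},\mathcal P_{\mathbb Z}^{41}$, is exactly the vector $\mathbf P_{\mathbb Z}^\sharp(\mathbf v)$, so Proposition~\ref{prop:renormalized-product-blocks} bounds it by $C_{R,\lambda_*}T^{\theta_1}(1+\|\mathbf v\|_{\mathcal D_T})^4$. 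For the second group, the one remaining stochastic term $10I_0(v^3Y_2)$ is handled by \eqref{eq:v3Y2-estimate}, which gives time power $\theta_{Y_2}$ and cubic dependence on $\|v\|_{\mathcal L_T^{\beta,\sigma}}$. Lemma~\ref{lem:controlled-reconstruction} converts that dependence into $C_R(1+\|\mathbf v\|_{\mathcal D_T})^3$.

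The deterministic terms $\lambda I_0(v^3)$, $\mu I_0(v^5)$ and $I_0(X+v)$ are treated with Lemma~\ref{lem:weighted-forcing}, taking $\omega=0$. Since $\beta>0$, $\bC^\beta$ is an algebra, so $\|v_t^3\|_\beta+\|v_t^5\|_\beta\lesssim\|v\|^3_{\mathcal L_T^{\beta,\sigma}}+\|v\|^5_{\mathcal L_T^{\beta,\sigma}}$. The hypothesis $\rho\le\beta-2\mathfrak d$ of the lemma forces a lower space, so I will use the embedding into $\bC^{\rho}$ with $\rho=\beta-2\mathfrak d$. The remaining hypotheses then need $\gamma<\beta-2\mathfrak d+2$, which holds because $\gamma<2\alpha-\frac12<\frac32$ and $\beta-2\mathfrak d>0$, and the resulting time power is $1-\mathfrak d>0$. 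For $I_0(X)$, take $\rho=-\frac12-\delta$, so that the time power is $1-\frac{\beta+\frac12+\delta}{2}>0$. The condition $\gamma<\frac32-\delta$ is satisfied because $\gamma<2\alpha-\frac12-3\delta\le\frac32-3\delta$. The coefficients $|\lambda|,|\mu|\le R$ are absorbed into $C_{R,\lambda_*}$, and again Lemma~\ref{lem:controlled-reconstruction} passes from $v$ to $\mathbf v$. The highest power that appears is $(1+\|\mathbf v\|)^5$, coming from $v^5$.

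For the stability estimate \eqref{eq:renormalized-mild-map-stability}, I telescope each multilinear term. The product blocks are covered directly by Proposition~\ref{prop:renormalized-product-blocks-stability}. For the polynomial terms and $I_0(v^3Y_2)$, the differences reduce to $\|v-\bar v\|_{\mathcal L_T^{\beta,\sigma}}$, $\|Y_2-\bar Y_2\|$, $|\lambda-\bar\lambda|$, $|\mu-\bar\mu|$ and $\|X-\bar X\|$. The coordinate differences are parts of $\|\mathbb Z^0-\overline{\mathbb Z}^0\|$, and $\|v-\bar v\|$ is controlled by \eqref{eq:controlled-reconstruction-different-data}.

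The one delicate point is bookkeeping. The different-data reconstruction bound carries no time factor on $\|v_0-\bar v_0\|_\beta$ or on the model difference, but those quantities are allowed to appear on the right without $T^{\theta_{\mathrm G}}$. The factor $d_{\mathcal D_T}(\mathbf v,\overline{\mathbf v})$, which comes through $v-\bar v$, keeps the positive time power produced by the outer Schauder estimate. Finally, Proposition~\ref{prop:finite-time-realization} lets the model difference be replaced by $d_{\mathscr X_{T,\alpha}}(\mathbb Z,\overline{\mathbb Z})$ on bounded sets. I expect no genuine obstacle: all the hard analysis is already contained in Propositions~\ref{prop:renormalized-product-blocks} and \ref{prop:renormalized-product-blocks-stability}.
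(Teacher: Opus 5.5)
Your proposal is correct and follows essentially the same route as the paper. You delegate the five product blocks to Propositions~\ref{prop:renormalized-product-blocks} and \ref{prop:renormalized-product-blocks-stability}, the $v^3Y_2$ term to \eqref{eq:v3Y2-estimate}, the deterministic terms to Schauder estimates, and the stability bound to telescoping together with \eqref{eq:controlled-reconstruction-different-data}; your explicit choices $\rho=\beta-2\mathfrak d$ and $\rho=-\frac12-\delta$ with $\omega=0$ in Lemma~\ref{lem:weighted-forcing} are a faithful way to realize the paper's conditions $\gamma<\beta+2$ and $\gamma<\frac32-\delta$.
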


\begin{proof}
Propositions~\ref{prop:renormalized-product-blocks} and \ref{prop:renormalized-product-blocks-stability} control the five renormalized product blocks collected in $\mathbf P_{\mathbb Z}^{\sharp}(\mathbf v)$ in \eqref{eq:product-block-vector}. Estimate \eqref{eq:v3Y2-estimate} treats the remaining mixed Wick term.  The ordinary Schauder estimates give positive powers of $T$ for $I_0(v^3)$, $I_0(v^5)$ and $I_0(X+v)$; the relevant inequalities are $\gamma<\beta+2$ and $\gamma<\frac32-\delta$. Lemma~\ref{lem:controlled-reconstruction} expresses the path norm of $v$ in terms of the controlled size and the fixed data.  Taking the minimum of all positive time powers proves \eqref{eq:renormalized-mild-map-bound}. Polynomial differences are expanded by telescoping, and the same estimates, together with \eqref{eq:controlled-reconstruction-different-data}, prove \eqref{eq:renormalized-mild-map-stability}.
\end{proof}

\begin{proposition}\label{prop:smooth-consistency-mild-map}
Let $\mathbb Z=\mathscr L_T^{\mathrm{stat}}(X;\lambda,\mu,\mathbf c)$, $\mathbf c=(c_1,c_2,d,b)$ be a smooth canonical lift.  Define
\begin{align}\label{eq:canonical-mass-constants}
\widetilde C:=10\mu c_1-\lambda,\ A^{\mathrm{loc}}:=3\lambda c_1-15\mu c_1^2,\ C^{\mathrm{can}}:=A^{\mathrm{loc}}-9\lambda^2c_2-9b-6d.
\end{align}
Then, for every smooth $v$,
\begin{align}\label{eq:smooth-consistency-mild-map}
\mathscr G_{\mathbb Z}(v,v)=-I_0\bigl(\mu(X+v)^5-\widetilde C(X+v)^3-(C^{\mathrm{can}}+1)(X+v)\bigr).
\end{align}
Consequently,
\begin{align*}
\Gamma_{\mathbb Z,v_0}(v,v)=P_\cdot v_0-I_0\bigl(\mu(X+v)^5-\widetilde C(X+v)^3-(C^{\mathrm{can}}+1)(X+v)\bigr).
\end{align*}
\end{proposition}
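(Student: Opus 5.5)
The plan is to compute $\mathscr G_{\mathbb Z}(v,v)$ term by term, using the smooth consistency identities of Lemma~\ref{lem:smooth-consistency-product-blocks} with $v'=v$, and then to compare the result with the exact Wick--Taylor identity from Section~\ref{sec02}. That identity carries over verbatim to a smooth $X$ with $c_1$ in place of $C_\varepsilon^{(1)}$, since it is purely algebraic in Hermite polynomials. On a smooth canonical lift one has $\mathbf Y_5^0=\mu I_0(X^{\diamond5})$, $\mathbf X_3^0=I_0(X^{\diamond3})$ and $Y_2=\mu X^{\diamond2}$. Substituting into \eqref{eq:renormalized-mild-map} then gives
\begin{align*}
-\mathscr G_{\mathbb Z}(v,v)={}&I_0\Big(\mu X^{\diamond5}+\lambda X^{\diamond3}+3\lambda vX^{\diamond2}+5\mu vX^{\diamond4}+3\lambda v^2X+10\mu v^2X^{\diamond3}+\lambda v^3\\
&+10\mu v^3X^{\diamond2}+5\mu v^4X+\mu v^5\Big)+9\lambda^2c_2I_0(X+v)+6dI_0(X)+9bI_0(X+v)+6dI_0(v)-I_0(X+v).
\end{align*}
The counterterm contributions are collected as follows. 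From $\mathcal P^{12}$ we get $9\lambda^2c_2(X+v)$, from $\mathcal P^{14}$ we get $6dX+9b(X+v)$, and from $\mathcal P^{23}$ we get $6dv$. Together they form $(9\lambda^2c_2+9b+6d)(X+v)$.

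The next step is to identify the bracket. With $\lambda=10\mu c_1-\widetilde C$, the Wick--Taylor identity states that the bracket equals
\[
\mu(X+v)^5-\widetilde C(X+v)^3-A^{\mathrm{loc}}(X+v),
\qquad A^{\mathrm{loc}}=15\mu c_1^2-3\widetilde Cc_1=3\lambda c_1-15\mu c_1^2 .
\]
To verify this I would check each monomial in $v$. Expanding $(X+v)^5$ and $(X+v)^3$ and rewriting the powers of $X$ through $X^5=X^{\diamond5}+10c_1X^{\diamond3}+15c_1^2X$, $X^4=X^{\diamond4}+6c_1X^{\diamond2}+3c_1^2$, $X^3=X^{\diamond3}+3c_1X$ and $X^2=X^{\diamond2}+c_1$, the coefficients recombine into $\lambda$ exactly as displayed. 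For example, the $v$-linear terms give $5\mu v(X^{\diamond4}+6c_1X^{\diamond2}+3c_1^2)-3\widetilde Cv(X^{\diamond2}+c_1)=5\mu vX^{\diamond4}+3\lambda vX^{\diamond2}+A^{\mathrm{loc}}v$.

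Combining the two steps,
\[
-\mathscr G_{\mathbb Z}(v,v)=I_0\Big(\mu(X+v)^5-\widetilde C(X+v)^3-\big(A^{\mathrm{loc}}-9\lambda^2c_2-9b-6d+1\big)(X+v)\Big),
\]
and the coefficient of $(X+v)$ is precisely $C^{\mathrm{can}}+1$, which is \eqref{eq:smooth-consistency-mild-map}. The formula for $\Gamma_{\mathbb Z,v_0}(v,v)$ then follows immediately from \eqref{eq:Gamma-definition}.

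The main obstacle is the bookkeeping of signs and factors in the constants contributed by the product blocks. In particular, one must confirm that the $d$-terms arising from $\mathcal P^{14}$ (the term $6dX$) and from $\mathcal P^{23}$ (the term $6dv$) combine to $6d(X+v)$. One must also confirm that the $b$- and $c_2$-contributions appear with $v'$ replaced by $v$, which is legitimate because we evaluate at $v'=v$. Since all of this rests on Lemma~\ref{lem:smooth-consistency-product-blocks}, the only new verification is the polynomial identity, which is routine.
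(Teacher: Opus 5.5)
Your proposal is correct and follows essentially the same route as the paper's proof: you apply Lemma~\ref{lem:smooth-consistency-product-blocks} at $v'=v$, collect the scalar corrections into $(9\lambda^2c_2+9b+6d)I_0(X+v)$, and conclude with the Wick--Taylor identity with $c_1$ in place of $C_\varepsilon^{(1)}$. Your explicit monomial check of that identity and your sign bookkeeping for the term $-I_0(X+v)$, which produces the coefficient $C^{\mathrm{can}}+1$, are both correct.
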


\begin{proof}
At $v'=v$, Lemma~\ref{lem:smooth-consistency-product-blocks} shows that the sum of all mixed product blocks equals
\begin{align*}
I_0(3\lambda vX^{\diamond2}+5\mu vX^{\diamond4}+3\lambda v^2X+10\mu v^2X^{\diamond3}+10\mu v^3X^{\diamond2}+5\mu v^4X)+(9\lambda^2c_2+9b+6d)I_0(X+v).
\end{align*}
Together with $\lambda I_0(X^{\diamond3})$, $\mu I_0(X^{\diamond5})$, $\lambda I_0(v^3)$ and $\mu I_0(v^5)$, the Wick--Taylor identity gives
\begin{align*}
\mu(X+v)^5-\widetilde C(X+v)^3-A^{\mathrm{loc}}(X+v)=&\lambda X^{\diamond3}+\mu X^{\diamond5}+3\lambda vX^{\diamond2}+5\mu vX^{\diamond4}+3\lambda v^2X\\
&+10\mu v^2X^{\diamond3}+\lambda v^3+10\mu v^3X^{\diamond2}+5\mu v^4X+\mu v^5.
\end{align*}
Substitution into \eqref{eq:renormalized-mild-map}, followed by \eqref{eq:canonical-mass-constants}, proves \eqref{eq:smooth-consistency-mild-map}.
\end{proof}

\begin{remark}\label{rem:lambda-zero-enhancement}
The restriction $\lambda\geq\lambda_*$ is harmless when $\lambda_\varepsilon\to\lambda>0$.  A deterministic theory uniform down to $\lambda=0$ requires one additional enhanced coordinate $\mathbf S_{51}:=\mu\cI(X^{\diamond5})\circ X$. Its finite-time realization is
\begin{equation*}
\mathbf S_{51}^0:=\mathbf S_{51}-(P_\cdot\mathbf Y_5(0))\circ X.
\end{equation*}
One may then prescribe
\begin{equation*}
\Psi\circ_{\mathbb Z}X:=\mathbf S_{51}^0+\lambda\mathbf X_{31}^0,
\end{equation*}
without dividing by $\lambda$.  This coordinate has the same spatial regularity and finite-time weight as $\mathbf R_{51}^0$.  Its stochastic convergence follows from the same dyadic argument used for $\lambda\mu\cI(X^{\diamond5})\circ X$.
\end{remark}

\subsection{Local well-posedness, stability and convergence}\label{subsec:local-well-posedness}

We retain all the parameters fixed in Sections~\ref{subsec:admissible-enhanced-data} and \ref{subsec:controlled-distributions}; in particular, \eqref{eq:model-parameter-choice}, \eqref{eq:model-parabolic-time-choice} and \eqref{eq:controlled-spatial-exponents} are assumed throughout this section.  Fix $\lambda_*>0$.

We first record that both the finite-time realization and the mild map are compatible with restriction to a shorter time interval.

\begin{lemma}\label{lem:fixed-point-restriction-consistency}
Let $0<S\leq T\leq1$.  Then
\begin{equation}\label{eq:restriction-finite-time-realization}
\operatorname{Res}_{S,T}(\mathscr Q_0\mathbb Z)=\mathscr Q_0(\operatorname{Res}_{S,T}\mathbb Z)
\end{equation}
for every $\mathbb Z\in\mathscr X_{T,\alpha}$.  Moreover, if $\mathbf v\in\mathcal D_T^{\beta,\gamma}(\mathbb Z;v_0)$, then
\begin{align}\label{eq:mild-map-restriction-consistency}
\operatorname{Res}_{S,T}\mathbf v\in\mathcal D_S^{\beta,\gamma}(\operatorname{Res}_{S,T}\mathbb Z;v_0),\ \operatorname{Res}_{S,T}(\mathcal M_{\mathbb Z,v_0}^{T}(\mathbf v))=\mathcal M_{\operatorname{Res}_{S,T}\mathbb Z,v_0}^{S}(\operatorname{Res}_{S,T}\mathbf v).
\end{align}
Here the superscript indicates the time interval on which the map is defined.
\end{lemma}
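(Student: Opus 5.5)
The plan is to check both identities first on the smooth canonical core and then pass to limits. Every object involved is either defined by an explicit formula that is local in time, or is the unique continuous extension of such a formula. On the core, restricting the formula commutes with restriction of paths, and the extension step preserves this commutation.

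\emph{Step 1: identity \eqref{eq:restriction-finite-time-realization} on the core.} Take a smooth canonical lift $\mathbb Z=\mathscr L_T^{\rm stat}(X;\lambda,\mu,\mathbf c)$. Its restriction $\operatorname{Res}_{S,T}\mathbb Z$ is again the canonical lift of $X$, now viewed on $[0,S]$. This holds because the stationary integration operator $\mathcal I$ at time $t\le S$ only uses $X$ on $(-\infty,t]$. Each coordinate of $\mathscr Q_0\mathbb Z$ is one of two explicit expressions.
\begin{itemize}
\item[(a)] $(Q_0F)_t=F_t-P_tF_0$, which at time $t$ depends only on $F_0$ and $F_t$.
\item[(b)] $F_t-(P_tG_0)\circ B_t$, which at time $t$ depends only on $F_t$, $G_0$ and $B_t$.
\end{itemize}
Since $0\in[0,S]$, both expressions commute with restriction, so \eqref{eq:restriction-finite-time-realization} holds on $\mathscr X^\infty_{T,\alpha}$.

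\emph{Step 2: extension to all admissible data.} The restriction maps are contractive on $\mathfrak X_{T,\alpha}$ and $\mathfrak X^0_{T,\alpha}$. Proposition~\ref{prop:finite-time-realization} shows that $\mathscr Q_0$ is locally Lipschitz on both time horizons. Hence both sides of \eqref{eq:restriction-finite-time-realization} are continuous maps $\mathscr X_{T,\alpha}\to\mathfrak X^0_{S,\alpha}$. They agree on the dense core, so they agree everywhere.

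\emph{Step 3: restriction of controlled pairs.} Let $\mathbf v\in\mathcal D_T^{\beta,\gamma}(\mathbb Z;v_0)$. Each term in \eqref{eq:controlled-remainder-definition} is causal:
\begin{itemize}
\item $P_\cdot v_0$ is evaluated pointwise in time;
\item $I_0(\cdot)(t)$ only involves integrands on $[0,t]$;
\item the coordinates $\mathbf Y_5^0$ and $\mathbf X_3^0$ are restricted consistently by Step 2.
\end{itemize}
Therefore the remainder $v^\sharp$ computed from the restricted data equals the restriction of $v^\sharp$. The norms $\mathcal L^{\rho_0,\rho_1}$ are contractive under restriction, and the initial value at $t=0$ is unchanged. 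This gives the membership claim in \eqref{eq:mild-map-restriction-consistency}.

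\emph{Step 4: restriction of the fixed-point map.} The aim is to show that every term of $\mathscr G_{\mathbb Z}$ in \eqref{eq:renormalized-mild-map} is causal. The terms fall into three groups.
\begin{itemize}
\item The explicit terms $I_0(\cdot)$ applied to pointwise-in-time products of $v$, $W$, $\Psi$ with model coordinates. These include the prescribed resonances \eqref{eq:prescribed-Psi-resonances} and the terms $\mathfrak T_B^{\mathbb Z}$, $\operatorname{Rem}_4$.
\item The weighted coordinates, which enter only through $I_0(f\mathbf M_{ij})$ and $I_0(\mathbf R^0_\tau)$.
\item The diagonal operators $\mathscr J_{ij}$ from \eqref{eq:J-time-definition}--\eqref{eq:J-bony-definition}.
\end{itemize}
For the $\mathscr J_{ij}$, the value at time $t$ involves $f_r$, $A_{i,r}$ and $A_{j,s}$ only for $0\le r\le s\le t$. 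Lemma~\ref{lem:integrated-diagonal-estimate} states that these operators are defined directly by the analytic formulas, without a density argument, so causality can be read off the formulas themselves. By Step 2, the coordinates of $\mathscr Q_0\operatorname{Res}_{S,T}\mathbb Z$ are the restrictions of those of $\mathscr Q_0\mathbb Z$. Together these give
\[
\operatorname{Res}_{S,T}\mathscr G_{\mathbb Z}(\mathbf v)=\mathscr G_{\operatorname{Res}\mathbb Z}(\operatorname{Res}\mathbf v).
\]
The second component of $\mathcal M$ is just $v$, which restricts trivially, so \eqref{eq:mild-map-restriction-consistency} follows.

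\emph{Main obstacle.} The delicate point is that Proposition~\ref{prop:renormalized-product-blocks} defines the blocks as continuous extensions from smooth canonical lifts, not by formula. To handle this, I would rely on the remark in its proof that every displayed term is defined directly by the analytic estimates for an admissible datum. If that is judged insufficient, I would argue by approximation instead:
\begin{itemize}
\item approximate $\mathbb Z$ by smooth lifts $\mathbb Z_n$ and $\mathbf v$ by smooth pairs;
\item use causality on the smooth level;
\item pass to the limit using the joint continuity estimates \eqref{eq:renormalized-mild-map-stability} on both intervals, with uniform constants for $T\le1$.
\end{itemize}
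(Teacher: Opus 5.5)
Your proposal is correct and follows the paper's proof. You verify \eqref{eq:restriction-finite-time-realization} on smooth canonical lifts from the explicit formulas $F_t-P_tF_0$ and $F_t-(P_tG_0)\circ B_t$ and extend it by density, making explicit that this also uses the continuity of $\mathscr Q_0$ from Proposition~\ref{prop:finite-time-realization} on both horizons. You then read off the restriction property of $v^\sharp$, of $\mathscr J_{ij}$ and of all product blocks from the fact that every $I_0$ and every inner integral only involves times in $[0,t]$.

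Your primary resolution of the ``main obstacle'' is exactly what the paper relies on: the blocks are given directly by their analytic formulas. So the fallback is not needed. It would also be delicate, because the norms $\mathcal L_T^{\rho_0,\rho_1}$ and $\bC^\beta$ do not in general allow approximating $\mathbf v$, or even $v_0$, by smooth objects.
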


\begin{proof}
For a smooth stationary canonical lift, \eqref{eq:restriction-finite-time-realization} follows directly from $(Q_0F)_t=F_t-P_tF_0$ and from the definitions of the finite-time resonant coordinates.  The identity extends to every admissible datum because restriction is continuous in all factors of $\mathfrak X_{T,\alpha}$ and $\mathfrak X_{T,\alpha}^0$.

The controlled remainder is compatible with restriction since every occurrence of $I_0$ is an integral from $0$ to the current time.  The same observation applies to the integrated diagonal operators and to all the renormalized product blocks.  Consequently, \eqref{eq:renormalized-mild-map} commutes with restriction, which proves \eqref{eq:mild-map-restriction-consistency}.
\end{proof}

Then we derive the following local well-posedness and stability result.

\begin{theorem}\label{thm:deterministic-fixed-point}
Let $0<T\leq1$.  For every $R\geq1$, there exist $K_{R,\lambda_*}<\infty$, $0<\tau_{R,\lambda_*}\leq1$, $0<\chi<\min\{\theta_0,\theta_{\mathrm G}\}$ depending only on $R$, $\lambda_*$ and the fixed exponents, with the following property. Set $T_R:=T\wedge\tau_{R,\lambda_*}$. Let $\mathbb Z=(\mathbb X^\Phi,\lambda,\mu,\mathbb V)\in\mathscr X_{T,\alpha}$, $\mathbb Z^0=\mathscr Q_0\mathbb Z$, $v_0\in\bC^\beta$, and assume that
\begin{equation}\label{eq:fixed-point-data-bound}
\|\mathbb Z^0\|_{\mathfrak X_{T,\alpha}^0}+\|v_0\|_\beta\leq R,\ \lambda\geq\lambda_*.
\end{equation}
Then the map $\mathcal M_{\mathbb Z,v_0}(v,v')=(\Gamma_{\mathbb Z,v_0}(v,v'),v)$ has a fixed point
\begin{equation*}
\mathbf v=(v,v)\in\mathcal D_{T_R}^{\beta,\gamma}(\mathbb Z;v_0)
\end{equation*}
satisfying
\begin{equation}\label{eq:fixed-point-solution-bound}
\|v\|_{\mathcal L_{T_R}^{\beta,\sigma}}\leq K_{R,\lambda_*},\ \|v^\sharp\|_{\mathcal L_{T_R}^{\beta,\gamma}}\leq1.
\end{equation}
It is the unique fixed point satisfying \eqref{eq:fixed-point-solution-bound}.  In particular,
\begin{equation}\label{eq:abstract-renormalized-mild-equation}
v=P_\cdot v_0+\mathscr G_{\mathbb Z}(v,v).
\end{equation}

Let $\overline{\mathbb Z}\in\mathscr X_{T,\alpha}$ and $\bar v_0\in\bC^\beta$ satisfy the same assumptions with $\lambda\wedge\bar\lambda\geq\lambda_*$, and let $\overline{\mathbf v}=(\bar v,\bar v)$ be the corresponding fixed point. If both data sets satisfy \eqref{eq:fixed-point-data-bound} with the same $R$, then
\begin{align}\label{eq:deterministic-solution-stability}
d_{\mathcal D_{T_R}}(\mathbf v,\overline{\mathbf v})+\|v-\bar v\|_{\mathcal L_{T_R}^{\beta,\sigma}}\leq C_{R,\lambda_*}\big(\|\mathbb Z^0-\overline{\mathbb Z}^0\|_{\mathfrak X_{T_R,\alpha}^0}+\|v_0-\bar v_0\|_\beta\big).
\end{align}
For pairs associated with different data, the two sharp remainders in $d_{\mathcal D_{T_R}}$ are computed relative to their respective enhanced data and initial conditions.  On bounded subsets of the stationary admissible space, the finite-time model difference on the right-hand side of \eqref{eq:deterministic-solution-stability} may be replaced by $d_{\mathscr X_{T,\alpha}}(\mathbb Z,\overline{\mathbb Z})$.
\end{theorem}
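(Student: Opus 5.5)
The argument is a Banach fixed-point argument for $\mathcal M_{\mathbb Z,v_0}$ in the complete space $\mathcal D_{T_R}^{\beta,\gamma}(\mathbb Z;v_0)$ of Proposition~\ref{prop:controlled-space-complete}, using the scaled metric $d^{(\chi)}_{\mathcal D_T}$. The invariant set will be the ball
\begin{equation*}
\mathcal B:=\Bigl\{\mathbf v=(v,v')\in\mathcal D_{T_R}^{\beta,\gamma}(\mathbb Z;v_0):\ \|v'\|_{\mathcal L_{T_R}^{\beta,\sigma}}\leq K,\ \|v^\sharp\|_{\mathcal L_{T_R}^{\beta,\gamma}}\leq1\Bigr\}.
\end{equation*}
The radius $K=K_{R,\lambda_*}$ will be fixed from the reconstruction bound. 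By \eqref{eq:controlled-reconstruction-estimate} with $T\leq1$, every $\mathbf v\in\mathcal B$ satisfies
\begin{equation*}
\|v\|_{\mathcal L_T^{\beta,\sigma}}\leq C_R\bigl(1+R+1\bigr)+C_RT^{\theta_0}K.
\end{equation*}
Take $K:=2C_R(2+R)$ and then require $C_RT^{\theta_0}\leq\frac12$. This gives $\|v\|\leq K$, so the size of $\mathbf v$ is bounded by $K+1$.

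First step: $\mathcal M$ maps $\mathcal B$ into itself. By \eqref{eq:mild-map-controlled-form}, the output pair $(\Gamma(\mathbf v),v)$ has derivative $v$, and its sharp remainder is exactly $\mathscr G^\sharp_{\mathbb Z}(\mathbf v)$. The derivative $v$ has norm at most $K$ by the choice above. Proposition~\ref{prop:renormalized-mild-map-bounds} gives
\begin{equation*}
\|\mathscr G^\sharp_{\mathbb Z}(\mathbf v)\|_{\mathcal L_T^{\beta,\gamma}}\leq C_{R,\lambda_*}T^{\theta_{\mathrm G}}(2+K)^5,
\end{equation*}
which is at most $1$ once $T$ is small. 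The smallness conditions determine $\tau_{R,\lambda_*}$.

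Second step: contraction. Fix $\chi\in(0,\min\{\theta_0,\theta_{\mathrm G}\})$ and compare $\mathbf v,\overline{\mathbf v}\in\mathcal B$ with the same data. The sharp part of the distance is controlled by \eqref{eq:renormalized-mild-map-stability} with zero data differences:
\begin{equation*}
\|\mathscr G^\sharp(\mathbf v)-\mathscr G^\sharp(\overline{\mathbf v})\|\leq CT^{\theta_{\mathrm G}}d_{\mathcal D_T}(\mathbf v,\overline{\mathbf v})\leq CT^{\theta_{\mathrm G}-\chi}d^{(\chi)}(\mathbf v,\overline{\mathbf v}).
\end{equation*}
The derivative part of the new distance is $T^\chi\|v-\bar v\|$. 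This derivative swap is the genuine obstacle: $v$ is not small in terms of $v^\sharp$ alone. Estimate \eqref{eq:controlled-derivative-swap-smallness} bounds it by $C_R(T^\chi+T^{\theta_0})\,d^{(\chi)}$. Since $\chi>0$ and $\theta_0>0$, shrinking $\tau$ makes the total Lipschitz constant at most $\frac12$. Banach's theorem then yields a unique fixed point in $\mathcal B$, and any fixed point has $v'=v$.

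The uniqueness class stated in \eqref{eq:fixed-point-solution-bound} is exactly $\mathcal B$ restricted to the diagonal $v'=v$. Identity \eqref{eq:abstract-renormalized-mild-equation} is the fixed-point equation. Restriction consistency (Lemma~\ref{lem:fixed-point-restriction-consistency}) guarantees that the same $\tau$ works for every $T$.

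Stability: let $\mathbf v=(v,v)$ and $\overline{\mathbf v}=(\bar v,\bar v)$ be the fixed points for the two data sets. Subtracting, the sharp difference is bounded by \eqref{eq:renormalized-mild-map-stability}:
\begin{equation*}
\|v^\sharp-\bar v^\sharp\|\leq C\bigl(\Delta+T^{\theta_{\mathrm G}}d_{\mathcal D_T}(\mathbf v,\overline{\mathbf v})\bigr),\qquad \Delta:=\|\mathbb Z^0-\overline{\mathbb Z}^0\|+\|v_0-\bar v_0\|_\beta .
\end{equation*}
The derivative difference $\|v-\bar v\|$ is bounded by \eqref{eq:controlled-reconstruction-different-data}, giving $C_R(\Delta+\|v^\sharp-\bar v^\sharp\|)$. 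The reason is that the derivative term there is really $T^{\theta_0}\|v'-\bar v'\|=T^{\theta_0}\|v-\bar v\|$, which can be absorbed on the left for small $T$. I would re-derive that absorbed form from the telescoped proof of Lemma~\ref{lem:controlled-reconstruction} rather than use the stated inequality directly. Combining the two bounds and absorbing $CT^{\theta_{\mathrm G}}d$ gives \eqref{eq:deterministic-solution-stability}. The final remark on replacing $\Delta$ by the stationary distance follows from Proposition~\ref{prop:finite-time-realization}.
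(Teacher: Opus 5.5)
Your proposal is correct and follows the paper's proof essentially step for step. You use the same closed ball $\{\|v'\|_{\mathcal L^{\beta,\sigma}}\le K,\ \|v^\sharp\|_{\mathcal L^{\beta,\gamma}}\le 1\}$ with the scaled metric $d^{(\chi)}_{\mathcal D_T}$, invariance from \eqref{eq:controlled-reconstruction-estimate} and Proposition~\ref{prop:renormalized-mild-map-bounds}, contraction from \eqref{eq:controlled-derivative-swap-smallness} and \eqref{eq:renormalized-mild-map-stability}, and stability by redoing the reconstruction difference with the $T^{\theta_0}$ factor retained and then absorbing. The one detail you skip, which the paper checks explicitly, is that the ball is nonempty: take $w'=P_\cdot v_0$ and $w^\sharp=0$, which lies in the ball once $K\gtrsim R$.
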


\begin{proof}
For $0<S\leq T$, all data and paths below are restricted to $[0,S]$. Choose $0<\chi<\min\{\theta_0,\theta_{\mathrm G}\}$. For $K>0$, introduce the closed subset
\begin{align*}
\mathfrak B_{K,S}(\mathbb Z;v_0):=\bigl\{\mathbf w=(w,w')\in\mathcal D_S^{\beta,\gamma}(\mathbb Z;v_0):\ \|w'\|_{\mathcal L_S^{\beta,\sigma}}\leq K,\ \|w^\sharp\|_{\mathcal L_S^{\beta,\gamma}}\leq1\bigr\}.
\end{align*}
This set is nonempty.  Indeed, one may take $w'=P_\cdot v_0$, set $w^\sharp=0$, and define $w$ through \eqref{eq:controlled-remainder-definition}.  It is complete for $d_{\mathcal D_S}^{(\chi)}$ by Proposition~\ref{prop:controlled-space-complete}.

Retaining the small time power in \eqref{eq:controlled-reconstruction-estimate}, the data bound \eqref{eq:fixed-point-data-bound} gives
\begin{equation*}
\|w\|_{\mathcal L_S^{\beta,\sigma}}\leq C_R+C\|w^\sharp\|_{\mathcal L_S^{\beta,\gamma}}+C_RS^{\theta_0}\|w'\|_{\mathcal L_S^{\beta,\sigma}}.
\end{equation*}
Choose $K=K_{R,\lambda_*}$ large enough that the first two terms are at most $\ff K2$, and then choose $S$ sufficiently small so that
\begin{equation*}
C_RS^{\theta_0}K\leq\frac K2.
\end{equation*}
Thus the second component of $\mathcal M_{\mathbb Z,v_0}(\mathbf w)$ has $\mathcal L_S^{\beta,\sigma}$ norm at most $K$.

By \eqref{eq:mild-map-controlled-form}, the controlled remainder of the first component of the image is $\mathscr G_{\mathbb Z}^{\sharp}(\mathbf w)$.  Proposition \ref{prop:renormalized-mild-map-bounds} gives
\begin{equation*}
\|\mathscr G_{\mathbb Z}^{\sharp}(\mathbf w)\|_{\mathcal L_S^{\beta,\gamma}}\leq C_{R,K,\lambda_*}S^{\theta_{\mathrm G}}(2+K)^5.
\end{equation*}
After decreasing $S$, this quantity is at most $1$.  Hence $\mathcal M_{\mathbb Z,v_0}$ maps $\mathfrak B_{K,S}(\mathbb Z;v_0)$ into itself.

For two elements of the preceding closed set, \eqref{eq:controlled-derivative-swap-smallness} and \eqref{eq:renormalized-mild-map-stability} yield
\begin{align}\label{eq:fixed-point-contraction}
d_{\mathcal D_S}^{(\chi)}\left(\mathcal M_{\mathbb Z,v_0}(\mathbf w),\mathcal M_{\mathbb Z,v_0}(\overline{\mathbf w})\right)\leq C_{R,K,\lambda_*}\left(S^\chi+S^{\theta_0}+S^{\theta_{\mathrm G}-\chi}\right)d_{\mathcal D_S}^{(\chi)}(\mathbf w,\overline{\mathbf w}).
\end{align}
Here we used
\begin{equation*}
d_{\mathcal D_S}(\mathbf w,\overline{\mathbf w})\leq2S^{-\chi}d_{\mathcal D_S}^{(\chi)}(\mathbf w,\overline{\mathbf w}).
\end{equation*}
We may therefore choose $\tau_{R,\lambda_*}\leq1$ so that the coefficient in \eqref{eq:fixed-point-contraction} is at most $\ff12$ whenever $S\leq\tau_{R,\lambda_*}$, and then set $T_R$. Banach's fixed-point theorem gives a fixed point in $\mathfrak B_{K,T_R}(\mathbb Z;v_0)$.  The form of the second component of $\mathcal M_{\mathbb Z,v_0}$ implies that the fixed point satisfies $v'=v$, and \eqref{eq:abstract-renormalized-mild-equation} follows from \eqref{eq:Gamma-definition}.  The contraction also proves uniqueness in the class \eqref{eq:fixed-point-solution-bound}.

We next prove stability.  Set $D_0:=\|\mathbb Z^0-\overline{\mathbb Z}^0\|_{\mathfrak X_{T_R,\alpha}^0}+\|v_0-\bar v_0\|_\beta$. Since both fixed points satisfy $v'=v$ and $\bar v'=\bar v$, repeating the proof of Lemma~\ref{lem:controlled-reconstruction} while retaining the small time power gives
\begin{align*}
\|v-\bar v\|_{\mathcal L_{T_R}^{\beta,\sigma}}\leq C_{R,\lambda_*}(D_0+\|v^\sharp-\bar v^\sharp\|_{\mathcal L_{T_R}^{\beta,\gamma}}+T_R^{\theta_0}\|v-\bar v\|_{\mathcal L_{T_R}^{\beta,\sigma}}).
\end{align*}
On the other hand, since the two controlled sizes are bounded in terms of $R$ and $\lambda_*$, \eqref{eq:renormalized-mild-map-stability} yields
\begin{align*}
\|v^\sharp-\bar v^\sharp\|_{\mathcal L_{T_R}^{\beta,\gamma}}\leq C_{R,\lambda_*}\big(D_0+T_R^{\theta_{\mathrm G}}(\|v-\bar v\|_{\mathcal L_{T_R}^{\beta,\sigma}}+\|v^\sharp-\bar v^\sharp\|_{\mathcal L_{T_R}^{\beta,\gamma}})\big).
\end{align*}
Here we used the fixed-point identities $v^\sharp=\mathscr G_{\mathbb Z}^{\sharp}(\mathbf v)$ and $\bar v^\sharp=\mathscr G_{\overline{\mathbb Z}}^{\sharp}(\overline{\mathbf v})$. Insert the second estimate into the first one and then add the two inequalities.  After decreasing $\tau_{R,\lambda_*}$ once more if necessary, the terms multiplied by $T_R^{\theta_0}$ and $T_R^{\theta_{\mathrm G}}$ can be absorbed into the resulting left-hand side.  Since
\begin{equation*}
d_{\mathcal D_{T_R}}(\mathbf v,\overline{\mathbf v})=\|v-\bar v\|_{\mathcal L_{T_R}^{\beta,\sigma}}+\|v^\sharp-\bar v^\sharp\|_{\mathcal L_{T_R}^{\beta,\gamma}},
\end{equation*}
this proves \eqref{eq:deterministic-solution-stability} with a constant independent of $T$.  Proposition \ref{prop:finite-time-realization} gives the corresponding estimate in terms of the stationary model distance.

Finally, Lemma~\ref{lem:fixed-point-restriction-consistency} shows that, for $0<S\leq T_R$, the restriction of the constructed solution is the fixed point obtained by applying the same construction directly on $[0,S]$. The same contraction argument applied to any two solutions lying in a common bounded fixed-point class gives uniqueness on a sufficiently short initial interval.  Since every controlled fixed point has finite controlled size, any two such fixed points belong, after restriction to a sufficiently short initial interval, to a common bounded class.  Thus the corresponding local solution germ is unique.  No shifted-interval or global uniqueness assertion is needed here.
\end{proof}

\begin{remark}
The preceding theorem deliberately states uniqueness in the bounded class \eqref{eq:fixed-point-solution-bound}.  The causal continuation observation at the end of the proof gives uniqueness of the associated initial solution germ, but it does not by itself provide a shifted-interval continuation, global existence or global uniqueness statement.
\end{remark}

\begin{corollary}[Deterministic convergence of solutions]
\label{cor:deterministic-solution-convergence}
Let $\mathbb Z_n=(\mathbb X_n^\Phi,\lambda_n,\mu_n,\mathbb V_n)\in\mathscr X_{T,\alpha}$, $\mathbb Z=(\mathbb X^\Phi,\lambda,\mu,\mathbb V)\in\mathscr X_{T,\alpha}$, and let $v_{0,n},v_0\in\bC^\beta$. Set $\mathbb Z_n^0:=\mathscr Q_0\mathbb Z_n$ and  $\mathbb Z^0:=\mathscr Q_0\mathbb Z$. Assume that for some $R<\infty$ and $\lambda_*>0$, $\lambda_n\geq\lambda_*$ for $n\geq1$, $\lambda\geq\lambda_*$, and
\begin{equation}\label{eq:deterministic-data-convergence}
\sup_{n\geq1}(\|\mathbb Z_n^0\|_{\mathfrak X_{T,\alpha}^0}+\|v_{0,n}\|_\beta)+\|\mathbb Z^0\|_{\mathfrak X_{T,\alpha}^0}+\|v_0\|_\beta\leq R,\ \|\mathbb Z_n^0-\mathbb Z^0\|_{\mathfrak X_{T,\alpha}^0}+\|v_{0,n}-v_0\|_\beta\to0.
\end{equation}
Then the corresponding fixed points are defined on the common interval $[0,T_R]$ supplied by Theorem~\ref{thm:deterministic-fixed-point}, and
\begin{align}\label{eq:deterministic-solution-convergence}
d_{\mathcal D_{T_R}}\bigl((v_n,v_n),(v,v)\bigr)+\|v_n-v\|_{\mathcal L_{T_R}^{\beta,\sigma}}\to0.
\end{align}
It is enough in \eqref{eq:deterministic-data-convergence} to assume
\begin{equation*}
d_{\mathscr X_{T,\alpha}}(\mathbb Z_n,\mathbb Z)+\|v_{0,n}-v_0\|_\beta\to0
\end{equation*}
together with a uniform bound on the stationary model norms.

If $X_n$ and $X$ denote the first coordinates of $\mathbb Z_n$ and $\mathbb Z$, respectively, and $u_n:=X_n+v_n$, $u:=X+v$, then
\begin{equation}\label{eq:reconstructed-field-convergence}
u_n\to u\ \text{in }C\bigl([0,T_R];\bC^{-\frac12-\delta}\bigr).
\end{equation}
\end{corollary}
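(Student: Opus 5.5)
The corollary is essentially a direct application of Theorem~\ref{thm:deterministic-fixed-point}, and I would argue as follows. Since every data set $(\mathbb Z_n,v_{0,n})$ and $(\mathbb Z,v_0)$ satisfies \eqref{eq:fixed-point-data-bound} with the same $R$ and $\lambda_*$, the theorem gives the same $T_R=T\wedge\tau_{R,\lambda_*}$ and the same $K_{R,\lambda_*}$ for all of them. It therefore produces fixed points $(v_n,v_n)\in\mathcal D_{T_R}^{\beta,\gamma}(\mathbb Z_n;v_{0,n})$ and $(v,v)\in\mathcal D_{T_R}^{\beta,\gamma}(\mathbb Z;v_0)$ on a common interval. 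The finite-time norms on $[0,T_R]$ are bounded by those on $[0,T]$, because restriction is contractive. Hence the stability estimate \eqref{eq:deterministic-solution-stability} applies to each pair $(\mathbb Z_n,v_{0,n})$, $(\mathbb Z,v_0)$ and gives
\begin{equation*}
d_{\mathcal D_{T_R}}\bigl((v_n,v_n),(v,v)\bigr)+\|v_n-v\|_{\mathcal L_{T_R}^{\beta,\sigma}}\leq C_{R,\lambda_*}\bigl(\|\mathbb Z_n^0-\mathbb Z^0\|_{\mathfrak X_{T,\alpha}^0}+\|v_{0,n}-v_0\|_\beta\bigr).
\end{equation*}
The right-hand side tends to zero by \eqref{eq:deterministic-data-convergence}, which proves \eqref{eq:deterministic-solution-convergence}.

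For the variant with stationary data, assume $\sup_n\|\mathbb Z_n\|_{\mathfrak X_{T,\alpha}}+\|\mathbb Z\|_{\mathfrak X_{T,\alpha}}\leq R'$. Then \eqref{eq:finite-time-realization-stability} gives two things. First, taking $\overline{\mathbb Z}$ to be the lift of $X\equiv0$ with the scalar entries of $\mathbb Z$, we get $\|\mathbb Z_n^0\|_{\mathfrak X^0_{T,\alpha}}\lesssim_{R'}1$ uniformly in $n$. Second, we get $\|\mathbb Z_n^0-\mathbb Z^0\|_{\mathfrak X^0_{T,\alpha}}\leq C_T(1+2R')^2d_{\mathscr X_{T,\alpha}}(\mathbb Z_n,\mathbb Z)\to0$. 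This reduces the variant to the previous case with an enlarged radius $R$ depending on $R'$. The bound on $\|v_{0,n}\|_\beta$ follows from convergence, and the bound $\lambda_n\geq\lambda_*$ is assumed.

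For \eqref{eq:reconstructed-field-convergence}, write $u_n-u=(X_n-X)+(v_n-v)$. The first coordinate of $\mathbb Z_n^0$ lives in $C_T^{\delta',-\frac12-\delta}$ and is unchanged by $\mathscr Q_0$, so $\sup_{t\leq T_R}\|X_n(t)-X(t)\|_{-\frac12-\delta}\leq\|\mathbb Z_n^0-\mathbb Z^0\|_{\mathfrak X^0_{T,\alpha}}\to0$. Also $\sup_t\|v_n(t)-v(t)\|_{-\frac12-\delta}\lesssim\sup_t\|v_n(t)-v(t)\|_\beta\leq\|v_n-v\|_{\mathcal L^{\beta,\sigma}_{T_R}}\to0$, since $\beta>0$.

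The one step I expect to need genuine care is membership in $C([0,T_R];\bC^{-\frac12-\delta})$, as opposed to uniform convergence. Here $X_n$ and $X$ are continuous because $C_T^{\delta',\rho}$ has the Hölder seminorm. For $v$, the $\mathcal L^{\beta,\sigma}$ norm gives Hölder continuity in $\bC^{\beta-2\mathfrak d}$, and $\beta-2\mathfrak d>0>-\frac12-\delta$ by \eqref{eq:controlled-ordering}. So $v_n$ and $v$ are continuous with values in $\bC^{-\frac12-\delta}$, and the uniform convergence above is convergence in $C([0,T_R];\bC^{-\frac12-\delta})$. Nothing further is needed.
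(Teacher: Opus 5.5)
Your proposal is correct and follows the paper's proof. As in the paper, the solution convergence comes from the stability estimate \eqref{eq:deterministic-solution-stability} on the common interval $[0,T_R]$, and the stationary variant is reduced to the finite-time case via Proposition~\ref{prop:finite-time-realization}. Likewise, \eqref{eq:reconstructed-field-convergence} follows from convergence of the first model coordinate together with the embedding of $\mathcal L_{T_R}^{\beta,\sigma}$ into $C([0,T_R];\bC^{-\frac12-\delta})$. Your extra details make explicit what the paper leaves implicit: the uniform bound on $\|\mathbb Z_n^0\|$ by comparison with the reference lift $(0,\lambda,\mu,\boldsymbol0)$ (take $X\equiv0$ and $\mathbf c=0$), and the continuity check via the $\bC^{\beta-2\mathfrak d}$ H\"older seminorm.
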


\begin{proof}
Estimate \eqref{eq:deterministic-solution-stability} proves \eqref{eq:deterministic-solution-convergence}.  The stationary formulation follows from Proposition~\ref{prop:finite-time-realization}.  Finally, $X_n\to X$ in $C_T^{\delta',-\frac12-\delta}$, while $\mathcal L_{T_R}^{\beta,\sigma}$ embeds continuously into $C([0,T_R];\bC^{-\frac12-\delta})$.  This proves \eqref{eq:reconstructed-field-convergence}.
\end{proof}

We now apply the preceding deterministic theory to the stochastic approximations. Assume that $\lambda_\varepsilon\to\lambda>0$ and fix $\lambda_*\in(0,\lambda)$. For all sufficiently small $\varepsilon$, one has $\lambda_\varepsilon\geq\lambda_*$. Set $\mu_\varepsilon:=\varepsilon^\alpha$, $\mathbf c_\varepsilon:=(C_\varepsilon^{(1)},C_\varepsilon^{(2)},D_{\varepsilon,\alpha},B_{\varepsilon,\alpha})$. Let $\mathbb Z_\varepsilon=\mathscr L_T^{\mathrm{stat}}(X_\varepsilon;\lambda_\varepsilon,\mu_\varepsilon,\mathbf c_\varepsilon)$, $\mathbb Z=(\mathbb X^\Phi,\lambda,0,\boldsymbol0)$ and write $\mathbb Z_\varepsilon^0:=\mathscr Q_0\mathbb Z_\varepsilon$, $\mathbb Z^0:=\mathscr Q_0\mathbb Z$. Choose $\lambda_*\in(0,\lambda)$.  For all sufficiently small $\varepsilon$, one then has $\lambda_\varepsilon\geq\lambda_*$.  Let $v_{\varepsilon,0}$ and $v_0$ be measurable $\bC^\beta$-valued random variables, and assume the well-prepared initial conditions 
\begin{equation}\label{eq:well-prepared-convergence-assumption}
u_{\varepsilon,0}=X_\varepsilon(0)+v_{\varepsilon,0},\ v_{\varepsilon,0}\to v_0\ \text{in probability in }\bC^\beta,
\end{equation}
and put $u_0=X(0)+v_0$. With this notation, the convergence of the approximating solutions is stated as follows.

\begin{corollary}\label{cor:approximating-solutions-convergence}
For $R\geq1$, define
\begin{align}\label{eq:localized-data-event}
\Omega_{\varepsilon,R}:=\bigl\{\|\mathbb Z_\varepsilon^0\|_{\mathfrak X_{T,\alpha}^0}+\|\mathbb Z^0\|_{\mathfrak X_{T,\alpha}^0}+\|v_{\varepsilon,0}\|_\beta+\|v_0\|_\beta\leq R\bigr\}.
\end{align}
Let $T_R$ be the common existence time from Theorem~\ref{thm:deterministic-fixed-point}. On  $\Omega_{\varepsilon,R}$, let $\mathbf v_\varepsilon=(v_\varepsilon,v_\varepsilon)$ and $\mathbf v=(v,v)$ be the corresponding fixed points driven by $\mathbb Z_\varepsilon$ and $\mathbb Z$, respectively. Then, for every $\eta>0$,
\begin{align}\label{eq:localized-remainder-convergence}
\mathbb P\bigl(\Omega_{\varepsilon,R}\cap\bigl\{d_{\mathcal D_{T_R}}(\mathbf v_\varepsilon,\mathbf v)+\|v_\varepsilon-v\|_{\mathcal L_{T_R}^{\beta,\sigma}}>\eta\bigr\}\bigr)\to0.
\end{align}
Moreover, with $u_\varepsilon:=X_\varepsilon+v_\varepsilon$ and $u:=X+v$, one has
\begin{equation}\label{eq:localized-full-solution-convergence}
\mathbb P\big(\Omega_{\varepsilon,R}\cap\big\{\|u_\varepsilon-u\|_{C([0,T_R];\bC^{-\frac12-\delta})}>\eta\big\}\big)\to0.
\end{equation}
The localization is asymptotically exhaustive:
\begin{equation}\label{eq:localized-events-tight}
\lim_{R\to\infty}\limsup_{\varepsilon\to0}\mathbb P(\Omega_{\varepsilon,R}^{\mathrm c})=0.
\end{equation}
If, in addition, $v_{\varepsilon,0}\to v_0$ in $L^p(\Omega;\bC^\beta)$, extend $v_\varepsilon$ and $v$ by zero on $\Omega_{\varepsilon,R}^{\mathrm c}$. By continuity of the fixed-point map on the bounded data ball, these extensions are measurable. Then
\begin{align*}
\left\|\mathbf 1_{\Omega_{\varepsilon,R}}(v_\varepsilon-v)\right\|_{L^p(\Omega;\mathcal L_{T_R}^{\beta,\sigma})}\leq C_{R,\lambda_*}(\|\mathbb Z_\varepsilon^0-\mathbb Z^0\|_{L^p(\Omega;\mathfrak X_{T,\alpha}^0)}+\|v_{\varepsilon,0}-v_0\|_{L^p(\Omega;\bC^\beta)})\to0.
\end{align*}

On $\Omega_{\varepsilon,R}$ and $[0,T_R]$, $u_\varepsilon$ is the pathwise local mild solution of the original approximating equation
\begin{align}\label{eq:approximating-equation-recovered}
\partial_tu_\varepsilon=\Delta u_\varepsilon-\varepsilon^\alpha u_\varepsilon^5+\xi_\varepsilon+C_\varepsilon u_\varepsilon+\wt C_\varepsilon u_\varepsilon^3,\ u_\varepsilon(0)=u_{\varepsilon,0},
\end{align}
where $\wt C_\varepsilon:=10\varepsilon^\alpha C_\varepsilon^{(1)}-\lambda_\varepsilon,\ C_\varepsilon:=C_\varepsilon^{\mathrm{can}}$. On the same localized interval, the limit $u=X+v$ is the renormalized dynamical $\Phi^4_3(\lambda)$ solution associated with $\mathbb Z$, with initial condition $u_0=X(0)+v_0$.
\end{corollary}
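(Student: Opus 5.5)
The plan is to reduce Corollary~\ref{cor:approximating-solutions-convergence} to the deterministic stability estimate \eqref{eq:deterministic-solution-stability} combined with the $L^p$ convergence \eqref{eq:finite-time-model-convergence}. For $\varepsilon$ small, $\lambda_\varepsilon\geq\lambda_*$. On $\Omega_{\varepsilon,R}$, both data sets $(\mathbb Z_\varepsilon,v_{\varepsilon,0})$ and $(\mathbb Z,v_0)$ satisfy \eqref{eq:fixed-point-data-bound} with the same $R$. Theorem~\ref{thm:deterministic-fixed-point} therefore provides fixed points on the common interval $[0,T_R]$, and they obey
\begin{equation*}
d_{\mathcal D_{T_R}}(\mathbf v_\varepsilon,\mathbf v)+\|v_\varepsilon-v\|_{\mathcal L_{T_R}^{\beta,\sigma}}\leq C_{R,\lambda_*}\bigl(\|\mathbb Z_\varepsilon^0-\mathbb Z^0\|_{\mathfrak X_{T,\alpha}^0}+\|v_{\varepsilon,0}-v_0\|_\beta\bigr).
\end{equation*}
By the restriction contractivity, the model difference on $[0,T_R]$ is dominated by the one on $[0,T]$. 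Hence the event in \eqref{eq:localized-remainder-convergence} is contained in the event $\{\|\mathbb Z_\varepsilon^0-\mathbb Z^0\|+\|v_{\varepsilon,0}-v_0\|_\beta>\eta/C_{R,\lambda_*}\}$. Its probability tends to zero by Markov's inequality applied with \eqref{eq:finite-time-model-convergence}, together with the assumed convergence in probability of $v_{\varepsilon,0}$.

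For \eqref{eq:localized-full-solution-convergence}, write $u_\varepsilon-u=(X_\varepsilon-X)+(v_\varepsilon-v)$. The second term is controlled through the embedding $\mathcal L_{T_R}^{\beta,\sigma}\hookrightarrow C([0,T_R];\bC^{-\frac12-\delta})$, exactly as in Corollary~\ref{cor:deterministic-solution-convergence}. The first term is the first coordinate of $\mathbb Z_\varepsilon^0-\mathbb Z^0$, so it converges in $L^p$. For tightness \eqref{eq:localized-events-tight}, we bound $\mathbb P(\Omega_{\varepsilon,R}^c)$ by Markov on the first two norms and use tightness of $(v_{\varepsilon,0})$. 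The model norms satisfy $\sup_\varepsilon\mathbb E\|\mathbb Z_\varepsilon^0\|\leq\mathbb E\|\mathbb Z^0\|+o(1)$, which is finite. The initial data are tight because they converge in probability in the Polish-type setting: we use $\|v_{\varepsilon,0}\|_\beta\leq\|v_0\|_\beta+\|v_{\varepsilon,0}-v_0\|_\beta$, so $\limsup_\varepsilon\mathbb P(\|v_{\varepsilon,0}\|_\beta>R/4)\leq\mathbb P(\|v_0\|_\beta>R/8)\to0$. The $L^p$ statement follows by raising the pathwise stability bound, multiplied by $\mathbf 1_{\Omega_{\varepsilon,R}}$, to the power $p$ and taking expectations. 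Measurability of the extensions follows from the continuity of the data-to-solution map on the closed data ball, so that the solution is a continuous function of measurable random variables restricted to a measurable set.

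The main obstacle is the last assertion: identifying $u_\varepsilon$ as the genuine mild solution of \eqref{eq:approximating-equation-recovered}. Proposition~\ref{prop:smooth-consistency-mild-map} is stated for smooth canonical lifts, while $\mathbb Z_\varepsilon$ is only a limit of such lifts obtained by time cutoffs and time mollification of $X_\varepsilon$. I would first check that, for fixed $\varepsilon$, all spatial products are classical because $X_\varepsilon$ is spatially smooth. Then both sides of \eqref{eq:smooth-consistency-mild-map} are continuous along the approximating sequence of lifts, with the constants $c_1,c_2,d,b$ equal to $\mathbf c_\varepsilon$ and $\mu=\varepsilon^\alpha$, $\lambda=\lambda_\varepsilon$. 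On the left, this continuity is Proposition~\ref{prop:renormalized-mild-map-bounds}; on the right, it uses the classical continuity of polynomial nonlinearities in $C_T\bC^{\text{smooth}}$. Passing to the limit gives $v_\varepsilon=P_\cdot v_{\varepsilon,0}-I_0(\varepsilon^\alpha u_\varepsilon^5-\wt C_\varepsilon u_\varepsilon^3-(C^{\mathrm{can}}_\varepsilon+1)u_\varepsilon)$. Adding the mild form of $X_\varepsilon$, where the $+1$ is absorbed by the massive semigroup, yields the mild form of \eqref{eq:approximating-equation-recovered} with $u_\varepsilon(0)=X_\varepsilon(0)+v_{\varepsilon,0}$. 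For the limit, $u=X+v$ is by definition the renormalized $\Phi^4_3(\lambda)$ solution driven by $\mathbb Z$. With $\mu=0$ and $\mathbb V=\boldsymbol0$, the map reduces to the Catellier--Chouk map; it remains to check that the $\mathbb V$-dependent terms vanish identically at $\mathbb Z$, noting that $\lambda^{-1}\mathbf R_{51}^0=0$.
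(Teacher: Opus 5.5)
Your proposal is correct and follows essentially the same route as the paper: the stability estimate \eqref{eq:deterministic-solution-stability} combined with \eqref{eq:finite-time-model-convergence} gives the convergence, tightness and $L^p$ claims, and approximating $\mathbb Z_\varepsilon$ by time-regularized canonical lifts together with Proposition~\ref{prop:smooth-consistency-mild-map} identifies the equation. Your two small deviations do not change the substance: you do not regularize $v_{\varepsilon,0}$ in space, which is harmless because at fixed $\varepsilon$ all products are classical for continuous $v$, and you identify the limit by checking that the $\mu$- and $\mathbb V$-dependent terms vanish at $(\mathbb X^\Phi,\lambda,0,\boldsymbol0)$ rather than by rerunning the density argument.
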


\begin{proof}
Theorem~\ref{thm:deterministic-fixed-point} and \eqref{eq:deterministic-solution-stability} give, on $\Omega_{\varepsilon,R}$,
\begin{align*}
d_{\mathcal D_{T_R}}(\mathbf v_\varepsilon,\mathbf v)+\|v_\varepsilon-v\|_{\mathcal L_{T_R}^{\beta,\sigma}}\leq C_{R,\lambda_*}(\|\mathbb Z_\varepsilon^0-\mathbb Z^0\|_{\mathfrak X_{T_R,\alpha}^0}+\|v_{\varepsilon,0}-v_0\|_\beta).
\end{align*}
Equation~\eqref{eq:finite-time-model-convergence} and \eqref{eq:well-prepared-convergence-assumption} prove \eqref{eq:localized-remainder-convergence}.  Since the first coordinate of the finite-time model is $X$, the same model convergence also gives
\begin{equation*}
X_\varepsilon\to X\ \text{in probability in }C([0,T];\bC^{-\frac12-\delta}).
\end{equation*}
Together with the continuous embedding of $\mathcal L_{T_R}^{\beta,\sigma}$ into
$C([0,T_R];\bC^{-\frac12-\delta})$, this proves \eqref{eq:localized-full-solution-convergence}.  The convergence of the models in $L^p$ and the convergence of the initial remainders in probability imply tightness of the four norms in \eqref{eq:localized-data-event}, which proves \eqref{eq:localized-events-tight}.  The localized $L^p$ estimate follows by applying \eqref{eq:deterministic-solution-stability} after multiplication by $\mathbf 1_{\Omega_{\varepsilon,R}}$, and using \eqref{eq:finite-time-model-convergence}.

It remains to identify the equations.  On the canonical approximating lift, Proposition~\ref{prop:smooth-consistency-mild-map} gives
\begin{align*}
\widetilde C=10\mu_\varepsilon C_\varepsilon^{(1)}-\lambda_\varepsilon=\wt C_\varepsilon,\ C^{\mathrm{can}}=A_{\varepsilon,\alpha}^{\mathrm{loc}}-9\lambda_\varepsilon^2C_\varepsilon^{(2)}-9B_{\varepsilon,\alpha}-6D_{\varepsilon,\alpha}=C_\varepsilon^{\mathrm{can}}.
\end{align*}
Although $X_\varepsilon$ is not smooth in time and $v_{\varepsilon,0}$ need not be smooth in space, one may first regularize $X_\varepsilon$ in time and $v_{\varepsilon,0}$ in space.  If $X_\varepsilon^{(m)}$ denotes the time-regularized path, then the corresponding complete canonical lifts converge to $\mathbb Z_\varepsilon$ in $\mathfrak X_{T,\alpha}$ and hence, after applying $\mathscr Q_0$, in $\mathfrak X_{T,\alpha}^0$.  Moreover,
\begin{equation*}
\xi_\varepsilon^{(m)}:=(\partial_t-\Delta+1)X_\varepsilon^{(m)}\to\xi_\varepsilon
\end{equation*}
in the distributional topology.  The regularized initial remainders converge to $v_{\varepsilon,0}$ in $\bC^\beta$.  The stability part of Theorem~\ref{thm:deterministic-fixed-point} therefore allows the classical mild identity from Proposition~\ref{prop:smooth-consistency-mild-map} to pass to the limit in both regularization parameters.  This proves \eqref{eq:approximating-equation-recovered} on $\Omega_{\varepsilon,R}$ and $[0,T_R]$.  Applying the same density and continuity argument to the limiting admissible datum identifies $u=X+v$ with the renormalized $\Phi^4_3(\lambda)$ solution on the same localized interval.
\end{proof}

\br
\eqref{eq:localized-remainder-convergence}-- \eqref{eq:localized-events-tight} give convergence of the random local solution germs in the bounded-data sense supplied by the present theory. More explicitly, for every $\rho>0$ one may choose $R_\rho$ so that the limiting upper probability of $\Omega_{\varepsilon,R_\rho}^{\mathrm c}$ is at most $\rho$; on the corresponding deterministic interval $[0,T_{R_\rho}]$, the localized error converges to zero in probability.  This statement does not assert a common deterministic lifetime independent of the localization radius.
\er

\section{Renormalization and convergence of the enhanced data} \label{sec:rough-distribution}

Having established the deterministic solution theory for abstract enhanced data in Section~\ref{sec03}, we now prove the stochastic estimates required for Theorem~\ref{thm:enhanced-data-convergence}. The standard $\Phi^4_3$ coordinates are constructed by the usual Wiener-chaos arguments, so we focus on the additional coordinates generated by the quintic perturbation. Throughout this section, we assume that $\alpha\in(\frac56,1)$ and that $\widetilde C_\varepsilon$ is chosen so that $\sup_{\varepsilon\in(0,1]}|\lambda_\varepsilon|<\infty$.

Our aim is to show that, after subtracting the appropriate local terms, all additional higher-order coordinates vanish in the required enhanced-data topologies. The only non-vanishing contractions are encoded by the scalar constants $D_{\varepsilon,\alpha}$ and $B_{\varepsilon,\alpha}$ defined in \eqref{eq:D-def} and \eqref{eq:B5-explicit}, together with their associated first-chaos multiples of $X_\varepsilon$. The analysis proceeds in three steps. We first establish dyadic estimates for the Wick powers and their stationary heat convolutions. We then treat the higher-order resonant products that vanish without further renormalization. Finally, we isolate the zeroth- and first-chaos contributions producing the local terms associated with $D_{\varepsilon,\alpha}$ and $B_{\varepsilon,\alpha}$.

We retain the notation $C_T^{\zeta,\rho}$ introduced in Section~\ref{subsec:admissible-enhanced-data}, and denote by $\Pi_n$ the projection onto the $n$-th homogeneous Wiener chaos. For a dyadic block $\Delta_q$, we write $K:=2^q\vee1$, with $|k|\sim K$ interpreted as $|k|\lesssim1$ when $q=-1$. All auxiliary time H\"older and logarithmic summation losses are chosen sufficiently small relative to the prescribed loss parameter $\eta$ and are absorbed into factors of the form $\varepsilon^{-\eta}K^\eta$.

\subsection{Renormalization for $\eps^\a X_\eps^{\diamond m}$ and $\eps^\a \cI(X_\eps^{\diamond m})$}

The following dyadic estimate serves as the starting point for the higher-order Wick terms generated by the quintic expansion. We state it only for $m=2,3,4,5$, which are precisely the orders needed. Since $f$ is compactly supported, there is a constant $R_f<\infty$ such that $\Delta_q(X_\eps^{\diamond m})=0$ for all $m\le5$ whenever $2^q>R_f\eps^{-1}$.

\begin{lemma}\label{lem:wick-dyadic-vanishing}
Let $m\in\{2,3,4,5\}$, $T>0$, $p\in[1,\infty)$, $0<\zeta<\ff14$, and $\eta>0$ sufficiently small.  Uniformly in $\eps\in(0,1]$ and $q\ge-1$,
\begin{align}\label{wick-dyadic-01}
\big\|\Delta_q X_\eps^{\diamond m}\|_{L^p(\Omega;C^\zeta([0,T];L^\infty))}\lesssim\eps^{-2\zeta-\eta}\Big((2^q\vee1)^{\ff m2+\eta}+\bbone_{\{m\ge3\}}\eps^{-\ff{m-3}{2}}(2^q\vee1)^{\ff32+\eta}\Big),
\end{align}
and
\begin{align}\label{wick-dyadic-02}
\big\|\Delta_q \cI(X_\eps^{\diamond m})\big\|_{L^p(\Omega;C^\zeta([0,T];L^\infty))}\lesssim \eps^{-\eta} (2^q\vee1)^{\ff m2-2+2\zeta+\eta}.
\end{align}
Here the implicit constants may depend on $m,p,T,\zeta,\eta$ and $f$, but not on $q$ or $\eps$.
\end{lemma}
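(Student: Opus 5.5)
The approach is the standard Wiener-chaos route: second moments in Fourier variables, then hypercontractivity and Kolmogorov. Since $X_\eps^{\diamond m}$ lies in the $m$-th homogeneous chaos, Nelson's hypercontractivity reduces every $L^p(\Omega)$ bound to $L^2(\Omega)$. We pass from pointwise-in-$x$ bounds to $L^\infty_x$ on the block $\Delta_q$ by the usual Besov/Bernstein embedding, at a cost $K^{\eta}$ (equivalently, estimate $\|\Delta_q\cdot\|_{L^{2r}_x}$ for large $r$ and use Bernstein). The $C^\zeta$-in-time norm comes from the Kolmogorov/Garsia–Rodemich–Rumsey criterion applied to second-moment bounds on increments of the form $|t-s|^{\zeta'}$ with $\zeta'$ slightly bigger than $\zeta$, which costs a further $K^\eta$ or $\eps^{-\eta}$. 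So everything reduces to estimating
$$\mE|\Delta_q X_\eps^{\diamond m}(t,x)|^2 \quad\text{and}\quad \mE|\Delta_q X_\eps^{\diamond m}(t,x)-\Delta_q X_\eps^{\diamond m}(s,x)|^2,$$
and the analogous quantities for $\cI(X_\eps^{\diamond m})$.

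\textbf{Covariance formula.} By Wick's theorem,
$$\mE[\widehat{X_\eps^{\diamond m}}(t,k)\overline{\widehat{X_\eps^{\diamond m}}(s,k)}]=m!\sum_{k_1+\dots+k_m=k}\prod_{i}\frac{|f(\eps k_i)|^2e^{-a_{k_i}|t-s|}}{2a_{k_i}}.$$
For the fixed-time bound I sum over $|k|\sim K$. The convolution of $m$ copies of $a_\ell^{-1}$ restricted to $|\ell|\lesssim\eps^{-1}$ behaves as follows:
\begin{itemize}
\item for $m=2$ it is $\lesssim|k|^{-1}$;
\item for $m\geq3$ it is $\lesssim |k|^{m-5}$ plus a divergent local contribution of size $\eps^{-(m-3)}$ uniformly in $k$. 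This comes from the iterated convolutions: $\sum_{|\ell|\le\eps^{-1}}a_\ell^{-1}\sim\eps^{-1}$, and generally the $j$-fold convolution is $\lesssim |k|^{2j-3\cdot}$.
\end{itemize}
Multiplying by $K^3$ from the block gives second moment $\lesssim K^{m}+\bbone_{m\ge3}\eps^{-(m-3)}K^3$. Taking square roots yields the spatial part of \eqref{wick-dyadic-01}.

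\textbf{Time regularity of $X_\eps^{\diamond m}$.} I will use $1-e^{-a|t-s|}\le (a|t-s|)^{2\zeta'}$, which gains $|t-s|^{2\zeta'}$ at the cost of a factor $\max_i a_{k_i}^{2\zeta'}\lesssim\eps^{-4\zeta'}$, since every $|k_i|\lesssim\eps^{-1}$ on the support of $f$. This produces the prefactor $\eps^{-2\zeta-\eta}$.

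\textbf{Estimate \eqref{wick-dyadic-02} for $\cI(X_\eps^{\diamond m})$.} Here I would not use the crude bound. I compute the stationary covariance of $\cI$ directly:
$$\mE|\widehat{\cI(X_\eps^{\diamond m})}(t,k)|^2=m!\sum\prod_i\frac{|f(\eps k_i)|^2}{2a_{k_i}}\cdot\frac{1}{a_k\bigl(a_k+\sum_i a_{k_i}\bigr)},$$
obtained by integrating the exponentials in time. The factor $(a_k+\sum_i a_{k_i})^{-1}$ is the key point: it kills the divergent local part. Bounding $(a_k+\sum a_{k_i})^{-1}\le a_k^{-\theta}(\sum a_{k_i})^{-(1-\theta)}$, the extra decay in the internal momenta turns the $\eps^{-(m-3)}$ divergence into at most a logarithmic, i.e. $\eps^{-\eta}$, loss. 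The resulting bound is $K^{m-5}\cdot K^{-2}\cdot K^3=K^{m-4}$, which is the required $(K^{m/2-2})^2$.

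For the time increments of $\cI$ I would use the factor $1-e^{-\cdots|t-s|}$ and give up $K^{4\zeta}$ against the heat factor. This is admissible since $\zeta<\frac14$ keeps all sums convergent.

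\textbf{Main obstacle.} I expect the hardest step to be the uniform-in-$\eps$ counting of the iterated lattice convolutions for $m=4,5$ in the $\cI$ case. There one must check that the time-integration denominator distributes enough decay over every subset of internal momenta that all $\eps$-divergences are reduced to $\eps^{-\eta}$. I would handle it by induction on $m$ using the elementary bound
$$\sum_{\ell}\langle \ell\rangle^{-a}\langle k-\ell\rangle^{-b}\lesssim\langle k\rangle^{3-a-b}\quad (a,b<3,\ a+b>3),$$
with logarithmic losses absorbed into $\eps^{-\eta}K^{\eta}$.
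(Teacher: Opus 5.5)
Your reduction follows the paper's route: Wick covariance in Fourier variables, then hypercontractivity, Bernstein and Garsia--Rodemich--Rumsey. For increments of $X_\varepsilon^{\diamond m}$ you use $1-e^{-x}\lesssim x^{2\zeta+\vartheta}$ with $\sum_i a_{k_i}\lesssim\varepsilon^{-2}$. For $\mathcal I(X_\varepsilon^{\diamond m})$ you use the stationary factor $\frac{1}{a_k(a_k+A)}$, $A=\sum_i a_{k_i}$, to kill the $\varepsilon^{-(m-3)}$ divergence.

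Counting by iterated convolutions instead of the paper's largest-frequency decomposition is fine in principle. (The paper uses two inputs of size $N\gg K$, giving $K^3N^{m-3}$, resp.\ $KN^{m-5}$ after dividing by $K^2N^2$.) Three points need fixing:
\begin{itemize}
\item The plain $m$-fold convolution of $w_\varepsilon$ is of order $\langle k\rangle^{m-3}$ in the scaling regime, not $\langle k\rangle^{m-5}$. Uniformly it is of order $\varepsilon^{-(m-3)}$, with a logarithm for $m=3$; this is what gives $K^m+\varepsilon^{-(m-3)}K^3$.
\item Your lemma requires $a+b>3$, so it fails exactly at the divergent or borderline steps. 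There you must sum with the cutoff $|\ell|\lesssim\varepsilon^{-1}$.
\item In the $\mathcal I$ case the denominator does not factorize, so you must first distribute it, e.g.\ $A^{-1}\lesssim\prod_i a_{k_i}^{-1/m}$. In your interpolation $a_k^{-\theta}A^{-(1-\theta)}$ you need $\theta\lesssim\eta$, since for $m=5$ the internal sum is exactly critical and $\theta>0$ costs $\varepsilon^{-2\theta}$.
\end{itemize}

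The genuine gap is the time regularity of $\mathcal I(X_\varepsilon^{\diamond m})$. With $h=t-s$,
\[
\widehat{\mathcal I(X_\varepsilon^{\diamond m})}(t,k)-\widehat{\mathcal I(X_\varepsilon^{\diamond m})}(s,k)=(e^{-a_kh}-1)\,\widehat{\mathcal I(X_\varepsilon^{\diamond m})}(s,k)+\int_s^t e^{-a_k(t-r)}\widehat{X_\varepsilon^{\diamond m}}(r,k)\,dr .
\]
Only the first term carries a factor $1-e^{-a_kh}$ costing $K^{4\zeta}$. The second involves fresh noise. Its second moment is $m!\sum\prod_i w_\varepsilon(k_i)\int_0^h\int_0^h e^{-a_k(u+v)}e^{-A|u-v|}\,du\,dv$, and your sketch does not treat it. Each naive treatment fails:
\begin{itemize}
\item The crude bound $h^2$ discards $(a_k+A)^{-1}$ and brings back $G_{5,\varepsilon}\sim\varepsilon^{-2}$.
\item Extracting $h^{2\zeta}$ from the internal rate, as you did for $X_\varepsilon^{\diamond m}$, costs $A^{2\zeta}\lesssim\varepsilon^{-4\zeta}$.
\item Weakening the denominator to $(a_k+A)^{-1+2\zeta}$ gives, for $m=5$, the sum $K\sum_{K\ll N\lesssim\varepsilon^{-1}}N^{4\zeta}\sim K\varepsilon^{-4\zeta}$. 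This seems to be what your remark ``$\zeta<\frac14$ keeps all sums convergent'' refers to, since that is exactly the convergence condition for $K\sum_N N^{-1+4\zeta}$ when $m=4$.
\end{itemize}
Each of these yields an $\varepsilon^{-2\zeta}$ loss in $L^p$, incompatible with the $\varepsilon^{-\eta}$ in \eqref{wick-dyadic-02}. It is fatal where the lemma is used with $m=5$, $\zeta=\mathfrak d$ in the proof of Theorem~\ref{thm:enhanced-data-convergence}, since there $2\mathfrak d>\delta$.

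The missing idea, which is what the paper does, is the kernel bound
\[
\int_0^h\!\!\int_0^h e^{-a(u+v)}e^{-A|u-v|}\,du\,dv\lesssim\frac{(ah)^{\gamma}}{a(a+A)},\qquad \gamma=2\zeta+\vartheta<1 .
\]
It follows from the three bounds $\frac{1}{a(a+A)}$, $\frac{2h}{A}$ and $h^2$ for the left side, used according to the case:
\begin{itemize}
\item $ah\ge1$: use $\frac{1}{a(a+A)}$;
\item $ah<1$ and $A\ge a$: use $\frac{2h}{A}$;
\item $ah<1$ and $A<a$: use $h^2$.
\end{itemize}
It keeps the full factor $(a_k+A)^{-1}$ and pays only $a_k^{\gamma}\sim K^{2\gamma}$, which gives $\varepsilon^{-\eta}K^{\frac m2-2+2\zeta+\eta}$ as claimed.
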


\begin{proof}
Write $a_k=1+4\pi^2|k|^2$. The Wick formula gives
\begin{align*}
\mE\Big[\widehat{X_\eps^{\diamond m}}(t,k)\widehat{X_\eps^{\diamond m}}(s,\ell)\Big]=m!\,\bbone_{ \{k+\ell=0\} }\sum_{k_1+\cdots+k_m=k}\prod_{i=1}^m\frac{|f(\eps k_i)|^2\e^{-a_{k_i}|t-s|}}{2a_{k_i}}.
\end{align*}

Since $a_k\asymp1+|k|^2$ and $|k_i|\lesssim\eps^{-1}$ on the support of $f$, the bound $1-\e^{-x}\lesssim x^{2\zeta+\vt}$, valid whenever $2\zeta+\vt<1$, yields
\begin{align*}
\mE\big|\widehat{X_\eps^{\diamond m}}(t,k)-\widehat{X_\eps^{\diamond m}}(s,k)\big|^2\lesssim|t-s|^{2\zeta+\vt}\eps^{-4\zeta-2\vt}\sum_{k_1+\cdots+k_m=k}\prod_{i=1}^m\frac{|f(\eps k_i)|^2}{1+|k_i|^2}.
\end{align*}
Set $K:=2^q\vee1$ and
\begin{align*}
S_{m,\eps}(K):=\sum_{|k|\sim K}\sum_{k_1+\cdots+k_m=k}\prod_{i=1}^m\frac{|f(\eps k_i)|^2}{1+|k_i|^2}.
\end{align*}
We claim that, for every $\vt>0$,
\begin{align}\label{eq:dyadic-conv-renorm}
S_{m,\eps}(K)\lesssim_{\vt} K^{m}+\bbone_{ \{m\ge3\} }K^{3}\eps^{3-m-\vt}.
\end{align}

To prove \eqref{eq:dyadic-conv-renorm}, decompose the sum according to dyadic $N$ such that $N\sim 1+\max_i|k_i|$. If $N\lesssim K$, then
\begin{align*}
S_{m,\eps}(K)\lesssim\prod_{i=1}^m\sum_{|k_i|\lesssim K}\frac1{1+|k_i|^2}\lesssim K^m.
\end{align*}
If $N\gg K$, the condition $|k|\sim K$ implies that at least two input frequencies have size comparable to $N$. Assume, without loss of generality, that $|k_1|\sim|k_2|\sim N$. For fixed $k,k_3,\ldots,k_m$, 
\begin{align*}
\sum_{\substack{k_1+k_2=k-k_3-\cdots-k_m\\|k_1|\sim|k_2|\sim N}}\frac1{(1+|k_1|^2)(1+|k_2|^2)}\lesssim N^{-1}.
\end{align*}
Therefore, the contribution of this scale is bounded by
\begin{align*}
K^3N^{-1}\prod_{i=3}^m\sum_{|k_i|\lesssim N}\frac1{1+|k_i|^2}\lesssim K^3N^{m-3}.
\end{align*}
Summing over $K\ll N\lesssim\eps^{-1}$ gives
\begin{align*}
\sum_{K\ll N\lesssim\eps^{-1}}K^3N^{m-3}
\lesssim
\begin{cases}
K^2, & m=2,\\
K^3\log(2+\eps^{-1}K^{-1}), & m=3,\\
K^3\eps^{3-m}, & m=4,5.
\end{cases}
\end{align*}
The first term is absorbed by $K^m$, while $\log(2+\eps^{-1}K^{-1})\lesssim_{\vt}\eps^{-\vt}$. This proves \eqref{eq:dyadic-conv-renorm}.

We next derive the $L^\infty$ block estimate. Fix $\vt>0$ and $r,\bar p<\infty$ such that
$$2\zeta+\vt<1,\ 2\vt\leq\eta,\ \frac3r\leq\vt,\ \bar p\geq p\vee r,\ \frac{\bar p\vt}{2}>1.$$
By the Bernstein inequality, the Minkowski inequality and hypercontractivity on the $m$-th homogeneous Wiener chaos,
\begin{align*}
\left\|\Delta_q\big(X_\eps^{\diamond m}(t)-X_\eps^{\diamond m}(s)\big)\right\|_{L^{\bar p}(\Omega;L^\infty)}&\lesssim K^{3/r}\left\|\Delta_q\big(X_\eps^{\diamond m}(t)-X_\eps^{\diamond m}(s)\big)\right\|_{L^{\bar p}(\Omega;L^r)}\\
&\lesssim K^\vt\sup_{x\in\mT^3}\Big(\mE\big|\Delta_q\big(X_\eps^{\diamond m}(t)-X_\eps^{\diamond m}(s)\big)(x)\big|^2\Big)^{\frac12}.
\end{align*}
By \eqref{eq:dyadic-conv-renorm},
\begin{align*}
\left\|\Delta_q\big(X_\eps^{\diamond m}(t)-X_\eps^{\diamond m}(s)\big)\right\|_{L^{\bar p}(\Omega;L^\infty)}\lesssim|t-s|^{\zeta+\frac\vt2}\eps^{-2\zeta-\eta}\big(K^{\frac m2+\eta}+\bbone_{\{m\geq3\}}\eps^{\frac{3-m}{2}}K^{\frac32+\eta}\big).
\end{align*}
The corresponding fixed-time estimate follows in the same way and is bounded by the same right-hand side without the time factor. Since $\ff{\bar p\vt}2>1$, the Banach-space-valued Garsia--Rodemich--Rumsey lemma proves \eqref{wick-dyadic-01}.

We next consider the integrated object. For fixed $k$, set $A:=\sum_{i=1}^m a_{k_i}$. A direct calculation gives
\begin{align*}
\int_{-\infty}^t\int_{-\infty}^t\e^{-a_k(t-r)}\e^{-a_k(t-r')}\e^{-A|r-r'|}\dif r\dif r'=\frac1{a_k(a_k+A)}.
\end{align*}
Consequently, for $|k|\sim K$,
\begin{align*}
\mE\big|\widehat{\cI(X_\eps^{\diamond m})}(t,k)\big|^2\lesssim\sum_{k_1+\cdots+k_m=k}\prod_{i=1}^m\frac{|f(\eps k_i)|^2}{1+|k_i|^2}\frac1{(1+K^2)(1+K^2+\sum_i|k_i|^2)}.
\end{align*}
Define
\begin{align*}
\widetilde S_{m,\eps}(K):=\sum_{|k|\sim K}\sum_{k_1+\cdots+k_m=k}\prod_{i=1}^m\frac{|f(\eps k_i)|^2}{1+|k_i|^2}\frac1{(1+K^2)(1+K^2+\sum_i|k_i|^2)}.
\end{align*}
We claim that, for every $\vt>0$,
\begin{align}\label{eq:dyadic-conv-integrated-renorm}
\widetilde S_{m,\eps}(K)\lesssim_{\vt}\eps^{-\vt}K^{m-4},\ m=2,\ldots,5.
\end{align}
Indeed, if $N\lesssim K$, the denominator in the definition of $\widetilde S_{m,\eps}(K)$ is bounded below by a constant multiple of $K^4$, and the preceding low-frequency estimate gives $K^{m-4}$. If $N\gg K$, the two-large-frequency estimate above gives $K^3N^{m-3}$, while the denominator is bounded below by a constant multiple of $K^2N^2$. Consequently,
\begin{align*}
\widetilde S_{m,\eps}(K)\lesssim K^{m-4}+K\sum_{K\ll N\lesssim\eps^{-1}}N^{m-5}\lesssim_{\vt}\eps^{-\vt}K^{m-4}.
\end{align*}
For $m=5$, the last sum produces $K\log(2+\eps^{-1}K^{-1})$, which is absorbed into $\eps^{-\vt}K$. This proves \eqref{eq:dyadic-conv-integrated-renorm}.

For the time increment of $\cI(X_\eps^{\diamond m})$, write $h=t-s>0$ and split
\begin{align*}
\widehat{\cI(X_\eps^{\diamond m})}(t,k)-\widehat{\cI(X_\eps^{\diamond m})}(s,k)
=(\e^{-a_kh}-1)\widehat{\cI(X_\eps^{\diamond m})}(s,k)+\int_s^t\e^{-a_k(t-r)}\widehat{X_\eps^{\diamond m}}(r,k)\dif r.
\end{align*}
The first term is controlled by the fixed-time estimate and the bound
\begin{align*}
|1-\e^{-a_kh}|\lesssim h^{\zeta+\ff\vt2}a_k^{\zeta+\ff\vt2}\lesssim h^{\zeta+\ff\vt2}K^{2\zeta+\vt}.
\end{align*}
For the second term we use the following elementary kernel estimate. Let $a\ge1$, $A>0$, and $\gamma:=2\zeta+\vt\in(0,1)$. Then
\begin{align}
\int_0^h\int_0^h \e^{-a(u+v)}\e^{-A|u-v|}\dif u\dif v\lesssim \frac{a^\gamma h^\gamma}{a(a+A)}.\no
\end{align}
Applying this estimate with $a=a_k$ and $A=\sum_i a_{k_i}$ gives
\begin{align*}
\sup_x\mE\left|\Delta_q\big(\cI(X_\eps^{\diamond m})(t)-\cI(X_\eps^{\diamond m})(s)\big)(x)\right|^2\lesssim |t-s|^{2\zeta+\vt}K^{4\zeta+2\vt}\widetilde S_{m,\eps}(K).
\end{align*}
Combining this bound with \eqref{eq:dyadic-conv-integrated-renorm}, and using the same Bernstein and hypercontractivity argument as above, yields
\begin{align*}
\left\|\Delta_q\big(\cI(X_\eps^{\diamond m})(t)-\cI(X_\eps^{\diamond m})(s)\big)\right\|_{L^{\bar p}(\Omega;L^\infty)}\lesssim |t-s|^{\zeta+\ff\vt2}\eps^{-\eta}K^{\ff m2-2+2\zeta+\eta}.
\end{align*}
The fixed-time estimate is identical without the factor $|t-s|^{\zeta+\ff\vt2}$. Another application of the Banach-space-valued Garsia--Rodemich--Rumsey lemma proves \eqref{wick-dyadic-02}.
\end{proof}

We now apply the dyadic bounds to the terms which carry an explicit prefactor $\varepsilon^\alpha$ in the renormalized expansion.

\begin{proposition}\label{prop:eps-wick-vanishing}
Let $T>0$, $p\in[1,\infty)$, and let $\delta>0$ and $\delta'\in(0,\ff\a2-\ff14)$ be sufficiently small. Then, for $m=2,3,4$,
\begin{align*}
\eps^\alpha X_\eps^{\diamond m}\to0\ \text{in }L^p\big(\Omega;C_T^{\delta',\,\alpha-\ff m2-\delta-2\delta'}\big).
\end{align*}
Moreover, for $m=2,3,4,5$,
\begin{align*}
\eps^\alpha \cI(X_\eps^{\diamond m})\to0\ \text{in }L^p\big(\Omega;C_T^{\delta',\,\alpha+2-\ff m2-\delta-2\delta'}\big).
\end{align*}
\end{proposition}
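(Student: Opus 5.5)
We deduce both statements from the dyadic bounds of Lemma~\ref{lem:wick-dyadic-vanishing} by summing over Littlewood--Paley blocks, using the frequency cutoff $2^q\lesssim R_f\eps^{-1}$ to turn powers of $2^q$ into powers of $\eps$. Fix $m$, let $\rho$ be the target regularity, $\rho_m:=\alpha-\frac m2-\delta-2\delta'$ for the Wick powers and $\alpha+2-\frac m2-\delta-2\delta'$ for the integrated ones, and take $\zeta=\delta'$ in the lemma (admissible since $\delta'<\frac14$ for $\alpha<1$). The norm $\|F\|_{C_T^{\delta',\rho}}$ is dominated by $\sup_q 2^{q\rho}\|\Delta_qF\|_{C^{\delta'}([0,T];L^\infty)}$, and the supremum is bounded by the $\ell^p$ sum over $q$. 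So for $p$ large (smaller $p$ follow from H\"older),
\begin{equation*}
\bigl\|\|F\|_{C_T^{\delta',\rho}}\bigr\|_{L^p(\Omega)}\lesssim\sum_{-1\le q\lesssim\log_2(\eps^{-1})}2^{q\rho}\,\|\Delta_qF\|_{L^p(\Omega;C^{\delta'}([0,T];L^\infty))}.
\end{equation*}
Since there are only $O(\log\eps^{-1})$ terms, it suffices to bound each summand by $\eps^{c}$ with a fixed $c>0$, absorbing the logarithm into a slightly smaller power.

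For the integrated objects, \eqref{wick-dyadic-02} with $\zeta=\delta'$ gives a summand of size
\begin{equation*}
\eps^{\alpha-\eta}K^{\alpha+2-\frac m2-\delta-2\delta'+\frac m2-2+2\delta'+\eta}=\eps^{\alpha-\eta}K^{\alpha-\delta+\eta}\lesssim\eps^{\delta-2\eta},
\end{equation*}
using $K\lesssim\eps^{-1}$. Taking $\eta<\delta/4$ yields a bound $\eps^{\delta/2}\log\eps^{-1}\to0$, uniformly in $m=2,\dots,5$. This is the same computation as in the proof of Theorem~\ref{thm:enhanced-data-convergence}.

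For the Wick powers, \eqref{wick-dyadic-01} has two pieces. The first is
\begin{equation*}
\eps^{\alpha-2\delta'-\eta}K^{\alpha-\frac m2-\delta-2\delta'+\frac m2+\eta}\lesssim\eps^{\delta-2\eta},
\end{equation*}
which again tends to zero. For $m\ge3$ the second piece is
\begin{equation*}
\eps^{\alpha-2\delta'-\eta-\frac{m-3}{2}}K^{\alpha-\frac m2-\delta-2\delta'+\frac32+\eta}.
\end{equation*}
The exponent of $K$ equals $\alpha-\frac{m-3}{2}-\delta-2\delta'+\eta$, which is positive for $m=3,4$ because $\alpha>\frac56>\frac12$ and the losses are small. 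Inserting $K\lesssim\eps^{-1}$ therefore gives at most $\eps^{\delta-2\eta}$. The prefactor $\eps^{-2\delta'}$ coming from the time regularity is exactly cancelled by the $-2\delta'$ loss built into the target space. This cancellation is the one point needing care, and it is also why the result is stated only for $m\le4$: the $\eps^{-\frac{m-3}{2}}$ factor is compensated precisely by the resolution cutoff.

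The main obstacle is the bookkeeping that turns the $K^\eta$, $\eps^{-\eta}$ and logarithmic losses into a strict positive power of $\eps$. We handle it by fixing $\eta<\delta/4$ at the outset and using that the number of nonzero dyadic blocks is $O(\log\eps^{-1})$.
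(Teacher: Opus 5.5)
Your proposal is correct and is essentially the paper's proof: take $\zeta=\delta'$ in Lemma~\ref{lem:wick-dyadic-vanishing}, multiply each block bound by the target weight, and use the cutoff $K\lesssim\eps^{-1}$ to get $\eps^{\delta-2\eta}$ for every piece. The only cosmetic differences are in the summation and in one exponent. You sum the blocks in $\ell^1$ via Minkowski and absorb the $O(\log\eps^{-1})$ count, whereas the paper bounds the supremum by an $\ell^p$ sum. For the $m=4$ case you assume $\alpha-\frac12-\delta-2\delta'>0$, whereas the paper writes the exponents with $(\cdot)_+$ and so needs only $2\delta'<\alpha-\frac12$.
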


\begin{proof}
Choose $\eta>0$ so small that $\eta<\ff\delta2$ and $2\delta'+\eta<\alpha-\ff12$. Set $K:=2^q\vee1$. By the Fourier cutoff, it suffices to consider
$1\leq K\leq R_f\eps^{-1}$. Moreover,
$$\|F_\eps\|_{L^p(\Omega;C_T^{\delta',\rho})}\lesssim\Big(\sum_{2^q\lesssim\eps^{-1}}K^{p\rho}\|\Delta_qF_\eps\|_{L^p(\Omega;C_t^{\delta'}L^\infty)}^p\Big)^{1/p},$$
where $F_\eps$ denotes any of the random fields considered below. Consequently, a weighted dyadic bound of the form $A_\eps K^\beta$ contributes at most $A_\eps\eps^{-\beta_+}$ after geometric summation.

For the first claim, take $\zeta=\delta'$ in \eqref{wick-dyadic-01}. For $m=2,3,4$,
\begin{align*}
&\big\|\eps^\alpha X_\eps^{\diamond m}\big\|_{L^p(\Omega;C_T^{\delta',\alpha-\ff m2-\delta-2\delta'})}\\
&\qquad \lesssim\eps^{\alpha-2\delta'-\eta}\sup_{1\le K\le R_f\eps^{-1}}K^{\alpha-\delta-2\delta'+\eta}+\bbone_{\{m\ge3\}}\eps^{\alpha-2\delta'-\eta+\ff{3-m}2}\sup_{1\le K\le R_f\eps^{-1}}K^{\alpha+\ff32-\ff m2-\delta-2\delta'+\eta}\\
&\qquad \lesssim\eps^{\alpha-2\delta'-\eta-(\alpha-\delta-2\delta'+\eta)_+}+\bbone_{\{m\ge3\}}\eps^{\alpha-2\delta'-\eta+\ff{3-m}2-(\alpha+\ff32-\ff m2-\delta-2\delta'+\eta)_+}.
\end{align*}
The two exponents on the right-hand side are bounded from below, respectively, by $\min\{\alpha-2\delta'-\eta,\delta-2\eta\}$ and $\min\left\{\alpha-\frac12-2\delta'-\eta,\delta-2\eta\right\}$, which are positive by the choice of $\eta$. This proves the first convergence.

For the integrated terms, take $\zeta=\delta'$ in
\eqref{wick-dyadic-02}. For $m=2,\ldots,5$,
\begin{align*}
\big\|\eps^\alpha \cI(X_\eps^{\diamond m})\big\|_{L^p(\Omega;C_T^{\delta',\alpha+2-\ff m2-\delta-2\delta'})}&\lesssim\eps^{\alpha-\eta}\sup_{1\le K\le R_f\eps^{-1}}K^{\alpha+2-\ff m2-\delta-2\delta'}K^{\ff m2-2+2\delta'+\eta}\\
&=\eps^{\alpha-\eta}\sup_{1\le K\le R_f\eps^{-1}}K^{\alpha-\delta+\eta}\lesssim\eps^{\alpha-\eta-(\alpha-\delta+\eta)_+}.
\end{align*}
The last exponent is bounded from below by $\min\{\alpha-\eta,\delta-2\eta\}>0$, which proves the second convergence.
\end{proof}

\begin{remark}\label{rem:m5-not-standalone}
The non-integrated estimate is not stated for $m=5$, and this omission is essential. Indeed, for every fixed $t$, $\mE|\widehat{\eps^\alpha X_\eps^{\diamond5}}(t,0)|^2\asymp \eps^{2\alpha-2}$, which diverges as $\eps\to0$ since $\alpha<1$. Thus, $\eps^\alpha X_\eps^{\diamond5}$ does not vanish even at the level of its zero spatial Fourier mode. What is needed in the mild formulation is instead the integrated estimate $\eps^\alpha\cI(X_\eps^{\diamond5})\longrightarrow0$, which is precisely the second assertion of Proposition~\ref{prop:eps-wick-vanishing}.
\end{remark}

The preceding proposition gives the required vanishing estimates for the terms involving a single Wick power and an explicit factor $\eps^\alpha$. We now turn to resonant products. These require more care, as contractions between the two Wick factors can partially compensate for the smallness induced by $\eps^\alpha$. The following lemmas show that the relevant higher-order resonant products nevertheless vanish in the spaces required by the paracontrolled formulation.

\subsection{Renormalization for $\eps^{2\a}\cI(X_\eps^{\diamond5})\circ X_\eps^{\diamond3}$}

For the remainder of this section, we use the following notation. Recall that $a_k:=1+4\pi^2|k|^2$. After enlarging $R_f$ by a fixed factor if necessary, set
\begin{align*}
\varrho(k):=1+|k|,\ \Lambda:=1+R_f\eps^{-1},\ w_\eps(k):=\frac{|f(\eps k)|^2}{2a_k}.
\end{align*}
Then
\begin{align*}
w_\eps(k)=0\ \text{for }\varrho(k)>\Lambda;\ w_\eps(k)\lesssim\varrho(k)^{-2}.
\end{align*}

To encode the resonant product, define
\begin{align*}
\varphi_{-1}:=\chi,\ \varphi_j(\xi):=\theta(2^{-j}\xi),\ j\geq0;\ \Phi(k,\ell):=\sum_{\substack{i,j\geq-1\\|i-j|\leq1}}\varphi_i(k)\varphi_j(\ell).
\end{align*}
For $\mathbf p=(p_1,\ldots,p_m)$, write
\begin{align*}
A_{\mathbf p}:=\sum_{i=1}^m a_{p_i}.
\end{align*}
For $m\geq1$, define
\begin{align}\label{definitioncJ}
G_{m,\eps}(k):=\sum_{p_1+\cdots+p_m=k}\prod_{i=1}^m w_\eps(p_i),\ \cJ_{m,\eps}(k):=\sum_{p_1+\cdots+p_m=k}\frac{\prod_{i=1}^m w_\eps(p_i)}{a_k(a_k+A_{\mathbf p})}.
\end{align}
Finally, for $r\geq0$, $P\in\mZ^3$ and $u\geq0$, let
\begin{align*}
H_{\eps,r}(P,u):=\sum_{\boldsymbol\xi\in(\mZ^3)^r}\prod_{\nu=1}^r w_\eps(\xi_\nu)\e^{-\left(a_{P+\Xi}+\sum_{\nu=1}^r a_{\xi_\nu}\right)u},\ \Xi:=\sum_{\nu=1}^r\xi_\nu,
\end{align*}
where $H_{\eps,0}(P,u)=\e^{-a_Pu}$.

We first consider $\eps^{2\alpha}\cI(X_\eps^{\diamond5})\circ X_\eps^{\diamond3}$. Both factors carry the prefactor $\eps^\alpha$, and no local counterterm is needed. We treat this case in some detail, since the same kernel estimates will be used repeatedly below.

\begin{lemma}\label{lem:X5-X3-vanishing}
Let $T>0$, $p\in[1,\infty)$, $\de,\de'>0$ sufficiently small such that $\a-\frac12-\delta-2\delta'>0$. Then
\begin{align}\label{eq:X5-X3-vanishing}
\eps^{2\alpha}\,\cI(X_\eps^{\diamond5})\circ X_\eps^{\diamond3}\to0\ \text{in}\ L^p\big(\Omega;C_T^{\delta',\,2\alpha-2-\delta-2\delta'}\big).
\end{align}
More precisely, if $K:=2^q\vee1$, then, for every sufficiently small $\eta>0$,
\begin{align}\label{eq:X5-X3-dyadic}
\left\|\Delta_q\left(\eps^{2\alpha}\,\cI(X_\eps^{\diamond5})\circ X_\eps^{\diamond3}\right)\right\|_{L^p(\Omega;C^{\delta'}([0,T];L^\infty))}\lesssim_{\eta,\delta'}\eps^{2\alpha-1-2\delta'-\eta}K^{1+\eta}.
\end{align}
\end{lemma}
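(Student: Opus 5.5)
We expand the product in Wiener chaos and estimate each chaos component at the level of second moments, using the kernel functions $G_{m,\eps}$, $\cJ_{m,\eps}$ and $H_{\eps,r}$ introduced above. Writing $\widehat{\cI(X_\eps^{\diamond5})}(t,k)=\int_{-\infty}^t\e^{-a_k(t-s)}\widehat{X_\eps^{\diamond5}}(s,k)\dif s$ and inserting the Fourier multiplier $\Phi(k,\ell)$ of the resonant product, the product $\cI(X_\eps^{\diamond5})\circ X_\eps^{\diamond3}$ decomposes via the product formula for multiple Wiener integrals into components in chaoses $8,6,4,2$, corresponding to $r=0,1,2,3$ contractions between the five inner legs and the three outer legs. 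There is no zeroth chaos (since $5\neq3$), so no constant counterterm arises; this is consistent with no renormalization appearing in \eqref{eq:X5-X3-vanishing}. By hypercontractivity, Bernstein's inequality and the Garsia--Rodemich--Rumsey lemma, exactly as in Lemma~\ref{lem:wick-dyadic-vanishing}, it suffices to prove for each $r$ the fixed-time bound $\sup_x\mE|\Delta_q\Pi_{8-2r}(\cdots)(t,x)|^2\lesssim\eps^{-2-\eta}K^{2+\eta}$ (before the prefactor $\eps^{4\alpha}$), together with the analogous increment bound carrying an extra factor $|t-s|^{2\delta'+\vt}\eps^{-4\delta'-2\vt}$.

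For the $r$-contraction term, the contracted legs $\xi_1,\ldots,\xi_r$ carry weights $w_\eps(\xi_\nu)$ and the time kernel $\e^{-\sum a_{\xi_\nu}(t-s)}$. Integrating in $s$ against $\e^{-a_{k}(t-s)}$ with $k=P+\Xi$ yields exactly the combination $\int_0^\infty H_{\eps,r}(P,u)\,\dif u$, where $P$ is the sum of the uncontracted inner legs. The second moment of the $r$-th component at frequency $n$ (with $|n|\sim K$) is then bounded by a sum over the $5-r$ remaining inner legs and $3-r$ remaining outer legs of $\prod w_\eps$ times the square of this time integral, after applying Cauchy--Schwarz to the $\Phi$-localized free variables. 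The cases $r=0,1,2$ follow from the crude bounds $\int_0^\infty H_{\eps,r}\dif u\lesssim\varrho(P+\Xi)^{-2}\prod\ldots$ combined with the dyadic convolution estimate \eqref{eq:dyadic-conv-renorm} and its integrated version \eqref{eq:dyadic-conv-integrated-renorm}. Each of these produces at most $\eps^{-1-\eta}$ times a power $K^{2+\eta}$, because the free high-frequency summations are of the same type as those yielding $K^3\eps^{3-m}$ with $m\le5$.

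The main obstacle will be the fully contracted case $r=3$, which lands in the second chaos with two free inner legs $p_1+p_2=P$. Here the three contracted loops $\sum_{\xi}\prod_{\nu=1}^3 w_\eps(\xi_\nu)\,(a_{P+\Xi}+\sum a_{\xi_\nu})^{-1}$ diverge. We first try to bound them by $\eps^{-1}\log\eps^{-1}$ using $\sum_{|\xi|\lesssim\Lambda}\varrho(\xi)^{-2}\lesssim\Lambda$ twice, together with one factor $\varrho(\xi)^{-2}$ absorbed by the denominator $\sum a_{\xi_\nu}$, which gives $\Lambda^{1}$ up to logarithms. The resonant constraint $|k|\sim|n-k|$ with $n-k=P$, combined with the remaining second-chaos sum $G_{2,\eps}$-type bound $\lesssim K^{2}$, then yields a squared moment of order $\eps^{-2-\eta}K^{2+\eta}$, hence an amplitude $\eps^{-1-\eta}K^{1+\eta}$. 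Multiplying by $\eps^{2\alpha}$ gives \eqref{eq:X5-X3-dyadic}.

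Finally, summing the dyadic bound with weight $K^{2\alpha-2-\delta-2\delta'}$ over $K\lesssim\eps^{-1}$ gives $\eps^{2\alpha-1-2\delta'-\eta}\cdot\eps^{-(2\alpha-1-\delta-2\delta'+\eta)}=\eps^{\delta-2\eta}\to0$. This proves \eqref{eq:X5-X3-vanishing}.
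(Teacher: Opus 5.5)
Your plan follows the paper's proof. It uses the same Wick decomposition into components $\cT_{\eps,r}$, $r=0,\dots,3$, of chaos order $8-2r$ (no zeroth chaos, hence no counterterm), and the same reduction via hypercontractivity, Bernstein and Garsia--Rodemich--Rumsey to block second moments of size $\eps^{-2-\eta}K^{2+\eta}$ plus increments. It also identifies the fully contracted term $r=3$ as dominant and ends with the same summation giving $\eps^{\delta-2\eta}$. Several intermediate claims are wrong, but each can be fixed within the same argument.

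First, for $r=0,1,2$ the block sum is not $\eps^{-1-\eta}K^{2+\eta}$. For $r=0$ the resonant sum over $|P|\sim|Q|\sim L\gtrsim K$ is dominated by $L\sim\Lambda$. So the second moment at a single frequency is already of order $\Lambda$, uniformly in $n$ (the paper bounds it by $\sup_bG_{3,\eps}(b)\sum_P\cJ_{5,\eps}(P)\lesssim\Lambda^{1+\vartheta}$), and the block sum is of order $\eps^{-1}K^3$. This still meets your target, but only because every block vanishes for $K>R_f\eps^{-1}$, so that $\eps^{-1}K^3\lesssim\eps^{-2}K^2$. You need to make this step explicit.

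Second, ``$\int_0^\infty H_{\eps,r}\,\dif u\lesssim\varrho(P+\Xi)^{-2}\cdots$'' is not a usable bound, since $\Xi$ is summed inside $H_{\eps,r}$. What one actually has is $H_{\eps,r}(P,u)\lesssim\e^{-ca_Pu}\Theta_\eps(u)^r$ with $\Theta_\eps(u)\lesssim\Lambda\wedge u^{-1/2}$. This gives $\int_0^\infty H_{\eps,1}(P,u)\,\dif u\lesssim a_P^{-1/2}$ and $\int_0^\infty H_{\eps,2}(P,u)\,\dif u\lesssim\log(2+\Lambda)$.

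Third, for $r=3$ the surviving frequency is $n=P$, and the resonance factor is $\Phi(P+\Xi,-\Xi)$, not $|k|\sim|n-k|$ with $n-k=P$. In fact no resonance is needed; $|\Phi|\lesssim1$ suffices. Your count of the triple loop does not yield $\Lambda$ as written: two full sums $\sum\varrho^{-2}\lesssim\Lambda$ already cost $\Lambda^2$, and the denominator is not pointwise $\lesssim\Lambda^{-2}$. Instead, decompose dyadically in $N\sim\max_\nu\varrho(\xi_\nu)$. The scale-$N$ piece is $N^{-2}\prod_{\nu=1}^3\sum_{\varrho(\xi_\nu)\lesssim N}\varrho(\xi_\nu)^{-2}\lesssim N$, which gives $\sup_n\int_0^\infty|J_{\eps,n}(u)|\,\dif u\lesssim\eps^{-1}$. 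Then Minkowski in $u$, together with $\mE|\widehat{X_\eps^{\diamond2}}(t-u,n)|^2\lesssim G_{2,\eps}(n)\lesssim\varrho(n)^{-1}$, gives $\eps^{-2}K^2$ on the block.

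Finally, the increment bound is only asserted. It follows by expanding the difference so that each term carries exactly one increment of $X_\eps$, and applying Cauchy--Schwarz with \eqref{eq:X5-X3-first-chaos-increment}.
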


\begin{proof}
By the Wick formula,
\begin{align}\label{eq:X5-X3-wick-expansion}
\cI(X_\eps^{\diamond5})\circ X_\eps^{\diamond3}=\sum_{r=0}^3c_r\cT_{\eps,r},\ c_r:=\binom5r\binom3r r!,
\end{align}
where $\cT_{\eps,r}$ denotes the contribution arising from exactly $r$ cross-contractions between $X_\eps^{\diamond5}$ and $X_\eps^{\diamond3}$. More explicitly,
\begin{align}\label{eq:X5-X3-Tr-Fourier}
\widehat{\cT_{\eps,r}}(t,n)={}\sum_{k+\ell=n}\Phi(k,\ell)\int_0^\infty\e^{-a_ku}\sum_{\substack{k_1+\cdots+k_5=k\\\ell_1+\ell_2+\ell_3=\ell}}\prod_{\nu=1}^r\left(\bbone_{\{k_\nu+\ell_\nu=0\}}w_\eps(k_\nu)\e^{-a_{k_\nu}u}\right)&\\
\times:\prod_{\nu=r+1}^5\widehat X_\eps(t-u,k_\nu)\prod_{\mu=r+1}^3\widehat X_\eps(t,\ell_\mu):\,\dif u&.\notag
\end{align}
The term $\cT_{\eps,r}$ belongs to the homogeneous Wiener chaos of order $8-2r$. Since $f$ is compactly supported, there is a constant $R_f<\infty$ such that all the blocks in \eqref{eq:X5-X3-wick-expansion} vanish whenever $K>R_f\eps^{-1}$.

We claim that, for every $\vt>0$,
\begin{align}\label{eq:X5-X3-r012-second-moment}
\sup_{t\in[0,T]}\sum_{|n|\sim K}\mE\left|\widehat{\Delta_q\cT_{\eps,r}}(t,n)\right|^2\lesssim_\vt\eps^{-1-\vt}K^{3+\vt},\ r=0,1,2.
\end{align}
To prove this, we first record the convolution estimates used below. The same dyadic argument as in the proof of \eqref{eq:dyadic-conv-renorm} gives, for every $\vt>0$,
\begin{align}\label{eq:X5-X3-basic-convolutions}
G_{2,\eps}(b)\lesssim\varrho(b)^{-1},\ G_{3,\eps}(b)\lesssim_\vt\Lambda^\vt,\ G_{4,\eps}(b)\lesssim_\vt\Lambda^{1+\vt}.
\end{align}
where the logarithmic borderline in the case $m=3$ is absorbed into $\Lambda^\vt$. Likewise, summing \eqref{eq:dyadic-conv-integrated-renorm} for $m=5$ over the dyadic  blocks yields
\begin{align}\label{eq:X5-X3-integrated-five-sum}
\sum_{b\in\mZ^3}\cJ_{5,\eps}(b)\lesssim_\vt\Lambda^{1+\vt}.
\end{align}

Set $m_1:=5-r$, $m_2:=3-r$, and write $\mathbf p=(p_1,\ldots,p_{m_1})$, $\mathbf z=(z_1,\ldots,z_{m_2})$ with $P:=\sum_i p_i$ and $Q:=\sum_jz_j$. By the Wiener isometry applied to the ordered kernel, together with the contractivity of symmetrization and the evenness of $a_\xi$ and $w_\eps(\xi)$, for each fixed $n$ we obtain,
$$\mE\big|\widehat{\Delta_q\cT_{\eps,r}}(t,n)\big|^2\lesssim\cS_{\eps,r}(n),$$
where
$$\cS_{\eps,r}(n):={}\sum_{\substack{\mathbf p\in(\mZ^3)^{m_1},\mathbf z\in(\mZ^3)^{m_2}\\P+Q=n}}\prod_{i=1}^{m_1}w_\eps(p_i)\prod_{j=1}^{m_2}w_\eps(z_j)\times\int_0^\infty\int_0^\infty\e^{-A_{\mathbf p}|u-v|}H_{\eps,r}(P,u)H_{\eps,r}(P,v)\,\dif u\,\dif v.$$

For $r=0$,
\begin{align*}
\int_0^\infty\int_0^\infty\e^{-a_P(u+v)}\e^{-A_{\mathbf p}|u-v|}\,\dif u\,\dif v=\frac1{a_P(a_P+A_{\mathbf p})}.
\end{align*}
For $r=1,2$, the inequality
\begin{align*}
a_{P+\Xi}+\sum_{\nu=1}^ra_{\xi_\nu}\gtrsim a_P+\sum_{\nu=1}^ra_{\xi_\nu}
\end{align*}
implies
\begin{align*}
H_{\eps,r}(P,u)\lesssim\e^{-ca_Pu}\Theta_\eps(u)^r,\qquad\Theta_\eps(u):=\sum_{\xi\in\mZ^3}w_\eps(\xi)\e^{-ca_\xi u}\lesssim\Lambda\wedge u^{-\frac12}.
\end{align*}
Consequently,
\begin{align*}
\int_0^\infty H_{\eps,1}(P,u)\,\dif u\lesssim a_P^{-\frac12},\ \int_0^\infty H_{\eps,2}(P,u)\,\dif u\lesssim\log(2+\Lambda).
\end{align*}
Using \eqref{eq:X5-X3-basic-convolutions} and \eqref{eq:X5-X3-integrated-five-sum}, we obtain, after absorbing logarithmic losses into $\Lambda^\vt$,
$$\cS_{\eps,0}(n)\lesssim\sup_{b\in\mZ^3}G_{3,\eps}(b)\sum_{P\in\mZ^3}\cJ_{5,\eps}(P)\lesssim_\vt\Lambda^{1+\vt},$$
$$\cS_{\eps,1}(n)\lesssim\sum_{P\in\mZ^3}a_P^{-1}G_{4,\eps}(P)G_{2,\eps}(n-P)\lesssim_\vt\Lambda^{1+\vt}\sum_{\varrho(P)\lesssim\Lambda}\varrho(P)^{-2}\varrho(n-P)^{-1}\lesssim_\vt\Lambda^{1+\vt},$$
$$\cS_{\eps,2}(n)\lesssim\log^2(2+\Lambda)\sum_{P\in\mZ^3}G_{3,\eps}(P)G_{1,\eps}(n-P)=\log^2(2+\Lambda)G_{4,\eps}(n)\lesssim_\vt\Lambda^{1+\vt}.$$
Since $\Lambda\asymp\eps^{-1}$ and the block $|n|\sim K$ contains $O(K^3)$ frequencies, this proves \eqref{eq:X5-X3-r012-second-moment}.

We next consider $r=3$. All three factors in $X_\eps^{\diamond3}$ are then contracted, leaving a second-chaos term. Writing $\xi_{1,2,3}:=\xi_1+\xi_2+\xi_3$, define
$$J_{\eps,n}(u):={}\sum_{\xi_1,\xi_2,\xi_3\in\mZ^3}\Phi(n+\xi_{1,2,3},-\xi_{1,2,3})\prod_{\nu=1}^3w_\eps(\xi_\nu)\times\e^{-\left(a_{n+\xi_{1,2,3}}+a_{\xi_1}+a_{\xi_2}+a_{\xi_3}\right)u}.$$
Then
\begin{align}\label{eq:X5-X3-r3-kernel-representation}
\widehat{\cT_{\eps,3}}(t,n)=\sum_{p_1+p_2=n}\int_0^\infty J_{\eps,n}(u):\widehat X_\eps(t-u,p_1)\widehat X_\eps(t-u,p_2):\,\dif u.
\end{align}
Since $|\Phi|\lesssim1$ and $a_{n+\xi_{1,2,3}}+a_{\xi_1}+a_{\xi_2}+a_{\xi_3}\gtrsim1+|\xi_1|^2+|\xi_2|^2+|\xi_3|^2$, a dyadic decomposition gives
\begin{align}\label{eq:X5-X3-r3-kernel-L1}
\sup_{n\in\mZ^3}\int_0^\infty|J_{\eps,n}(u)|\,\dif u\lesssim\sum_{\xi_1,\xi_2,\xi_3}\frac{\prod_{\nu=1}^3|f(\eps\xi_\nu)|^2}{\prod_{\nu=1}^3(1+|\xi_\nu|^2)\left(1+|\xi_1|^2+|\xi_2|^2+|\xi_3|^2\right)}\lesssim\eps^{-1}.
\end{align}
Indeed, on a dyadic scale $N\sim\max_\nu\varrho(\xi_\nu)$, the contribution is bounded by
\begin{align*}
N^{-2}\prod_{\nu=1}^3\sum_{\varrho(\xi_\nu)\lesssim N}\varrho(\xi_\nu)^{-2}\lesssim N.
\end{align*}
Summing over $N\lesssim\Lambda$ proves \eqref{eq:X5-X3-r3-kernel-L1}.

Since $\mE|\widehat{X_\eps^{\diamond2}}(t,n)|^2\lesssim G_{2,\eps}(n)$, the Minkowski inequality and \eqref{eq:X5-X3-r3-kernel-L1} give
\begin{align*}
\mE\big|\widehat{\cT_{\eps,3}}(t,n)\big|^2\lesssim\eps^{-2}\varrho(n)^{-1}.
\end{align*}
Therefore,
\begin{align}\label{eq:X5-X3-r3-second-moment}
\sup_{t\in[0,T]}\sum_{|n|\sim K}\mE\big|\widehat{\Delta_q\cT_{\eps,3}}(t,n)\big|^2\lesssim\eps^{-2}K^2.
\end{align}

We next estimate the time increments. Fix $\vt>0$ sufficiently small that $2\delta'+\vt<1$, and let $h:=t-s>0$. Then
\begin{align}\label{eq:X5-X3-first-chaos-increment}
\mE\big|\widehat X_\eps(t,k)-\widehat X_\eps(s,k)\big|^2=2w_\eps(k)(1-\e^{-a_kh})\lesssim h^{2\delta'+\vt}\eps^{-4\delta'-2\vt}w_\eps(k).
\end{align}
Since the heat kernels above depend only on $u$, replacing $t$ by $s$ changes only the factors involving $X_\eps$. Expanding the difference of the products, each resulting term contains one increment of $X_\eps$, while all remaining factors are estimated as in the fixed-time case. The Wiener isometry and the Cauchy--Schwarz inequality then allow us to apply \eqref{eq:X5-X3-first-chaos-increment} to this increment.

For $r=0$, each increment contributes the factor $h^{2\delta'+\vt}\eps^{-4\delta'-2\vt}$. When the increment occurs in a factor inside $\cI(X_\eps^{\diamond5})$ with frequency $p_i$, the identity $P=\sum_jp_j$ gives $a_{p_i}\lesssim a_P+\sum_{j\ne i}a_{p_j}$, so the same time-integral estimate remains valid. For $r=1,2$, no additional argument is needed, since the fixed-time estimate already used $\e^{-A_{\mathbf p}|u-v|}\leq1$. Hence
\begin{align}\label{eq:X5-X3-r012-time-increment}
\sum_{|n|\sim K}\mE\big|\widehat{\Delta_q\big(\cT_{\eps,r}(t)-\cT_{\eps,r}(s)\big)}(n)\big|^2\lesssim_\vt h^{2\delta'+\vt}\eps^{-1-4\delta'-3\vt}K^{3+\vt},\ r=0,1,2.
\end{align}

For $r=3$, the kernel $J_{\eps,n}(u)$ is independent of $t$. Moreover,
\begin{align*}
\mE\left|\widehat{X_\eps^{\diamond2}}(t,n)-\widehat{X_\eps^{\diamond2}}(s,n)\right|^2\lesssim h^{2\delta'+\vt}\eps^{-4\delta'-2\vt}G_{2,\eps}(n).
\end{align*}
Thus, by \eqref{eq:X5-X3-r3-kernel-representation} and \eqref{eq:X5-X3-r3-kernel-L1},
\begin{align}\label{eq:X5-X3-r3-time-increment}
\sum_{|n|\sim K}\mE\left|\widehat{\Delta_q\big(\cT_{\eps,3}(t)-\cT_{\eps,3}(s)\big)}(n)\right|^2\lesssim h^{2\delta'+\vt}\eps^{-2-4\delta'-2\vt}K^2.
\end{align}

Combining the preceding fixed-time and increment bounds with the Bernstein inequality, hypercontractivity, and the Banach-space-valued Garsia--Rodemich--Rumsey lemma, as in the proof of Lemma~\ref{lem:wick-dyadic-vanishing}, we obtain, after absorbing all auxiliary losses into $\eta$,
\begin{align*}
\left\|\Delta_q\cT_{\eps,r}\right\|_{L^p(\Omega;C^{\delta'}([0,T];L^\infty))}\lesssim_{\eta,\delta'}
\begin{cases}
\eps^{-\frac12-2\delta'-\eta}K^{\frac32+\eta},& r=0,1,2,\\[2mm]
\eps^{-1-2\delta'-\eta}K^{1+\eta},& r=3.
\end{cases}
\end{align*}
All blocks vanish unless $K\lesssim\eps^{-1}$. Therefore
\begin{align*}
\eps^{-\frac12-2\delta'-\eta}K^{\frac32+\eta}\lesssim\eps^{-1-2\delta'-\eta}K^{1+\eta},
\end{align*}
and multiplication by $\eps^{2\alpha}$ proves \eqref{eq:X5-X3-dyadic}.

Finally, choose $\eta<\ff\delta2$. Since $2\alpha-1-\delta-2\delta'>0$, the dyadic characterization of $C_T^{\delta',\,2\alpha-2-\delta-2\delta'}$ gives
\begin{align*}
\left\|\eps^{2\alpha}\cI(X_\eps^{\diamond5})\circ X_\eps^{\diamond3}\right\|_{L^p(\Omega;C_T^{\delta',\,2\alpha-2-\delta-2\delta'})}\ \lesssim\eps^{2\alpha-1-2\delta'-\eta}\sup_{1\le K\lesssim\eps^{-1}}K^{2\alpha-1-\delta-2\delta'+\eta}\lesssim\eps^{\delta-2\eta}\longrightarrow0.
\end{align*}
This proves \eqref{eq:X5-X3-vanishing}.
\end{proof}

\subsection{Renormalization for $\eps^\a\,\cI(X_\eps^{\diamond5})\circ X_\eps^{\diamond2}$}

We next consider the analogous resonant product with $X_\eps^{\diamond2}$. Its proof is a direct variant of Lemma~\ref{lem:X5-X3-vanishing}, so we record only the estimates that change.

\begin{lemma}\label{lem:X5-X2-vanishing}
Let $T>0$, $p\in[1,\infty)$, $\de,\de'>0$ sufficiently small such that $\a-\frac12-\delta-2\delta'>0$. Then
\begin{align*}
\lambda_\eps\eps^\a\,\cI(X_\eps^{\diamond5})\circ X_\eps^{\diamond2}\to0\ \text{in}\ L^p\big(\Omega;C_T^{\delta',\,\a-\frac32-\delta-2\delta'}\big).
\end{align*}
More precisely, if $K:=2^q\vee1$, then, for every sufficiently small $\eta>0$,
\begin{align}\label{eq:X5-X2-dyadic}
\left\|\Delta_q\Big(\lambda_\eps\eps^\a\,\cI(X_\eps^{\diamond5})\circ X_\eps^{\diamond2}\Big)\right\|_{L^p(\Omega;C^{\delta'}([0,T];L^\infty))}\lesssim_{\eta,\delta'}\eps^{\a-\frac12-2\delta'-\eta}K^{1+\eta}.
\end{align}
\end{lemma}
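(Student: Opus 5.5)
We follow the proof of Lemma~\ref{lem:X5-X3-vanishing} line by line, replacing the second factor $X_\eps^{\diamond3}$ by $X_\eps^{\diamond2}$. Since $\sup_\eps|\lambda_\eps|<\infty$, the factor $\lambda_\eps$ is harmless and it suffices to treat $\eps^\a\cI(X_\eps^{\diamond5})\circ X_\eps^{\diamond2}$.

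\textbf{Chaos decomposition.} By the Wick formula,
\begin{equation*}
\cI(X_\eps^{\diamond5})\circ X_\eps^{\diamond2}=\sum_{r=0}^2c_r'\cT_{\eps,r}',\qquad c_r'=\binom5r\binom2r r!,
\end{equation*}
where $\cT'_{\eps,r}$ lies in the chaos of order $7-2r$. The Fourier representation is \eqref{eq:X5-X3-Tr-Fourier} with $m_1=5-r$ and $m_2=2-r$.

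\textbf{Second moments for $r=0,1$.} We use the same quantity $\cS_{\eps,r}(n)$ as before, now with $G_{2,\eps}$ replacing $G_{3,\eps}$ in the second factor.
\begin{itemize}
\item[$r=0$:] Using $G_{2,\eps}(b)\lesssim\varrho(b)^{-1}\lesssim1$ and \eqref{eq:X5-X3-integrated-five-sum}, we get $\cS_{\eps,0}(n)\lesssim\sup_bG_{2,\eps}(b)\sum_P\cJ_{5,\eps}(P)\lesssim\Lambda^{1+\vt}$.
\item[$r=1$:] From $\int_0^\infty H_{\eps,1}(P,u)\,\dif u\lesssim a_P^{-1/2}$ we get $\cS_{\eps,1}(n)\lesssim\sum_Pa_P^{-1}G_{4,\eps}(P)G_{1,\eps}(n-P)\lesssim\Lambda^{1+\vt}\sum_P\varrho(P)^{-2}\varrho(n-P)^{-2}\lesssim\Lambda^{1+\vt}$.
\end{itemize}
Summing over the $O(K^3)$ frequencies $|n|\sim K$ gives a bound $\eps^{-1-\vt}K^{3+\vt}$. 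After hypercontractivity and Bernstein, this contributes $\eps^{-\frac12-\eta}K^{\frac32+\eta}\lesssim\eps^{-1-\eta}K^{1+\eta}$ on $K\lesssim\eps^{-1}$. This is the same bookkeeping as before.

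\textbf{The fully contracted term $r=2$ (main obstacle).} For $r=2$, both factors of $X_\eps^{\diamond2}$ contract against $\cI(X_\eps^{\diamond5})$, leaving a third-chaos term:
\begin{equation*}
\widehat{\cT_{\eps,2}'}(t,n)=\sum_{p_1+p_2+p_3=n}\int_0^\infty J'_{\eps,n}(u):\prod_{i=1}^3\widehat X_\eps(t-u,p_i):\,\dif u.
\end{equation*}
The kernel is
\begin{equation*}
J'_{\eps,n}(u)=\sum_{\xi_1,\xi_2}\Phi(n+\xi_{1,2},-\xi_{1,2})\,w_\eps(\xi_1)w_\eps(\xi_2)\,\e^{-(a_{n+\xi_{1,2}}+a_{\xi_1}+a_{\xi_2})u}.
\end{equation*}
The naive bound $\int|J'|\,\dif u\lesssim\sum\varrho(\xi_1)^{-2}\varrho(\xi_2)^{-2}(1+|\xi_1|^2+|\xi_2|^2)^{-1}$ is only logarithmic. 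Combined with $\mE|\widehat{X_\eps^{\diamond3}}(n)|^2\lesssim G_{3,\eps}(n)$, it would give $\sum_{|n|\sim K}G_{3,\eps}(n)$. By \eqref{eq:dyadic-conv-renorm} this is $\lesssim K^3\eps^{-\vt}$, hence the block bound $\eps^{-\eta}K^{\frac32+\eta}$.

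One concern is that Minkowski in $u$ ignores the time decorrelation, but this is harmless: it only costs an upper bound. Multiplying by $\eps^\a$ gives $\eps^{\a-\eta}K^{\frac32+\eta}$. On $K\lesssim\eps^{-1}$, we have
\begin{equation*}
K^{\frac32}=K^{\frac12}\cdot K\lesssim\eps^{-\frac12}K,
\end{equation*}
so this is $\lesssim\eps^{\a-\frac12-\eta}K^{1+\eta}$. The $r=0,1$ contributions times $\eps^\a$ give the same rate. Together these yield \eqref{eq:X5-X2-dyadic} at fixed time.

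\textbf{Time increments.} These are handled exactly as in \eqref{eq:X5-X3-first-chaos-increment}--\eqref{eq:X5-X3-r3-time-increment}: expand the difference of the products so that each term contains one increment of $X_\eps$. This costs $h^{2\delta'+\vt}\eps^{-4\delta'-2\vt}$, and the Garsia--Rodemich--Rumsey lemma then produces the $C^{\delta'}$ bound with the loss $\eps^{-2\delta'}$ shown in \eqref{eq:X5-X2-dyadic}.

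\textbf{Dyadic summation.} Summing in the space $C_T^{\delta',\a-\frac32-\delta-2\delta'}$ gives
\begin{equation*}
\eps^{\a-\frac12-2\delta'-\eta}\sup_{K\lesssim\eps^{-1}}K^{\a-\frac12-\delta-2\delta'+\eta}\lesssim\eps^{\delta-2\eta}\to0.
\end{equation*}
Here the exponent of $K$ is positive by the hypothesis $\a-\frac12-\delta-2\delta'>0$, so the supremum is attained at $K\sim\eps^{-1}$. This proves the convergence for any choice $\eta<\frac\delta2$.
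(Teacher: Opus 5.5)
There is a genuine gap in your treatment of the components $r=0,1$. Your chaos decomposition and your bound for the fully contracted term $r=2$ are fine and agree with the paper, which also obtains a per-frequency bound $\log^2(2+\Lambda)G_{3,\eps}(n)\lesssim\eps^{-\eta}$ there. For $r=0,1$, however, you carried over the per-frequency bound $\cS_{\eps,r}(n)\lesssim\Lambda^{1+\vt}$ from Lemma~\ref{lem:X5-X3-vanishing}.

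Summed over $|n|\sim K$, that bound gives $\eps^{-1-\vt}K^{3+\vt}$. After multiplying by $\eps^\a$, the block bound is $\eps^{\a-\frac12-\eta}K^{\frac32+\eta}$, which on $K\lesssim\eps^{-1}$ is only $\eps^{\a-1-\eta}K^{1+\eta}$. This is not the rate $\eps^{\a-\frac12-\eta}K^{1+\eta}$ required in \eqref{eq:X5-X2-dyadic}, so your sentence claiming that the $r=0,1$ contributions ``give the same rate'' is false.

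In Lemma~\ref{lem:X5-X3-vanishing} this bookkeeping was harmless, because the prefactor was $\eps^{2\a}$ and the target rate there was $\eps^{2\a-1}K$ anyway. Here the prefactor is only $\eps^\a$, and the loss is fatal. Your dyadic summation would become
$$\eps^{\a-1-2\delta'-\eta}\sup_{K\lesssim\eps^{-1}}K^{\a-\frac12-\delta-2\delta'+\eta}\approx\eps^{-\frac12+\delta-2\eta},$$
which diverges. The time-increment bounds inherit the same loss.

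The fix is to keep the decay that you discarded.

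\textbf{Case $r=1$.} You already wrote $\Lambda^{1+\vt}\sum_P\varrho(P)^{-2}\varrho(n-P)^{-2}$. This convolution is $\lesssim\varrho(n)^{-1}$, not merely $O(1)$. Hence $\cS_{\eps,1}(n)\lesssim\eps^{-1-\vt}\varrho(n)^{-1}$, and the block sum is $\eps^{-1-\vt}K^2$.

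\textbf{Case $r=0$.} Do not replace $G_{2,\eps}(n-P)$ by $\sup_bG_{2,\eps}(b)$. The support of $\Phi$ forces $\varrho(P)\sim\varrho(n-P)\sim L$ with $L\gtrsim K$, so $G_{2,\eps}(n-P)\lesssim L^{-1}$. By \eqref{eq:dyadic-conv-integrated-renorm} with $m=5$, $\sum_{\varrho(P)\sim L}\cJ_{5,\eps}(P)\lesssim\eps^{-\vt}L$. Each dyadic scale therefore contributes $\eps^{-\vt}$, and the sum over $L$ is only logarithmic, giving $\cS_{\eps,0}(n)\lesssim\eps^{-\eta}$. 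Simply keeping $G_{2,\eps}(n-P)\lesssim\varrho(n-P)^{-1}$, even without $\Phi$, would also suffice.

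With these bounds, all three chaos components have block second moments $\lesssim\eps^{-1-\eta}K^{2+\eta}$ on $K\lesssim\eps^{-1}$. The increments satisfy the same bound with the extra factor $h^{2\delta'+\nu}\eps^{-4\delta'}$. This yields $\eps^{-\frac12-2\delta'-\eta}K^{1+\eta}$ per block and hence the lemma, which is exactly how the paper's proof proceeds.
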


\begin{proof}
Since $\sup_{\eps\in(0,1]}|\lambda_\eps|<\infty$, we suppress $\lambda_\eps$ throughout the proof. The Wick formula gives
$$\cI(X_\eps^{\diamond5})\circ X_\eps^{\diamond2}=\sum_{r=0}^2g_r\cV_{\eps,r},\ g_r:=\binom5r\binom2r r!,$$
where $\cV_{\eps,r}$ is the contribution with exactly $r$ contractions and belongs to the homogeneous Wiener chaos of order $7-2r$.

Repeating the Wiener-isometry estimate from the proof of Lemma~\ref{lem:X5-X3-vanishing}, now with $m_2=2-r$, it remains only to estimate the three contraction cases. For $r=0$, the support of $\Phi$ restricts the two resonant frequencies to $|k|\sim|\ell|\sim L$ with $L\gtrsim K$; hence \eqref{eq:dyadic-conv-integrated-renorm} with $m=5$, together with $G_{2,\eps}(\ell)\lesssim\varrho(\ell)^{-1}$, leaves only a logarithmic sum over $L$. For $r=1$, we use
\begin{align*}
\int_0^\infty H_{\eps,1}(P,u)\,\dif u\lesssim a_P^{-\frac12},\ \sum_{P\in\mZ^3}\varrho(P)^{-2}\varrho(n-P)^{-2}\lesssim\varrho(n)^{-1}.
\end{align*}
The case $r=2$ follows from
$$G_{3,\eps}(n)\lesssim_\eta\Lambda^\eta,\ \int_0^\infty H_{\eps,2}(n,u)\,\dif u
\lesssim\log(2+\Lambda).$$

After absorbing the logarithmic losses, the preceding estimates give
\begin{align*}
\sup_{t\in[0,T]}
\mE\big|\widehat{\Delta_q\cV_{\eps,r}}(t,n)\big|^2\lesssim_\eta
\begin{cases}
\eps^{-\eta},&r=0,2,\\
\eps^{-1-\eta}\varrho(n)^{-1},&r=1.
\end{cases}
\end{align*}
Since the block $|n|\sim K$ contains $O(K^3)$ frequencies and vanishes unless $K\lesssim\eps^{-1}$, it follows that
\begin{align}\label{eq:X5-X2-second-moments}
\sup_{t\in[0,T]}\sum_{|n|\sim K}\mE\big|\widehat{\Delta_q\cV_{\eps,r}}(t,n)\big|^2\lesssim_\eta\eps^{-1-\eta}K^{2+\eta},\ r=0,1,2.
\end{align}

For $0\le s<t\le T$, set $h:=t-s$. Choose $\nu>0$ sufficiently small relative to $\eta$ so that $2\delta'+\nu<1$. The time-increment argument leading to \eqref{eq:X5-X3-r012-time-increment} then gives
\begin{align}\label{eq:X5-X2-time-increments}
\sum_{|n|\sim K}\mE\big|\widehat{\Delta_q\big(\cV_{\eps,r}(t)-\cV_{\eps,r}(s)\big)}(n)\big|^2\lesssim_\eta h^{2\delta'+\nu}\eps^{-1-4\delta'-\eta}K^{2+\eta},\ r=0,1,2.
\end{align}

The same Bernstein, hypercontractivity, and Banach-space-valued Garsia--Rodemich--Rumsey argument as in the preceding proof yields
\begin{align*}
\|\Delta_q\cV_{\eps,r}\|_{L^p(\Omega;C^{\delta'}([0,T];L^\infty))}\lesssim_{\eta,\delta'}\eps^{-\frac12-2\delta'-\eta}K^{1+\eta},\ r=0,1,2.
\end{align*}
Multiplying by $\eps^\alpha$, summing over $r$, and using the boundedness of $\lambda_\eps$ proves \eqref{eq:X5-X2-dyadic}.

Finally, choose $0<\eta<\frac\delta2$. Since $\alpha-\frac12-\delta-2\delta'>0$, the dyadic characterization gives
\begin{align*}
\left\|\lambda_\eps\eps^\a\cI(X_\eps^{\diamond5})\circ X_\eps^{\diamond2}\right\|_{L^p(\Omega;C_T^{\delta',\,\a-\frac32-\delta-2\delta'})}\lesssim\eps^{\a-\frac12-2\delta'-\eta}\sup_{1\le K\lesssim\eps^{-1}}K^{\a-\frac12-\delta-2\delta'+\eta}\lesssim\eps^{\delta-2\eta}\to0.
\end{align*}
\end{proof}

\subsection{Renormalization for $\eps^\a\cI(X_\eps^{\diamond5})\circ X_\eps$}

We next consider the corresponding resonant product with $X_\eps$. Its proof is a simpler variant of Lemma~\ref{lem:X5-X3-vanishing}, so we record only the estimates that differ.

\begin{lemma}\label{lem:X5-X1-vanishing}
Let $T>0$, $p\in[1,\infty)$, $\de,\de'>0$ sufficiently small such that $\a-\frac12-\delta-2\delta'>0$. Then
\begin{align*}
\lambda_\eps\eps^\a\,\cI(X_\eps^{\diamond5})\circ X_\eps\to0\ \text{in}\ L^p\big(\Omega;C_T^{\delta',\,\a-1-\delta-2\delta'}\big).
\end{align*}
More precisely, if $K:=2^q\vee1$, then, for every sufficiently small $\eta>0$,
\begin{align}\label{eq:X5-X1-dyadic}
\left\|\Delta_q\Big(\lambda_\eps\eps^\a\,\cI(X_\eps^{\diamond5})\circ X_\eps\Big)\right\|_{L^p(\Omega;C^{\delta'}([0,T];L^\infty))}\lesssim_{\eta,\delta'}\eps^{\a-\frac12-2\delta'-\eta}K^{\frac12+\eta}.
\end{align}
\end{lemma}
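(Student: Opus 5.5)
We follow the scheme of Lemma~\ref{lem:X5-X3-vanishing} and Lemma~\ref{lem:X5-X2-vanishing}, suppressing the bounded factor $\lambda_\eps$. The Wick formula gives
$$\cI(X_\eps^{\diamond5})\circ X_\eps=\cU_{\eps,0}+5\,\cU_{\eps,1}.$$
Here $\cU_{\eps,0}$ lies in the sixth chaos, with no contraction. The term $\cU_{\eps,1}$ lies in the fourth chaos, with one cross-contraction between a factor of $X_\eps^{\diamond5}$ and $X_\eps$. We use the representation analogous to \eqref{eq:X5-X3-Tr-Fourier}, now with $m_1=5-r$ and $m_2=1-r$. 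The Wiener isometry, contractivity of symmetrization and evenness of $w_\eps,a$ then reduce the fixed-time second moment to the kernel $\cS_{\eps,r}(n)$, exactly as before.

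\textbf{Case $r=0$.} The support of $\Phi$ forces $|k|\sim|\ell|\sim L$ with $L\gtrsim K$. Using $\int\!\int \e^{-a_P(u+v)}\e^{-A_{\mathbf p}|u-v|}=\frac{1}{a_P(a_P+A_{\mathbf p})}$, we bound $\cS_{\eps,0}(n)$ by
$$\sum_{P}\cJ_{5,\eps}(P)\,w_\eps(n-P)\bbone_{\{|P|\sim|n-P|\}}.$$
By \eqref{eq:dyadic-conv-integrated-renorm}, a dyadic shell $|P|\sim L$ carries $\sum_{|P|\sim L}\cJ_{5,\eps}(P)\lesssim\eps^{-\vt}L$, while $w_\eps(n-P)\lesssim L^{-2}$. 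This gives a contribution $\eps^{-\vt}L^{-1}$ per shell, so after summing over $L\gtrsim K$ we get $\cS_{\eps,0}(n)\lesssim\eps^{-\vt}K^{-1}$.

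\textbf{Case $r=1$.} The kernel is $\int_0^\infty H_{\eps,1}(P,u)\,\dif u$ with $P=n$, since $Q=0$. This is $\lesssim a_n^{-1/2}$, as computed in Lemma~\ref{lem:X5-X3-vanishing}, and it enters squared. The remaining factor is $G_{4,\eps}(n)\lesssim\Lambda^{1+\vt}$ by \eqref{eq:X5-X3-basic-convolutions}. Hence $\cS_{\eps,1}(n)\lesssim\eps^{-1-\vt}\varrho(n)^{-1}$.

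Summing over the $O(K^3)$ frequencies $|n|\sim K$, the dominant term is $r=1$, and we obtain
$$\sup_t\sum_{|n|\sim K}\mE|\widehat{\Delta_q\,\cdot\,}(t,n)|^2\lesssim\eps^{-1-\vt}K^{2}.$$
The $r=0$ contribution $\eps^{-\vt}K^2$ is smaller.

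The time increments are handled exactly as in \eqref{eq:X5-X3-r012-time-increment}. Expanding the difference of products leaves a single increment of $X_\eps$, which is estimated by \eqref{eq:X5-X3-first-chaos-increment} at cost $h^{2\delta'+\vt}\eps^{-4\delta'-2\vt}$. For $r=0$, an increment falling inside $\cI(X_\eps^{\diamond5})$ is absorbed using $a_{p_i}\lesssim a_P+\sum_{j\neq i}a_{p_j}$.

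Bernstein's inequality converts the $\ell^2$ block sums to $L^\infty$ at cost $K^{\vt}$. Hypercontractivity on chaoses of order $\le6$ then gives $L^{\bar p}$ bounds, and the Banach-valued Garsia--Rodemich--Rumsey lemma upgrades these to $C^{\delta'}$ in time. This yields
$$\|\Delta_q\,\cdot\,\|_{L^p(\Omega;C^{\delta'}L^\infty)}\lesssim\eps^{-\frac12-2\delta'-\eta}K^{\frac12+\eta}$$
for the unweighted product. Multiplying by $\eps^\alpha$ proves \eqref{eq:X5-X1-dyadic}.

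For the final convergence, take $\eta<\delta/2$. Since all blocks vanish unless $K\lesssim\eps^{-1}$, the dyadic characterization gives a bound of
$$\eps^{\alpha-\frac12-2\delta'-\eta}\sup_{K\lesssim\eps^{-1}}K^{\alpha-\frac12-\delta-2\delta'+\eta}\lesssim\eps^{\delta-2\eta}\to0.$$
The exponent $\alpha-\frac12-\delta-2\delta'+\eta$ is positive by hypothesis, so the supremum is attained at $K\sim\eps^{-1}$. This proves convergence in $C_T^{\delta',\alpha-1-\delta-2\delta'}$.

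The main obstacle is the $r=0$ high-high sum, where the control relies on the $\Phi$-resonance restriction to make the sum over $L$ converge. If that sum were only logarithmic, it would still be harmless, since it is absorbed into $\eps^{-\vt}$.
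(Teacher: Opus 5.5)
Your argument follows the paper's proof step by step: the split into the uncontracted sixth-chaos term and the once-contracted fourth-chaos term, and for $r=0$ the $\Phi$-support together with the shell bound $\sum_{|k|\sim L}\cJ_{5,\varepsilon}(k)\lesssim\varepsilon^{-\vartheta}L$. For $r=1$ you use $\int_0^\infty H_{\varepsilon,1}(n,u)\,\dif u\lesssim a_n^{-1/2}$ with $G_{4,\varepsilon}\lesssim\Lambda^{1+\vartheta}$. The increment, Bernstein, hypercontractivity, Garsia--Rodemich--Rumsey and dyadic-summation steps are also the paper's.

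There is, however, an arithmetic slip in Case $r=1$ that breaks the chain as written. You have $(a_n^{-1/2})^2=a_n^{-1}\asymp\varrho(n)^{-2}$, not $\varrho(n)^{-1}$. With your stated block bound $\varepsilon^{-1-\vartheta}K^2$, Bernstein and hypercontractivity only give $\|\Delta_q\cdot\|_{L^\infty}\lesssim\varepsilon^{-\frac12-\ldots}K^{1+\ldots}$, not $K^{\frac12+\eta}$. That weaker bound would not suffice: the final summation would give $\varepsilon^{-\frac12+\delta-2\eta}$, which diverges.

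The fix is to keep the correct power. Then $\sum_{|n|\sim K}\Lambda^{1+\vartheta}a_n^{-1}\lesssim\varepsilon^{-1-\vartheta}K$ for $r=1$. The $r=0$ contribution $\varepsilon^{-\vartheta}K^2$ is then dominated by $\varepsilon^{-1-\vartheta}K$ only because all blocks vanish unless $K\lesssim\varepsilon^{-1}$, not because it is smaller than a $K^2$ term. With the common block bound $\varepsilon^{-1-\vartheta}K$ for both values of $r$, the claimed estimate $\varepsilon^{-\frac12-2\delta'-\eta}K^{\frac12+\eta}$ and the rest of your argument go through exactly as in the paper.
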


\begin{proof}
We suppress the uniformly bounded factor $\lambda_\eps$. By the Wick formula,
$$\cI(X_\eps^{\diamond5})\circ X_\eps=\cT_{\eps,0}+5\cT_{\eps,1},$$
where $\cT_{\eps,r}$ is the contribution with exactly $r$ contractions and belongs to the homogeneous Wiener chaos of order $6-2r$.

For $r=0$, the support of $\Phi$ restricts $\varrho(k)$ and $\varrho(\ell)$ to a common dyadic scale $L\gtrsim K$. The kernel estimate from the proof of Lemma~\ref{lem:X5-X3-vanishing}, together with \eqref{eq:dyadic-conv-integrated-renorm}, \eqref{eq:X5-X3-basic-convolutions}, and
\begin{align*}
\int_0^\infty H_{\eps,1}(n,u)\,\dif u\lesssim a_n^{-\frac12},
\end{align*}
gives
$$\sup_{t\in[0,T]}\sum_{|n|\sim K}
\mE\big|\widehat{\Delta_q\cT_{\eps,0}}(t,n)\big|^2\lesssim_\eta\sum_{K\lesssim L\lesssim\eps^{-1}}K^3L^{-2}\sum_{|k|\sim L}\cJ_{5,\eps}(k)\lesssim_\eta\eps^{-\eta}K^2,$$
$$\sup_{t\in[0,T]}\sum_{|n|\sim K}\mE\big|\widehat{\Delta_q\cT_{\eps,1}}(t,n)\big|^2\lesssim_\eta\sum_{|n|\sim K}G_{4,\eps}(n)a_n^{-1}\lesssim_\eta\eps^{-1-\eta}K^{1+\eta}.$$
Since the blocks vanish unless $K\lesssim\eps^{-1}$, these estimates imply
\begin{align*}
\sup_{t\in[0,T]}\sum_{|n|\sim K}\mE\big|\widehat{\Delta_q\cT_{\eps,r}}(t,n)\big|^2\lesssim_\eta\eps^{-1-\eta}K^{1+\eta},\ r=0,1.
\end{align*}

For $0\le s<t\le T$, set $h:=t-s$ and choose $\nu>0$ sufficiently
small relative to $\eta$ so that $2\delta'+\nu<1$. The time-increment
argument leading to \eqref{eq:X5-X3-r012-time-increment} gives
\begin{align}
\sum_{|n|\sim K}\mE\big|\widehat{\Delta_q\big(\cT_{\eps,r}(t)-\cT_{\eps,r}(s)\big)}(n)\big|^2\lesssim_\eta h^{2\delta'+\nu}\eps^{-1-4\delta'-\eta}K^{1+\eta},\ r=0,1.\no
\end{align}
The same Bernstein, hypercontractivity, and Banach-space-valued Garsia--Rodemich--Rumsey argument as in Lemma~\ref{lem:X5-X3-vanishing} now yields
$$\|\Delta_q\cT_{\eps,r}\|_{L^p(\Omega;C^{\delta'}([0,T];L^\infty))}\lesssim\eps^{-\frac12-2\delta'-\eta}K^{\frac12+\eta},\ r=0,1,$$
Multiplying by $\lambda_\eps\eps^\alpha$ and summing over $r$ proves \eqref{eq:X5-X1-dyadic}.

Finally, choose $0<\eta<\frac\delta2$. Since $\alpha-\frac12-\delta-2\delta'>0$,
\begin{align*}
\left\|\lambda_\eps\eps^\a\cI(X_\eps^{\diamond5})\circ X_\eps\right\|_{L^p(\Omega;C_T^{\delta',\,\a-1-\delta-2\delta'})}\lesssim\eps^{\a-\frac12-2\delta'-\eta}\sup_{1\le K\lesssim\eps^{-1}}K^{\a-\frac12-\delta-2\delta'+\eta}\lesssim\eps^{\delta-2\eta}\to0.
\end{align*}
\end{proof}

\subsection{Renormalization for $\eps^\a\,\cI(X_\eps^{\diamond2})\circ X_\eps^{\diamond4}$ and $\eps^\a\,\cI(X_\eps^{\diamond4})\circ X_\eps^{\diamond2}$}

We next treat these two mixed resonant products together because they have the same contraction structure and differ only in which Wick factor is integrated. Both are controlled by the kernel estimates from Lemma~\ref{lem:X5-X3-vanishing}, so we record only the estimates that distinguish the two cases.

\begin{lemma}\label{lem:mixed-Z2-Z2-vanishing}
Let $T>0$, $p\in[1,\infty)$, $\de,\de'>0$ sufficiently small such that $\a-\frac12-\delta-2\delta'>0$. Then
\begin{align}\label{eq:mixed-Z2-Z2-vanishing}
\eps^\a\,\cI(X_\eps^{\diamond2})\circ X_\eps^{\diamond4}\to0,\ \eps^\a\,\cI(X_\eps^{\diamond4})\circ X_\eps^{\diamond2}\to0\ \text{in}\ L^p\big(\Omega;C_T^{\delta',\,\a-1-\delta-2\delta'}\big).
\end{align}
More precisely, if $K:=2^q\vee1$, then, for every sufficiently small $\eta>0$,
\begin{align}\label{eq:Z2-Z2-dyadic-24}
\left\|\Delta_q\Big(\eps^\a\,\cI(X_\eps^{\diamond2})\circ X_\eps^{\diamond4}\Big)\right\|_{L^p(\Omega;C^{\delta'}([0,T];L^\infty))}\lesssim_{\eta,\delta'}\eps^{\a-\frac12-2\delta'-\eta}K^{\frac12+\eta},
\end{align}
\begin{align}\label{eq:Z2-Z2-dyadic-42}
\left\|\Delta_q\Big(\eps^\a\,\cI(X_\eps^{\diamond4})\circ X_\eps^{\diamond2}\Big)\right\|_{L^p(\Omega;C^{\delta'}([0,T];L^\infty))}\lesssim_{\eta,\delta'}\eps^{\a-2\delta'-\eta}K^{1+\eta}.
\end{align}
\end{lemma}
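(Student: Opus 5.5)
We follow the scheme of Lemma~\ref{lem:X5-X3-vanishing}. By the Wick formula,
\[
\cI(X_\eps^{\diamond2})\circ X_\eps^{\diamond4}=\sum_{r=0}^2\binom2r\binom4r r!\,\cU_{\eps,r},\qquad \cI(X_\eps^{\diamond4})\circ X_\eps^{\diamond2}=\sum_{r=0}^2\binom4r\binom2r r!\,\cW_{\eps,r},
\]
where $r$ is the number of cross-contractions. Each term lies in the homogeneous chaos of order $6-2r$. In the Fourier representation analogous to \eqref{eq:X5-X3-Tr-Fourier}, the integrated factor carries the heat kernel $\e^{-a_k u}$, and the $r$ contracted frequencies produce the factor $H_{\eps,r}(P,u)$.

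The resonant cutoff $\Phi$ forces $|k|\sim|\ell|\sim L\gtrsim K$. Both expansions are purely resonant products of centred Wick powers, with no Wick-ordered local term of the form $\cI(\cdot)\circ(\cdot)-c$. The zeroth-chaos parts therefore come only from the case $r=2$ with both factors of $X^{\diamond2}$ contracted, and these are not local constants here because two free frequencies remain on the $X^{\diamond4}$ side. Consequently no counterterm is subtracted, in agreement with the definition of $\mathbb V^2$.

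For the second moments we use the Wiener isometry bound $\mE|\widehat{\Delta_q\cdot}(t,n)|^2\lesssim\cS_{\eps,r}(n)$ from the earlier proof, together with \eqref{eq:X5-X3-basic-convolutions}, \eqref{eq:dyadic-conv-integrated-renorm} and the bounds $\int_0^\infty H_{\eps,1}(P,u)\,\dif u\lesssim a_P^{-1/2}$ and $\int_0^\infty H_{\eps,2}(P,u)\,\dif u\lesssim\log(2+\Lambda)$. The aim is to show
\[
\sum_{|n|\sim K}\mE\big|\widehat{\Delta_q\cU_{\eps,r}}\big|^2\lesssim\eps^{-1-\eta}K^{1+\eta},\qquad
\sum_{|n|\sim K}\mE\big|\widehat{\Delta_q\cW_{\eps,r}}\big|^2\lesssim\eps^{-\eta}K^{2+\eta}.
\]

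For $\cU$ (integrated $X^{\diamond2}$, free $X^{\diamond4}$) the cases go as follows.
\begin{itemize}
\item For $r=0$, the bound is $\sum_{L\gtrsim K}K^3\sum_{|k|\sim L}\cJ_{2,\eps}(k)\,G_{4,\eps}(\cdot)$. Since $G_{4,\eps}\lesssim\Lambda^{1+\eta}$, one factor $\eps^{-1}$ appears, while $\sum_{|k|\sim L}\cJ_{2,\eps}\lesssim L^{-2}$ gives the $K$ power.
\item For $r=1$, we obtain $a_P^{-1}G_{1}\ast G_{3}$, which is of order $\Lambda^\eta\varrho(n)^{-2}$ or better.
\item For $r=2$, we obtain $\log^2\Lambda\cdot G_{2,\eps}(n)\lesssim\Lambda^\eta\varrho(n)^{-1}$.
\end{itemize}
The worst case is $r=0$, which gives the $\eps^{-1}K$ bound.

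For $\cW$ (integrated $X^{\diamond4}$) there is a key gain: the integrated kernel $\cJ_{4,\eps}$ satisfies $\sum_{|k|\sim L}\cJ_{4,\eps}\lesssim\eps^{-\eta}$ by \eqref{eq:dyadic-conv-integrated-renorm} with $m=4$, so no $\eps^{-1}$ appears. Paired with $G_{2,\eps}(\ell)\lesssim\varrho(\ell)^{-1}$ and summed over $L\gtrsim K$, this gives $K^3\cdot K^{-1}$. The cases $r=1,2$ are handled with $G_{3,\eps}\lesssim\Lambda^\eta$ and the $H_{\eps,r}$ integrals, and they give at most $K^{2+\eta}\eps^{-\eta}$.

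The time increments follow verbatim from \eqref{eq:X5-X3-first-chaos-increment} and the product expansion. Each increment costs $h^{2\delta'+\vt}\eps^{-4\delta'-2\vt}$, where the increment inside the integrated factor is absorbed via $a_{p_i}\lesssim a_P+\sum_{j\neq i}a_{p_j}$. Combining Bernstein's inequality, hypercontractivity and the Garsia--Rodemich--Rumsey lemma as before, then multiplying by $\eps^\alpha$, gives \eqref{eq:Z2-Z2-dyadic-24} and \eqref{eq:Z2-Z2-dyadic-42}.

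For the dyadic summation in $C_T^{\delta',\alpha-1-\delta-2\delta'}$ we use $K\lesssim\eps^{-1}$. For \eqref{eq:Z2-Z2-dyadic-24} the bound is
\[
\eps^{\alpha-\frac12-2\delta'-\eta}\sup_K K^{\alpha-\frac12-\delta-2\delta'+\eta}\lesssim\eps^{\delta-2\eta},
\]
and for \eqref{eq:Z2-Z2-dyadic-42} it is
\[
\eps^{\alpha-2\delta'-\eta}\sup_K K^{\alpha-\delta-2\delta'+\eta}\lesssim\eps^{\delta-2\eta},
\]
both of which tend to zero once $\eta<\delta/2$.

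The main obstacle is the $r=0$ estimate for $\cU_{\eps,0}$, where the high–high resonance at scale $L$ meets the divergent $G_{4,\eps}\sim\eps^{-1}$. We must check that the two large frequencies at scale $L$ in $X^{\diamond4}$ (its total frequency is $\ell\sim L$) do not lose more than a single $\eps^{-1}$. To do this, we bound $G_{4,\eps}(\ell)$ uniformly by $\Lambda^{1+\eta}$ and rely on the $L^{-2}$ decay of $\sum_{|k|\sim L}\cJ_{2,\eps}$ to make the sum over $L$ converge with room to spare.
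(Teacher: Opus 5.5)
Your scheme is the paper's: Wick expansion into chaoses of order $6-2r$, Wiener-isometry bounds, Bernstein/hypercontractivity/Garsia--Rodemich--Rumsey, and dyadic summation with $K\lesssim\eps^{-1}$. Your treatment of $\cI(X_\eps^{\diamond4})\circ X_\eps^{\diamond2}$ is correct, and so is your treatment of the terms $r=0,2$ of $\cI(X_\eps^{\diamond2})\circ X_\eps^{\diamond4}$.

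The gap is the term $r=1$ of $\cI(X_\eps^{\diamond2})\circ X_\eps^{\diamond4}$. There the integrated factor keeps one free frequency $P$ and the $X_\eps^{\diamond4}$ side keeps three. With the crude bound $\int_0^\infty H_{\eps,1}(P,u)\,\dif u\lesssim a_P^{-1/2}$ you get the majorant
\begin{equation*}
\sum_{P\in\mZ^3}a_P^{-1}w_\eps(P)\,G_{3,\eps}(n-P).
\end{equation*}
Your claim that this is $\lesssim\Lambda^\eta\varrho(n)^{-2}$ is false. The $P=0$ term alone gives at least $\frac12G_{3,\eps}(n)$. Moreover $G_{3,\eps}(Q)$ is of order $\log(2+\Lambda/\varrho(Q))$ and does not decay in $Q$, so the majorant is bounded below by a positive constant for $\varrho(n)\le c\,\eps^{-1}$.

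Summing over $|n|\sim K$ then gives a block bound of order $K^3$ instead of $\eps^{-1}K$. This is larger as soon as $K\gg\eps^{-1/2}$. For the block $K\sim c\,\eps^{-1}$, the bound your argument produces in $C_T^{\delta',\alpha-1-\delta-2\delta'}$ is of order $\eps^{-\frac12+\delta+2\delta'}$. So neither \eqref{eq:Z2-Z2-dyadic-24} nor the convergence follows. In $\cW_{\eps,1}$ the same crude bound works only because the roles are reversed: the non-decaying $G_{3,\eps}$ sits under $a_P^{-1}$, and the free factor $w_\eps(n-P)$ decays.

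The missing idea is to keep the resonant cutoff inside the contracted kernel instead of bounding $|\Phi|\le1$. Since $\Phi(P+\xi,n-P-\xi)$ forces $\varrho(P+\xi)\sim L$ for some dyadic $L\gtrsim K$, one has, uniformly in $P$,
\begin{equation*}
\int_0^\infty|H_{\eps,1}(P,u)|\,\dif u\le\sum_{\xi\in\mZ^3}|\Phi(P+\xi,n-P-\xi)|\frac{w_\eps(\xi)}{a_{P+\xi}+a_\xi}\lesssim K^{-1}.
\end{equation*}
To see this, split into two cases and then sum over $L\gtrsim K$.
\begin{itemize}
\item If $\varrho(P)\lesssim L$, the bound is $L^{-2}\sum_{\varrho(\xi)\lesssim L}\varrho(\xi)^{-2}\lesssim L^{-1}$.
\item If $\varrho(P)\gg L$, then $\varrho(\xi)\asymp\varrho(P)$ and the bound is $L^3\varrho(P)^{-4}\lesssim L^{-1}$.
\end{itemize}
This gives $\mE|\widehat{\Delta_q\cU_{\eps,1}}(t,n)|^2\lesssim K^{-2}G_{4,\eps}(n)\lesssim\Lambda^{1+\eta}K^{-2}$, hence the block bound $\eps^{-1-\eta}K$, and the rest of your argument goes through.

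This estimate, rather than the $r=0$ term, is the real obstacle for \eqref{eq:Z2-Z2-dyadic-24}. The crude kernel bound was enough in Lemma~\ref{lem:X5-X3-vanishing} only because the target there was $K^{3}$. A minor point: neither product has a zeroth-chaos component, since at most two contractions are possible, so the $r=2$ terms are second chaos.
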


\begin{proof}
By the Wick formula,
$$\cI(X_\eps^{\diamond2})\circ X_\eps^{\diamond4}=\sum_{r=0}^2c_r\cU_{\eps,r}^{(2,4)},\ \cI(X_\eps^{\diamond4})\circ X_\eps^{\diamond2}=\sum_{r=0}^2c_r\cU_{\eps,r}^{(4,2)},\ c_r:=\binom2r\binom4r r!.$$
Each $\cU_{\eps,r}^{(m,n)}$ belongs to the homogeneous Wiener chaos of order $6-2r$. Moreover, all dyadic blocks vanish unless $K\lesssim\Lambda\asymp\eps^{-1}$.

For $m=2,4$, the largest-frequency decomposition used in \eqref{eq:dyadic-conv-integrated-renorm} gives
$$\cJ_{m,\eps}(k)\lesssim_\eta\varrho(k)^{-2}\sum_{\substack{N\gtrsim\varrho(k),N\text{ dyadic}}}\frac{N^{m-3+\eta}}{\varrho(k)^2+N^2}\lesssim_\eta\varrho(k)^{m-7+\eta}.$$
Thus,
\begin{align}
\cJ_{2,\eps}(k)\lesssim_\eta\varrho(k)^{-5+\eta},\ \cJ_{4,\eps}(k)\lesssim_\eta\varrho(k)^{-3+\eta}.\no
\end{align}

For $|n|\sim K$, the support of $\Phi$ implies
\begin{align*}
\sup_{P\in\mZ^3}\sum_{\xi\in\mZ^3}|\Phi(P+\xi,n-P-\xi)|\frac{w_\eps(\xi)}{a_{P+\xi}+a_\xi}\lesssim K^{-1}.
\end{align*}
Indeed, the support of $\Phi$ restricts $P+\xi$ and $n-P-\xi$ to comparable frequencies of dyadic size $L\gtrsim K$. For each such $L$, the corresponding sum is bounded by $L^{-1}$: if $\varrho(P)\lesssim L$, this follows from $L^{-2}\sum_{\varrho(\xi)\lesssim L}\varrho(\xi)^{-2}\lesssim L^{-1}$; otherwise, $\varrho(\xi)\asymp\varrho(P)$ and $L^3\varrho(P)^{-4}\lesssim L^{-1}$. Summing over $L$ proves the estimate.

The Wiener-isometry argument from the proof of Lemma~\ref{lem:X5-X3-vanishing}, together with the support of $\Phi$, the preceding bounds for $\cJ_{2,\eps}$ and $\cJ_{4,\eps}$, and \eqref{eq:X5-X3-basic-convolutions}, gives, uniformly in $t\in[0,T]$ and $|n|\sim K$,
\begin{align*}
\mE\big|\widehat{\Delta_q\cU_{\eps,0}^{(2,4)}}(t,n)\big|^2\lesssim_\eta\Lambda^{1+\eta}K^{-2+\eta},\ \mE\big|\widehat{\Delta_q\cU_{\eps,0}^{(4,2)}}(t,n)\big|^2\lesssim_\eta K^{-1+\eta}.
\end{align*}
For the contracted terms, the bounds for $H_{\eps,1}$ and $H_{\eps,2}$ from the same proof, the preceding estimate, and
\begin{align*}
\sum_{P\in\mZ^3}\varrho(P)^{-2}\varrho(n-P)^{-2}\lesssim\varrho(n)^{-1}
\end{align*}
yield
$$\mE\big|\widehat{\Delta_q\cU_{\eps,1}^{(2,4)}}(t,n)\big|^2\lesssim K^{-2}G_{4,\eps}(n)\lesssim_\eta\Lambda^{1+\eta}K^{-2},$$
$$\mE\big|\widehat{\Delta_q\cU_{\eps,2}^{(2,4)}}(t,n)\big|^2
+\sum_{r=1}^2\mE\big|\widehat{\Delta_q\cU_{\eps,r}^{(4,2)}}(t,n)\big|^2\lesssim_\eta\Lambda^\eta\varrho(n)^{-1}.$$
Since $K\lesssim\Lambda$ and the block contains $O(K^3)$ frequencies, the preceding estimates give, uniformly for $r=0,1,2$,
\begin{align*}
\sup_{t\in[0,T]}\sum_{|n|\sim K}\mE\big|\widehat{\Delta_q\cU_{\eps,r}^{(2,4)}}(t,n)\big|^2\lesssim_\eta\eps^{-1-\eta}K^{1+\eta},\ \sup_{t\in[0,T]}\sum_{|n|\sim K}\mE\big|\widehat{\Delta_q\cU_{\eps,r}^{(4,2)}}(t,n)\big|^2\lesssim_\eta\eps^{-\eta}K^{2+\eta}.
\end{align*}

For $0\le s<t\le T$, set $h:=t-s$ and choose $\nu>0$ sufficiently small relative to $\eta$ so that $2\delta'+\nu<1$. Repeating the time-increment argument leading to \eqref{eq:X5-X3-r012-time-increment}, we obtain, uniformly for $r=0,1,2$,
$$\sum_{|n|\sim K}\mE\big|\widehat{\Delta_q\big(\cU_{\eps,r}^{(2,4)}(t)-\cU_{\eps,r}^{(2,4)}(s)\big)}(n)\big|^2\lesssim_\eta h^{2\delta'+\nu}\eps^{-1-4\delta'-\eta}K^{1+\eta},$$
$$\sum_{|n|\sim K}\mE\big|\widehat{\Delta_q\big(\cU_{\eps,r}^{(4,2)}(t)-\cU_{\eps,r}^{(4,2)}(s)\big)}(n)\big|^2\lesssim_\eta h^{2\delta'+\nu}\eps^{-4\delta'-\eta}K^{2+\eta}.$$
The same Bernstein, hypercontractivity, and Banach-space-valued Garsia--Rodemich--Rumsey argument as in Lemma~\ref{lem:X5-X3-vanishing} yields, uniformly for $r=0,1,2$,
$$\left\|\Delta_q\cU_{\eps,r}^{(2,4)}\right\|_{L^p(\Omega;C^{\delta'}([0,T];L^\infty))}\lesssim_{\eta,\delta'}\eps^{-\frac12-2\delta'-\eta}K^{\frac12+\eta},$$
$$\left\|\Delta_q\cU_{\eps,r}^{(4,2)}\right\|_{L^p(\Omega;C^{\delta'}([0,T];L^\infty))}\lesssim_{\eta,\delta'}\eps^{-2\delta'-\eta}K^{1+\eta}.$$
Multiplying by $\eps^\alpha$ and summing over $r$ proves \eqref{eq:Z2-Z2-dyadic-24}--\eqref{eq:Z2-Z2-dyadic-42}.

Finally, choose $0<\eta<\frac\delta2$. Since $\alpha-\frac12-\delta-2\delta'>0$, 
$$\left\|\eps^\alpha\cI(X_\eps^{\diamond2})\circ X_\eps^{\diamond4}\right\|_{L^p(\Omega;C_T^{\delta',\,\alpha-1-\delta-2\delta'})}\lesssim\eps^{\alpha-\frac12-2\delta'-\eta}\sup_{1\le K\lesssim\eps^{-1}}K^{\alpha-\frac12-\delta-2\delta'+\eta}\lesssim\eps^{\delta-2\eta},$$
$$\left\|\eps^\alpha\cI(X_\eps^{\diamond4})\circ X_\eps^{\diamond2}\right\|_{L^p(\Omega;C_T^{\delta',\,\alpha-1-\delta-2\delta'})}\lesssim\eps^{\alpha-2\delta'-\eta}\sup_{1\le K\lesssim\eps^{-1}}K^{\alpha-\delta-2\delta'+\eta}\lesssim\eps^{\delta-2\eta}.$$
Since $\delta-2\eta>0$, this proves \eqref{eq:mixed-Z2-Z2-vanishing}.
\end{proof}

The next two products are different from the previous ones: their full contractions do not vanish. After the zeroth-chaos projection is removed, the remaining non-scalar parts vanish; the scalar projections are kept as local constants.

\subsection{Renormalization for $\eps^\a\cI(X_\eps^{\diamond3})\circ X_\eps^{\diamond3}$}

The full contraction of the cubic--cubic product yields the first new local scalar term $D_{\eps,\a}$, while all its nonzero-chaos components vanish.

\begin{lemma}\label{lem:X3-X3-nonscalar-vanishing}
Let $T>0$, $p\in[1,\infty)$, $\de,\de'>0$ sufficiently small such that $\a-\frac12-\delta-2\delta'>0$. Then
$$
\eps^\a\Big(\cI(X_\eps^{\diamond3})\circ X_\eps^{\diamond3}-\Pi_0\big(\cI(X_\eps^{\diamond3})\circ X_\eps^{\diamond3}\big)\Big)\to0\ \text{in}\ L^p\big(\Omega;C_T^{\delta',\a-1-\delta-2\delta'}\big).
$$
More precisely, if $K:=2^q\vee1$, then, for every sufficiently small
$\eta>0$,
$$
\left\|\Delta_q\left[\eps^\a\Big(\cI(X_\eps^{\diamond3})\circ X_\eps^{\diamond3}-\Pi_0\big(\cI(X_\eps^{\diamond3})\circ X_\eps^{\diamond3}\big)\Big)\right]\right\|_{L^p(\Omega;C_T^{\delta'}L^\infty)}\lesssim_\eta\eps^{\a-2\de'-\eta}K^{1+\eta}.
$$
\end{lemma}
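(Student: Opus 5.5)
We follow the scheme of Lemma~\ref{lem:X5-X3-vanishing}. By the Wick formula,
$$\cI(X_\eps^{\diamond3})\circ X_\eps^{\diamond3}=\sum_{r=0}^3c_r\cW_{\eps,r},\qquad c_r:=\binom3r^2r!,$$
where $\cW_{\eps,r}$ is the contribution with exactly $r$ cross-contractions and lies in the homogeneous chaos of order $6-2r$. The case $r=3$ is deterministic and equals $\Pi_0$; it is exactly the part subtracted in the statement. So it suffices to bound $\cW_{\eps,r}$ for $r=0,1,2$.

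For each such $r$ we write the second moment of $\widehat{\Delta_q\cW_{\eps,r}}(t,n)$ via the Wiener isometry as a kernel $\cS_{\eps,r}(n)$, with the factor $H_{\eps,r}(P,u)$, where now $m_1=m_2=3-r$. We then use the same estimates as before.

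\begin{enumerate}
\item[(i)] For $r=0$, the support of $\Phi$ forces $|k|\sim|\ell|\sim L\gtrsim K$. Using $\cJ_{3,\eps}(k)\lesssim_\eta\varrho(k)^{-4+\eta}$ (from the largest-frequency decomposition, as in Lemma~\ref{lem:mixed-Z2-Z2-vanishing}) and $G_{3,\eps}\lesssim\Lambda^\eta$, we get $\mE|\cdot|^2\lesssim_\eta\eps^{-\eta}K^{-1+\eta}$.
\item[(ii)] For $r=1$, use $\int_0^\infty H_{\eps,1}(P,u)\,\dif u\lesssim a_P^{-1/2}$ together with $\sum_P\varrho(P)^{-2}\varrho(P)^{-1}\varrho(n-P)^{-1}\lesssim\log\Lambda\,\varrho(n)^{-1}$. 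Since $G_{2,\eps}\lesssim\varrho^{-1}$, this gives a bound $\eps^{-\eta}\varrho(n)^{-1}$.
\item[(iii)] For $r=2$, the term is second chaos and can be written as $\sum_{p_1+p_2=n}\int J_{\eps,n}(u):\widehat X_\eps(t-u,p_1)\widehat X_\eps(t,p_2):\dif u$. Its kernel has $\int_0^\infty|J_{\eps,n}(u)|\,\dif u\lesssim\sum_{\xi_1,\xi_2}\prod_\nu w_\eps(\xi_\nu)(1+|\xi_1|^2+|\xi_2|^2)^{-1}\lesssim\log\Lambda$, by the dyadic count $N^{-2}\cdot N\cdot N\lesssim 1$ per scale. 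Hence $\mE|\cdot|^2\lesssim\eps^{-\eta}G_{2,\eps}(n)\lesssim\eps^{-\eta}\varrho(n)^{-1}$.
\end{enumerate}

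Summing over the $O(K^3)$ frequencies of the block, every $r\le2$ then satisfies $\sup_t\sum_{|n|\sim K}\mE|\cdot|^2\lesssim_\eta\eps^{-\eta}K^{2+\eta}$.

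The time increments are handled exactly as in \eqref{eq:X5-X3-r012-time-increment}. Each increment of a first-chaos factor costs $h^{2\delta'+\nu}\eps^{-4\delta'-2\nu}$, using \eqref{eq:X5-X3-first-chaos-increment}. When the increment hits a factor inside $\cI$, we use the domination $a_{p_i}\lesssim a_P+\sum_{j\ne i}a_{p_j}$. Bernstein, hypercontractivity and the Banach-valued Garsia--Rodemich--Rumsey lemma then give
$$\|\Delta_q\cW_{\eps,r}\|_{L^p(\Omega;C_T^{\delta'}L^\infty)}\lesssim\eps^{-2\delta'-\eta}K^{1+\eta}.$$
Multiplying by $\eps^\alpha$ yields the dyadic bound in the statement. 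Summing dyadically, with $K\lesssim\eps^{-1}$ and $\eta<\delta/2$, gives
$$\eps^{\alpha-2\delta'-\eta}\sup_K K^{\alpha-\delta-2\delta'+\eta}\lesssim\eps^{\delta-2\eta}\to0.$$

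The main obstacle I expect is step (iii), the $r=2$ term. There two contractions share the heat time $u$, and one has to verify that only a logarithmic divergence survives in the kernel. If it were not logarithmic, an $\eps^{-1/2}$ loss would appear, which would be incompatible with the target exponent $\alpha-1$. I would therefore check carefully that $\Phi$ and the exponential factor $e^{-(a_{\xi_1}+a_{\xi_2})u}$ together give the $(1+|\xi_1|^2+|\xi_2|^2)^{-1}$ gain after integrating in $u$.
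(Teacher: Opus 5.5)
Your proposal is correct and is essentially the paper's own proof. The steps match: the same Wick decomposition with the $r=3$ term identified as $\Pi_0$, and the same Wiener-isometry bounds for $r=0,1,2$. Your pointwise bound $\cJ_{3,\eps}(k)\lesssim\varrho(k)^{-4+\eta}$ and your explicit $r=2$ kernel estimate $\int_0^\infty|J|\,\dif u\lesssim\log\Lambda$ are repackagings of \eqref{eq:dyadic-conv-integrated-renorm} and of $\int_0^\infty H_{\eps,2}(P,u)\,\dif u\lesssim\log(2+\Lambda)$. The increment, Garsia--Rodemich--Rumsey and dyadic-summation steps are also the same.

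The only inaccuracy is notational. In the $r=2$ term the kernel also depends on $p_1$, through $a_{p_1+\xi_1+\xi_2}$ and through $\Phi(p_1+\xi_1+\xi_2,p_2-\xi_1-\xi_2)$. Your bound on it is uniform in $p_1$, so nothing changes.
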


\begin{proof}
By the Wick formula,
$$\cI(X_\eps^{\diamond3})\circ X_\eps^{\diamond3}=\sum_{r=0}^3d_r\widetilde\cT_{\eps,r},\ d_r:=\binom{3}{r}^2r!,$$
where $\widetilde\cT_{\eps,r}$ is the contribution with exactly $r$ contractions and belongs to the homogeneous Wiener chaos of order $6-2r$. Since $d_3\widetilde\cT_{\eps,3}=\Pi_0\big(\cI(X_\eps^{\diamond3})\circ X_\eps^{\diamond3}\big)$, it remains to estimate $r=0,1,2$.

The same Wiener-isometry argument as in Lemma~\ref{lem:X5-X3-vanishing} gives, uniformly in $t\in[0,T]$ and $|n|\sim K$,
$$\mE\big|\widehat{\Delta_q\widetilde\cT_{\eps,0}}(t,n)\big|^2\lesssim\sum_{P+Q=n}|\Phi(P,Q)|^2\cJ_{3,\eps}(P)G_{3,\eps}(Q),$$
$$\mE\big|\widehat{\Delta_q\widetilde\cT_{\eps,1}}(t,n)\big|^2\lesssim\sum_{P+Q=n}a_P^{-1}G_{2,\eps}(P)G_{2,\eps}(Q),$$
$$\mE\big|\widehat{\Delta_q\widetilde\cT_{\eps,2}}(t,n)\big|^2\lesssim\log^2(2+\Lambda)G_{2,\eps}(n).$$
For $r=0$, the support of $\Phi$ restricts $\varrho(P)$ and $\varrho(Q)$ to comparable dyadic frequencies $L\gtrsim K$. Hence \eqref{eq:dyadic-conv-integrated-renorm} with $m=3$ and \eqref{eq:X5-X3-basic-convolutions} give
\begin{align*}
\sup_{t\in[0,T]}\sum_{|n|\sim K}\mE\big|\widehat{\Delta_q\widetilde\cT_{\eps,0}}(t,n)\big|^2\lesssim_\eta\eps^{-\eta}K^3\sum_{L\gtrsim K}L^{-1}\lesssim_\eta\eps^{-\eta}K^2.
\end{align*}
For $r=1$, the convolution estimate
\begin{align*}
\sum_{P\in\mZ^3}\varrho(P)^{-3}\varrho(n-P)^{-1}\lesssim_\eta\varrho(n)^{-1+\eta}
\end{align*}
gives the same bound. The case $r=2$ follows directly from $G_{2,\eps}(n)\lesssim\varrho(n)^{-1}$, with the logarithmic loss absorbed into $\eps^{-\eta}$. Consequently,
\begin{align*}
\sup_{t\in[0,T]}\sum_{|n|\sim K}\mE\big|\widehat{\Delta_q\widetilde\cT_{\eps,r}}(t,n)\big|^2\lesssim_\eta\eps^{-\eta}K^{2+\eta},\ r=0,1,2.
\end{align*}

For $0\le s<t\le T$, set $h:=t-s$. Repeating the time-increment argument leading to \eqref{eq:X5-X3-r012-time-increment}, we obtain, for some $\nu>0$ with $2\de'+\nu<1$ and after absorbing the auxiliary losses into $\eta$,
\begin{align*}
\sum_{|n|\sim K}\mE\big|\widehat{\Delta_q\big(\widetilde\cT_{\eps,r}(t)-\widetilde\cT_{\eps,r}(s)\big)}(n)\big|^2\lesssim_{\eta,\de'}h^{2\de'+\nu}\eps^{-4\de'-\eta}K^{2+\eta},\ r=0,1,2.
\end{align*}
Combining the fixed-time and increment estimates with the Bernstein inequality, hypercontractivity, and the Banach-space-valued Garsia--Rodemich--Rumsey lemma, as in Lemma~\ref{lem:X5-X3-vanishing}, yields
\begin{align*}
\left\|\Delta_q\widetilde\cT_{\eps,r}\right\|_{L^p(\Omega;C^{\de'}([0,T];L^\infty))}\lesssim_{\eta,\de'}\eps^{-2\de'-\eta}K^{1+\eta},\ r=0,1,2.
\end{align*}
Multiplying by $\eps^\a$ and summing over $r$ proves the stated dyadic estimate.

Finally, all blocks vanish unless $K\lesssim\eps^{-1}$. Choosing $0<\eta<\ff\de2$, the dyadic characterization gives
\begin{align*}
&\left\|\eps^\a\Big(\cI(X_\eps^{\diamond3})\circ X_\eps^{\diamond3}-\Pi_0\big(\cI(X_\eps^{\diamond3})\circ X_\eps^{\diamond3}\big)\Big)\right\|_{L^p(\Omega;C_T^{\de',\,\a-1-\de-2\de'})}\\
&\qquad\qquad\qquad\qquad\qquad\lesssim\eps^{\a-2\de'-\eta}\sup_{1\le K\lesssim\eps^{-1}}K^{\a-\de-2\de'+\eta}\lesssim\eps^{\de-2\eta}\to0.
\end{align*}
\end{proof}

Set
\begin{align}\label{eq:D-def}
D_{\eps,\a}:=\frac{10}{3}\lambda_\eps\eps^\a\,\Pi_0\Big(\cI(X_\eps^{\diamond3})\circ X_\eps^{\diamond3}\Big).
\end{align}
Note that
$$\sum_{\substack{ |i-j|\le1}}\varphi_i(k)\varphi_j(-k)=1,$$
since non-neighbouring blocks have disjoint supports. Then for  $a_k=1+4\pi^2|k|^2$ and $k_{1,2,3}:=k_1+k_2+k_3$,
\begin{align*}
D_{\eps,\a}=\frac{10}{3}\lambda_\eps\eps^\a\frac{3!}{2^3}\sum_{k_1,k_2,k_3\in\mZ^3}\frac{\prod_{i=1}^3|f(\eps k_i)|^2}{a_{k_1}a_{k_2}a_{k_3}\big(a_{k_{1,2,3}}+a_{k_1}+a_{k_2}+a_{k_3}\big)}.\no
\end{align*}
The sum has size $\eps^{-1}$, which means $D_{\eps,\a}$ is divergent unless $\a\ge1$.

By Lemma~\ref{lem:X3-X3-nonscalar-vanishing},
\begin{align*}
\frac{10}{3}\lambda_\eps\eps^\a\,\cI(X_\eps^{\diamond3})\circ X_\eps^{\diamond3}-D_{\eps,\a}\to0\ \hbox{in }L^p\big(\Omega;C_T^{\delta',\a-1-\delta-2\delta'}\big).\no
\end{align*}
Thus, at the level of the enhanced noise, the whole cubic--cubic contribution is reduced to the single scalar quantity $D_{\eps,\alpha}$.

\subsection{Renormalization for $\eps^{2\a}\cI(X_\eps^{\diamond4})\circ X_\eps^{\diamond4}$}

The proof is parallel to the preceding cubic--cubic case. The nonzero-chaos components again vanish, while the full contraction defines the scalar constant $B_{\eps,\a}$.

\begin{lemma}\label{lem:X4-X4-nonscalar-vanishing}
Let $T>0$, $p\in[1,\infty)$, $\de,\de'>0$ sufficiently small such that $\a-\frac12-\delta-2\delta'>0$. Then
$$
\eps^{2\a}\Big(\cI(X_\eps^{\diamond4})\circ X_\eps^{\diamond4}-\Pi_0\big(\cI(X_\eps^{\diamond4})\circ X_\eps^{\diamond4}\big)\Big)\to0\ \text{in}\ L^p\big(\Omega;C_T^{\delta',2\a-2-\delta-2\delta'}\big).
$$
More precisely, if $K:=2^q\vee1$, then, for every sufficiently small $\eta>0$,
\begin{align*}
\left\|\Delta_q\left[\eps^{2\a}\Big(\cI(X_\eps^{\diamond4})\circ X_\eps^{\diamond4}-\Pi_0\big(\cI(X_\eps^{\diamond4})\circ X_\eps^{\diamond4}\big)\Big)\right]\right\|_{L^p(\Omega;C^{\delta'}([0,T];L^\infty))}\lesssim_{\eta,\delta'}\eps^{2\a-1-2\delta'-\eta}K^{1+\eta}.\end{align*}
\end{lemma}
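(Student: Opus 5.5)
We follow the scheme of Lemma~\ref{lem:X3-X3-nonscalar-vanishing}. By the Wick formula,
$$\cI(X_\eps^{\diamond4})\circ X_\eps^{\diamond4}=\sum_{r=0}^4b_r\widehat\cT_{\eps,r},\qquad b_r:=\binom4r^2r!,$$
where $\widehat\cT_{\eps,r}$ collects the terms with exactly $r$ cross-contractions and lies in the homogeneous chaos of order $8-2r$. The full contraction $b_4\widehat\cT_{\eps,4}$ equals $\Pi_0(\cI(X_\eps^{\diamond4})\circ X_\eps^{\diamond4})$ and is removed, so only $r=0,1,2,3$ have to be estimated.

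\textbf{Second moments.} We apply the Wiener-isometry reduction of Lemma~\ref{lem:X5-X3-vanishing} with $m_1=m_2=4-r$. We aim for the fixed-time bound
$$\sup_{t\in[0,T]}\sum_{|n|\sim K}\mE\big|\widehat{\Delta_q\widehat\cT_{\eps,r}}(t,n)\big|^2\lesssim_\eta\eps^{-2-\eta}K^{2+\eta},$$
uniformly in $r$.

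\begin{itemize}
\item $r=0$: the support of $\Phi$ forces $\varrho(P)\sim\varrho(Q)\sim L\gtrsim K$. We use $\cJ_{4,\eps}(P)\lesssim\varrho(P)^{-3+\eta}$ from the proof of Lemma~\ref{lem:mixed-Z2-Z2-vanishing} and $G_{4,\eps}(Q)\lesssim\Lambda^{1+\eta}$. Each $n$ then contributes $\Lambda^{1+\eta}\sum_{L\gtrsim K}L^{3}L^{-3}$, which is at most $\Lambda^{2+\eta}$ after summing $L\lesssim\Lambda$. Multiplying by $K^3$ gives $\eps^{-2-\eta}K^3$. This is slightly too large, so we refine: bound the $L$-sum through $\sum_{|P|\sim L}\cJ_{4,\eps}(P)\lesssim\Lambda^\eta L^{0}$ (from \eqref{eq:dyadic-conv-integrated-renorm} with $m=4$) together with $\sup G_{4,\eps}$. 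Since $|n|\sim K$ and $n=P+Q$ at scale $L$, the count gives $K^3L^{-3}\cdot\Lambda^{1+\eta}$ per $L$, and summing yields $\Lambda^{1+\eta}$ overall, well within budget.
\item $r=1$: we use $\int_0^\infty H_{\eps,1}\lesssim a_P^{-1/2}$, which leaves $\sum_P a_P^{-1}G_{3,\eps}(P)G_{3,\eps}(n-P)\lesssim\Lambda^{\eta}\sum_{\varrho(P)\lesssim\Lambda}\varrho(P)^{-2}\lesssim\Lambda^{1+\eta}$.
\item $r=2$: we use $\log^2\Lambda\,G_{4,\eps}(n)\lesssim\Lambda^{1+\eta}$.
\item $r=3$: this is the analogue of the $\cT_{\eps,3}$ case in Lemma~\ref{lem:X5-X3-vanishing}. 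Three contractions leave the kernel $J_{\eps,n}$, whose $L^1$-in-time norm is $\lesssim\eps^{-1}$ by \eqref{eq:X5-X3-r3-kernel-L1}, acting on second-chaos factors; one of them is heat-integrated, but it is bounded in the same way. The result is $\eps^{-2}G_{2,\eps}(n)\lesssim\eps^{-2}\varrho(n)^{-1}$, hence $\eps^{-2}K^2$ per block.
\end{itemize}

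\textbf{Conclusion.} Since $K\lesssim\eps^{-1}$, the $r\le2$ bounds $\eps^{-1-\eta}K^3$ are dominated by $\eps^{-2-\eta}K^2$. The time increments follow verbatim from the argument leading to \eqref{eq:X5-X3-r012-time-increment} and \eqref{eq:X5-X3-r3-time-increment}, each picking up $h^{2\delta'+\nu}\eps^{-4\delta'}$. Bernstein's inequality, hypercontractivity and the Garsia--Rodemich--Rumsey lemma then give
$$\|\Delta_q\widehat\cT_{\eps,r}\|_{L^p(\Omega;C^{\delta'}L^\infty)}\lesssim\eps^{-1-2\delta'-\eta}K^{1+\eta},$$
and multiplying by $\eps^{2\alpha}$ yields the dyadic claim. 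Finally, summation with $\eta<\delta/2$ gives the bound $\eps^{2\alpha-1-2\delta'-\eta}\sup_{K\lesssim\eps^{-1}}K^{2\alpha-1-\delta-2\delta'+\eta}\lesssim\eps^{\delta-2\eta}\to0$, using $2\alpha-1-\delta-2\delta'>0$.

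The main obstacle is the $r=3$ term. It sits in the second chaos but carries the divergent triple-contraction loop of size $\eps^{-1}$, which the heat factors must absorb; this is the source of the $\eps^{-1}$ loss. We will also check that the $r=0$ dyadic counting does not lose an extra power of $K$.
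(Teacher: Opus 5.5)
Your proposal follows the paper's proof essentially line by line: the same Wick decomposition with coefficients $\binom4r^2r!$, the per-block bounds $\eps^{-1-\eta}K^3$ for $r=0,1,2$ (via $\cJ_{4,\eps}$, $G_{3,\eps}$, $G_{4,\eps}$), the bound $\Lambda^2G_{2,\eps}(n)$, hence $\eps^{-2}K^2$ per block, for $r=3$, and the same time-increment and Garsia--Rodemich--Rumsey conclusion. For $r=3$, your uniform $L^1$-in-time kernel bound is the paper's $\int_0^\infty H_{\eps,3}(P,u)\,\dif u\lesssim\Lambda$; here the kernel depends on the frequency $P$ of the uncontracted factor inside $\cI$ rather than on $n$, but the bound is uniform in that shift. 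The only slip is in your $r=0$ bullet. The sum over $L$ runs over dyadic scales, so it is only logarithmic, and your first estimate already gives $\Lambda^{1+\eta}$ per frequency and $\eps^{-1-\eta}K^3$ per block, which is the bound you correctly use in the conclusion. The ``refined'' claim of $\Lambda^{1+\eta}$ for the whole block drops the factor $K^3$ from the number of $n$ with $|n|\sim K$, is false, and should be discarded.
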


\begin{proof}
By the Wick formula,
$$\cI(X_\eps^{\diamond4})\circ X_\eps^{\diamond4}=\sum_{r=0}^4e_r\cS_{\eps,r},\ e_r:=\binom4r^2r!,$$
where $\cS_{\eps,r}$ is the contribution with exactly $r$ contractions and belongs to the homogeneous Wiener chaos of order $8-2r$. Since $e_4\cS_{\eps,4}=\Pi_0\big(\cI(X_\eps^{\diamond4})\circ X_\eps^{\diamond4}\big)$, it remains to estimate $r=0,1,2,3$.

The same Wiener-isometry argument as in Lemma~\ref{lem:X5-X3-vanishing} gives, uniformly in $t\in[0,T]$ and $|n|\sim K$,
$$\mE\big|\widehat{\Delta_q\cS_{\eps,0}}(t,n)\big|^2\lesssim\sum_{P+Q=n}|\Phi(P,Q)|^2\cJ_{4,\eps}(P)G_{4,\eps}(Q),$$
$$\mE\big|\widehat{\Delta_q\cS_{\eps,1}}(t,n)\big|^2\lesssim\sum_{P+Q=n}a_P^{-1}G_{3,\eps}(P)G_{3,\eps}(Q),$$
$$\mE\big|\widehat{\Delta_q\cS_{\eps,2}}(t,n)\big|^2\lesssim\log^2(2+\Lambda)G_{4,\eps}(n).$$
For $r=0$, the support of $\Phi$ restricts $\varrho(P)$ and $\varrho(Q)$ to comparable dyadic frequencies $L\gtrsim K$, so we use \eqref{eq:dyadic-conv-integrated-renorm} with $m=4$. For $r=1$, we use $G_{3,\eps}\lesssim_\eta\Lambda^\eta$ and $\sum_{\varrho(P)\lesssim\Lambda}a_P^{-1}\lesssim\Lambda$, while the case $r=2$ follows from $G_{4,\eps}\lesssim_\eta\Lambda^{1+\eta}$. After absorbing the logarithmic summations and convolution losses into $\eta$, we obtain
$$\sup_{t\in[0,T]}\sum_{|n|\sim K}\mE\big|\widehat{\Delta_q\cS_{\eps,r}}(t,n)\big|^2\lesssim_\eta\eps^{-1-\eta}K^3,\ r=0,1,2.$$

For $r=3$, the estimate
$$H_{\eps,3}(P,u)\lesssim\e^{-ca_Pu}\big(\Lambda^3\wedge u^{-\frac32}\big)$$
implies
$$\int_0^\infty H_{\eps,3}(P,u)\,\dif u\lesssim\Lambda.$$
Consequently,
$$\mE\big|\widehat{\Delta_q\cS_{\eps,3}}(t,n)\big|^2\lesssim\Lambda^2G_{2,\eps}(n),$$
and hence
$$\sup_{t\in[0,T]}\sum_{|n|\sim K}\mE\big|\widehat{\Delta_q\cS_{\eps,3}}(t,n)\big|^2\lesssim\eps^{-2}K^2.$$
Since all blocks vanish unless $K\lesssim\eps^{-1}$, the estimates for $r=0,1,2$ are also bounded by $\eps^{-2-\eta}K^2$. Therefore,
$$\sup_{t\in[0,T]}\sum_{|n|\sim K}\mE\big|\widehat{\Delta_q\cS_{\eps,r}}(t,n)\big|^2\lesssim_\eta\eps^{-2-\eta}K^2,\ r=0,1,2,3.$$

Let $h:=t-s>0$. Using \eqref{eq:X5-X3-first-chaos-increment} and repeating the time-increment argument from the proof of Lemma~\ref{lem:X5-X3-vanishing}, we obtain, for some sufficiently small $\nu>0$ satisfying $2\delta'+\nu<1$,
$$\sum_{|n|\sim K}\mE\big|\widehat{\Delta_q\big(\cS_{\eps,r}(t)-\cS_{\eps,r}(s)\big)}(n)\big|^2\lesssim_{\eta,\delta'}h^{2\delta'+\nu}\eps^{-2-4\delta'-\eta}K^2,\ r=0,1,2,3.$$
Combining the fixed-time and increment estimates with hypercontractivity, the Bernstein inequality, and the Banach-space-valued Garsia--Rodemich--Rumsey lemma yields
$$\left\|\Delta_q\cS_{\eps,r}\right\|_{L^p(\Omega;C_T^{\delta'}L^\infty)}\lesssim_{\eta,\delta'}\eps^{-1-2\delta'-\eta}K^{1+\eta},\ r=0,1,2,3.$$
Multiplying by $\eps^{2\a}$ and summing over $r$ proves the stated dyadic estimate.

Finally, choose $0<\eta<\ff\delta2$. Since all blocks vanish unless $K\lesssim\eps^{-1}$ and $2\a-1-\delta-2\delta'+\eta>0$, 
\begin{align*}
&\left\|\eps^{2\a}\Big(\cI(X_\eps^{\diamond4})\circ X_\eps^{\diamond4}-\Pi_0\big(\cI(X_\eps^{\diamond4})\circ X_\eps^{\diamond4}\big)\Big)\right\|_{L^p(\Omega;C_T^{\delta',\,2\a-2-\delta-2\delta'})}\\
&\qquad\qquad\qquad\qquad\qquad\lesssim\eps^{2\a-1-2\delta'-\eta}\sup_{1\le K\lesssim\eps^{-1}}K^{2\a-1-\delta-2\delta'+\eta}\lesssim\eps^{\delta-2\eta}\to0.
\end{align*}
\end{proof}

Set
\begin{align}\label{eq:B5-explicit}
\begin{split}
B_{\eps,\a}&:=\frac{25}{9}\eps^{2\a}\Pi_0\Big(\cI(X_\eps^{\diamond4})\circ X_\eps^{\diamond4}\Big)\\
&=\frac{25}{9}\eps^{2\a}\ff{4!}{2^4}\sum_{k_1,k_2,k_3,k_4\in\mZ^3}\frac{\prod_{i=1}^4|f(\eps k_i)|^2}{a_{k_1}a_{k_2}a_{k_3}a_{k_4}\big(a_{k_{1,2,3,4}}+a_{k_1}+a_{k_2}+a_{k_3}+a_{k_4}\big)},
\end{split}
\end{align}
where $k_{1,2,3,4}:=k_1+k_2+k_3+k_4$. The sum has size $\eps^{-2}$, which means $B_{\eps,\a}$ is divergent unless $\a\ge1$.

By Lemma~\ref{lem:X4-X4-nonscalar-vanishing},
$$\frac{25}{9}\eps^{2\a}\cI(X_\eps^{\diamond4})\circ X_\eps^{\diamond4}-B_{\eps,\a}\to0\  \text{in}\ L^p\big(\Omega;C_T^{\delta',2\a-2-\delta-2\delta'}\big).$$
Consequently, the fourth--fourth product contributes only the scalar constant $B_{\eps,\alpha}$.

The same constants reappear when one of the Wick powers contains one additional factor of $X_\eps$. Contracting all factors in the lower-order Wick power then leaves one copy of $X_\eps$, so the required local term is a multiple of $X_\eps$ rather than a scalar.

\subsection{Renormalization for $\lambda_\eps\eps^\a\cI(X_\eps^{\diamond3})\circ X_\eps^{\diamond4}$}

The first-chaos part of $\cI(X_\eps^{\diamond3})\circ X_\eps^{\diamond4}$ produces the local counterterm $2D_{\eps,\a}X_\eps$, while all remaining chaos components vanish.

\begin{lemma}\label{lem:X3-X4-D-localization}
Let $T>0$, $p\in[1,\infty)$, $\de,\de'>0$ sufficiently small such that $\a-\frac12-\delta-2\delta'>0$. Then
\begin{align*}
\frac53\lambda_\eps\eps^\a\,\Pi_1\big(\cI(X_\eps^{\diamond3})\circ X_\eps^{\diamond4}\big)-2D_{\eps,\a}X_\eps\to0\ \hbox{in }L^p\big(\Omega;C_T^{\delta',\a-\frac32-\delta-2\delta'}\big),
\end{align*}
and
\begin{align*}
\frac53\lambda_\eps\eps^\a\,(1-\Pi_1)\big(\cI(X_\eps^{\diamond3})\circ X_\eps^{\diamond4}\big)\to0\ \hbox{in }L^p\big(\Omega;C_T^{\delta',\a-\frac32-\delta-2\delta'}\big).
\end{align*}
More precisely, if $K:=2^q\vee1$, then, for every sufficiently small $\eta>0$,
\begin{align}\label{eq:X3-X4-D-first-chaos-dyadic}
\left\|\Delta_q\left(\frac53\lambda_\eps\eps^\a\,\Pi_1\big(\cI(X_\eps^{\diamond3})\circ X_\eps^{\diamond4}\big)-2D_{\eps,\a}X_\eps\right)\right\|_{L^p(\Omega;C_T^{\delta'}L^\infty)}\lesssim_{\eta,\delta'}\eps^{\a-2\delta'-\eta}K^{\frac32+\eta},
\end{align}
\begin{align}\label{eq:X3-X4-D-higher-chaos-dyadic}
\left\|\Delta_q\left(\frac53\lambda_\eps\eps^\a(1-\Pi_1)\big(\cI(X_\eps^{\diamond3})\circ X_\eps^{\diamond4}\big)\right)\right\|_{L^p(\Omega;C_T^{\delta'}L^\infty)}\lesssim_{\eta,\delta'}\eps^{\a-\frac12-2\delta'-\eta}K^{1+\eta}.
\end{align}
\end{lemma}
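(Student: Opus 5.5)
I would follow the Wiener-chaos template of Lemmas~\ref{lem:X3-X3-nonscalar-vanishing} and \ref{lem:X4-X4-nonscalar-vanishing}. First expand
$$\cI(X_\eps^{\diamond3})\circ X_\eps^{\diamond4}=\sum_{r=0}^3 c_r\cW_{\eps,r},\qquad c_r:=\binom3r\binom4r r!,$$
where $\cW_{\eps,r}$ collects the terms with exactly $r$ cross-contractions and lies in the chaos of order $7-2r$. Then $\Pi_1$ picks out exactly $c_3\cW_{\eps,3}$, and $(1-\Pi_1)$ is the sum over $r=0,1,2$. The second estimate \eqref{eq:X3-X4-D-higher-chaos-dyadic} is then a repetition of the second-moment bounds used before, with the kernel $\cS_{\eps,r}$, $m_1=3-r$ and $m_2=4-r$:
\begin{itemize}
\item for $r=0$, use $\cJ_{3,\eps}$ from \eqref{eq:dyadic-conv-integrated-renorm} together with $G_{4,\eps}\lesssim\Lambda^{1+\eta}$ and the $\Phi$-support restriction to scales $L\gtrsim K$;
\item for $r=1$, use $\int H_{\eps,1}\lesssim a_P^{-1/2}$ with $G_{2,\eps}\ast G_{3,\eps}$;
\item for $r=2$, use $\int H_{\eps,2}\lesssim\log\Lambda$ with $G_{1,\eps}\ast G_{2,\eps}=G_{3,\eps}$.
\end{itemize}
Each case should give at most $\eps^{-1-\eta}K^{2+\eta}$ summed over $|n|\sim K$. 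The time increments follow from \eqref{eq:X5-X3-first-chaos-increment} exactly as before. Bernstein, hypercontractivity and Garsia--Rodemich--Rumsey then give $\eps^{-\frac12-2\delta'-\eta}K^{1+\eta}$, and multiplying by $\eps^\alpha$ and summing dyadically in $\bC^{\alpha-\frac32-\delta-2\delta'}$ leaves $\eps^{\delta-2\eta}$.

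For the first-chaos part I would write $\cW_{\eps,3}$ explicitly. All three factors of $\cI(X^{\diamond3})$ are contracted against three of the four factors of $X^{\diamond4}$, leaving $\widehat X_\eps(t,n)$. This gives
$$\widehat{\cW_{\eps,3}}(t,n)=\widehat X_\eps(t,n)\,M_\eps(n),\qquad M_\eps(n)=\sum_{\xi_1,\xi_2,\xi_3}\Phi(-\xi_{1,2,3},n+\xi_{1,2,3})\frac{\prod_\nu w_\eps(\xi_\nu)}{a_{\xi_{1,2,3}}+\sum_\nu a_{\xi_\nu}}.$$
The combinatorics give $\frac53 c_3=\frac53\cdot 24=40$, while $2D_{\eps,\alpha}=2\cdot\frac{10}{3}\cdot 6\cdot(\text{same sum at }n=0)$. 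So the claim reduces to showing that $\frac53\lambda_\eps\eps^\alpha c_3 M_\eps(0)$ equals $2D_{\eps,\alpha}$ up to the normalization of $w_\eps$. I would check this constant bookkeeping carefully, since the factor $\frac53$ in \eqref{eq:second-higher-order-block} was chosen to make it work.

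The main step is bounding $M_\eps(n)-M_\eps(0)$. The difference comes from two sources: the shift $n$ in the $\Phi$ cutoff, and the shift $a_{n+\xi_{1,2,3}}$ versus $a_{\xi_{1,2,3}}$ in the denominator. Because the sum is concentrated at $|\xi|\sim N\lesssim\Lambda$, I would estimate the difference by $|n|/N$ (mean value in the denominator, and $\Phi$ exactly $1$ unless $N\lesssim|n|$), weighted by the dyadic contribution $N$ coming from \eqref{eq:X5-X3-r3-kernel-L1}. Summing gives
$$|M_\eps(n)-M_\eps(0)|\lesssim\sum_{N\lesssim\Lambda}N\min\{1,|n|/N\}\lesssim |n|\log\Lambda\lesssim\eps^{-\eta}K^{1+\eta}.$$
Then
$$\eps^\alpha\,\mE\big|(M_\eps(n)-M_\eps(0))\widehat X_\eps(t,n)\big|^2\lesssim\eps^{2\alpha-\eta}K^{2+2\eta}K^{-2},$$
and summing over $|n|\sim K$ gives $\eps^{2\alpha-\eta}K^{3+\eta}$. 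This yields the block bound $\eps^{\alpha-\eta}K^{3/2+\eta}$, with time increments handled via \eqref{eq:X5-X3-first-chaos-increment} at a cost of $\eps^{-2\delta'}$. Finally, the dyadic sum in $\bC^{\alpha-\frac32-\delta-2\delta'}$ gives $\eps^{\alpha-2\delta'-\eta}\sup_K K^{\alpha-\delta-2\delta'+\eta}\lesssim\eps^{\delta-2\eta}$.

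The hardest step will be making the Lipschitz-in-$n$ bound for $M_\eps$ rigorous near the edge of the $\Phi$ support.
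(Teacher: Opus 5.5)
Your route is the paper's: the same Wick decomposition with $c_r=\binom3r\binom4r r!$, and the same second-moment bounds for $r=0,1,2$, giving $\eps^{-1-\eta}K^{2+\eta}$. The first-chaos multiplier is also the same (your $M_\eps(n)$ is the paper's $\Gamma_{\eps,3}(n)$), as is the reduction to a Lipschitz bound in $n$. The constant bookkeeping is exact, with no normalization issue. One has $\widehat{\Pi_1(\cI(X_\eps^{\diamond3})\circ X_\eps^{\diamond4})}(t,n)=24M_\eps(n)\widehat X_\eps(t,n)$ and, since $\Phi(k,-k)=1$, $D_{\eps,\a}=20\lambda_\eps\eps^\a M_\eps(0)$. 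The difference is therefore exactly $40\lambda_\eps\eps^\a(M_\eps(n)-M_\eps(0))\widehat X_\eps(t,n)$.

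The step you flagged as hardest has the right conclusion but is justified incorrectly as written.

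First, there is no denominator shift here. In your own formula the denominator $a_{\xi_{1,2,3}}+\sum_\nu a_{\xi_\nu}$ does not depend on $n$, because the uncontracted factor sits in $X_\eps^{\diamond4}$ at time $t$. A shifted heat denominator and a time-lag error occur only in the $\cI(X_\eps^{\diamond5})\circ X_\eps^{\diamond4}$ case. So the only source of the difference is the cutoff $\Phi(-\xi_{1,2,3},n+\xi_{1,2,3})$.

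Second, that cutoff depends on $k=\xi_{1,2,3}$, not on $N=\max_\nu\varrho(\xi_\nu)$. Your claim that ``$\Phi$ is exactly $1$ unless $N\lesssim|n|$'' is false. At every scale $N\gg|n|$ there are configurations with $|\xi_{1,2,3}|\lesssim|n|$ (for instance $\xi_2\approx-\xi_1$ with $|\xi_1|\sim N$), where $\Phi$ can differ from $1$.

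The paper fixes this by organizing the sum by $L\sim\varrho(k)$ instead of by $N$. The largest-frequency argument behind \eqref{eq:dyadic-conv-integrated-renorm} gives $\sum_{\varrho(k)\sim L}a_k\cJ_{3,\eps}(k)\lesssim L$; note that $a_k\cJ_{3,\eps}(k)$ is exactly your summand at fixed $k$. Combined with $|\Phi(k,n-k)-1|\lesssim\mathbf 1_{\{\varrho(k)\lesssim\varrho(n)\}}+\frac{\varrho(n)}{\varrho(k)}\mathbf 1_{\{\varrho(k)\gg\varrho(n)\}}$, this yields $|M_\eps(n)-M_\eps(0)|\lesssim\sum_{L\lesssim\varrho(n)}L+\sum_{\varrho(n)\ll L\lesssim\Lambda}\varrho(n)\lesssim\varrho(n)\log(2+\Lambda)$. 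This uses only $|\Phi-1|\lesssim1$ on $\varrho(k)\lesssim\varrho(n)$, so the edge of the $\Phi$ support causes no difficulty.

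Alternatively, you can keep your decomposition by $N$. Then bound the region $|\xi_{1,2,3}|\lesssim|n|$ at scale $N\gg|n|$ directly by $|n|^3N^{-2}$, using that two of the $\xi_\nu$ then have size $\sim N$.

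With either correction, the rest of your first-chaos estimate goes through and gives $\eps^{\a-2\de'-\eta}K^{3/2+\eta}$. The prefactor in front of the second moment should read $\eps^{2\a}$, not $\eps^\a$.
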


\begin{proof}
Since $(\lambda_\eps)_{\eps\in(0,1]}$ is bounded, we suppress this factor in the estimates below. By the Wick formula,
$$\cI(X_\eps^{\diamond3})\circ X_\eps^{\diamond4}=\sum_{r=0}^3h_r\cW_{\eps,r}^{(3,4)},\ h_r:=\binom3r\binom4r r!,$$
where $\cW_{\eps,r}^{(3,4)}$ belongs to the homogeneous Wiener chaos of order $7-2r$. The summand corresponding to $r=3$ is the first-chaos projection.

For $n,k\in\mZ^3$, set
$$\Phi_n(k):=\Phi(k,n-k),\ \Gamma_{\eps,3}(n):=\sum_{k\in\mZ^3}\Phi_n(k)a_k\cJ_{3,\eps}(k).$$
Then
$$\Phi_0(k)=1,\ |\Phi_n(k)-1|\lesssim\bbone_{\{\varrho(k)\lesssim\varrho(n)\}}+\frac{\varrho(n)}{\varrho(k)}\bbone_{\{\varrho(k)\gg\varrho(n)\}}.$$
Moreover, the largest-frequency decomposition used in the proof of \eqref{eq:dyadic-conv-integrated-renorm} gives
$$\sum_{\varrho(k)\sim L}a_k\cJ_{3,\eps}(k)\lesssim L,\ 1\le L\lesssim\Lambda.$$
Decomposing the sum at $L\sim\varrho(n)$ therefore yields, for every $\kappa>0$,
\begin{align*}
|\Gamma_{\eps,3}(n)-\Gamma_{\eps,3}(0)|\lesssim\sum_{L\lesssim\varrho(n)}L+\sum_{\varrho(n)\ll L\lesssim\Lambda}\varrho(n)\lesssim\varrho(n)\log(2+\Lambda)\lesssim_\kappa\eps^{-\kappa}\varrho(n).
\end{align*}

A direct Fourier computation gives
$$\widehat{\Pi_1\big(\cI(X_\eps^{\diamond3})\circ X_\eps^{\diamond4}\big)}(t,n)=24\Gamma_{\eps,3}(n)\widehat X_\eps(t,n).$$
Moreover, \eqref{eq:D-def} and $\Phi_0(k)=1$ give
$$D_{\eps,\a}=\frac{10}{3}\lambda_\eps\eps^\a\,3!\,\Gamma_{\eps,3}(0).$$
Thus, setting
$$F_\eps^{(1)}:=\frac53\lambda_\eps\eps^\a\Pi_1\big(\cI(X_\eps^{\diamond3})\circ X_\eps^{\diamond4}\big)-2D_{\eps,\a}X_\eps,$$
we obtain
$$\widehat F_\eps^{(1)}(t,n)=40\lambda_\eps\eps^\a\big(\Gamma_{\eps,3}(n)-\Gamma_{\eps,3}(0)\big)\widehat X_\eps(t,n).$$

Let $h:=t-s>0$ and choose $\nu>0$ such that $\gamma:=2\delta'+\nu<1$. Using
\eqref{eq:X5-X3-first-chaos-increment} and $\sum_{|n|\sim K}w_\eps(n)\lesssim K$, we obtain
$$\sup_{t\in[0,T]}\sum_{|n|\sim K}\mE\big|\widehat{\Delta_qF_\eps^{(1)}}(t,n)\big|^2\lesssim_\kappa\eps^{2\a-\kappa}K^3,$$
$$\sum_{|n|\sim K}\mE\big|\widehat{\Delta_q\big(F_\eps^{(1)}(t)-F_\eps^{(1)}(s)\big)}(n)\big|^2\lesssim_\kappa h^\gamma\eps^{2\a-2\gamma-\kappa}K^3.$$
Hypercontractivity, the Bernstein inequality, and the Banach-space-valued Garsia--Rodemich--Rumsey lemma then yield
$$\left\|\Delta_qF_\eps^{(1)}\right\|_{L^p(\Omega;C_T^{\delta'}L^\infty)}\lesssim_{\eta,\delta'}\eps^{\a-2\delta'-\eta}K^{\frac32+\eta}.$$
This proves \eqref{eq:X3-X4-D-first-chaos-dyadic}.

It remains to estimate $r=0,1,2$. The same Wiener-isometry argument as in Lemma~\ref{lem:X5-X3-vanishing} gives, uniformly in $t\in[0,T]$ and $|n|\sim K$,
$$\mE\big|\widehat{\Delta_q\cW_{\eps,0}^{(3,4)}}(t,n)\big|^2\lesssim\sum_{P+Q=n}|\Phi(P,Q)|^2\cJ_{3,\eps}(P)G_{4,\eps}(Q),$$
$$\mE\big|\widehat{\Delta_q\cW_{\eps,1}^{(3,4)}}(t,n)\big|^2\lesssim\sum_{P+Q=n}a_P^{-1}G_{2,\eps}(P)G_{3,\eps}(Q),$$
$$\mE\big|\widehat{\Delta_q\cW_{\eps,2}^{(3,4)}}(t,n)\big|^2\lesssim\log^2(2+\Lambda)G_{3,\eps}(n).$$
For $r=0$, the support of $\Phi$ restricts $\varrho(P)$ and $\varrho(Q)$ to comparable dyadic frequencies $L\gtrsim K$. Using \eqref{eq:dyadic-conv-integrated-renorm} with $m=3$ and $G_{4,\eps}\lesssim_\eta\eps^{-1-\eta}$ gives a bound of order $\eps^{-1-\eta}K^2$. For $r=1$, we use
$$G_{3,\eps}\lesssim_\eta\Lambda^\eta,\ \sum_{\varrho(P)\lesssim\Lambda}a_P^{-1}G_{2,\eps}(P)\lesssim\log(2+\Lambda),$$
while the case $r=2$ follows directly from the bound on $G_{3,\eps}$. Since $K\lesssim\Lambda\asymp\eps^{-1}$, the last two bounds are also dominated by $\eps^{-1-\eta}K^2$. Hence
$$\sup_{t\in[0,T]}\sum_{|n|\sim K}\mE\big|\widehat{\Delta_q\cW_{\eps,r}^{(3,4)}}(t,n)\big|^2\lesssim_\eta\eps^{-1-\eta}K^{2+\eta},\ r=0,1,2.$$

For $0\le s<t\le T$, set $h:=t-s$. Repeating the time-increment argument leading to \eqref{eq:X5-X3-r012-time-increment}, we obtain, for some $\nu>0$ with $2\delta'+\nu<1$,
$$\sum_{|n|\sim K}\mE\big|\widehat{\Delta_q\big(\cW_{\eps,r}^{(3,4)}(t)-\cW_{\eps,r}^{(3,4)}(s)\big)}(n)\big|^2\lesssim_{\eta,\delta'}h^{2\delta'+\nu}\eps^{-1-4\delta'-\eta}K^{2+\eta},\ r=0,1,2.$$
Together with the fixed-time estimate, hypercontractivity, the Bernstein inequality, and the Banach-space-valued Garsia--Rodemich--Rumsey lemma, this gives
$$\left\|\Delta_q\cW_{\eps,r}^{(3,4)}\right\|_{L^p(\Omega;C_T^{\delta'}L^\infty)}\lesssim_{\eta,\delta'}\eps^{-\frac12-2\delta'-\eta}K^{1+\eta},\ r=0,1,2.$$
Since
$$(1-\Pi_1)\big(\cI(X_\eps^{\diamond3})\circ X_\eps^{\diamond4}\big)=\sum_{r=0}^2h_r\cW_{\eps,r}^{(3,4)},$$
the boundedness of $\lambda_\eps$ proves \eqref{eq:X3-X4-D-higher-chaos-dyadic}.

Finally, choose $0<\eta<\ff\delta2$. Since all blocks vanish unless $K\lesssim\eps^{-1}$ and the relevant powers of $K$ are positive, the two dyadic estimates give
$$
\left\|F_\eps^{(1)}\right\|_{L^p(\Omega;C_T^{\delta',\,\a-\frac32-\delta-2\delta'})}\lesssim\eps^{\a-2\delta'-\eta}\sup_{1\le K\lesssim\eps^{-1}}K^{\a-\delta-2\delta'+\eta}\lesssim\eps^{\delta-2\eta},$$
$$\left\|\frac53\lambda_\eps\eps^\a(1-\Pi_1)\big(\cI(X_\eps^{\diamond3})\circ X_\eps^{\diamond4}\big)\right\|_{L^p(\Omega;C_T^{\delta',\,\a-\frac32-\delta-2\delta'})}\lesssim\eps^{\a-\frac12-2\delta'-\eta}\sup_{1\le K\lesssim\eps^{-1}}K^{\a-\frac12-\delta-2\delta'+\eta}\lesssim\eps^{\delta-2\eta}.$$
Both terms therefore converge to zero.
\end{proof}

\subsection{Renormalization for $\eps^{2\a}\cI(X_\eps^{\diamond5})\circ X_\eps^{\diamond4}$}

The first-chaos part of $\cI(X_\eps^{\diamond5})\circ X_\eps^{\diamond4}$ produces the local counterterm $3B_{\eps,\a}X_\eps$.

\begin{lemma}\label{lem:X5-X4-B-localization}
Let $T>0$, $p\in[1,\infty)$, $\de,\de'>0$ sufficiently small such that $\a-\frac12-\delta-2\delta'>0$. Then
\begin{align}\label{eq:X5-X4-B-first-chaos}
\frac53\eps^{2\a}\,\Pi_1\big(\cI(X_\eps^{\diamond5})\circ X_\eps^{\diamond4}\big)-3B_{\eps,\a}X_\eps\to0\ \text{in }L^p\big(\Omega;C_T^{\delta',\,2\a-\frac52-\delta-2\delta'}\big),
\end{align}
and
\begin{align}\label{eq:X5-X4-B-higher-chaos}
\frac53\eps^{2\a}\,(1-\Pi_1)\big(\cI(X_\eps^{\diamond5})\circ X_\eps^{\diamond4}\big)\to0\ \text{in }L^p\big(\Omega;C_T^{\delta',\,2\a-\frac52-\delta-2\delta'}\big).
\end{align}
More precisely, if $K:=2^q\vee1$, then, for every sufficiently small $\eta>0$,
\begin{align}\label{eq:X5-X4-B-first-chaos-dyadic}
\left\|\Delta_q\left(\frac53\eps^{2\a}\,\Pi_1\big(\cI(X_\eps^{\diamond5})\circ X_\eps^{\diamond4}\big)-3B_{\eps,\a}X_\eps\right)\right\|_{L^p(\Omega;C_T^{\delta'}L^\infty)}\lesssim_{\eta,\delta'}\eps^{2\a-1-2\delta'-\eta}K^{\frac32+\eta},
\end{align}
and
\begin{align}\label{eq:X5-X4-B-higher-chaos-dyadic}
\left\|\Delta_q\left(\frac53\eps^{2\a}(1-\Pi_1)\big(\cI(X_\eps^{\diamond5})\circ X_\eps^{\diamond4}\big)\right)\right\|_{L^p(\Omega;C_T^{\delta'}L^\infty)}\lesssim_{\eta,\delta'}\eps^{2\a-1-2\delta'-\eta}K^{\frac32+\eta}.
\end{align}
\end{lemma}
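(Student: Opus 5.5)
We follow the template of Lemma~\ref{lem:X3-X4-D-localization}, with $\cJ_{4,\eps}$ in place of $\cJ_{3,\eps}$ and the higher-chaos terms handled by the kernel bounds of Lemma~\ref{lem:X5-X3-vanishing}. First expand by the Wick formula
$$\cI(X_\eps^{\diamond5})\circ X_\eps^{\diamond4}=\sum_{r=0}^4b_r\cW^{(5,4)}_{\eps,r},\qquad b_r:=\binom5r\binom4r r!,$$
where $\cW^{(5,4)}_{\eps,r}$ lies in the chaos of order $9-2r$. The term $r=4$ is the first-chaos projection. In it all four legs of $X_\eps^{\diamond4}$ are contracted with four legs of $X_\eps^{\diamond5}$, and the remaining leg of $X_\eps^{\diamond5}$ carries the external frequency $n$. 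Setting $\Gamma_{\eps,4}(n):=\sum_k\Phi_n(k)a_k\cJ_{4,\eps}(k)$, we expect the Fourier identity $\widehat{\Pi_1(\cdots)}(t,n)=b_4\Gamma_{\eps,4}(n)\widehat X_\eps(t,n)$, with $b_4=5\cdot 4!=120$. Up to the harmless heat factor $e^{-a_n u}$ on the uncontracted leg, this identity should follow from the same computation as in Lemma~\ref{lem:X3-X4-D-localization}. One should check that this factor only produces a correction controlled like $\Gamma_{\eps,4}(n)-\Gamma_{\eps,4}(0)$. Comparing with \eqref{eq:B5-explicit} and $\Phi_0\equiv1$ gives $B_{\eps,\a}=\frac{25}{9}\eps^{2\a}4!\,\Gamma_{\eps,4}(0)$. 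The constants then match: $\frac53\cdot120=200=3\cdot\frac{25}{9}\cdot 24$. Hence
$$\widehat F_\eps(t,n)=200\,\eps^{2\a}\big(\Gamma_{\eps,4}(n)-\Gamma_{\eps,4}(0)\big)\widehat X_\eps(t,n),$$
where $F_\eps$ denotes the left-hand side of \eqref{eq:X5-X4-B-first-chaos}.

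The key deterministic estimate is $|\Gamma_{\eps,4}(n)-\Gamma_{\eps,4}(0)|\lesssim_\kappa\eps^{-1-\kappa}\varrho(n)$. The largest-frequency decomposition from \eqref{eq:dyadic-conv-integrated-renorm} gives $\sum_{\varrho(k)\sim L}a_k\cJ_{4,\eps}(k)\lesssim L^{2}\log\Lambda$, since the numerator is $L^3N^{1}$ over a denominator $L^2N^2$ summed over $N\gtrsim L$, up to logs. Use the bound on $|\Phi_n-1|$ from Lemma~\ref{lem:X3-X4-D-localization}: the scales $L\lesssim\varrho(n)$ contribute $\varrho(n)^2$, and the scales $L\gg\varrho(n)$ contribute $\varrho(n)L^{-1}L^2\lesssim\varrho(n)\Lambda$. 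Since $\varrho(n)\lesssim\Lambda$, the total is $\lesssim\Lambda\varrho(n)$ up to logarithms. Then $\sum_{|n|\sim K}\mE|\widehat{\Delta_qF_\eps}|^2\lesssim\eps^{4\a-2-\kappa}K^2\cdot K$. The time increments use \eqref{eq:X5-X3-first-chaos-increment}. Hypercontractivity, Bernstein and Garsia--Rodemich--Rumsey then give \eqref{eq:X5-X4-B-first-chaos-dyadic}.

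For $r=0,\dots,3$, bound the second moments by the Wiener-isometry expressions of Lemma~\ref{lem:X5-X3-vanishing}. For $r=0$, use $\sum|\Phi|^2\cJ_{5,\eps}(P)G_{4,\eps}(Q)$ with \eqref{eq:X5-X3-integrated-five-sum} and $G_{4,\eps}\lesssim\Lambda^{1+\vt}$, which gives $\Lambda^{2+\vt}$. For $r=1,2$, use $\int H_{\eps,r}\lesssim a_P^{-1/2}$ or $\log\Lambda$ together with the convolution bounds \eqref{eq:X5-X3-basic-convolutions}. For $r=3$, use $\int H_{\eps,3}\lesssim\Lambda$ from Lemma~\ref{lem:X4-X4-nonscalar-vanishing}, so that $\mE|\cdot|^2\lesssim\Lambda^2 G_{2,\eps}(n)\cdot$(a further factor bounded by $\Lambda^\vt$). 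Each case should give $\sum_{|n|\sim K}\mE|\widehat{\Delta_q\cW_{\eps,r}}|^2\lesssim\eps^{-2-\vt}K^{3}$, using $K\lesssim\Lambda$ to trade powers. Increments are handled as in \eqref{eq:X5-X3-r012-time-increment}, and hypercontractivity then yields \eqref{eq:X5-X4-B-higher-chaos-dyadic} after multiplying by $\eps^{2\a}$.

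Summation is then routine. The target regularity exponent plus $\frac32$ is $2\a-1-\delta-2\delta'>0$, so $\sup_{K\lesssim\eps^{-1}}$ gives $\eps^{2\a-1-2\delta'-\eta}\eps^{-(2\a-1-\delta-2\delta'+\eta)}=\eps^{\delta-2\eta}\to0$ for $\eta<\delta/2$. The main obstacle is the $r=0$ and $r=1$ higher-chaos bounds. The naive count gives $\Lambda^{2}$ times possibly an extra $K$-independent $\Lambda$ from the integrated quintic factor. One must check carefully that the resonant support of $\Phi$ and the $a_P^{-1}$ decay prevent an extra $\Lambda^{1}$ beyond $\eps^{-2}K^3$. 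If this fails, the fallback is to distribute $\Lambda$ powers against $K$ using only $K\lesssim\Lambda$, which still suffices as long as the total is $\eps^{-2}K^3$ up to $\eps^{-\vt}$.
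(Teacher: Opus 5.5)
There is a gap in the first-chaos part, and it sits exactly where the quintic--quartic case differs from Lemma~\ref{lem:X3-X4-D-localization}. In that lemma the integrated factor $\cI(X_\eps^{\diamond3})$ is fully contracted and the free leg comes from $X_\eps^{\diamond4}(t)$. So $\Pi_1$ there really is a deterministic multiplier times $\widehat X_\eps(t,n)$. Here all four legs of $X_\eps^{\diamond4}(t)$ are contracted and the free leg lies inside $\cI(X_\eps^{\diamond5})$. The correct formula is
\begin{equation*}
\widehat{\Pi_1\big(\cI(X_\eps^{\diamond5})\circ X_\eps^{\diamond4}\big)}(t,n)=120\sum_{\mathbf k}\Phi(k+n,-k)\,W_\eps(\mathbf k)\int_0^\infty\e^{-R_n(\mathbf k)u}\,\widehat X_\eps(t-u,n)\,\dif u,
\end{equation*}
where $k=k_1+\cdots+k_4$, $W_\eps(\mathbf k)=\prod_iw_\eps(k_i)$ and $R_n(\mathbf k)=a_{k+n}+\sum_{i=1}^4a_{k_i}$.

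Your identity $\widehat{\Pi_1}=120\,\Gamma_{\eps,4}(n)\widehat X_\eps(t,n)$ is therefore false for two reasons:
\begin{enumerate}
\item the heat exponent involves the frequency $k+n$ of the integrated factor, not $k$;
\item the free leg is $\widehat X_\eps(t-u,n)$, not $\widehat X_\eps(t,n)$.
\end{enumerate}
You flag that something must be checked here, but the second point is not a ``harmless heat factor $\e^{-a_nu}$''. Write $\widehat X_\eps(t-u,n)=\e^{-a_nu}\widehat X_\eps(t,n)+Z_{t,u}(n)$. The residual $Z_{t,u}(n)$ is Gaussian, uncorrelated with and hence independent of $\widehat X_\eps(t,\cdot)$, and has variance $(1-\e^{-2a_nu})w_\eps(n)$. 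It is a genuinely random term that no deterministic multiplier absorbs, so comparing $\Gamma_{\eps,4}(n)$ with $\Gamma_{\eps,4}(0)$ cannot control it.

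The first discrepancy is fixed by your scale-splitting argument once you add $|a_{k+n}-a_k|\lesssim\varrho(n)(\varrho(k)+\varrho(n))$ and $R_n\asymp R_0$ for $\varrho(k)\gg\varrho(n)$. This gives $\sum_{\mathbf k}W_\eps(\mathbf k)\,|\Phi(k+n,-k)R_n(\mathbf k)^{-1}-R_0(\mathbf k)^{-1}|\lesssim_\kappa\Lambda^{1+\kappa}\varrho(n)$, as in the paper. The missing step is the time-shift error
\begin{equation*}
200\,\eps^{2\a}\sum_{\mathbf k}\Phi(k+n,-k)\,W_\eps(\mathbf k)\int_0^\infty\e^{-R_n(\mathbf k)u}\big(\widehat X_\eps(t-u,n)-\widehat X_\eps(t,n)\big)\,\dif u .
\end{equation*}
The paper controls it with three ingredients, where $\gamma>2\delta'$ is close to $2\delta'$, $\kappa>0$ is small and $\rho=\frac{1-\gamma-\kappa}{2}$:
\begin{enumerate}
\item $\|\widehat X_\eps(t-u,n)-\widehat X_\eps(t,n)\|_{L^2(\Omega)}\lesssim u^\rho a_n^\rho w_\eps(n)^{1/2}$;
\item $\sum_{\mathbf k}W_\eps(\mathbf k)R_n(\mathbf k)^{-1-\rho}\lesssim\Lambda^{2-2\rho}$;
\item a two-time bound $\lesssim h^{\gamma/2}u^\rho a_n^{\gamma/2+\rho}w_\eps(n)^{1/2}$ for the increment in $t$.
\end{enumerate}
Appealing to \eqref{eq:X5-X3-first-chaos-increment} alone does not cover this term. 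The three ingredients give the same order $\eps^{4\a}\Lambda^{2+2\gamma+2\kappa}K^3$ as the multiplier term. So the lemma survives, but this is precisely the estimate your proposal leaves unproved.

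Your higher-chaos part and the final dyadic summation agree with the paper's, with two small corrections. For $r=2$ you need $G_{5,\eps}\lesssim_\kappa\Lambda^{2+\kappa}$, which is not contained in \eqref{eq:X5-X3-basic-convolutions}. For $r=3$ the bound is $\Lambda^2G_{3,\eps}(n)$. No fallback is needed for $r=0,1$: $\sup_QG_{4,\eps}(Q)\sum_P\cJ_{5,\eps}(P)$ and $\Lambda^{1+\kappa}\Lambda^{\kappa}\sum_{\varrho(P)\lesssim\Lambda}a_P^{-1}$ are already $O(\Lambda^{2+2\kappa})$ per frequency.
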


\begin{proof}
For $\mathbf k=(k_1,\ldots,k_4)\in(\mZ^3)^4$, write
$$k:=k_1+\cdots+k_4,\ W_\eps(\mathbf k):=\prod_{i=1}^4w_\eps(k_i),\ R_n(\mathbf k):=a_{k+n}+\sum_{i=1}^4a_{k_i},$$
and set
$$\Psi(k,n):=\sum_{|i-j|\le1}\varphi_i(k+n)\varphi_j(-k).$$
Note that $\Psi(k,0)=1$. The term with four cross-contractions is
$$\widehat{\Pi_1\big(\cI(X_\eps^{\diamond5})\circ X_\eps^{\diamond4}\big)}(t,n)=120\sum_{\mathbf k}\Psi(k,n)W_\eps(\mathbf k)\int_0^\infty\e^{-R_n(\mathbf k)u}\widehat X_\eps(t-u,n)\,\dif u.$$
Set
$$F_\eps^{(1)}:=\frac53\eps^{2\a}\Pi_1\big(\cI(X_\eps^{\diamond5})\circ X_\eps^{\diamond4}\big)-3B_{\eps,\a}X_\eps.
$$
Equation \eqref{eq:B5-explicit} gives
\begin{align*}
\frac{\widehat{F_\eps^{(1)}}(t,n)}{200\eps^{2\a}}={}&\sum_{\mathbf k}W_\eps(\mathbf k)\left(\frac{\Psi(k,n)}{R_n(\mathbf k)}-\frac1{R_0(\mathbf k)}\right)\widehat X_\eps(t,n)\\
&+\sum_{\mathbf k}\Psi(k,n)W_\eps(\mathbf k)\int_0^\infty\e^{-R_n(\mathbf k)u}\big(\widehat X_\eps(t-u,n)-\widehat X_\eps(t,n)\big)\,\dif u.
\end{align*}
The first term measures the change in the resonant multiplier and heat denominator when $n$ is set to zero; the second is the error from replacing $\widehat X_\eps(t-u,n)$ by $\widehat X_\eps(t,n)$.

We first estimate the coefficient in the first term. For every $\kappa>0$,
\begin{align}\label{eq:X5-X4-localization-sum}
\sum_{\varrho(k)\sim L}\sum_{k_1+\cdots+k_4=k}\frac{W_\eps(\mathbf k)}{R_n(\mathbf k)}\lesssim_\kappa L^{2+\kappa},\ 1\le L\lesssim\Lambda,
\end{align}
uniformly in $n$. Indeed, for fixed $k$, decompose the sum according to $N:=\max_i\varrho(k_i)\gtrsim L$. The same largest-frequency argument as in \eqref{eq:X5-X3-basic-convolutions} bounds the corresponding convolution sum by $N^{1+\kappa}$, whereas $R_n(\mathbf k)\gtrsim N^2$. Summing over dyadic $N\gtrsim L$ and over the $O(L^3)$ possible values of $k$ proves \eqref{eq:X5-X4-localization-sum}.

Moreover,
$$|\Psi(k,n)-1|\lesssim\bbone_{\{\varrho(k)\lesssim\varrho(n)\}}+\frac{\varrho(n)}{\varrho(k)}\bbone_{\{\varrho(k)\gg\varrho(n)\}},\ |a_{k+n}-a_k|\lesssim\varrho(n)\big(\varrho(k)+\varrho(n)\big).$$
Splitting at $\varrho(k)\sim\varrho(n)$ and using $R_n(\mathbf k)\asymp R_0(\mathbf k)$ when $\varrho(k)\gg\varrho(n)$ gives, for the only relevant range $\varrho(n)\lesssim\Lambda$,
$$\sum_{\mathbf k}W_\eps(\mathbf k)\Big|\frac{\Psi(k,n)}{R_n(\mathbf k)}-\frac1{R_0(\mathbf k)}\Big|\lesssim_\kappa\Lambda^{1+\kappa}\varrho(n).$$

Choose $\gamma>2\delta'$ sufficiently close to $2\delta'$ and then $\kappa>0$ sufficiently small that $\gamma+\kappa<1$. Writing $h:=|t-s|$, we have
$$\mE\left|\widehat X_\eps(t,n)-\widehat X_\eps(s,n)\right|^2\lesssim h^\gamma a_n^\gamma w_\eps(n),\ \sum_{|n|\sim K}w_\eps(n)\lesssim K.$$
This controls the first term in the decomposition of $F_\eps^{(1)}$.

For the second term, set $\rho:=\ff{1-\gamma-\kappa}2$. Since $R_n(\mathbf k)\gtrsim\max_i\varrho(k_i)^2$ and $\sum_{\varrho(k)\lesssim N}w_\eps(k)\lesssim N$,
$$\sum_{\mathbf k}W_\eps(\mathbf k)R_n(\mathbf k)^{-1-\rho}\lesssim\sum_{\substack{N\lesssim\Lambda\\N\text{ dyadic}}}N^{2-2\rho}\lesssim\Lambda^{2-2\rho}.$$
The covariance formula for the Ornstein--Uhlenbeck modes gives
$$\left\|\widehat X_\eps(t-u,n)-\widehat X_\eps(t,n)\right\|_{L^2(\Omega)}\lesssim u^\rho a_n^\rho w_\eps(n)^{\frac12},$$
$$\left\|\big(\widehat X_\eps(t-u,n)-\widehat X_\eps(t,n)\big)-\big(\widehat X_\eps(s-u,n)-\widehat X_\eps(s,n)\big)\right\|_{L^2(\Omega)}\lesssim h^{\frac\gamma2}u^\rho a_n^{\frac\gamma2+\rho}w_\eps(n)^{\frac12}.$$
Using the Minkowski inequality and $K\lesssim\Lambda$, the two terms in
the decomposition of $F_\eps^{(1)}$ therefore satisfy the common bounds
$$\sup_{t\in[0,T]}\sum_{|n|\sim K}\mE\big|\widehat{\Delta_qF_\eps^{(1)}}(t,n)\big|^2\lesssim_\kappa\eps^{4\a}\Lambda^{2+2\gamma+2\kappa}K^3,$$
$$\sum_{|n|\sim K}\mE\big|\widehat{\Delta_q\big(F_\eps^{(1)}(t)-F_\eps^{(1)}(s)\big)}(n)
\big|^2\lesssim_\kappa h^\gamma\eps^{4\a}\Lambda^{2+2\gamma+2\kappa}K^3.
$$
Hypercontractivity, the Bernstein inequality, and the Banach-space-valued Garsia--Rodemich--Rumsey lemma now yield, after absorbing the auxiliary losses into $\eta$,
$$\left\|\Delta_qF_\eps^{(1)}\right\|_{L^p(\Omega;C_T^{\delta'}L^\infty)}\lesssim_{\eta,\delta'}\eps^{2\a-1-2\delta'-\eta}K^{\frac32+\eta}.$$
This proves \eqref{eq:X5-X4-B-first-chaos-dyadic}.

It remains to control the higher-chaos part. By the Wick formula,
$$(1-\Pi_1)\big(\cI(X_\eps^{\diamond5})\circ X_\eps^{\diamond4}\big)=\sum_{r=0}^3\binom5r\binom4r r!\,\cQ_{\eps,r}^{(5,4)},$$
where $\cQ_{\eps,r}^{(5,4)}$ is the contribution with exactly $r$ cross-contractions and belongs to the homogeneous Wiener chaos of order $9-2r$.

Besides \eqref{eq:X5-X3-basic-convolutions}, we only need
$$G_{5,\eps}(b)\lesssim_\kappa\Lambda^{2+\kappa},\ \int_0^\infty H_{\eps,3}(P,u)\,\dif u\lesssim\int_0^\infty\e^{-ca_Pu}\big(\Lambda^3\wedge u^{-\frac32}\big)\,\dif u\lesssim\Lambda.$$
The same Wiener-isometry estimates as in Lemma~\ref{lem:X5-X3-vanishing} give, uniformly in $t\in[0,T]$ and $|n|\sim K$,
$$\mE\big|\widehat{\Delta_q\cQ_{\eps,0}^{(5,4)}}(t,n)\big|^2\lesssim\sum_{P+Q=n}|\Phi(P,Q)|^2\cJ_{5,\eps}(P)G_{4,\eps}(Q),$$
$$\mE\big|\widehat{\Delta_q\cQ_{\eps,1}^{(5,4)}}(t,n)\big|^2\lesssim\sum_{P+Q=n}a_P^{-1}G_{4,\eps}(P)G_{3,\eps}(Q),$$
$$\mE\big|\widehat{\Delta_q\cQ_{\eps,2}^{(5,4)}}(t,n)\big|^2\lesssim\log^2(2+\Lambda)G_{5,\eps}(n),$$
$$\mE\big|\widehat{\Delta_q\cQ_{\eps,3}^{(5,4)}}(t,n)\big|^2\lesssim\Lambda^2G_{3,\eps}(n).$$
Together with \eqref{eq:X5-X3-integrated-five-sum}, \eqref{eq:X5-X3-basic-convolutions}, and $\sum_{\varrho(P)\lesssim\Lambda}a_P^{-1}\lesssim\Lambda$, this yields
$$\sup_{t\in[0,T]}\sum_{|n|\sim K}\mE\big|\widehat{\Delta_q\cQ_{\eps,r}^{(5,4)}}(t,n)\big|^2\lesssim_\kappa\Lambda^{2+\kappa}K^3,\ r=0,1,2,3.$$
Using \eqref{eq:X5-X3-first-chaos-increment} and the time-increment argument from Lemma~\ref{lem:X5-X3-vanishing}, we similarly obtain
$$\sum_{|n|\sim K}\mE\big|\widehat{\Delta_q\big(\cQ_{\eps,r}^{(5,4)}(t)-\cQ_{\eps,r}^{(5,4)}(s)\big)}(n)\big|^2\lesssim_\kappa h^\gamma\Lambda^{2+2\gamma+\kappa}K^3,\ r=0,1,2,3.$$
Consequently,
$$\left\|\Delta_q\cQ_{\eps,r}^{(5,4)}\right\|_{L^p(\Omega;C_T^{\delta'}L^\infty)}\lesssim_{\eta,\delta'}\eps^{-1-2\delta'-\eta}K^{\frac32+\eta},\ r=0,1,2,3.$$
Multiplying by $\frac53\eps^{2\a}$ and summing over $r$ proves
\eqref{eq:X5-X4-B-higher-chaos-dyadic}.

Finally, choose $0<\eta<\ff\delta2$. Both expressions in \eqref{eq:X5-X4-B-first-chaos} and \eqref{eq:X5-X4-B-higher-chaos} have no dyadic blocks above $K\lesssim\eps^{-1}$ and satisfy the same dyadic estimate. Since $2\a-1-\delta-2\delta'+\eta>0$, their norms in  $L^p\big(\Omega;C_T^{\delta',\,2\a-\frac52-\delta-2\delta'}\big)$ are bounded by
$$\eps^{2\a-1-2\delta'-\eta}\sup_{1\le K\lesssim\eps^{-1}}K^{2\a-1-\delta-2\delta'+\eta}\lesssim\eps^{\delta-2\eta}\to0.$$
This proves both convergence statements.
\end{proof}

Proposition~\ref{prop:eps-wick-vanishing} and Lemmas~\ref{lem:X5-X3-vanishing}--\ref{lem:X5-X4-B-localization} verify the convergence of all higher-order coordinates in the canonical lift. Together with the standard $\Phi^4_3$ construction, these estimates complete the stochastic input used in Theorem~\ref{thm:enhanced-data-convergence}.


\begin{thebibliography}{99}

\bibitem{BCD11} H. Bahouri, J.-Y. Chemin and R. Danchin, \emph{Fourier analysis and nonlinear partial differential equations}, Grundlehren der mathematischen Wissenschaften [Fundamental Principles of Mathematical Sciences], Heidelberg, Springer, 2011. \url{https://doi.org/10.1007/978-3-642-16830-7}

\bibitem{BFS83} D. C. Brydges, J. Fr\"ohlich and A. D. Sokal, A new proof of the existence and nontriviality of the continuum $\varphi^4_2$ and $\varphi^4_3$ quantum field theories, \emph{Comm. Math. Phys.} \textbf{91} (1983), no.~2, 141--186. \url{https://doi.org/10.1007/BF01211157}

\bibitem{BG20} N. Barashkov and M. Gubinelli, A variational method for $\Phi^4_3$, \emph{Duke Math. J.} \textbf{169} (2020), no.~17, 3339--3415. \url{https://doi.org/10.1215/00127094-2020-0029}

\bibitem{BHZ19} Y. Bruned, M. Hairer and L. Zambotti, Algebraic renormalisation of regularity structures, \emph{Invent. Math.} \textbf{215} (2019), no.~3, 1039--1156. \url{https://doi.org/10.1007/s00222-018-0841-x}

\bibitem{CC18} R. Catellier and K. Chouk, Paracontrolled distributions and the three-dimensional stochastic quantization equation, \emph{Ann. Probab.} \textbf{46} (2018), no.~5, 2621--2679. \url{https://doi.org/10.1214/17-AOP1235}

\bibitem{CGW22} A. Chandra, T. S. Gunaratnam and H. Weber, Phase transitions for $\phi^4_3$, \emph{Comm. Math. Phys.} \textbf{392} (2022), no.~2, 691--782. \url{https://doi.org/10.1007/s00220-022-04353-6}

\bibitem{CW17} A. Chandra and H. Weber, Stochastic PDEs, regularity structures, and interacting particle systems, \emph{Ann. Fac. Sci. Toulouse Math. (6)} \textbf{26} (2017), no.~4, 847--909. \url{https://doi.org/10.5802/afst.1555}

\bibitem{DPD03} G. Da Prato and A. Debussche, Strong solutions to the stochastic quantization equations, \emph{Ann. Probab.} \textbf{31} (2003), no.~4, 1900--1916. \url{https://doi.org/10.1214/aop/1068646370}

\bibitem{EX22} D. Erhard and W. Xu, Weak universality of dynamical $\Phi^4_3$: polynomial potential and general smoothing mechanism, \emph{Electron. J. Probab.} \textbf{27} (2022), article no.~112, 1--43. \url{https://doi.org/10.1214/22-EJP833}

\bibitem{FG19} M. Furlan and M. Gubinelli, Weak universality for a class of 3d stochastic reaction--diffusion models, \emph{Probab. Theory Related Fields} \textbf{173} (2019), nos.~3--4, 1099--1164. \url{https://doi.org/10.1007/s00440-018-0849-6}

\bibitem{GJ87} J. Glimm and A. Jaffe, \emph{Quantum physics: a functional integral point of view}, 2nd ed., Springer-Verlag, New York, 1987. \url{https://doi.org/10.1007/978-1-4612-4728-9}

\bibitem{GH19} M. Gubinelli and M. Hofmanov\'a, Global solutions to elliptic and parabolic $\Phi^4$ models in Euclidean space, \emph{Comm. Math. Phys.} \textbf{368} (2019), no.~3, 1201--1266. \url{https://doi.org/10.1007/s00220-019-03398-4}

\bibitem{GH21} M. Gubinelli and M. Hofmanov\'a, A PDE construction of the Euclidean $\Phi^4_3$ quantum field theory, \emph{Comm. Math. Phys.} \textbf{384} (2021), no.~1, 1--75. \url{https://doi.org/10.1007/s00220-021-04022-0}

\bibitem{GIP15} M. Gubinelli, P. Imkeller and N. Perkowski, Paracontrolled distributions and singular PDEs, \emph{Forum Math. Pi}, vol. 3, 2015, Art. no. e6. \url{https://doi.org/10.1017/fmp.2015.2}

\bibitem{GMW25} P. Grazieschi, K. Matetski and H. Weber, The dynamical Ising--Kac model in 3D converges to $\Phi^4_3$, \emph{Probab. Theory Related Fields} \textbf{191} (2025), nos.~1--2, 671--778. \url{https://doi.org/10.1007/s00440-024-01316-x}

\bibitem{Hairer14} M. Hairer, A theory of regularity structures, \emph{Invent. Math.} \textbf{198} (2014), no.~2, 269--504. \url{https://doi.org/10.1007/s00222-014-0505-4}

\bibitem{HM18} M. Hairer and K. Matetski, Discretisations of rough stochastic PDEs, \emph{Ann. Probab.} \textbf{46} (2018), no.~3, 1651--1709. \url{https://doi.org/10.1214/17-AOP1212}

\bibitem{HX18} M. Hairer and W. Xu, Large-scale behavior of three-dimensional continuous phase coexistence models, \emph{Comm. Pure Appl. Math.} \textbf{71} (2018), no.~4, 688--746. \url{https://doi.org/10.1002/cpa.21738}

\bibitem{Kup16} A. Kupiainen, Renormalization group and stochastic PDEs, \emph{Ann. Henri Poincar'e} \textbf{17} (2016), no.~3, 497--535. \url{https://doi.org/10.1007/s00023-015-0408-y}

\bibitem{MW17} J.-C. Mourrat and H. Weber, The dynamic $\Phi^4_3$ model comes down from infinity, \emph{Comm. Math. Phys.} \textbf{356} (2017), no.~3, 673--753. \url{https://doi.org/10.1007/s00220-017-2997-4}

\bibitem{MW17IK} J.-C. Mourrat and H. Weber, Convergence of the two-dimensional dynamic Ising--Kac model to $\Phi^4_2$, \emph{Comm. Pure Appl. Math.} \textbf{70} (2017), no.~4, 717--812. \url{https://doi.org/10.1002/cpa.21655}

\bibitem{MWX17} J.-C. Mourrat, H. Weber and W. Xu, Construction of $\Phi^4_3$ diagrams for pedestrians, in \emph{From Particle Systems to Partial Differential Equations}, Springer Proc. Math. Stat., vol.~209, Springer, Cham, 2017, pp.~1--46. \url{https://doi.org/10.1007/978-3-319-66839-0_1}

\bibitem{PW81} G. Parisi and Y.-S. Wu, Perturbation theory without gauge fixing, \emph{Sci. Sinica} \textbf{24} (1981), no.~4, 483--496. \url{https://doi.org/10.1360/ya1981-24-4-483}

\bibitem{SX18} H. Shen and W. Xu, Weak universality of dynamical $\Phi^4_3$: non-Gaussian noise, \emph{Stoch. Partial Differ. Equ. Anal. Comput.} \textbf{6} (2018), no.~2, 211--254. \url{https://doi.org/10.1007/s40072-017-0107-4}

\bibitem{ZZ18} R. Zhu and X. Zhu, Lattice approximation to the dynamical $\Phi^4_3$ model, \emph{Ann. Probab.} \textbf{46} (2018), no.~1, 397--455. \url{https://doi.org/10.1214/17-AOP1188}

\bibitem{ZZ23WU} R. Zhu and X. Zhu, Weak universality of the dynamical $\Phi^4_3$ model on the whole space, \emph{Potential Anal.} \textbf{58} (2023), no.~2, 295--330. \url{https://doi.org/10.1007/s11118-021-09941-0}

\end{thebibliography}
\end{document}